\documentclass[11pt,a4paper]{amsart}
\usepackage{a4wide}
\usepackage[latin1]{inputenc}
\usepackage{amsmath, wasysym}
\usepackage{amsfonts}
\usepackage{amssymb}
\usepackage[dvipsnames]{xcolor}

\usepackage{graphicx}
\usepackage{amsthm}
\usepackage{amssymb}
\usepackage{cite}
\usepackage{mathrsfs}
\usepackage[backref=page]{hyperref}
\usepackage{paralist}
\usepackage{enumitem}
\usepackage{tikz}
\usetikzlibrary{matrix,chains}
\usepackage{dynkin-diagrams}

\begin{document}

	\newcommand{\n}{\mathbf{n}}
	\newcommand{\x}{\mathbf{x}}
	\newcommand{\h}{\mathbf{h}}
	\newcommand{\m}{\mathbf{m}}
	\newcommand{\Ph}{\Phi_}
	
	\newcommand{\bN}{\mathbf{N}}
	\newcommand{\bL}{\mathbf{L}}
	\newcommand{\bT}{\mathbf{T}}
	\newcommand{\bG}{\mathbf{G}}
	\newcommand{\bS}{\mathbf{S}}
	\newcommand{\bZ}{\mathbf{Z}}
	\newcommand{\bH}{\mathbf{H}}
	\newcommand{\B}{\mathbf{B}}
	\newcommand{\U}{\mathbf{U}}
	\newcommand{\K}{\mathbf{K}}
	\newcommand{\V}{\mathbf{V}}
	\newcommand{\T}{\mathbf{T}}
	\newcommand{\G}{\mathbf{G}}
	\newcommand{\Para}{\mathbf{P}}
	\newcommand{\Levi}{\mathbf{L}}
	\newcommand{\Y}{\mathbf{Y}}
	\newcommand{\X}{\mathbf{X}}
	\newcommand{\M}{\mathbf{M}}
	\newcommand{\pro}{\mathbf{prod}}
	\renewcommand{\o}{\overline}
	
	\newcommand{\Gtilde}{\mathbf{\tilde{G}}}
	\newcommand{\Ttilde}{\mathbf{\tilde{T}}}
	\newcommand{\Btilde}{\mathbf{\tilde{B}}}
	\newcommand{\Ltilde}{\mathbf{\tilde{L}}}
	\newcommand{\C}{\operatorname{C}}
	
	\newcommand{\N}{\operatorname{N}}
	\newcommand{\bl}{\operatorname{bl}}
	\newcommand{\Z}{\operatorname{Z}}
	\newcommand{\Gal}{\operatorname{Gal}}
	\newcommand{\modulo}{\operatorname{mod}}
	\newcommand{\kernel}{\operatorname{ker}}
	\newcommand{\Irr}{\operatorname{Irr}}
	\newcommand{\D}{\operatorname{D}}
	\newcommand{\I}{\operatorname{I}}
	\newcommand{\GL}{\operatorname{GL}}
	\newcommand{\SL}{\operatorname{SL}}
	\newcommand{\W}{\mathbf{W}}
	\newcommand{\R}{\operatorname{R}}
	\newcommand{\Br}{\operatorname{Br}}
	\newcommand{\Aut}{\operatorname{Aut}}
	\newcommand{\Out}{\operatorname{Out}}
	\newcommand{\End}{\operatorname{End}}
	\newcommand{\Ind}{\operatorname{Ind}}
	\newcommand{\Res}{\operatorname{Res}}
	\newcommand{\br}{\operatorname{br}}
	\newcommand{\Hom}{\operatorname{Hom}}
	\newcommand{\Endo}{\operatorname{End}}
	\newcommand{\Ho}{\operatorname{H}}
	\newcommand{\Tr}{\operatorname{Tr}}
	\newcommand{\opp}{\operatorname{opp}}
	\newcommand{\ssc}{\operatorname{sc}}
	\newcommand{\ad}{\operatorname{ad}}
	\newcommand{\Lin}{\operatorname{Lin}}
	\newcommand{\odd}{\operatorname{odd}}
	\newcommand{\even}{\operatorname{even}}
	
	\newcommand{\cind}{\operatorname{co-ind}}
	\newcommand{\ind}{\operatorname{ind}}
	\newcommand{\CC}{{\mathbb{C}}}	
	\newcommand{\QQ}{{\mathbb{Q}}}	
	\newcommand{\fF}{{\mathfrak F}}
	\newcommand{\la}{{\lambda}}
    \newcommand{\La}{{\Lambda}}
	\newcommand{\al}{{\alpha}}
	\newcommand{\cH}{{\mathcal H}}
	\newcommand{\tw}[1]{{}^#1\!}

    \newcommand{\wt}{\widetilde}
    \newcommand{\galh}{\mathcal{H}}   \newcommand{\gal}{\mathcal{G}}
    \newcommand{\type}{\operatorname}

    \newcommand{\St}{\mathrm{St}}
    \newcommand{\sig}{\langle\sigma\rangle}
    \newcommand{\RTG}{R_{\bT}^\bG}
    \newcommand{\RTGsig}{R_{\bT\sig}^{\bG\sig}}

\newcommand{\tA}{\type{A}}
\newcommand{\tB}{\type{B}}
\newcommand{\tC}{\type{C}}
\newcommand{\tD}{\type{D}}
\newcommand{\tE}{\type{E}}
\newcommand{\tF}{\type{F}}
\newcommand{\tG}{\type{G}}

\newcommand{\subL}{\mathfrak{L}}

    \newcommand{\Syl}{\mathrm{Syl}}

    \theoremstyle{definition}
	\newtheorem{definition}{Definition}[section]
	\newtheorem{notation}[definition]{Notation}
	\newtheorem{construction}[definition]{Construction}
	\newtheorem{remark}[definition]{Remark}
	\newtheorem{example}[definition]{Example}

    \theoremstyle{theorem}
	
	\newtheorem{theorem}[definition]{Theorem}
	\newtheorem{lemma}[definition]{Lemma}
	\newtheorem{question}{Question}
	\newtheorem{corollary}[definition]{Corollary}
	\newtheorem{proposition}[definition]{Proposition}
	\newtheorem{conjecture}[definition]{Conjecture}
	\newtheorem{assumption}[definition]{Assumption}
	\newtheorem{hypothesis}[definition]{Hypothesis}
	\newtheorem{maintheorem}[definition]{Main Theorem}

	\newtheorem{theo}{Theorem}
	\newtheorem{conj}[theo]{Conjecture}
	\newtheorem{cor}[theo]{Corollary}

	\renewcommand{\thetheo}{\Alph{theo}}
	\renewcommand{\theconj}{\Alph{conj}}
	\renewcommand{\thecor}{\Alph{cor}}

	\newcommand{\ov}{\overline }
	
	\def\oo#1{\overline{\overline{#1}}}

    \newcommand{\mandicomment}{\textcolor{blue}}
    \newcommand{\lucascomment}{\textcolor{OliveGreen}}

\title{Galois automorphisms and blocks covering unipotent blocks}

\author{L. Ruhstorfer}
\address[L. Ruhstorfer]{{School of Mathematics and Natural Sciences}, {University of Wuppertal}, {Gau{\ss}str. 20,
		42119 Wuppertal, Germany}}
\email{ruhstorfer@uni-wuppertal.de}
\author{A. A. Schaeffer Fry}
\address[A. A. Schaeffer Fry]{{Department of Mathematics}, {University of Denver}, {Denver, CO 80210, USA}}
\email{mandi.schaefferfry@du.edu}

\thanks{
 The second-named author is partially supported by a CAREER grant through the U.S. National Science Foundation, Award No. DMS-2439897. Some of this research was conducted in the framework of the research
 training group GRK 2240: Algebro-Geometric Methods in Algebra, Arithmetic and Topology,
 funded by the DFG}

\begin{abstract}
In this paper we prove that a recent condition of Lyons--Mart{\'i}nez--Navarro--Tiep, regarding the field of values of extensions of characters in principal blocks, is satisfied for all finite simple groups, which when combined with their results gives a new characterization of finite groups with a normal $\ell$-complement for a prime $\ell$. This leads us to study the distribution of characters in unipotent blocks of disconnected reductive groups and show that this is well-behaved under a generalization of $d$-Harish-Chandra theory. We go on to study the blockwise Galois--McKay (also known as the Alperin--McKay--Navarro) conjecture for the blocks of almost (quasi-)simple groups above unipotent blocks.

\textbf{Key words and phrases:} {Galois action, character extensions, principal block, groups of lie type, unipotent blocks, disconnected groups}
\end{abstract}

\maketitle

\begin{center}
 \emph{}
\end{center}

\section{Introduction}

Many interesting problems in the representation theory of finite groups aim to relate information about a group's structure to information that can be obtained from the character table. In particular, several recent problems in this realm require a deep understanding of the fields of values of characters of almost simple groups.

In \cite{nt10}, Navarro and Tiep give a necessary and sufficient condition for a group to have a normal $2$-complement, in terms of the rational-valued irreducible characters of $G$. They also give a corresponding sufficient condition for odd primes. In \cite{lmnt}, Lyons--Mart{\'i}nez--Navarro--Tiep recently considered principal block analogues of those results. In fact, they see that by restricting to the principal block, the version for odd primes should be able to be improved to an if and only if statement. The first main goal of this paper is to complete the proof of that statement.

We write $\QQ_n$ for the cyclotomic field $\QQ(e^{2\pi i/n})$ and, given a prime $\ell$, we write $B_0(G)$ for the principal $\ell$-block of $G$. We complete the proof of the following question considered in \cite{lmnt}:

\begin{theo}\label{thm:main}
Let $\ell$ be any odd prime number and let $G$ be a finite group with order divisible by $\ell$. Then $\ell$ divides $\chi(1)$ for all nonlinear $\QQ_\ell$-valued $\chi\in\Irr(B_0(G))$ if and only if $G$ has a normal $\ell$-complement.
\end{theo}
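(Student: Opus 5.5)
The easy direction comes first. Suppose $G$ has a normal $\ell$-complement $K$. Then $B_0(G)$ consists of the characters of $G/K$, which is an $\ell$-group $P$. So every $\chi\in\Irr(B_0(G))$ has $\ell$-power degree, and every nonlinear one has degree divisible by $\ell$.

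For the converse, the plan follows the reduction of Lyons--Mart{\'i}nez--Navarro--Tiep \cite{lmnt}. Their argument takes a minimal counterexample $G$. It uses normal-subgroup arguments: Clifford theory for principal blocks, and the fact that blocks covering the principal block of a normal subgroup include $B_0(G)$. These reduce the statement to an almost simple configuration. Concretely, we expect the following to suffice. For every nonabelian simple group $S$ of order divisible by $\ell$, and every $S\le A\le\Aut(S)$ with the relevant $\ell$-structure (for example $A/S$ an $\ell$-group, or $A$ containing a Sylow-related subgroup), there is a nonlinear character $\chi\in\Irr(B_0(S))$ of degree prime to $\ell$ with the following property: $\chi$ is $A$-invariant, or at least has a suitable orbit, and extends to a $\QQ_\ell$-valued character of $B_0(A)$ or of its stabilizer. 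We will take the precise condition as stated in \cite{lmnt} and assume their reduction theorem. The remaining task is to verify this condition for all finite simple groups.

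We split by the classification. Alternating groups, sporadic groups and the Tits group should be handled in \cite{lmnt} or be checkable with character tables. For groups of Lie type in characteristic $p=\ell$, we take $\chi$ to be the Steinberg character, which lies in a block of defect zero. That fails, so instead we take a semisimple character of $\ell'$-degree in the principal block; this case is presumably also treated in \cite{lmnt}. The main new case is groups of Lie type $S=G^F/Z$ in characteristic $p\ne\ell$. Here we choose $\chi$ to be a unipotent character of $\ell'$-degree in the principal block, which exists by $d$-Harish-Chandra theory with $d$ the order of $q$ modulo $\ell$. We pick $\chi$ so that it is rational and $\Aut(S)$-invariant, which holds for most unipotent characters by Lusztig's results. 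It then extends to $G^F\rtimes D$, where $D$ is generated by field and graph automorphisms, following Malle's extension results for unipotent characters.

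The crux, and the main obstacle, is to show that some extension $\tilde\chi$ lies in $B_0(A)$ and has values in $\QQ_\ell$. This needs control of the blocks of the disconnected group $\bG^F\langle\sigma\rangle$ covering the unipotent principal block, i.e.\ a $d$-Harish-Chandra theory for disconnected groups. The plan is to parametrize the extensions via $d$-cuspidal pairs $(\bL,\lambda)$ with $\sigma$-stable $\bL$, and to use a generalized Lusztig induction $R_{\bL\sig}^{\bG\sig}$. With this we identify which extension lies over the principal block of $\bL^F\langle\sigma\rangle$, namely the one coming from the trivial character, hence in $B_0$. We then compare its Galois conjugates: for $\tau\in\Gal(\overline{\QQ}/\QQ_\ell)$, the conjugate $\tilde\chi^\tau$ is an extension in $B_0$ again, so $\tilde\chi^\tau=\tilde\chi\mu$ with $\mu$ a linear character of $A/S$ of $\ell$-power order. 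Such a $\mu$ is trivial on values once the block condition forces uniqueness, which gives $\QQ_\ell$-rationality. We expect the genuinely delicate cases to be the few non-invariant or non-rational unipotent characters, such as those in $\tE_7$, $\tE_8$ and the triality cases $\tD_4$. There we would choose a different unipotent character or argue case by case.
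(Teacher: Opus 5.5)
Your outer structure matches the paper's. That covers the easy direction, and the reduction of the converse to condition $(\star)$ of Definition \ref{def:lmnt} via \cite[Thm.~E]{lmnt}, with alternating, sporadic and defining-characteristic groups taken from \cite{lmnt}. The gap is at the crux, Lie type in non-defining characteristic, where your rationality argument relies on a uniqueness that does not hold.

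The extensions of $\chi$ lying in $B_0(A)$ form a whole family $\tilde\chi\mu$. Here $\mu$ runs over all linear characters of $\ell$-power order and, on the $\ell'$-part, over all $\mu$ trivial on the Dade ramification group $A[b]$ (Corollary \ref{cor:linear action}). So $\tilde\chi^\tau\in\Irr(B_0(A))$ does not force $\tilde\chi^\tau=\tilde\chi$ unless $\ell\nmid|A/S|$ and $A[b]=A$, and this fails in general. Requiring $\mu$ to have $\ell$-power order is neither justified nor sufficient: such a $\mu$ preserves the block but still changes the values.

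The disconnected $d$-Harish-Chandra theory you need is Theorem \ref{thm:unipotent characters}. It requires $\hat\G^F[b]=\hat\G^F$ and $\hat\G/\G$ cyclic of $\{p,\ell\}'$-order, which is exactly the situation where the extension in a given block is already unique. It says nothing above the Dade group, which is where rationality is lost. Even Proposition \ref{prop:extensionblock} only controls fields of values over the $\ell$-adic numbers, not over $\QQ$.

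This is a real obstruction. In Example \ref{ex:steinberg} ($G=\SL_2(q)$, $\ell\mid q-1$), the rational generalized Steinberg character of the Dade group $G\langle F_{p_1}\rangle$ lies in $B_1\otimes\varepsilon$, not in $B_1$. Hence every extension of the rational character $\St_G$ to $G\langle F_{p_2}\rangle$ lying in the principal block differs from $\St_{G\langle F_{p_2}\rangle}$ by a linear character whose $2$-part has order $4$. Since $\St_{G\langle F_{p_2}\rangle}$ does not vanish at $F_{p_2}$, none of these extensions is $\QQ_\ell$-valued. This is why the paper warns that the expected unipotent characters cannot always be used. You also only assert the existence of a nontrivial, rational, invariant unipotent character of $\ell'$-degree in $B_0(S)$, and the handling of the $\ell$-part of $A/S$.

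The paper instead works mainly with semisimple characters $\tilde\chi\in\mathcal E(\wt G,\tilde s)$, with $\tilde s\in\Z(P^\ast)\cap P_0^\ast$ of order $\ell$ (Lemmas \ref{lem:semisimple}--\ref{D4}). It singles out an extension $\bar\chi$ to $\wt GE'$, where $E'=E_\chi\cap E_{\ell'}$, not by its block but as the unique extension occurring with multiplicity $\pm1$ in $D_{\wt GE'}(\hat\Gamma)$ (Lemma \ref{lem:mult1}). This characterization is Galois-compatible, giving $\QQ(\bar\chi)=\QQ_{\tilde s}\subseteq\QQ_\ell$. It also forces the principal block, using three ingredients:
\begin{itemize}
\item Hiss's indecomposability of $\Gamma_1$, lifted to $\hat\Gamma_1$ (Lemma \ref{lem:GGRblock});
\item the lifted duality of Theorem \ref{AC};
\item the fact that $\St_{\wt GE_{\ell'}}$ is a constituent of $\hat\Gamma_1$ (Lemma \ref{lem:ACGGRprincblock}).
\end{itemize}
The $\ell$-part is then added via canonical extensions \cite[Cor.~6.4]{Nav18} and the gluing Lemma \ref{lem:gluing}.

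A unipotent character, namely $\St$ extended rationally to $\St_{\wt GE}$, is used only for Suzuki--Ree groups and in the residual cases $\SL_{\ell^k}(\epsilon q)$ and $\tE_6(\epsilon q)$ with $\ell\mid q-\epsilon$, where no suitable semisimple character exists. Even there, membership in $B_0$ needs a specific Dade-group analysis, carried out for $\wt G$ rather than $G$. When $\epsilon=-1$ the Dade group has odd order. When $\epsilon=1$ one uses a $\sigma$-stable regular $\ell$-element with Lemmas \ref{lem:Steinberg} and \ref{lem:steinbergA}, or an inductive variant for $\tE_6$.
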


As mentioned in the last paragraph of the introduction of \cite{lmnt}, this also yields 
 \cite[Conj.~F]{lmnt} for principal blocks. Recall that a block $B$ is called nilpotent if all of its height-zero characters have the same degree, and that for the principal block, this is equivalent to $G$ having a normal $\ell$-complement. We also recall that an almost $\ell$-rational character is one whose values lie in $\QQ_{\ell m}$ for some $m$ relatively prime to $\ell$. Then Theorem \ref{thm:main} gives:

 \begin{cor}\label{thm:maincor}
The principal $\ell$-block $B_0(G)$ is nilpotent if and only if all its height-zero almost $\ell$-rational characters are linear. (That is, \cite[Conj.~F]{lmnt} holds for principal blocks.)
 \end{cor}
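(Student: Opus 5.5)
The statement to prove is Corollary~\ref{thm:maincor}. I would derive it from Theorem~\ref{thm:main}, using two standard facts about principal blocks: $B_0(G)$ is nilpotent exactly when $G$ has a normal $\ell$-complement (Frobenius' normal complement theorem together with Broué--Puig), and the trivial character is a height-zero, rational, linear character of $B_0(G)$. The one point needing care is the passage from ``$\QQ_\ell$-valued'' in Theorem~\ref{thm:main} to ``almost $\ell$-rational'' in the corollary.

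\emph{Forward direction.} Suppose $B_0(G)$ is nilpotent. Then $G=K\rtimes P$ with $K=O_{\ell'}(G)$ and $P$ a Sylow $\ell$-subgroup. The irreducible characters of $B_0(G)$ are exactly those with $K$ in their kernel, so $\Irr(B_0(G))$ is identified with $\Irr(G/K)\cong\Irr(P)$. The height-zero characters are then the characters of $P$ of $\ell'$-degree, and these are exactly the linear characters. So every height-zero character is linear, and in particular every almost $\ell$-rational one is.

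\emph{Reverse direction.} Suppose every height-zero almost $\ell$-rational $\chi\in\Irr(B_0(G))$ is linear. If $\ell\nmid|G|$, the defect group is trivial and the principal block is trivially nilpotent. Assume therefore that $\ell\mid|G|$, and take a nonlinear $\QQ_\ell$-valued $\chi\in\Irr(B_0(G))$. A $\QQ_\ell$-valued character is almost $\ell$-rational (take $m=1$). If $\ell\nmid\chi(1)$, then $\chi$ has height zero, since $\ell$-heights in $B_0(G)$ are measured against $|G|_\ell$. The hypothesis would then force $\chi$ to be linear, a contradiction. Hence $\ell\mid\chi(1)$.

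Now the two cases of $\ell$ differ. For $\ell$ odd, Theorem~\ref{thm:main} applies and gives a normal $\ell$-complement, so $B_0(G)$ is nilpotent. For $\ell=2$, note that $\QQ_2=\QQ$, so I would invoke the Navarro--Tiep principal-block result for $\ell=2$, which is already proved in \cite{lmnt}, in place of Theorem~\ref{thm:main}. The paper's remark in the introduction indicates that this combination is exactly \cite[Conj.~F]{lmnt}.

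The main obstacle is not this deduction. It lies inside Theorem~\ref{thm:main}, namely in verifying the \cite{lmnt} inductive condition on fields of values of extensions of principal-block characters for all finite simple groups. Taking that theorem as given, as I do here, the corollary is essentially formal.
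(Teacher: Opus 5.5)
Your argument is correct and is the same deduction the paper relies on. The corollary follows from Theorem~\ref{thm:main}, with \cite[Thm.~D]{lmnt} covering $\ell=2$, using that $B_0(G)$ is nilpotent exactly when $G$ has a normal $\ell$-complement and that a nonlinear $\QQ_\ell$-valued character of $\ell'$-degree in $B_0(G)$ would be a nonlinear height-zero almost $\ell$-rational character; the paper only cites the closing remark of the introduction of \cite{lmnt} for this step, and your write-up fills in those details correctly.
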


We remark that the statement of Theorem \ref{thm:main} also holds for $\ell=2$, and is \cite[Thm.~D]{lmnt}. Further, Theorem \ref{thm:main} is reduced to a problem on simple groups  in \cite[Thm.~E]{lmnt}. Therefore,
our proof of Theorem \ref{thm:main} requires a study of an extendibility condition on simple groups from \cite{lmnt} (see Definition \ref{def:lmnt} below), which is a principal block version of a condition from \cite{nt10}. That is, we consider the ``going up" problem mentioned in the introduction of \cite{lmnt}. In \cite{lmnt}, this condition is proved for nonabelian simple groups $S$ that are not groups of Lie type or that are groups of Lie type defined in characteristic $p=\ell$. For this reason, the current paper focuses on groups of Lie type in nondefining characteristic $p\neq\ell$. 

A key requirement in working with Definition \ref{def:lmnt} below is to be able to determine character extensions that simultaneously preserve desired rationality properties and lie in a prescribed block (here the principal block), which is a deceptively difficult problem. 
While the rationality of characters in the principal block of a group of Lie type $G$ and their distribution into $\ell$-blocks is somewhat well understood, the same cannot be said about the almost (quasi-)simple group $\hat{G}$ having $G$ as a normal subgroup. Here we develop new tools (and adapt established tools) to deal with this question.  
We remark that although (as of the writing of this paper) the Alperin--McKay--Navarro conjecture \cite[Conj.~B]{Na04} has not yet been reduced to conditions on simple groups, this question will also be crucial for working with any future such conditions.

We first study the semisimple characters lying in the principal $\ell$-block of $G$. We show that these characters have an extension that again lie in the principal block. A key step toward this is the following, strengthening the generalized Alvis--Curtis duality introduced by Digne--Michel in \cite{DM94} (we refer the reader to Section \ref{sec:AC} for the relevant notation):

\begin{theo}\label{thm:AC}
    Let $E$ be a group of automorphism of the BN-pair $(G,B,N,S)$. Then the complex $D_{(G)}$ of $G$-$G$-bimodules extends to a complex of $(G \times G^{\mathrm{opp}}) \Delta E$-modules. In particular, if $\ell \nmid |E|$, then the induced complex $\Ind_{(G \times G^{\mathrm{opp}}) \Delta E}^{G E \times (GE)^{\mathrm{opp}}}(D_{(G)})$ induces a splendid derived equivalence between $\Lambda GE$ and itself.
\end{theo}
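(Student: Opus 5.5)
Recall the Digne--Michel construction (in the form used by Okuyama/Bonnafé--Rouquier type arguments). For each subset $I\subseteq S$ let $P_I=U_IL_I$ be the standard parabolic subgroup. The complex $D_{(G)}$ has in degree $-|I|$ (up to shift) the bimodule
\[
\bigoplus_{I\subseteq S,\ |I|=i} \Lambda G e_{U_I}\otimes_{\Lambda L_I} e_{U_I}\Lambda G \;\cong\; \bigoplus_{|I|=i}\Lambda[G/U_I]\otimes_{\Lambda L_I}\Lambda[U_I\backslash G],
\]
where $e_{U_I}$ is the idempotent of $U_I$. The differentials are alternating sums, with signs given by an orientation $\varepsilon(I,J)$ for $J=I\cup\{s\}$, of the natural maps induced by $e_{U_J}$-inclusions.

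The plan is to write down an explicit action of $\Delta E=\{(e,e^{-1})\}$ on each term and check it commutes with the differentials. An element $e\in E$ stabilises $B$ and $N$, so it permutes $S$, and hence permutes the standard parabolics: $e(P_I)=P_{e(I)}$, $e(U_I)=U_{e(I)}$ and $e(L_I)=L_{e(I)}$. We therefore define
\[
(g\otimes h)\cdot(e,e^{-1}) := \epsilon_e(I)\, e^{-1}(g)\otimes e^{-1}(h)
\]
from the $I$-summand to the $e^{-1}(I)$-summand. Together with the $G\times G^{\mathrm{opp}}$ action this gives a $(G\times G^{\mathrm{opp}})\Delta E$-module structure, since conjugation by $e$ intertwines the bimodule structures correctly.

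The main obstacle is the sign. The naive permutation action need not commute with the differentials, because $e$ may not preserve the orientation signs $\varepsilon(I,J)$. I would fix this by realising the signs canonically. Take a total order on $S$ and regard the degree-$i$ term as $\bigoplus_I M_I\otimes \Lambda^{|I|}(\Lambda^I)$, i.e. tensor each summand with the top exterior power of the free module on $I$. The differential is then wedging with the new simple reflection. In this form $E$ acts naturally on $\Lambda^{\mathrm{top}}(\Lambda^I)$ by permuting basis vectors, and $\epsilon_e(I)$ is exactly the sign of the induced permutation. Commutation with the differential becomes functoriality of the exterior algebra (Koszul-type complex). I would verify that $e\wedge$ intertwines correctly, including the compatibility of the natural map $M_I\to M_J$ with $e$, which holds because $e(U_J)=U_{e(J)}$. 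The $\ell$-adic coefficients cause no issue, since all maps are defined over $\Lambda$.

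For the second statement, assume $\ell\nmid |E|$. Digne--Michel/Okuyama show that $D_{(G)}$ is a splendid (Rickard) self-equivalence of $\Lambda G$: its terms are $p$-permutation bimodules with vertices in $\Delta G$, and $D\otimes_{\Lambda G}D^{*}\simeq\Lambda G$ in the homotopy category. I would then invoke Marcus's theorem on extending derived equivalences along group-graded algebras. A complex of $(G\times G^{\mathrm{opp}})\Delta E$-modules inducing a Rickard equivalence for $\Lambda G$ induces up to a complex $X$ for $\Lambda GE$ with $X\otimes X^{*}\cong \Ind(D\otimes D^*)\simeq \Ind_{\Delta(GE)}\Lambda \cong\Lambda GE$. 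These isomorphisms use the Mackey formula, $GE\times(GE)^{\mathrm{opp}}=(G\times G^{\mathrm{opp}})\Delta E\cdot (GE\times 1)$ and $\ell'$-index. The terms remain $\ell$-permutation modules, since induction preserves this. Their vertices remain twisted diagonal because $\ell\nmid|E|$, so vertices lie in $G\times G$, and they are relatively $\Delta$-projective by the original case. Hence the equivalence is splendid.
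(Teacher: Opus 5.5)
Your proposal is correct and follows the paper's proof: twist the natural permutation action of $E$ on the summands indexed by $I\subseteq S$ by a sign $\epsilon_e(I)$ so that the differentials become equivariant, then apply Marcus's theorem. The only difference is bookkeeping. The paper defines $\varepsilon(I)$ recursively and checks by hand both well-definedness (a parity argument) and the cocycle identity $\varepsilon_\tau(\sigma(I))\varepsilon_\sigma(I)=\varepsilon_{\tau\sigma}(I)$. Your tensoring with the top exterior power of $\Lambda^I$ produces the same sign, since the paper's $\varepsilon(I)$ is exactly the sign of the permutation $I\to\sigma(I)$ relative to the chosen order, and it gives both checks for free by functoriality.
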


However, in a small number of cases it can happen that  the trivial character is the unique semisimple character satisfying the conditions of Definition \ref{def:lmnt}.
In these remaining cases, we consider instead unipotent characters in the block.
The fields of values of characters of $\hat{G}$ covering unipotent characters of $G$ were determined recently in \cite{RSST}, but it is not clear in which blocks these covering characters lie. We study this question here, and using this are able to show that in the remaining cases the condition $(\star)$ of Definition \ref{def:lmnt} for $(\QQ_\ell,\ell)$ is satisfied. (It is also worth noting that in some cases the expected unipotent characters cannot be used, see Example \ref{ex:steinberg}, and in those cases the semisimple characters as mentioned above are necessary.)

In fact, this leads us to obtain something stronger and to the second main goal of the paper. From \cite{CE94}, we know that if $\ell$ is an odd prime satisfying some additional reasonable assumptions, then unipotent characters of a finite reductive group $G$ are in the same $\ell$-block $b$ if and only if they are in the same $d$-Harish-Chandra series, where $d$ is the order of $q$ modulo $\ell$. We show that a similar statement can be made in the context of disconnected reductive groups (we refer the reader to Section \ref{sec:disconblocks} for the relevant notation):

\begin{theo}\label{thm:unipotent characters}
Assume that $\ell$ is prime satisfying Definition \ref{def:goodprime} for $\bG$, $[\bG,\bG]$ is simply connected, and $\hat{\G}/\G$ is cyclic of $\{p,\ell \}'$-order. If $(\Levi,\la)$ is a $d$-cuspidal unipotent pair of $(\G,F)$ defining an $\ell$-block $b=b_G(\Levi,\la)$ of $\G^F$ such that $\hat{\G}^F[b]=\hat{\G}^F$, then there exists a Levi subgroup $\hat{\Levi}$ of $\hat{\G}$ such that $\C_{\hat{\G}^F}(\Z(\Levi)_\ell^F)=\hat{\Levi}^F$. Moreover, there exists a bijection
$$\Irr(\hat{\Levi}^F \mid \la) \to \mathrm{Bl}(\hat{\G}^F \mid b)$$ given by $$ \hat{\la} \mapsto b_{\hat{\G}^F}(\hat{\Levi},\hat{\lambda})$$
such that
\begin{enumerate}
    \item $\mathcal{E}(\hat{\G}^F,1) \cap \Irr(\hat{\G}^F,  b_{\hat{\G}^F}(\hat{\Levi},\hat{\lambda}))= \{ \hat{\chi} \in \Irr(\hat{G}) \mid \langle R_{\hat{\Levi} \subset \hat{\Para}}^{\hat{\G}}(\hat{\la}), \hat{\chi} \rangle \neq 0 \}$ and
\item$(1,b_{\hat{\G}^F}(\hat{\Levi},\hat{\lambda})) \leq (\Z(\Levi)_\ell^F, b_{\hat{\Levi}^F}(\hat{\la})))$.
\end{enumerate}
\end{theo}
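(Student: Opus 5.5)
The plan is to reduce everything to the connected case (Cabanes--Enguehard) and then use Clifford theory, exploiting that $\hat{\G}^F/\G^F$ is cyclic of $\ell'$-order.

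\textbf{Step 1: the Levi subgroup $\hat{\Levi}$.} Since $\hat{\G}^F[b]=\hat{\G}^F$, every element of $\hat{\G}^F$ stabilises $b$. By uniqueness of the $d$-cuspidal pair up to $\G^F$-conjugacy, a Frattini argument gives
$$\hat{\G}^F=\G^F\,\N_{\hat{\G}^F}(\Levi,\la).$$
Set $\hat{\Levi}:=\C_{\hat{\G}}(\Z^\circ(\Levi)_{\Phi_d})\cap \N_{\hat{\G}}(\Levi,\la)$, understood as a disconnected Levi subgroup with identity component $\Levi$. Under the hypothesis on $\ell$ (Definition \ref{def:goodprime}), Cabanes--Enguehard give
$$\Levi=\C^\circ_{\G}(\Z(\Levi)^F_\ell), \qquad \C_{\G^F}(\Z(\Levi)_\ell^F)=\Levi^F,$$
where the second equality uses that $[\bG,\bG]$ is simply connected, so centralisers of $\ell$-elements are connected for good $\ell$. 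Hence $\C_{\hat{\G}^F}(\Z(\Levi)_\ell^F)$ is an extension of $\Levi^F$ by the elements of $\N_{\hat{\G}^F}(\Levi,\la)$ centralising $\Z(\Levi)^F_\ell$. I would show that every coset of $\G^F$ in $\hat{\G}^F$ contains such an element, using that $\hat{\G}/\G$ is an $\ell'$-group acting coprimely, so that the $\ell$-group $\Z(\Levi)_\ell^F$ can be centralised after adjusting by $\Levi^F$. This yields $\hat{\Levi}^F$ with $\hat{\Levi}^F/\Levi^F\cong\hat{\G}^F/\G^F$.

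\textbf{Step 2: the bijection.} Since $\hat{\G}^F/\G^F$ is an $\ell'$-group, blocks of $\hat{\G}^F$ covering $b$ correspond to characters of $\hat{\G}^F/\G^F$ modulo the relevant inertial action, and $b$ extends. Likewise $\la$ is $\hat{\Levi}^F$-stable and extends, since the quotient is cyclic. So $\Irr(\hat{\Levi}^F\mid\la)$ is a $\Lin(\hat{\Levi}^F/\Levi^F)$-torsor, and $\mathrm{Bl}(\hat{\G}^F\mid b)$ is a torsor for the same group once we know all blocks over $b$ are distinct under tensoring with linear characters. Define $b_{\hat{\G}^F}(\hat{\Levi},\hat\la)$ as the block containing the constituents of $R_{\hat{\Levi}\subset\hat{\Para}}^{\hat{\G}}(\hat\la)$ (Digne--Michel disconnected Lusztig induction). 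Well-definedness and bijectivity would follow from:
\begin{itemize}
\item equivariance, i.e.\ $R(\hat\la\otimes\theta)=R(\hat\la)\otimes\theta$ for linear $\theta$ of $\hat{\G}^F/\G^F$;
\item the relation $\Res R^{\hat{\G}}_{\hat{\Levi}}(\hat\la)=R^{\G}_{\Levi}(\la)$, which follows from the Mackey formula for disconnected groups.
\end{itemize}

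\textbf{Step 3: item (1) and the main obstacle.} The hard part is to show that all unipotent constituents of $R(\hat\la)$ lie in a single block, and that every unipotent character of that block occurs. My approach is to use the generalized Alvis--Curtis duality and Theorem \ref{thm:AC}, or rather Brauer pair theory: compute, via the Broué--Michel formula and the character formula for $R$ at elements $zs$ with $z\in\Z(\Levi)^F_\ell$, that the Brauer map sends $R(\hat\la)$ to something nonzero only on the block $b_{\hat{\Levi}^F}(\hat\la)$ of $\C_{\hat{\G}^F}(\Z(\Levi)_\ell^F)=\hat{\Levi}^F$. This would give item (2), namely the Brauer pair inclusion $(1,b_{\hat{\G}^F}(\hat{\Levi},\hat\la))\le(\Z(\Levi)^F_\ell,b_{\hat{\Levi}^F}(\hat\la))$. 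Since the blocks of $\hat{\Levi}^F$ over $\la$ are distinguished by $\hat\la$, the constituents of different $R(\hat\la)$ then lie in different blocks. Combined with the torsor counting from Step 2, this gives the equality of sets in (1).
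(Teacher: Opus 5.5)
Your Step 1 has a genuine gap: the centralizing claim is argued incorrectly and may fail, and the parabolic needed to define $R_{\hat{\Levi}\subset\hat{\Para}}^{\hat{\G}}$ is never produced.

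\textbf{Step 1.} Adjusting by $\Levi^F$ cannot change how an element acts on $\Z(\Levi)^F_\ell$, since $\Levi^F$ centralizes it. Coprimality of $\hat{\G}^F/\G^F$ also does not force an $\ell'$-element to centralize an $\ell$-group. Your argument only uses that $\hat{\G}^F$ stabilizes $b$ (through the Frattini argument), and that is not enough. In Example \ref{ex:steinberg} the principal block of $\SL_2(q)$ is stable under $F_{p_2}$. Yet every element of the coset $\G^F F_{p_2}$ normalizing $\T_0$ acts on $(T_0)_\ell$ as $t\mapsto t^{\pm p_2}$ with $d_\ell(p_2)=4$, so none of them centralizes it.

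What you need is the full strength of $\hat{\G}^F[b]=\hat{\G}^F$. By Theorem \ref{thm:dade} (Murai), $\hat{\G}^F=\G^F\C_{\hat{\G}^F}(D)$ for a defect group $D$ of $b$. Moreover $D$ may be taken to contain $\Z(\Levi)^F_\ell$, because $b_{\Levi^F}(\la)^{\G^F}=b$ \cite[Lem.~4.13]{Nav98}. Even then you must find a generator $\sigma$ of $p'$-order (hence quasi-semisimple) that stabilizes a parabolic subgroup $\Para$ having $\Levi$ as Levi complement. Without this, your $\hat{\Levi}$ is not a Levi subgroup in the sense of Digne--Michel and $R_{\hat{\Levi}\subset\hat{\Para}}^{\hat{\G}}$ is undefined. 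The paper supplies this via Lemma \ref{lem:sigma fixed}, applied to $S=\Z^\circ(\Levi)^F_\ell\leq(\T^\sigma)^\circ$. Finally, the fact that all of $\C_{\hat{\G}^F}(\Z(\Levi)^F_\ell)$ fixes $\la$ must be proved: the paper gets $\hat{\Levi}^F[b_{\Levi^F}(\la)]=\hat{\Levi}^F$ via the Brauer homomorphism (Lemma \ref{lem:Levi existence}). Intersecting with $\N(\Levi,\la)$ by definition risks losing the required equality with the full centralizer.

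\textbf{Step 3.} This is the heart of the proof, and as stated it does not work. The Brauer map of the virtual character $R_{\hat{\Levi}\subset\hat{\Para}}^{\hat{\G}}(\hat\la)$ says nothing about its individual irreducible constituents, which may cancel or lie in other blocks. Also, Brauer's second main theorem handles one $\ell$-element at a time, while $\Z(\Levi)^F_\ell$ is in general not cyclic.

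The paper works constituent by constituent, by induction on $\mathrm{rk}(\G)-\mathrm{rk}(\Levi)$ (Proposition \ref{prop:decomp RLG}):
\begin{enumerate}
\item For a constituent $\hat\chi$, pick $x\in\Z(\Levi)^F_\ell\setminus\Z(\G)$.
\item ${}^\ast R(\hat\chi)\in\mathbb{Z}\mathcal{E}(\hat{\Levi}^F,1)$ (Lemma \ref{lem:preserveunip}) has a nonzero coefficient at some unipotent $\hat\la'$ in $b_{\hat{\Levi}^F}(\hat\la)$.
\item The $d^1$-images of the unipotent characters of this block are linearly independent. This is the basic-set property, using that the block is isomorphic to $b_{\Levi^F}(\la)$ by restriction. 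Hence the block projection of $d^1({}^\ast R(\hat\chi))$ is nonzero.
\item The disconnected commutation $d^x\circ{}^\ast R={}^\ast R\circ d^x$ (Lemma \ref{lem:dx neu}) turns this into $b_{\hat C}\,d^x(\hat\chi)\neq 0$. Here $b_{\hat C}$ is the block of $\hat C=\C_{\hat{\G}^F}(x)$ supplied by the induction hypothesis.
\item Brauer's second main theorem and transitivity of Brauer pairs then give (2).
\end{enumerate}
Uniqueness of the block attached to a Brauer pair then gives well-definedness.

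With (2) and $\Lin$-equivariance established, your torsor count does yield bijectivity. The reverse inclusion in (1) follows from \cite{CE94} for $\G^F$, together with the fact that distinct extensions of a given $\chi$ lie in distinct blocks (Theorem \ref{thm:dade}).
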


We anticipate that Theorem \ref{thm:unipotent characters} could be of independent interest. In particular, it may have implications for the inductive Alperin--McKay conditions or eventual similar conditions for the Alperin--McKay--Navarro conjecture. Recall that the Alperin--McKay conjecture posits a bijection between the height-zero characters in an $\ell$-block $b$ of $G$ and those in its Brauer-corresponding block of $\N_G(D)$, where $D$ is a defect group for $b$. The
Alperin--McKay--Navarro conjecture further posits that such a bijection may be chosen to be  $\galh$-equivariant, where $\galh\leq \mathrm{Gal}(\QQ^{\mathrm{ab}}/\QQ)=:\gal$ is the subgroup of those $\sigma\in \gal$ satisfying that there is some $e\geq 0$ such that $\sigma$ acts as the $\ell^e$-power map on all $\ell'$-roots of unity. 

We provide a first look into applications of Theorem \ref{thm:unipotent characters} to the Alperin--McKay--Navarro conjecture in Section \ref{sec:AMN}, where we prove the following:

\begin{theo}\label{thm:AMN}
Let $\G$ be simple, simply connected of classical type but not of type $\tD_4$, and let $\ell$ be an odd prime not dividing $|\Z(G)|$, where $G=\bG^F$ with $F$ a Frobenius map defining $G$ over $\mathbb{F}_q$. Let $(\Levi, \la)$ be a unipotent $d$-cuspidal pair for $G$, where $d$ is the order of $q$ modulo $\ell$. Then there exists a block-preserving, 
$\mathcal{H}$-equivariant bijection
$$\Irr_{0}(GE \mid \mathcal{E}(G,(\Levi,\la))) \to \Irr_{0}(N_{GE}(\Levi) \mid \la),$$
where $E:=E(\G^F)$ is the group of field and graph automorphisms as in \cite[Sec.~2.C]{CS25}.
\end{theo}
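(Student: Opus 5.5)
We plan to construct the bijection in three stages: first for the group $G\rtimes E$ and the normaliser side separately, then match them via the $d$-Harish-Chandra parametrisation, and finally check $\mathcal{H}$-equivariance and block preservation.

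\emph{Known bijection for $G$.} On the $G$-level, by the results of Cabanes--Enguehard \cite{CE94} the characters of $\mathcal{E}(G,(\Levi,\la))$ of height zero in the corresponding $\ell$-block $b=b_G(\Levi,\la)$ are parametrised, via $d$-Harish-Chandra theory, by $\Irr_0(\N_G(\Levi,\la)\mid \la)$. This uses the bijection between characters of the relative Weyl group $W_G(\Levi,\la)$ and constituents of $R_{\Levi}^{\G}(\la)$, extended to all of the block by Jordan decomposition with $\ell$-elements. For classical types not $\tD_4$ and $\ell\nmid|\Z(G)|$, the Galois behaviour of this parametrisation is known from \cite{RSST} and \cite{CS25}, where an $\mathcal{H}\times E$-equivariant version is established.

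\emph{Lifting to $GE$ and $\N_{GE}(\Levi)$.} We apply Clifford theory over $G\lhd GE$. For $\chi\in\Irr_0(b)$ we need three things: (i) the stabiliser $E_\chi$ matches the stabiliser of the corresponding character $\psi$ of $\N_G(\Levi)$; (ii) both extend to their inertia groups; (iii) extensions can be chosen compatibly with $\mathcal{H}$. The excluded cases $\tD_4$ and $\ell\mid|\Z(G)|$ are exactly where $E$ may act with non-cyclic or non-$\ell'$ stabilisers. With these removed, $E_\chi/\text{(its }\ell\text{-part)}$ is well behaved, and by \cite{RSST} the unipotent characters extend with an explicitly controlled field of values.

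\emph{Heights and blocks.} Height-zero characters of $GE$ over $b$ are those covering height-zero characters of $G$ with $\ell'$-index extensions. We parametrise the extensions of $\chi$ to $GE_\chi$ by $\Irr(E_\chi/\dots)$ times a fixed extension, and do the same on the local side. This gives a natural bijection once the base extensions $\tilde\chi$ and $\tilde\psi$ are chosen so that $\tilde\chi^\sigma=\tilde\chi\mu_\sigma$ and $\tilde\psi^\sigma=\tilde\psi\mu_\sigma$ with the same linear character $\mu_\sigma$ for $\sigma\in\mathcal{H}$. Inducing to $GE$ and $\N_{GE}(\Levi)$ yields the bijection, which is $\mathcal{H}$-equivariant by construction.

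\emph{Block preservation.} Here we use Theorem \ref{thm:unipotent characters} (when $E_\chi$ is cyclic; otherwise we reduce stepwise through a cyclic normal series, using that field automorphisms form a cyclic group and graph automorphisms have order $2$ in classical types). It identifies the blocks of $\hat{G}=GE_\chi$ covering $b$ with $\Irr(\hat{\Levi}^F\mid\la)$ via $\hat\la\mapsto b_{\hat G}(\hat\Levi,\hat\la)$. Part (2) of that theorem gives a Brauer pair inclusion, so the block $b_{\hat G}(\hat\Levi,\hat\la)$ is Brauer-corresponding to the block of $\N_{\hat G}(\Levi)$ over $\hat\la$. Hence choosing $\tilde\chi$ in the block labelled by $\hat\la$ and $\tilde\psi$ covering $\hat\la$ makes the bijection block preserving.

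\emph{Main obstacle.} The hardest step is the simultaneous choice of the extensions: they must be Galois-compatible, via the same $\mu_\sigma$, and also lie in matching blocks. For this we would first take $\hat\la$ to be $\mathcal{H}$-compatible extensions of $\la$, which should be possible since $d$-cuspidal unipotent characters of Levi subgroups of classical groups are rational or nearly so. We would then transport this compatibility through part (1) of Theorem \ref{thm:unipotent characters} using the $\mathcal{H}$-equivariance of $R_{\hat\Levi\subset\hat\Para}^{\hat\G}$.
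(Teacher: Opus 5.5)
There is a genuine gap in your block-preservation step. You apply Theorem \ref{thm:unipotent characters} to $\hat G=GE_\chi$, but that theorem needs $\hat G/G$ cyclic of $\{p,\ell\}'$-order \emph{and} $\hat G[b]=\hat G$. Neither holds in general.

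Unipotent characters of classical groups are essentially always $E$-invariant, so $E_\chi$ is typically all of $E$. That group may contain field automorphisms of order divisible by $\ell$ or by $p$, and it need not be cyclic. Excluding $\tD_4$ and $\ell\mid|\Z(G)|$ does not change this; those exclusions make $E$ abelian, make Lemma \ref{lem:graph cyclo} available, and make Definition \ref{def:goodprime} hold.

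More seriously, the Dade group condition really fails. By Lemma \ref{lem:square is twist}, suppose the element $xF_{p_0}$ ($x\in G$) centralizing $\Z(\Levi)_\ell^F$ corresponds to $vF_{p_0}$ with $v\notin\langle w\rangle$. Then $F_{p_0}$ already generates the Sylow $2$-subgroup of $G\langle F_p\rangle[b]$. Concretely, Example \ref{ex:steinberg} gives $G\langle F_{p_2}\rangle[b]=G\langle F_{p_1}\rangle\neq G\langle F_{p_2}\rangle$ for $\SL_2(q)$.

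In that case $|\mathrm{Bl}(\hat G\mid b)|=|\hat G[b]:G|<|\hat G:G|$. So the blocks over $b$ cannot be labelled by extensions $\hat\la$ as in Theorem \ref{thm:unipotent characters}. The extension of $\chi$ singled out by a block is then no longer tied to your Galois-compatible $\hat\la$; the generalized Steinberg character leaving the principal block is exactly this phenomenon. Choosing $\tilde\chi$ in the right block \emph{and} with the prescribed $\mu_\sigma$ is therefore what has to be proved. Your heuristic that $\la$ is ``rational or nearly so'' does not address it, because the problem lies in the fields of values of the extensions, which involve roots of unity of order dividing $|\hat G:G|$.

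The paper gets around this as follows.
\begin{enumerate}
\item It reduces from $GE$ to $G\langle F_p\rangle$ with Lemma \ref{lem:gluing blocks}. Either $GE[b]=G\langle F_p\rangle[b]$, or $GE[b]=G\langle F_p\rangle[b]\,G\langle e\rangle[b]$ for a graph involution $e$ whose two extensions have the same field of values.
\item It splits $G\langle F_p\rangle=\hat G_2\hat G_{2'}$ into its $2$- and $2'$-parts over $G$, and glues again with Lemma \ref{lem:gluing blocks}.
\item The odd part contains the $\ell$-part and any odd $p$-part of the field automorphisms, and needs no Theorem \ref{thm:unipotent characters}. Unipotent characters are rational, so by Lemmas \ref{lem:real} and \ref{lem:real blocks} the unique real extension lies in the unique real block over $b$. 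On the local side, block induction is compatible with complex conjugation.
\item For $\hat G_2$ with $d$ even, all relevant extensions are $\mathcal{H}$-invariant, since the $\ell$-adic numbers contain the $f_2$-th roots of unity.
\item For $\hat G_2$ with $d$ odd, Proposition \ref{prop:extensionblock} is used. Disjointness of the generalized $d$-Harish-Chandra series on all of $\hat G_2$ comes from an \emph{auxiliary} Zsigmondy prime $\ell_0$ with $d_{\ell_0}(p_0)=d$. For $\ell_0$, Lemmas \ref{lem:linear} and \ref{lem:linear twisted} put the whole $2$-part into the Dade group (Corollary \ref{cor:disjoint HC}, with Remark \ref{rem:char2} when $p=2$).
\item Theorem \ref{thm:unipotent characters} for $\ell$ itself is applied only on a subgroup $G\langle F_{p'}\rangle$ of the Dade group (notation of Proposition \ref{prop:extensionblock}). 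Passing up to $\hat G_2$ replaces $\hat\la$ by $\hat\la\varepsilon$ with $\varepsilon^{2f'}=1$. This is harmless because the $\ell$-adic numbers contain primitive $2f'$-th roots of unity.
\end{enumerate}
Some such detour is needed. As written, your construction only handles automorphisms that already lie in $GE[b]$ and have $\{p,\ell\}'$-order.
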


Theorem \ref{thm:AMN} is a blockwise analogue of \cite[Thm.~8.8]{RSST}, which we anticipate leading to an analogue of \cite[Thm.~D]{RSST} proving the extension results needed for unipotent blocks in possible future inductive conditions for the Alperin--McKay--Navarro conjecture.

The rest of the paper is structured as follows. In Section \ref{sec:prelim}, we introduce the conditions from \cite{lmnt} and discuss some preliminaries on principal blocks and extensions. In Section \ref{sec:AC}, we prove Theorem \ref{thm:AC} and discuss the generalized Alvis--Curtis duality, which we use in Section \ref{sec:regular} to study the regular and semisimple characters in principal blocks and to introduce the generalized Steinberg character, which we study further in Section \ref{sec:steinberg}. The proof of Theorem \ref{thm:main} is in Section \ref{sec:mainproof}. In Section \ref{sec:disconblocks}, we study the blocks of disconnected reductive groups and prove Theorem \ref{thm:unipotent characters}. Finally, in Section \ref{sec:AMN}, we study implications for the Alperin--McKay--Navarro conjecture and prove Theorem \ref{thm:AMN}.

\section{Preliminaries}\label{sec:prelim}

Most of our notation for blocks and characters is standard. 
We refer the reader to \cite{isaacs, Nav18} for relevant background on ordinary character theory, and to \cite{Nav98} for relevant notation and background on block theory.
For a group $X$ acting on the characters $\Irr(G)$ of a finite group $G$, we will write $X_\theta$ for the stabilizer in $X$ of $\theta\in\Irr(G)$. If $X\leq G$, we will use $\Res_X^G(\theta)$ and $\Ind_X^G(\chi)$, respectively, for the restriction of a character $\theta\in\Irr(G)$ to $X$ and the induced character of $\chi\in\Irr(X)$ to $G$.

\subsection{The conditions of Lyons--Mart{\'i}nez--Navarro--Tiep}\label{sec:conditions}

Here we recall the reduction statement from \cite{lmnt} for Theorem \ref{thm:main}. It requires the following condition on simple groups.

\begin{definition}[Definition 3.1 of \cite{lmnt}]\label{def:lmnt}
Let $\QQ\subseteq\mathbb{K}$ be a field. Assume that $S$ is a nonabelian simple group of order divisible by $\ell$. Then we say $S$ satisfies condition $(\star)$ for $(\mathbb{K},\ell)$ if there there is an $\Aut(S)$-orbit $\mathcal{X}\subseteq\Irr(B_0(S))$ such that
\begin{enumerate}
\item $|\mathcal{X}|$ is not divisible by $\ell$,
\item the characters in $\mathcal{X}$ are nonlinear, of degree not divisible by $\ell$ and $\mathbb{K}$-valued,
\item every $\theta\in\mathcal{X}$ extends to a $\mathbb{K}$-valued character $\hat\theta$ in $B_0(\Aut(S)_\theta)$.
\end{enumerate}
\end{definition}

With this, the following reduces Theorem \ref{thm:main} to a problem on simple groups.

\begin{theorem}[Theorem E of \cite{lmnt}]\label{thm:lmnt}
Let $\ell$ be any odd prime number and assume that every finite nonabelian simple group of order divisible by $\ell$ satisfies condition $(\star)$ of Definition \ref{def:lmnt} for $(\QQ_\ell,\ell)$. Then given a finite group $G$, $\ell$ divides $\chi(1)$ for all nonlinear $\QQ_\ell$-valued $\chi\in\Irr(B_0(G))$ if and only if $G$ has a normal $\ell$-complement.
\end{theorem}

We establish the remaining case of condition $(\star)$ of Definition \ref{def:lmnt} (namely, that of groups of Lie type defined in characteristic $p\neq \ell$) in Theorem \ref{thm:mainlie} below.

\subsection{Initial observations on extensions}\label{sec:preliminaries}

Throughout, we will let $\gal:=\mathrm{Gal}(\QQ^{\mathrm{ab}}/\QQ)$ as in the introduction. We will often use the following statement on gluing extensions. (See also Lemma \ref{lem:gluing blocks} below for a version for blocks.)

\begin{lemma}\label{lem:gluing}
Let $Y$ be a finite group and assume $X_1 \lhd Y$ and $X_2 \leq Y$ are such that  $Y=X_1 X_2$. Assume that $\vartheta_1 \in \Irr(X_1)$ is $Y$-invariant and $\vartheta_2 \in \Irr(X_2)$ is such that $\Res_{X_1 \cap X_2}^{X_2}(\vartheta_2)$ is irreducible and coincides with $\Res_{X_1 \cap X_2}^{X_1}(\vartheta_1)$. Then there exists a unique character $\vartheta \in \Irr(Y)$ which extends both $\vartheta_1$ and $\vartheta_2$. In particular, $\gal_{\vartheta_1}\cap\gal_{\vartheta_2}\leq\gal_\vartheta$.
\end{lemma}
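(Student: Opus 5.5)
This is the standard gluing lemma (cf. Isaacs, Lemma 6.8 / Navarro). The plan is to define $\vartheta$ explicitly as a function on $Y$, check that it is a character extending both $\vartheta_1$ and $\vartheta_2$, and then get uniqueness and the Galois statement from the same description.

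\emph{Construction.} Set $Z:=X_1\cap X_2$. Since $\vartheta_1$ is $Y$-invariant and its restriction to $Z$ equals the irreducible character $\Res^{X_2}_{Z}(\vartheta_2)$, Gallagher/Clifford theory applies. The cleanest route uses projective representations:
\begin{itemize}
\item Choose a representation $\mathcal{D}_1$ of $X_1$ affording $\vartheta_1$.
\item By invariance, $\mathcal{D}_1$ extends to a projective representation $\mathcal{P}$ of $Y$ that is a genuine representation on $X_1$.
\item On $X_2$, the restriction $\Res_{Z}\mathcal{D}_1$ is irreducible and equivalent to $\Res_Z\mathcal{D}_2$ for a representation $\mathcal{D}_2$ affording $\vartheta_2$. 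Conjugating $\mathcal{D}_2$, we may assume the two agree on $Z$.
\item For $x\in X_2$, both $\mathcal{P}(x)$ and $\mathcal{D}_2(x)$ intertwine $\mathcal{D}_1|_Z$ with its $x$-conjugate. By Schur's lemma they differ by a scalar, so we may normalize $\mathcal{P}(x):=\mathcal{D}_2(x)$ for $x\in X_2$.
\item Define $\mathcal{D}(x_1x_2):=\mathcal{D}_1(x_1)\mathcal{D}_2(x_2)$. This is well defined because the two representations agree on $Z$.
\end{itemize}

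\emph{Homomorphism check.} We must show $\mathcal{D}(x_2x_1)=\mathcal{D}_2(x_2)\mathcal{D}_1(x_1)$, i.e.
\[
\mathcal{D}_2(x_2)\mathcal{D}_1(x_1)\mathcal{D}_2(x_2)^{-1}=\mathcal{D}_1(x_2x_1x_2^{-1}).
\]
Fix $x_2$. Both sides, viewed as functions of $x_1$, are representations of $X_1$ equivalent to $\mathcal{D}_1$ (using invariance). The intertwiner between them can be taken to be $\mathcal{P}(x_2)=\mathcal{D}_2(x_2)$, and it is determined up to a scalar. That scalar is forced to be $1$ by comparing the two sides on $Z$, where they agree because $\mathcal{D}_2$ is a homomorphism on $X_2$. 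I expect this bookkeeping to be the only delicate point. Once it is done, $\mathcal{D}$ is a representation of $Y$ whose character $\vartheta$ extends both $\vartheta_1$ and $\vartheta_2$. It is irreducible because its restriction to $X_1$ is irreducible.

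\emph{Uniqueness.} By Gallagher, every extension of $\vartheta_1$ has the form $\vartheta\lambda$ with $\lambda\in\Irr(Y/X_1)$ linear. Since $Y/X_1\cong X_2/Z$, we can view $\lambda$ as a linear character of $X_2/Z$. If $\vartheta\lambda$ also extends $\vartheta_2$, then restricting to $X_2$ gives $\vartheta_2\lambda=\vartheta_2$. We cannot simply cancel here, because $\vartheta_2$ may vanish at some points.

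To conclude, use the explicit form instead. An extension $\vartheta'$ of both characters is afforded by a representation $\mathcal{D}'$ that restricts to $\mathcal{D}_1$ on $X_1$ (after conjugation). On $X_2$, $\mathcal{D}'$ is equivalent to $\mathcal{D}_2$ via an intertwiner commuting with $\mathcal{D}_1(Z)$, hence a scalar by irreducibility on $Z$. Therefore $\mathcal{D}'|_{X_2}=\mathcal{D}_2$, and so $\mathcal{D}'=\mathcal{D}$.

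\emph{Galois statement.} Let $\sigma\in\gal_{\vartheta_1}\cap\gal_{\vartheta_2}$. Then $\vartheta^\sigma$ is an irreducible character of $Y$ extending $\vartheta_1^\sigma=\vartheta_1$ and $\vartheta_2^\sigma=\vartheta_2$. By uniqueness $\vartheta^\sigma=\vartheta$.
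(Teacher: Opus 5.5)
Your proof is correct. For existence and uniqueness the paper gives no argument of its own: it cites \cite[Lem.~4.1]{spath10} and \cite[Lem.~6.8]{Nav18}. It then derives $\gal_{\vartheta_1}\cap\gal_{\vartheta_2}\leq\gal_\vartheta$ from uniqueness exactly as you do. Your explicit construction $\mathcal{D}(x_1x_2)=\mathcal{D}_1(x_1)\mathcal{D}_2(x_2)$ is essentially the standard proof of that cited lemma, so your version is self-contained where the paper is short.

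Two small points.

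First, in the homomorphism check no scalar needs to be ``forced to be $1$'', because conjugation ignores scalars. The step that actually does the work is your fourth bullet. There, $\mathcal{D}_2(x_2)$ and any $X_1$-intertwiner from $\mathcal{D}_1$ to its $x_2$-conjugate both intertwine the irreducible $\Res_Z\mathcal{D}_1$ with its conjugate. This uses $Z=X_1\cap X_2\lhd X_2$. By Schur's lemma they are proportional, so $\mathcal{D}_2(x_2)$ already intertwines on all of $X_1$.

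Second, uniqueness can be finished at the character level without returning to matrices. Since $Z\lhd X_2$ and $\Res_Z\vartheta_2$ is irreducible, Gallagher's theorem says the characters $\vartheta_2\beta$ for $\beta\in\Irr(X_2/Z)$ are pairwise distinct. Hence $\vartheta_2\lambda=\vartheta_2$ forces $\Res_{X_2}\lambda=1$, and so $\lambda=1$ because $Y=X_1X_2$. The possible vanishing of $\vartheta_2$ is therefore not an obstacle. Your representation-level uniqueness argument is nonetheless also valid.
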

\begin{proof}
For the first statement, see, for example, \cite[Lem.~4.1]{spath10}. (This is also a case of a more general statement by Isaacs---see \cite[Lem.~6.8]{Nav18}.) The last statement follows from the uniqueness of the common extension $\vartheta$.
\end{proof}

\begin{lemma}\label{lem:alpdade}
Let $\ell$ be a prime and let $G$ be a finite group with $N\lhd G$ such that $|G/N|$ is prime to $\ell$. Let $M:=N\C_G(P)$, where $P\in\Syl_\ell(G)$. Assume that $M\lhd G$. 
Then 
\begin{itemize}
    \item Restriction defines a bijection between the pincipal $\ell$-blocks $\Irr(B_0(M))\rightarrow \Irr(B_0(N))$; and
    \item $B_0(G)$ is the unique block of $G$ covering $B_0(M)$.
    \item If further $G\lhd H$ with $H/G$ an $\ell$-group and $M\lhd H$, then $B_0(H)$ is the unique block of $H$ covering $B_0(M)$.
\end{itemize}
\end{lemma}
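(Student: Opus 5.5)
Since $|G/N|$ is prime to $\ell$, a Sylow $\ell$-subgroup $P$ of $G$ lies in $N$, so $P\in\Syl_\ell(N)$ and $P\in\Syl_\ell(M)$. The plan is to apply Alperin--Dade theory to $N\lhd M$, and then Brauer's third main theorem together with the standard covering results for the remaining claims.

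\emph{First bullet.} We have $M=N\C_M(P)$ and $M/N$ is an $\ell'$-group. This is exactly the setting of the Alperin--Dade isomorphism theorem (Alperin for $M/N$ solvable, Dade in general). It asserts that restriction $\Res^M_N$ gives a bijection $\Irr(B_0(M))\to\Irr(B_0(N))$, and even an isomorphism of block algebras. So the first bullet is a direct citation.

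\emph{Second bullet.} Since $M\lhd G$ and $G/M$ is an $\ell'$-group, the following standard fact applies: for a normal subgroup of $\ell'$-index, the blocks of $G$ covering a given $G$-stable block $b$ of $M$ are controlled by $G/M$. For $b=B_0(M)$, every block covering $b$ has defect group $D$ with $D\cap M$ a defect group of $b$, namely $P$. Hence each covering block has full defect $P$.

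To identify these blocks I would use Brauer pairs. A block $B$ of $G$ covering $B_0(M)$ with defect group $P$ has Brauer correspondent in $\N_G(P)$, and by Brauer's third main theorem $B=B_0(G)$ exactly when the corresponding block of $\C_G(P)$ is the principal one. Now $\C_G(P)\le M$, because $M=N\C_G(P)$. Blocks of $G$ covering $B_0(M)$ correspond to Brauer pairs $(P,e)$ with $e$ a block of $\C_G(P)=\C_M(P)$ such that $(P,e)$ is a $B_0(M)$-Brauer pair. By the third main theorem applied in $M$, we get $e=B_0(\C_M(P))$. Therefore $B$ has a maximal Brauer pair $(P,B_0(\C_G(P)))$, which forces $B=B_0(G)$.

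An alternative, cleaner route: $B_0(G)$ covers $B_0(M)$, and the number of blocks of $G$ covering $B_0(M)$ equals the number of $G$-conjugacy classes of blocks of $\C_G(P)$ over $B_0(\C_N(P))$ with the same defect. Here only the principal block arises, since $\C_G(P)=\C_M(P)$.

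\emph{Third bullet.} Let $H/G$ be an $\ell$-group with $M\lhd H$. A block of $H$ covering $B_0(M)$ covers some block of $G$ covering $B_0(M)$, which is $B_0(G)$ by the second bullet. Since $H/G$ is an $\ell$-group, $B_0(H)$ is the unique block of $H$ covering $B_0(G)$ (Navarro's book, Cor.~9.6). This gives the third bullet.

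The main obstacle is the second bullet, specifically justifying that a covering block with defect group $P$ must have the principal Brauer pair. I would argue via $\C_G(P)=\C_M(P)$ and the fact that Brauer pairs of a block of $G$ restrict to Brauer pairs of the covered block of $M$ (as in Navarro's book on covering blocks and defect groups). I would also double-check that $B_0(\C_M(P))$ is the only block of $\C_M(P)$ occurring in maximal $B_0(M)$-pairs.
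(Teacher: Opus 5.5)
Your proposal is correct. The first and third bullets follow the paper's proof exactly: Alperin--Dade for $N\lhd M$, and then uniqueness of the block of $H$ over $B_0(G)$ because $H/G$ is an $\ell$-group.

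For the second bullet your route differs somewhat from the paper's.

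\emph{The paper's route.} It cites Navarro's results on block induction from normal subgroups. Since $P$ is a defect group of $B_0(M)$ and $\C_G(P)\le M$, the induced block $B_0(M)^G$ is defined and is the \emph{unique} block of $G$ covering $B_0(M)$. Brauer's third main theorem then identifies it as $B_0(G)$. In fact uniqueness alone already suffices, since $1_G$ lies over $1_M$.

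\emph{Your route.} You first show that every covering block has full defect. You then pin down its maximal Brauer pair $(P,e)$ by applying the third main theorem once in $M$ and once in $G$. This works. It is more hands-on, but the step you flag as the main obstacle is exactly what the cited uniqueness theorem packages.

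If you keep your route, that step closes in a few lines. Work over a splitting field $k$ of characteristic $\ell$ and set $b:=B_0(M)$. Since $b$ is $G$-stable, its block idempotent $e_b$ lies in $\Z(kG)$. Then $B=e^G$ covers $b$ if and only if
\[
\lambda_B(e_b)=\lambda_e(\mathrm{Br}_P(e_b))=1 .
\]
Because $e$ is a block of $\C_G(P)=\C_M(P)$, and $\mathrm{Br}_P(e_b)$ is the sum of the idempotents of the blocks $f$ of $\C_M(P)$ with $f^M=b$, this condition says precisely that $e^M=b$. The third main theorem in $M$ then forces $e$ to be principal.

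Finally, your ``alternative cleaner route'' should be dropped. Counting $G$-classes of blocks of $\C_G(P)$ over $B_0(\C_N(P))$ is not a standard result as stated; the correct local count would come from Harris--Kn\"orr applied over $\N_M(P)$. It is not needed in any case.
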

\begin{proof}
    The first part is the so-called Alperin--Dade correspondence (see \cite{dade77});  the second follows from Brauer's third main and \cite[9.26, 9.20, 9.19]{Nav98}. The third part follows since $H/G$ is an $\ell$-group, so $B_0(H)$ is the unique block covering $B_0(G)$.
\end{proof}

In the situation of Lemma \ref{lem:alpdade}, $M$ is the so-called \emph{Dade Ramification Group} for $B_0(N)$ in $G$ (or $H$). We will in particular apply it when $(N, G, H)=(\bG^F, \bG^FE_{\ell'}, \bG^FE)$ for a finite reductive group $\bG^F$ and its group of graph-field automorphisms $E$ and a Hall $\ell'$-subgroup $E_{\ell'}\leq E$. (In Section \ref{sec:disconblocks}, we will also study the Dade Ramification Group in a more general context.)

\begin{lemma}\label{lem:oddp'}
Keep the situation of Lemma \ref{lem:alpdade} and assume further that $|M/N|$ is odd. Assume $\chi\in\Irr(B_0(N))$ is rational-valued and extends to $G$. Then there is a rational-valued extension of $\chi$ in $\Irr(B_0(M))$.
\end{lemma}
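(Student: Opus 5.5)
Since $\chi$ extends to $G$, it also extends to $M$, because $N\le M\le G$. By the first part of Lemma~\ref{lem:alpdade}, restriction $\Irr(B_0(M))\to\Irr(B_0(N))$ is a bijection. So there is a unique $\tilde\chi\in\Irr(B_0(M))$ with $\Res^M_N(\tilde\chi)=\chi$. The plan is to show that this particular extension $\tilde\chi$ is rational-valued, using uniqueness together with Galois stability of the principal block.

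First, for any $\sigma\in\gal$, the character $\tilde\chi^\sigma$ restricts to $\chi^\sigma=\chi$, since $\chi$ is rational-valued. Moreover, $\tilde\chi^\sigma$ again lies in $B_0(M)$. Indeed, Galois conjugation permutes $\ell$-blocks (for instance via the central characters $\omega_B$, whose values are algebraic integers that $\sigma$ moves to those of another block), and it fixes the principal block because $1_M^\sigma=1_M$. Uniqueness in the Alperin--Dade bijection then gives $\tilde\chi^\sigma=\tilde\chi$ for every $\sigma$. Hence $\tilde\chi$ is rational-valued.

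The main point requiring care is the assertion that Galois automorphisms of $\QQ^{\mathrm{ab}}$ preserve $B_0(M)$. One clean argument is as follows. The block of $\psi$ is determined by the reduction mod a maximal ideal over $\ell$ of $\omega_\psi(\hat K)=|K|\psi(x_K)/\psi(1)$. Under $\sigma$, this value is mapped to $\omega_{\psi^\sigma}(\hat K)$. Extend $\sigma$ to an automorphism of the field generated by the $|M|$-th roots of unity, and choose a maximal ideal $\mathfrak p$ over $\ell$. Then $\sigma$ carries the congruence $\omega_\psi\equiv\omega_{1_M}\pmod{\mathfrak p}$ to $\omega_{\psi^\sigma}\equiv\omega_{1_M}\pmod{\sigma(\mathfrak p)}$, because the $\omega_{1_M}(\hat K)=|K|$ are rational. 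Block membership does not depend on the choice of prime over $\ell$, since the reductions of $\omega_\psi$ and $\omega_{1_M}$ lie in the images of integers and the relevant Brauer-type characterization (e.g. \cite[Ch.~3]{Nav98}) is independent of that choice. Therefore $\psi^\sigma\in B_0(M)$.

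Note that this argument never uses the hypothesis that $|M/N|$ is odd. It is presumably needed in the authors' proof, or for extension existence, but with the Alperin--Dade bijection it is not required. If I am wary of a subtlety here, I would fall back on an alternative route. By Gallagher's theorem, the extensions of $\chi$ to $M$ are $\tilde\chi\beta$ for linear $\beta\in\Irr(M/N)$. Since $|M/N|$ is odd, a rational extension is forced by a standard argument: $\det$-type considerations, or the fact that a Galois orbit of extensions has size dividing an odd number, so some extension is fixed. One would then combine this with uniqueness.
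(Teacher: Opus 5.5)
Your main argument is correct and is essentially the paper's proof. The Alperin--Dade bijection of Lemma~\ref{lem:alpdade} gives a unique $\tilde\chi\in\Irr(B_0(M))$ over $\chi$, and $\gal$-stability of $\Irr(B_0(M))$ together with $\gal$-invariance of $\chi$ forces $\tilde\chi$ to be $\gal$-invariant, hence rational. You are also right that the oddness of $|M/N|$ is not needed; the paper's argument does not use it either.

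Your fallback is unnecessary, and its orbit-counting step does not work as stated. An odd-size set can a priori split into Galois orbits of odd size greater than $1$ with no fixed point. A rational extension for odd $|M/N|$ would instead come from Lemma~\ref{lem:real}, and you would still need a separate argument to place it in $B_0(M)$.
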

\begin{proof}
    Note that there is some $\wt\chi\in\Irr(B_0(M))$ extending $\chi$ by Lemma \ref{lem:alpdade}, and that this is the unique character in $\Irr(B_0(M))$ above $\chi$. 
    Then since $\Irr(B_0(M))$ is $\gal$-stable and $\chi$ is $\gal$-invariant, it follows that $\wt\chi$ is also $\gal$-invariant, hence rational-valued.
\end{proof}

We will also use the following observation from \cite[Corollary 6.6]{Nav18}:

\begin{lemma}\label{lem:real}
    Let $N \lhd G$ such that $G/N$ is odd. If $\vartheta$ is a $G$-invariant real-valued character, then there exists a unique real-valued character $\chi \in \Irr(G \mid \vartheta)$ extending $\vartheta$. Here $\QQ(\chi)=\QQ(\vartheta)$.
\end{lemma}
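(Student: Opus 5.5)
The plan is the standard Gallagher/Glauberman argument for odd-index extensions. Since $|G/N|$ is odd, it is solvable by Feit--Thompson. So I induct on $|G/N|$, reducing to the case where $G/N$ is cyclic of odd prime order $r$. In that case $\vartheta$ extends to $G$, because $G/N$ is cyclic and $\vartheta$ is $G$-invariant.

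For the base step, fix one extension $\chi_0$. By Gallagher's theorem the extensions of $\vartheta$ are exactly $\chi_0\lambda$ with $\lambda\in\Irr(G/N)$, and there are $r$ of them.
\begin{enumerate}
\item Complex conjugation permutes this set, since $\vartheta$ is real and hence $\bar\chi_0$ also extends $\vartheta$.
\item The set has odd size $r$ and conjugation is an involution on it, so at least one extension is fixed, i.e. real-valued.
\item For uniqueness, suppose $\chi$ and $\chi\lambda$ are both real. Then $\chi\lambda=\bar\chi\bar\lambda=\chi\bar\lambda$, so $\chi\lambda^2=\chi$. Since $\chi(g)\neq 0$ for $g\in N$ is not enough by itself, I use that distinct $\lambda$ give distinct $\chi\lambda$ (Gallagher). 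This forces $\lambda^2=1$, hence $\lambda=1$ because $|G/N|$ is odd.
\end{enumerate}

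For general odd $|G/N|$, I need an extension to $G$ to exist in the first place. The cleanest route avoids the cyclic reduction and uses the determinantal order criterion. Let $o(\vartheta)$ be the order of $\det\vartheta$. Since $\vartheta$ is real, $\det\vartheta$ is real, so $o(\vartheta)\mid 2$. Hence $(|G/N|,\vartheta(1)o(\vartheta))$ divides $\vartheta(1)$, and the Gallagher-type criterion (Isaacs, Cor.~8.16, or more generally Cor.~6.2 of \cite{Nav18}) gives an extension $\chi_0$, because $\gcd(\vartheta(1)o(\vartheta),|G/N|)=1$ in the relevant sense, using oddness and Sylow-wise coprimality.

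Then the same counting argument applies, with $\Lin(G/N)$ replaced by the set of all extensions $\{\chi_0\lambda\}$ for $\lambda\in\Lin(G/N)$. This set has size $|G/N:(G/N)'|$, which is odd, so conjugation again fixes some member. Uniqueness follows as in step 3: $\lambda^2=1$ with $\lambda$ of odd order forces $\lambda=1$.

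For $\QQ(\chi)=\QQ(\vartheta)$, I argue by Galois action. For $\sigma\in\gal$ fixing $\QQ(\vartheta)$, the character $\chi^\sigma$ is again a real extension of $\vartheta^\sigma=\vartheta$. By uniqueness $\chi^\sigma=\chi$, so $\QQ(\chi)\subseteq\QQ(\vartheta)$. The reverse inclusion is clear by restriction.

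The main obstacle is the existence step for non-cyclic $G/N$. If citing the determinantal criterion is unwelcome, the fallback is induction along a chief series, using uniqueness to transport invariance. Suppose $N\le K\lhd G$ with $K/N$ of prime order. The unique real extension $\vartheta_K$ to $K$ is $G$-invariant by uniqueness, since $G$-conjugation preserves reality. Induction on $|G/K|$ then finishes the argument.
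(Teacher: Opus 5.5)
The paper gives no argument here; it simply quotes \cite[Corollary~6.6]{Nav18}. Several parts of your proposal are fine on their own:
the cyclic case;
the uniqueness argument (two real extensions differ by some $\lambda\in\Lin(G/N)$ with $\lambda^2=1$, so $\lambda=1$);
the passage from ``some extension'' to ``a real extension'' (conjugation is an involution on a set of odd size $|G/N:(G/N)'|$);
the proof of $\QQ(\chi)=\QQ(\vartheta)$.

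The gap is the existence of an extension when $G/N$ is not cyclic, and your primary route fails there. The determinantal-order criterion needs $\gcd(|G:N|,o(\vartheta)\vartheta(1))=1$. Reality of $\vartheta$ only gives $o(\vartheta)\mid 2$ and says nothing about $\vartheta(1)$. A real character of degree $3$ of $A_5$, with $G=A_5\times C_3$, already violates the hypothesis. Passing to Sylow subgroups does not help for primes $p\mid\vartheta(1)$. There is no ``relevant sense'' in which the gcd is $1$.

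Your fallback is the right idea, but as written it assumes some $K\lhd G$ with $K/N$ of prime order. Such a $K$ need not exist: the nonabelian group $C_5^2\rtimes C_3$ of order $75$ has no normal subgroup of prime order. Induct instead over an arbitrary proper nontrivial normal subgroup $K/N$ of $G/N$. The full inductive statement (existence and uniqueness) for $(K,N,\vartheta)$ yields a real extension $\vartheta_K$, which is $G$-invariant by uniqueness. The statement for $(G,K,\vartheta_K)$ then finishes. If $G/N$ is simple, it is cyclic of prime order by Feit--Thompson, which is your base case.

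Alternatively, you can avoid Feit--Thompson. Complex conjugation acts on $H^2(G/N,\CC^\times)$ by inversion (take a cocycle with root-of-unity values). Since $\bar\vartheta=\vartheta$, the class $\omega$ of the character triple $(G,N,\vartheta)$ satisfies $\omega^{-1}=\bar\omega=\omega$. As $\omega^{|G:N|}=1$ with $|G:N|$ odd, $\omega=1$, so $\vartheta$ extends, and your counting argument then gives the real extension.

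If uniqueness is meant among all real members of $\Irr(G\mid\vartheta)$, add Gallagher's theorem and Burnside's result that a group of odd order has no nontrivial real irreducible character.
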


Given a block $B$, we will write $\ov B$ for the the block containing the complex conjugates of the characters in $\Irr(B)$. If $\ov B=B$, we say that $B$ is a real block.
In the situation of Lemma \ref{lem:real}, we see that if $G/N$ is abelian, the block $b_G(\chi)$ containing $\chi$ is the unique real block covering $b_N(\vartheta)$.  As is well known, all unipotent characters of groups of classical type are real (even rational). However, as the statement is not necessarily true for the local characters, the following will be useful in Section \ref{sec:AMN} below for dealing with those.

\begin{lemma}\label{lem:real blocks}
\begin{enumerate}
    \item Suppose that $H \leq G$ and $B$ is a block of $H$ such that the induced block $B^G$ is defined. Then $\overline{B}^G$ is defined and $\overline{B}^G=\overline{B^G}$.
    \item Suppose that $N \lhd G$ such that $G/N$ is abelian of odd order. If $B$ is a real block of $N$, then there exists a unique real block of $G$ covering $B$. 
\end{enumerate}
\end{lemma}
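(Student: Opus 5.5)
Both parts come down to how complex conjugation interacts with the two block-theoretic operations involved, namely block induction and covering.

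For part (1), I would work with central characters. Recall that $B^G$ is defined when the central character $\lambda_B \circ \mathrm{Br}$ (i.e.\ the map $\omega_B$ applied to $\Res$ of the class sums, $\hat K \mapsto \omega_B((\hat K)\cap H)$) is a central character of $\Z(\mathbb F G)$. Then $B^G$ is the block whose central character $\omega_{B^G}$ equals this map, and $\omega_{B^G}(\hat K)=\omega_B(\widehat{K\cap H})$.

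Complex conjugation sends $\chi$ to $\bar\chi$, and $\omega_{\bar\chi}(\hat K)=\omega_\chi(\widehat{K^{-1}})$, since $\bar\chi(x)=\chi(x^{-1})$ and $|K|=|K^{-1}|$. After reducing modulo the maximal ideal, $\omega_{\ov B}(\hat K)=\omega_B(\widehat{K^{-1}})$. Therefore the map $\hat K\mapsto \omega_{\ov B}(\widehat{K\cap H})=\omega_B(\widehat{K^{-1}\cap H})$ is the composite of the algebra automorphism of $\Z(\mathbb F G)$ induced by $g\mapsto g^{-1}$ (an anti-automorphism of $\mathbb F G$ that is an automorphism on the commutative center) with $\omega_{B^G}$. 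It is therefore again an algebra homomorphism, so $\ov B^G$ is defined. Moreover it equals $\omega_{\ov{B^G}}$, which gives $\ov B^G=\ov{B^G}$. The only care needed is checking that $K\mapsto K^{-1}$ commutes with intersecting with $H$, which is clear.

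For part (2), I would use that the blocks of $G$ covering $B$ form a single orbit under multiplication by the characters of $G/N$ of $\ell'$-order. More precisely, I would proceed as follows:
\begin{itemize}
\item Let $T=G_B$ be the stabilizer. By the Fong--Reynolds correspondence, blocks of $G$ covering $B$ correspond bijectively to blocks of $T$ covering $B$, and this correspondence commutes with complex conjugation, since it is induced by character induction and $\ov B=B$. So we reduce to the case where $B$ is $G$-invariant.
\item With $G/N$ abelian, the group $\Irr(G/N)$ acts transitively on the blocks of $G$ covering $B$ by multiplication. The $\ell$-part of $G/N$ acts trivially, so we may pass to the $\ell'$-part.
\item Complex conjugation acts on this set of covering blocks, and it is compatible with the action of $\Irr(G/N)$: if $\mu$ sends $b$ to $b\mu$, then it sends $\ov b$ to $\ov b\,\ov\mu$.
\end{itemize}

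To count the real blocks, fix a covering block $b$. Then $\ov b=b\mu$ for some linear $\mu$, determined modulo the stabilizer $S$ of $b$ in $A:=\Irr(G/N)$. The covering blocks are in bijection with $A/S$ via $\nu\mapsto b\nu$, and $\ov{b\nu}=b\mu\bar\nu=b\mu\nu^{-1}$. So $b\nu$ is real exactly when $\nu^2\equiv\mu \pmod S$. Since $|A/S|$ is odd, squaring is a bijection on $A/S$, so there is exactly one such $\nu$.

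The main point to verify is that complex conjugation commutes with the $A$-action, which is immediate from $\ov{\chi\nu}=\ov\chi\,\ov\nu$. Existence and uniqueness of the real covering block then follow. Alternatively, existence can be obtained from Lemma \ref{lem:real} applied to a real character in $B$ after Fong--Reynolds reduction, but the counting argument is cleaner and does not require an invariant real character.
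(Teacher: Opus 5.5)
Your proposal is correct and follows essentially the same route as the paper. Part (1) goes through central characters and the inversion map $g\mapsto g^{-1}$ on $\Z(\mathbb{F}G)$, and part (2) through the transitive action of $\Irr(G/N)$ on the covering blocks together with the odd order; your single count (squaring is a bijection on $A/S$) replaces the paper's two steps (an involution on an odd-size set has a fixed point, and $\lambda^2$ stabilizing $b$ forces $\lambda$ to stabilize $b$), while the Fong--Reynolds reduction and the passage to the $\ell'$-part are harmless but unnecessary, since transitivity already holds for abelian $G/N$ without assuming $B$ is $G$-stable.
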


\begin{proof}
We use the notation of \cite[Thm.~4.12]{Nav98} and the comments following it. Let $\lambda_B: \Z(FH) \to F$ the $F$-algebra homomorphism associated to $B$. By assumption, the map $\lambda_{B^G}: \Z(FG) \to F$ maping $\sum_{x \in G} c_x x \in \Z(FG)$ to $\sum_{x \in H} c_x \lambda_B(x)$ is an $F$-algebra homomorphism. Now, $\overline{B}$ is the block that contains the characters $\ov{\chi}$ with $\ov{\chi}(g):=\chi(g^{-1})$ and $\chi \in \Irr(B)$. From this it follows that $\lambda_{\ov{B}}(\sum_{y \in H} c_y y)=\lambda_{B}(\sum_{y\in H} c_{y^{-1}} y)$. On the other hand, $$\lambda_{\overline{B^{G}}}(\sum_{x \in G} c_x x)=\lambda_{B^G}(\sum_{x \in G} c_{x^{-1}} x)=\lambda_B(\sum_{x \in H} c_{x^{-1}} x).$$
This shows that $\lambda_{\ov{B}^G}$ coincides with $\lambda_{\overline{B^{G}}}$. Hence, $\overline{B}^G$ is defined and coincides with $\overline{B^G}$.

For the second part, observe that since $G/N$ is abelian, $\Irr(G/N)$ acts transitively on the set of blocks of $G$ covering $B$. Since the action of complex conjugation and $\Irr(G/N)$ are of coprime order, it follows that there is at least one block $b$ covering $B$ fixed by conjugation. If $b'$ is a second block with this property, then we have $b'=b\otimes \la$ with $\la\in\Irr(G/N)$. Note then that $\la \bar{\la}^{-1}=\la^2$ stabilizes $b$. Since $\la$ has odd order, this implies that $\la$ also stabilizes $b$, so $b'=b \otimes \la=b$.
\end{proof}

\section{The Alvis--Curtis duality and automorphisms}\label{sec:AC}

\subsection{Lifting the Cabanes--Rickard equivalence}

Let $p$ be a prime and let $\Lambda$ be any integral domain in which $p$ is invertible.
Suppose that $G$ is a group with a strongly split $BN$-pair of characteristic $p$. Then there is a notion of Alvis-Curtis duality, denoted $D_{(G)}$, inducing a splendid derived equivalence between $\Lambda G$ and itself. (See e.g. \cite[Sec.~4]{CE04} for the construction of $D_{(G)}$ and \cite[Cor.~4.19]{CE04} for the derived equivalence.)

Here we study a generalized version of $D_{(G)}$ and prove Theorem \ref{thm:AC} (restated here), which extends the above-mentioned result. Below we denote by $\Delta E$ the diagonal group in $E\times E^{\mathrm{opp}}$.

\begin{theorem}\label{AC}
    Let $E$ be a group of automorphism of the BN-pair $(G,B,N,S)$. Then the complex $D_{(G)}$ of $G$-$G$-bimodules extends to a complex of $(G \times G^{\mathrm{opp}}) \Delta E$-modules. In particular, if $\ell \nmid |E|$, then the induced complex $\Ind_{(G \times G^{\mathrm{opp}}) \Delta E}^{G E \times (GE)^{\mathrm{opp}}}(D_{(G)})$ induces a splendid derived equivalence between $\Lambda GE$ and itself.  
\end{theorem}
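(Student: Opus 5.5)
We first recall the construction of $D_{(G)}$ from \cite[Sec.~4]{CE04}. For each subset $I\subseteq S$ there is the standard parabolic $P_I=BN_IB$ with Levi decomposition $P_I=U_I\rtimes L_I$. Let $e_{U_I}=|U_I|^{-1}\sum_{u\in U_I}u$, which makes sense since $p$ is invertible in $\Lambda$. In degree $|I|$ the complex $D_{(G)}$ has the term
$$\bigoplus_{|I|=i}\Lambda G e_{U_I}\otimes_{\Lambda L_I}e_{U_I}\Lambda G\;\cong\;\bigoplus_{|I|=i}\Lambda[G/U_I]\otimes_{\Lambda L_I}\Lambda[U_I\backslash G].$$
The differentials are sums of the natural inclusion maps $\Lambda G e_{U_I}\otimes_{L_I} e_{U_I}\Lambda G\to \Lambda G e_{U_J}\otimes_{L_J}e_{U_J}\Lambda G$ for $J=I\setminus\{s\}$, multiplied by signs $\pm1$. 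These signs come from a fixed total ordering of $S$.

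The plan is as follows. An element $e\in E$ is an automorphism of $G$ with $e(B)=B$ and $e(N)=N$. It therefore permutes $S$ and sends $P_I,U_I,L_I$ to $P_{e(I)},U_{e(I)},L_{e(I)}$. We let $(g,h^{\mathrm{opp}})\cdot\Delta(e)$ act on $\Lambda G\otimes\Lambda G$ by $x\otimes y\mapsto g\,e(x)\otimes e(y)h$. This map sends the summand indexed by $I$ to the summand indexed by $e(I)$. It is compatible with the tensor relation over $L_I$, because $e(xl)\otimes e(y)=e(x)e(l)\otimes e(y)$ and $e(l)\in L_{e(I)}$. It also fixes idempotents, since $e(e_{U_I})=e_{U_{e(I)}}$. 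Together with the $G\times G^{\mathrm{opp}}$-action this gives an action of $(G\times G^{\mathrm{opp}})\Delta E$ on each term.

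The main obstacle is the differentials. The inclusion maps are clearly $E$-equivariant, but the signs depend on the chosen ordering of $S$, and $E$ need not preserve that ordering. We would handle this by the standard orientation twist. Identify the degree-$i$ part with $\bigoplus_{|I|=i}M_I\otimes\Lambda\wedge^{|I|}(\Lambda I)$, where $\wedge^{|I|}\Lambda I$ is the top exterior power of the free module on $I$. The differential then becomes contraction, $s_1\wedge\cdots\wedge s_k\mapsto\sum_j(-1)^{j}s_1\wedge\cdots\widehat{s_j}\cdots\wedge s_k$, which is canonical. We let $e$ act on the orientation factor through its permutation of $S$. The resulting signs are exactly what is needed for the differential to commute with the action. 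The complex is isomorphic to the original one as a complex of $G$-$G$-bimodules, so it gives the required extension of $D_{(G)}$. One should also check that the action is well-defined as a group action of $(G\times G^{\mathrm{opp}})\Delta E$ when $E\cap G\neq 1$ inside $\Aut(G)$. We would avoid this issue by working with the abstract semidirect product $G\rtimes E$, or note that inner elements act compatibly.

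For the second statement, set $\tilde D:=\Ind_{(G\times G^{\mathrm{opp}})\Delta E}^{GE\times(GE)^{\mathrm{opp}}}(D_{(G)})$. Each term of $D_{(G)}$ is a direct sum of modules $\Lambda[G/U_I]\otimes_{L_I}\Lambda[U_I\backslash G]$, each a summand of a permutation module with vertex contained in $\Delta L_I$. The $E$-extension keeps the terms as permutation-type modules with diagonal vertices, so the induced terms are again splendid, that is $\ell$-permutation with twisted diagonal vertices. By the standard result on lifting derived equivalences along coprime extensions (Marcus' theorem, or \cite[Thm.~3.4]{Marcus} style arguments; also the Broué-type lemma used by Harris and Linckelmann), $\tilde D\otimes_{\Lambda GE}\tilde D^\vee\cong \Ind(D\otimes_{\Lambda G}D^\vee)$ as complexes. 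Here we use $\Lambda GE\otimes_{\Lambda G}-$ and the Mackey formula with $GE\times(GE)^{\mathrm{opp}}=(G\times G^{\mathrm{opp}})\Delta E\cdot(GE\times 1)$. Since $D\otimes D^\vee$ is homotopy equivalent to $\Lambda G$ by \cite[Cor.~4.19]{CE04}, and this equivalence can be chosen $\Delta E$-equivariantly when $\ell\nmid|E|$ by averaging a homotopy over $E$, the left side is homotopy equivalent to $\Ind_{\Delta (GE)}\Lambda=\Lambda GE$. The same argument applies on the other side, so $\tilde D$ induces a splendid derived equivalence of $\Lambda GE$ with itself.
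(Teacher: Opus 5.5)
Your proposal is correct and follows the paper's proof: extend the action termwise by letting $e$ permute the summands indexed by $I\subseteq S$, repair the differentials with a sign twist, and appeal to Marcus' result for the induced complex. Your orientation line $\wedge^{|I|}\Lambda I$ is exactly the paper's sign function $\varepsilon$, since $\varepsilon(I)$ is the sign with which $e$ sends $\alpha_1\wedge\cdots\wedge\alpha_i$ (in increasing order) to the increasingly ordered wedge of $e(I)$. This packaging makes automatic the paper's two explicit checks, namely that $\varepsilon$ is independent of the removed $\alpha_j$ and that it is compatible with composition in $E$.
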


\begin{proof}
Let $ \sigma \in E$.
    We recall the construction of the complex $D_{(G)}$ from \cite[Sec.~4]{CE04}. The complex $D_{(G)}$ has only non-zero terms in degrees $[0,n]$, where $n=|S|$. It's $i$th term is given by 
    $$D_{(G)}^{(i)}:=\displaystyle \bigoplus_{I \subset S : |I|=i} \Lambda G\otimes_{\Lambda P_I} e(U_I) \Lambda G,$$
    where $e(U_I)=\frac{1}{|U_I|} \sum_{ u \in U_I} u$.
Under our assumptions, $\sigma(I)$ is a again a subset of $S$ of the same cardinality. Hence, $\sigma$ has a natural action on 
$$\displaystyle\bigoplus_{ J \in \langle \sigma \rangle I} \Lambda G\otimes_{\Lambda P_J} e(U_J) \Lambda G $$ where $\langle \sigma \rangle I$ denotes the $\sigma$-orbit of $I$. Hence, we can consider each of the terms of $D_{(G)}$ as a $(G \times G^{\mathrm{opp}}) \langle \sigma, \sigma^{-1} \rangle$-module. We must check that the transition maps $\delta^i: D_{(G)}^{(i)} \to D_{(G)}^{(i+1)}$ are also equivariant with respect to this action. For this recall that if $I' \subset I$ then we have a morphism $$f^I_{I'}:\Lambda G\otimes_{\Lambda P_I} e(U_I) \Lambda G \to \Lambda G\otimes_{\Lambda P_{I'}} e(U_{I'}) \Lambda G$$ given by $$ x\otimes_{\Lambda P_I} y \mapsto x \otimes_{\Lambda P_{I'}} y,$$ of $G$-$G$-bimodules.
To define the transition maps we fix a total ordering $``\leq "$ on $S$. In particular, any $I \subset S$ can be written as $I=\{ \alpha_1, \alpha_2,\dots, \alpha_i\}$ such that $\alpha_1 \leq \alpha_2\leq \dots \leq \alpha_i$. For $j \leq i$ we can therefore define $I_j:=I \setminus\{\alpha_j\}$. With these data the transition map is then defined as $$\delta^i(m):=\sum_{j=0}^{i} (-1)^j f_{I_j}^{I}(m)$$
for $m \in \Lambda G\otimes_{\Lambda P_I} e(U_I)\Lambda G$.

In general, the maps $\delta^i$ are not equivariant with respect to the action of $\sigma$ defined above. For this purpose, we define for a given function $\varepsilon: \mathcal{P}(S) \to \{ \pm 1\}$ a new action $``\ast_\varepsilon"$ of $\sigma$ on $D_{(G)}^i$ by setting
$$\sigma \ast_\varepsilon m:=\varepsilon(I) \sigma(m)$$
for $m \in \Lambda G \otimes_{\Lambda P_I} e(U_I) \Lambda G$. 
In order to make the maps $\delta^i$ equivariant with respect to the $``\ast_\varepsilon"$ action of $\sigma$, we must find the signs $\varepsilon(I) \in \{ \pm 1 \}$ such that 
 $$f_{I_j}^I(\varepsilon(I) \sigma(m))=\varepsilon(I \setminus \{\al_j\}) \sigma(f_{I_j}^I(m))$$ 
for all $0 \leq j \leq i$. For this purpose, we set $\delta_I(\al_j):=(-1)^{k-j}$ where $k$ is such that $\sigma(\al_j)=\beta_k$ and $\sigma(I)=\{\beta_1,\dots, \beta_i\}$. In other words, $\delta_I(\al_j)=1$ if and only if the parity between the position of $\al_j$ in $I$ and the position of $\sigma(\al_j)$ in $\sigma(I)$ is even.

With this definition, our claim amounts to showing that $$\varepsilon(I)=\delta_I(\al_j) \varepsilon(I \setminus \{\al_j\}).$$
We set $\varepsilon(\emptyset)=1$ and $\varepsilon(\{a\})=1$ for all $a \in S$. This uniquely determines the function $\varepsilon$ (provided that it exists). In order to show that it exists, we must show that the definition $\varepsilon(I)=\delta_I(\al_j) \varepsilon(I \setminus \{\al_j\})$  is independent of $j$. This amounts to showing that for $k \neq j$ we have $$\delta_I(\al_k) \delta_{I \setminus \{ \al_k\}}(\al_j)= \delta_I(\al_j) \delta_{I \setminus \{ \al_j\}}(\al_k).$$
However, removing $\al_j$ resp. $\sigma(\al_j)$ from $I$ resp. $\sigma(I)$ changes the position mod $2$ of $\al_k$ precisely by one. This implies that $\delta_{I}(\al_k)=\delta_{I \setminus \{\al_k\}}(\al_k)$ (and by symmetry of course $\delta_{I}(\al_j)=\delta_{I \setminus \{\al_j\}}(\al_j)$). Hence, the equality of signs above is always satisfied.

Note that as the function $\varepsilon$ is uniquely determined from $\sigma$, it follows that we can extend the complex $D_{(G)}$ to a complex of $(G \times G^{\mathrm{opp}}) \Delta E$-modules.
(We note that the above action extends to one for $E$. For this, we need to check that $\varepsilon_\tau(\sigma(I)) \varepsilon_\sigma(I)=\varepsilon_{\tau \sigma}(I)$ for each $\tau, \sigma\in E$ for all $I$. By induction on $I$, it suffices to check that the corresponding $\delta$ functions coincide. Write $\delta_{\sigma, I}$, respectively $\delta_{\tau, I}$ for the functions corresponding to $\sigma$, resp. $\tau$. With the notation above, we have 
$\delta_{\sigma,I}(\al_j)=(-1)^{k-j}$ and $\delta_{\tau,\sigma(I)}(\sigma(\al_j))=(-1)^{k-l}$ if $\tau\sigma(\al_j)=\gamma_l$ where $\tau(\sigma(I))=\{\gamma_1, \dots, \gamma_i\}$. However, $\delta_{\tau\sigma,I}(\al_j)=(-1)^{j-k}$, showing the desired equality of the corresponding $\varepsilon$-functions.)
By a result of Marcus \cite{Marcus}, the induced complex $\Ind_{(G \times G^{\mathrm{opp}}) \Delta E}^{G E \times (G E)^{\mathrm{opp}}}(D_{G})$ induces a derived auto-equivalence as desired.
\end{proof}

\begin{remark}\label{different ordering}
Note that if $``\leq_1"$ and $``\leq_2"$ are two total orderings on $S$, then there exists a unique permutation $\pi$ on the set $S$ such that $a \leq_1 b$ if and only if $\pi(a) \leq_2 \pi(b)$. In particular, $\varepsilon_{\leq_1}(I)=\varepsilon_{\leq_2}(\pi(I))$. Hence, if $I$ is a $\sigma$-stable set then we have $\varepsilon_{\leq_1}(I)=\varepsilon_{\leq_2}(I)$ (as the value of $\varepsilon$ on $I$ depends only on the ordering of the elements in $I$).
\end{remark}

In the situation of Theorem \ref{AC}, we write $D_{GE}: \mathbb{Z} \Irr(GE) \to \mathbb{Z} \Irr(GE)$ for the isometry induced by the lifted complex. Note that $\Res_{G}^{GE} \circ D_{GE}=D_{G} \circ \Res_{G}^{GE}$ by construction. Further, if $E'\leq E$, then we have $\Res_{GE'}^{GE} \circ D_{GE}=D_{GE'} \circ \Res_{GE'}^{GE}$.

Recall that the Steinberg character for $G$ is defined as $\St_G:= D_G(1_G) \in \Irr(G)$. We will work often with this character and its generalized version:

\begin{definition}\label{def:genstein}
    With the notation above, the generalized Steinberg character of $G E$ is defined as $\St_{G E}:=D_{GE}(1_{GE}) \in \Irr(GE)$.
\end{definition}

\begin{remark}\label{rem:gensteinrat}
When $E'\leq E$, the observation above regarding $\Res_{GE'}^{GE}$ implies that $\St_{GE'}=\Res_{GE'}^{GE}\St_{GE}$.
Further, note that the extended complex of Theorem \ref{AC} is a complex whose terms are $\Lambda$-modules. Hence, $D_{GE}$ commutes with all Galois automorphisms. In particular, $\St_{GE}$ is a rational character as the dual of a rational character.
\end{remark}

\subsection{Comparison to Digne--Michel's lift}

By a group of Lie type or finite reductive group, we will mean the group $G:=\bG^F$ of fixed points of a connected reductive algebraic group $\bG$ under a Steinberg endomorphism $F\colon \bG\rightarrow \bG$. In this situation, we compare the lift we obtain from Theorem \ref{AC} to the lift of the Alvis--Curtis duality constructed by Digne--Michel \cite{DM94} in the context of finite (disconnected) reductive groups.

\begin{corollary}\label{AC DM}
    If $G=\G^F$ is a group of Lie type and $\sigma$ is a quasi-central automorphism of $\G$ commuting with the action of $F$, then the bijection $D_{G\langle \sigma \rangle}: \mathbb{Z} \Irr(G \langle \sigma \rangle) \to \mathbb{Z} \Irr(G \langle \sigma \rangle)$ defined above coincides with the generalized Alvis--Curtis duality introduced by Digne--Michel in \cite[Prop.~3.13]{DM94}.
\end{corollary}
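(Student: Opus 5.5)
Both maps are determined by their values on characters, so the plan is to compare their character formulas. I want to show that the virtual character $D_{G\langle\sigma\rangle}(\chi)$ obtained from the lifted complex of Theorem \ref{AC} is given by the Digne--Michel formula for every $\chi\in\Irr(G\langle\sigma\rangle)$. Recall that Digne--Michel \cite[Prop.~3.13]{DM94} define the duality on $G\langle\sigma\rangle$ as an alternating sum over the $\sigma$-stable $F$-stable parabolic subgroups $\Para_I$ containing a fixed $\sigma$-stable Borel. The terms are $\varepsilon_I\,R_{\Levi_I\langle\sigma\rangle}^{\G\langle\sigma\rangle}\circ{}^*R_{\Levi_I\langle\sigma\rangle}^{\G\langle\sigma\rangle}$, with the sign $\varepsilon_I=(-1)^{|I/\sigma|}$ depending on the number of $\sigma$-orbits on $I$. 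On the coset $G\sigma$ only these $\sigma$-stable $I$ contribute. On $G$ itself both constructions restrict to the usual Alvis--Curtis duality: for ours this is the identity $\Res_G^{G\langle\sigma\rangle}\circ D_{G\langle\sigma\rangle}=D_G\circ\Res_G^{G\langle\sigma\rangle}$, and for Digne--Michel it is their compatibility with restriction. So it suffices to compare the values of $D_{G\langle\sigma\rangle}(\chi)$ on elements $g\sigma$ of the outer coset.

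\emph{Step 1: the Lefschetz trace.} The character of the lifted complex at an element $(g\sigma,h\sigma)$ (or at $\chi$ paired via the bimodule) is the alternating sum $\sum_i(-1)^i$ of the traces on the terms $D^{(i)}_{(G)}$. The element $\sigma$ permutes the summands indexed by $I\subset S$, so a summand with $\sigma(I)\neq I$ contributes zero trace: the trace on an induced/permuted block is zero off the diagonal. Hence only $\sigma$-stable $I$ survive. For such $I$, the summand $\Lambda G\otimes_{\Lambda P_I}e(U_I)\Lambda G$, viewed as a $(G\times G^{\mathrm{opp}})\Delta\langle\sigma\rangle$-module, is precisely the bimodule inducing Harish-Chandra induction composed with restriction for the extended Levi $L_I\langle\sigma\rangle$. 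I would expect it to be isomorphic to $\Lambda G\langle\sigma\rangle\otimes_{\Lambda P_I\langle\sigma\rangle}e(U_I)\Lambda G\langle\sigma\rangle$ restricted suitably. The twist is the sign $\varepsilon(I)$ built into the $\ast_\varepsilon$ action. Thus on the coset, the character of our $D_{G\langle\sigma\rangle}$ equals $\sum_{I=\sigma(I)}(-1)^{|I|}\varepsilon(I)\,R_{L_I\langle\sigma\rangle}^{G\langle\sigma\rangle}{}^*R_{L_I\langle\sigma\rangle}^{G\langle\sigma\rangle}$.

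\emph{Step 2: the sign identity.} It remains to show $(-1)^{|I|}\varepsilon(I)=(-1)^{|I/\sigma|}$ for $\sigma$-stable $I$, up to a global sign that is fixed by the common normalization on $G$. By Remark \ref{different ordering}, $\varepsilon(I)$ for $\sigma$-stable $I$ does not depend on the ordering. Moreover, from the recursion $\varepsilon(I)=\delta_I(\alpha_j)\varepsilon(I\setminus\{\alpha_j\})$ one sees that $\varepsilon(I)$ is the sign of the permutation induced by $\sigma$ on the ordered set $I$. A $\sigma$-orbit of length $k$ contributes the sign $(-1)^{k-1}$ of a $k$-cycle, so $\varepsilon(I)=\prod_{\text{orbits}}(-1)^{k-1}=(-1)^{|I|-|I/\sigma|}$. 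This gives exactly the Digne--Michel sign. Proving carefully that $\varepsilon(I)=\operatorname{sgn}(\sigma|_I)$ is an induction on $|I|$: removing $\alpha_j$ changes the sign of the permutation by the parity of the positional shift, which is $\delta_I(\alpha_j)$.

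\emph{Main obstacle.} I expect the main difficulty to be Step 1. There one has to identify the $\sigma$-action on the surviving summands, coming from the action of $E$ on the bimodule, with the natural action used by Digne--Michel for $\Levi_I\langle\sigma\rangle$ (quasi-central $\sigma$ normalizing a $\sigma$-stable pair $\B\subset\Para_I$). One must also check that no extra character of $\langle\sigma\rangle$ enters. I would handle this by noting that $\sigma$ fixes $e(U_I)$ and acts on $\Lambda G\otimes_{\Lambda P_I}e(U_I)\Lambda G$ by $x\otimes y\mapsto\sigma(x)\otimes\sigma(y)$, which is exactly the Harish-Chandra bimodule for $G\langle\sigma\rangle$ and $P_I\langle\sigma\rangle$.
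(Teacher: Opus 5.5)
Your proposal is correct and follows the same skeleton as the paper's proof. On the coset $G\sigma$ only the $\sigma$-stable $I$ survive. Each surviving summand is the Digne--Michel bimodule for $R_{L_I\langle\sigma\rangle}^{G\langle\sigma\rangle}\circ{}^*R_{L_I\langle\sigma\rangle}^{G\langle\sigma\rangle}$, twisted by $\varepsilon(I)$. It then remains to check $(-1)^{|I|}\varepsilon(I)=(-1)^{|I/\sigma|}$.

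For the vanishing step, the paper writes the sum over a non-stable orbit as a module induced from $G\langle\sigma\rangle_I$ and notes that induced class functions vanish on $G\sigma$. Your trace argument (a block permutation with no fixed block has trace zero) is equivalent. The identification you flag as the main obstacle is
\[
\Ind_{(G\times G^{\mathrm{opp}})\Delta\langle\sigma\rangle}^{G\langle\sigma\rangle\times(G\langle\sigma\rangle)^{\mathrm{opp}}}(\Lambda G\otimes_{\Lambda P_I}e(U_I)\Lambda G)\cong\Lambda G\langle\sigma\rangle\otimes_{\Lambda P_I\langle\sigma\rangle}e(U_I)\Lambda G\langle\sigma\rangle,
\]
with the natural $\sigma$-action. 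This is exactly what the paper asserts, in the form $\mathbb{C}[G\langle\sigma\rangle/U_I]\cong\Ind(\mathbb{C}[G/U_I])$. So no ``suitable restriction'' is needed, and your write-up is at the same level of detail as the paper's.

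The genuine difference is the sign computation. The paper uses Remark \ref{different ordering} to choose an ordering in which each orbit appears as $x<\sigma(x)<\dots<\sigma^{r-1}(x)$ and the orbits form consecutive blocks. It then tracks the $\delta$-factors to get $(-1)^{r+1}$ for a single orbit and multiplies over orbits.

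You instead prove, for every $I$ (stable or not), that $\varepsilon(I)=\operatorname{sgn}(\pi_I)$, where $\sigma(\alpha_j)=\beta_{\pi_I(j)}$. This holds because deleting the matched pair $(j,\pi_I(j))$ changes the inversion count by $\pi_I(j)-j$ modulo $2$, which is exactly the factor $\delta_I(\alpha_j)$, and both sides equal $1$ when $|I|\le 1$. You should state the induction in this precise form for non-stable $I$. Cycle decomposition then gives $\varepsilon(I)=(-1)^{|I|-|I/\sigma|}$ for $\sigma$-stable $I$ directly.

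Your route buys more than the paper's:
\begin{itemize}
\item Ordering-independence for stable $I$ becomes conjugation-invariance of the sign, so the Remark is not needed.
\item The existence of $\varepsilon$ and the cocycle relation $\varepsilon_{\tau\sigma}(I)=\varepsilon_\tau(\sigma(I))\varepsilon_\sigma(I)$ used in the proof of Theorem \ref{AC} become immediate.
\item The same computation with $\sigma^a$ in place of $\sigma$ covers the other outer cosets $G\sigma^a$ without extra work on the sign side.
\end{itemize}
Finally, your hedge ``up to a global sign'' is unnecessary: the identity is exact.
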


\begin{proof}
    By the observation following \cite[Cor.~3.12]{DM94}, the Alvis--Curtis duality on the coset $G \sigma $ is given by $$D_{G \sigma}:= \sum_I (-1)^{|I/ \sigma|} {}^\ast R_{L_I \sigma}^{G\sigma} R_{L_I \sigma}^{G \sigma}$$
    where $I$ runs over all $\sigma$-stable subset of $S$. Note that $R_{L_I \sigma}^{G \sigma}: CF(L_I \sigma,K) \to CF(G \sigma,K)$ is by definition the restriction of the functor $R_{L_I \langle\sigma \rangle }^{G \langle \sigma \rangle}$ which is induced by the $G$-$L_I$-bimodule 
    $$\mathbb{C}[G \langle \sigma \rangle/ U_I] \cong \Ind_{(G \times (L_I)^{\mathrm{opp}}) \langle \sigma, \sigma^{-1} \rangle}^{G \langle \sigma \rangle \times L_I \langle \sigma \rangle }(\mathbb{C}[G /U_I]).$$
Now if the subset $I$ is not $\sigma$-stable, we denote by $\langle \sigma \rangle_I$ its stabilizer in $\langle \sigma \rangle$. Then we have $$\Ind^{G \langle \sigma \rangle \times (L_I \langle \sigma \rangle)^{\opp} }_{(G\times (L_I)^{\opp}) \Delta (\langle \sigma \rangle_I \rangle) }(\mathbb{C}[G/U_I]) =\bigoplus_{J \in \langle \sigma \rangle I}(\mathbb{C}[G\langle \sigma \rangle_I/U_J])$$ 
as $G\langle \sigma \rangle \times (L_I \langle \sigma \rangle)^{\opp}$-modules.
Hence, this module induces the functor $R_{L_I \langle \sigma \rangle_I}^{G \langle \sigma \rangle_I} \circ \Res_{L_I \langle \sigma \rangle_I}^{L_I \langle \sigma \rangle}$ on characters. Then for any class function $f$ of $L_I \langle \sigma \rangle$ its image in $G \sigma$ vanishes. This implies that only the $\sigma$-stable orbits contribute to $\Res_{G \sigma}^{G \langle \sigma \rangle} \circ D_{(G)}$.

It remains to check that the signs in \cite{DM94} and in our construction match. For this, we need to show that $\varepsilon(I) (-1)^{|I|}=(-1)^{|I/ \sigma|}$ for all $\sigma$-stable set $I \subset S$. Let us first assume that $I$ is a $\sigma$-orbit. In this case we need to show that $\varepsilon(I)=(-1)^{|I|+1}$. Now by construction if $|I|=r$, then we have for any $x \in I$
$$\varepsilon(I)=\prod_{i=0}^{r-1} \delta_{I \setminus \{x,\sigma(x),\dots,\sigma^{i-1}(x)\}}(\sigma^i(x)).$$
By Remark \ref{different ordering} we can assume that $x \leq \sigma(x) \leq \sigma^2(x) \leq \dots... \leq \sigma^{r-1}(x)$ which implies that $\delta_{I \setminus \{x,\sigma(x),\dots,\sigma^{i-1}(x)\}}(\sigma^i(x))=-1$ for all $i$ so that $\varepsilon(I)=(-1)^{r+1}$ as claimed. 

Let now $I_1,\dots,I_k$ be the orbits of $\sigma$ on $I$. Again by Remark \ref{different ordering}  we can choose an ordering such that $x\leq y$ whenever $x \in I_i$ and $y \in I_j$ with $i < j$. From this we deduce that
$$\varepsilon(I)= \prod_{i=1}^k \varepsilon(I_i)=\prod_{i=1}^k(-1)^{|I_i|+1}$$
so that $\varepsilon(I) (-1)^{|I|}= (-1)^k=(-1)^{|I/\sigma|}$.
\end{proof}

\begin{example}
    Suppose that $G$ is of type $\tA$ and consider the canonical ordering of the simple roots of type $\tA$. Here, the graph automorphism $\sigma$  reverses the ordering of these simple roots. In particular, $\varepsilon(I)=\varepsilon(I \setminus \{\al_j\}) (-1)^{|I|}$ for all $j$. This implies the formula $\varepsilon(I)=1$ if and only if $|I| \equiv 0,1 \mod 4$. Now $|I/\sigma|=1$  if and only if $|I| \equiv 1,2 \mod 4$. This proves the claimed formula $\varepsilon(I) (-1)^{|I|}=(-1)^{|I/ \sigma|}$ in this example.
\end{example}

\begin{remark}
Let $d^1:CF(G \langle \sigma \rangle ,K) \to CF(G \langle \sigma \rangle,K)$ be the decomposition map.
    It follows from \cite[Prop.~2.11]{DM94} that $d^1 \circ D_{G \langle \sigma \rangle}=D_{G \langle \sigma \rangle} \circ d^1$. In particular, from this we see that the Alvis--Curtis duality defined by Digne--Michel is compatible with $\ell$-blocks.
\end{remark}

\section{Regular characters in the principal block}\label{sec:regular}

\subsection{Groups of Lie type}

In this section, we let $\G$ be a simple algebraic group of simply connected type with $F\colon \bG\rightarrow\bG$ a Frobenius endomorphism defining an $\mathbb{F}_q$-structure on $\G$. We denote by $B_0(G)=B_0(\G^F)$ the principal block of $G:=\G^F$. Let $\ell\nmid q$ be a prime dividing $|G|$.

Let $(\bG^\ast, F)$ be dual to $(\bG, F)$ and write $G^\ast:=(\bG^\ast)^F$. Then $\Irr(G)$ is partitioned into rational Lusztig series $\mathcal{E}(G, s)$ indexed by the semisimple elements $s\in G^\ast$, up to $G^\ast$-conjugation. Further, a result of Brou{\'e}--Michel gives that for $s\in G^\ast$ a semisimple element of $\ell'$-order, the union $\mathcal{E}_\ell(G, s):=\cup_{t\in \C_{G^\ast}(s)_\ell} \mathcal{E}(G, st)$ is a union of $\ell$-blocks. For each block $B$ in this union, we have $\Irr(B)\cap \mathcal{E}(G, s)\neq\emptyset$. (See \cite[Thm.~9.12]{CE04}.) 

\subsection{Regular embeddings and regular characters}

Let $\G$ be a simple algebraic group of simply connected type with maximal $F$-stable torus $\T$ contained in an $F$-stable Borel subgroup $\B$ of $\G$. Let $\U$ be the unipotent radical of $\B$ and write $U:=\U^F$.
Consider a regular embedding $\iota:\G \hookrightarrow \Gtilde$ with dual map $\iota^\ast: \Gtilde^\ast \to \G^\ast$ as defined in \cite[Ch.~15]{CE04}. Let $E:=E(\G^F)$ denote the group of field and graph automorphisms as in \cite{CS25}, which act on $\tilde{G}:=\Gtilde^F$. For a power $q_0$ of $p$, we write $F_{q_0}\in E$ for the field automorphism induced by the corresponding standard Frobenius map. {Throughout, we will write $E_\ell$ for a Sylow $\ell$-subgroup in $E$ and $E_{\ell'}$ for a Hall $\ell'$-subgroup in $E$.} Moreover as in \cite{CS25}, we let $E^\ast$ be the group of graph and field automorphisms in $\tilde{G}^\ast:=(\Gtilde^\ast)^F$.

We denote by $\Gamma=\Ind_U^{\tilde{G}}(\phi)$ the unique ordinary Gelfand--Graev character of $\tilde{G}$, which is multiplicity-free by \cite[Rem.~3.4.18]{GM20}. {We recall that a regular character of $\wt{G}$ is one that appears as a constituent of $\Gamma$, and a semisimple character is a constituent of $D_{(\wt G)}(\Gamma)$. In particular, each series $\mathcal{E}(\wt{G}, \wt{s})$ contains exactly one semisimple character and one regular character.}

Now, recall that since $\ell \nmid |U|$ it follows that $\Gamma$ is a projective character of $\tilde{G}$. For $\tilde{s} \in \wt{G}^\ast$ a semisimple element of $\ell'$-order we denote by $\Gamma_{\tilde{s}}$ the projection of $\Gamma$ onto $\mathcal{E}_\ell(\tilde{G},\tilde{s})$ so that $\Gamma$ is by construction the sum of the $\Gamma_{\tilde{s}}$. It is a result by G. Hiss \cite[Thm~3.2]{hiss} that $\Gamma_{\tilde{s}}$ is in fact an indecomposable projective module. Further, we can choose the linear character $\phi$ to be $E_{\ell'}$-stable (see \cite[Def.~3.3]{Spa12}, see also \cite[Sec.~3]{Joh22b} for the case of exceptional graph automorphisms), and we denote by $\hat{\phi}$ its trivial extension to $U E_{\ell'}$. Also denote $\hat{\Gamma}:=\Ind_{UE_{\ell'}}^{\tilde{G} E_{\ell'}}(\hat{\phi})$, which is again a projective $\hat{G}:=\tilde{G} E_{\ell'}$-module and $\hat{\Gamma}_{\tilde{s}}$ the projection of $\hat{\Gamma}$ onto the sum of blocks of $\hat{G}$ covering the blocks in $\mathcal{E}_\ell(\wt{G}, \wt{s})$.

The following generalizes \cite[Cors.~3.3, 3.4]{hiss}.

\begin{lemma}\label{lem:GGRblock}
The character $\hat{\Gamma}_{\tilde{s}}$ is a projective indecomposable character of $\tilde{G} E_{\ell'}$ and thus all its irreducible constituents lie in the same block. In particular, the irreducible constituents of $D_{ \tilde{G} E_{\ell'} }(\hat{\Gamma}_{\tilde{s}})$ all lie in the same $\ell$-block of $\tilde{G} E_{\ell'}$.
\end{lemma}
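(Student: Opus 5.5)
We use Hiss's theorem that $\Gamma_{\tilde{s}}$ is a projective indecomposable $\tilde{G}$-character. From this we show that $\hat{\Gamma}_{\tilde{s}}$ restricts to $\tilde G$ in a controlled way and cannot split over $\tilde G E_{\ell'}$. Put $\hat G=\tilde G E_{\ell'}$.

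\textbf{Step 1: restriction.} By Mackey's formula, since $\tilde G\cap UE_{\ell'}=U$ and $\hat G=\tilde G\,UE_{\ell'}$, we get $\Res^{\hat G}_{\tilde G}\hat\Gamma=\Gamma$. The automorphisms in $E_{\ell'}$ permute the sets $\mathcal{E}_\ell(\tilde G,\tilde s)$ according to their action on $\tilde s$. Let $E_{\tilde s}$ be the stabiliser in $E_{\ell'}$ of the $\tilde G^\ast$-class of $\tilde s$, and let $\mathcal O$ be the $E_{\ell'}$-orbit of this class. Then $\hat\Gamma_{\tilde s}$ is the projection onto blocks covering $\bigcup_{\tilde t\in\mathcal O}\mathcal{E}_\ell(\tilde G,\tilde t)$. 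Since that union of blocks is $E_{\ell'}$-stable, its block idempotent is central in $\hat G$, so
\[\Res^{\hat G}_{\tilde G}\hat\Gamma_{\tilde s}=\sum_{\tilde t\in\mathcal O}\Gamma_{\tilde t}.\]
By Hiss's theorem this is a sum of $|\mathcal O|$ pairwise non-isomorphic indecomposable projectives, permuted transitively by $\hat G$.

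\textbf{Step 2: indecomposability.} Suppose $\hat\Gamma_{\tilde s}=P_1\oplus P_2$ with both $P_i$ projective and nonzero. Each $\Res_{\tilde G}P_i$ is a nonzero $\hat G$-stable direct summand of $\bigoplus_{\tilde t\in\mathcal O}\Gamma_{\tilde t}$. By Krull--Schmidt, each $\Res_{\tilde G}P_i$ contains some $\Gamma_{\tilde t}$. It is $\hat G$-stable, and the $\Gamma_{\tilde t}$ are pairwise non-isomorphic and permuted transitively, so it contains the whole orbit. This contradicts the multiplicity-one decomposition of the restriction. The multiplicity-freeness can also be seen at character level: $\Gamma$ is multiplicity free by \cite[Rem.~3.4.18]{GM20}, and $\ell\nmid|E_{\ell'}|$.

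The delicate point is that projectivity over $\Lambda\hat G$ (with $\Lambda$ an $\ell$-modular system) must make sense. This holds because $\hat\Gamma$ is induced from $UE_{\ell'}$, which has $\ell'$-order since $\ell\neq p$. Hence $\hat\Gamma_{\tilde s}$ is a PIM, and its constituents lie in one block.

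\textbf{Step 3: duality.} By Theorem~\ref{AC}, $\ell\nmid|E_{\ell'}|$, and $E_{\ell'}$ acts on the BN-pair of $\tilde G$. So $D_{\hat G}$ is induced by a splendid derived self-equivalence of $\Lambda\hat G$. Such an equivalence induces a perfect isometry, which preserves blocks up to a bijection, and $\pm$ maps the characters of a single block $B$ into $\mathbb{Z}\Irr(B')$ for a single block $B'$. Since all constituents of $\hat\Gamma_{\tilde s}$ lie in one block by Step 2, all constituents of $D_{\hat G}(\hat\Gamma_{\tilde s})$ lie in one block.

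I expect the main obstacle to be Step 2. The care needed is in justifying that the restriction is exactly the orbit sum with multiplicity one. In particular, one must check that $E_{\tilde s}$ stabilises $\Gamma_{\tilde s}$ rather than twisting it, which follows from the uniqueness of $\Gamma_{\tilde s}$ as a projection of the $E_{\ell'}$-stable $\Gamma$.
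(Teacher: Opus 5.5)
Your proposal is correct and follows the paper's route: restrict $\hat\Gamma_{\tilde s}$ to $\tilde G$, use Hiss's theorem that $\Gamma_{\tilde s}$ is projective indecomposable to conclude that the projective $\hat\Gamma_{\tilde s}$ is indecomposable, and then carry the single-block property through the derived self-equivalence of Theorem~\ref{AC}. The only difference is that the paper simply asserts $\Res^{\hat G}_{\tilde G}\hat\Gamma_{\tilde s}=\Gamma_{\tilde s}$, which tacitly assumes $E_{\ell'}$ fixes the class of $\tilde s$ (true for $\tilde s=1$, the case used later), whereas your Krull--Schmidt argument over the $E_{\ell'}$-orbit also covers the non-stable case, and you rightly let the duality permute blocks rather than fix them.
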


\begin{proof}
    Note that $\Res_{\tilde{G}}^{\hat{G}}(\hat{\Gamma})=\Gamma$ by Frobenius reciprocity, which implies that $\Res^{\hat{G}}_{\tilde{G}}(\hat{\Gamma}_{\tilde{s}})=\Gamma_{\tilde{s}}$. As $\Gamma_{\tilde{s}}$ is a projective indecomposable module and $\hat{\Gamma}_{\tilde{s}}$ is a projective module (being the projection of a projective module to a sum of blocks), it follows that $\hat{\Gamma}_{\tilde{s}}$ is indecomposable as well. Hence, all its irreducible constituents lie in the same $\ell$-block. By Theorem \ref{AC}, it follows that all constituents of $D_{ \tilde{G} E_{\ell'}}(\hat{\Gamma}_{\tilde{s}})$ also lie in the same $\ell$-block. 
\end{proof}

It is a useful fact that automorphisms of finite reductive groups can be realized as automorphisms of reductive groups in the following sense:

\begin{lemma}\label{lem: model}
    There exists a connected reductive group $\underline{\G}$ with Frobenius endomorphism $\underline{F}$ defining an $\mathbb{F}_p$-structure together with a morphism $\mathrm{pr}: \underline{\G} \to \G$ such that $\underline{\G}^{\underline{F}} \cong \G^F$ via this morphism. Moreover, there is a set $\underline{E}$ of $\underline{F}$-equivariant quasi-central automorphisms of $\underline{\G}$ with $\underline{E} \cong E$ via $\mathrm{pr}$. In addition, if $\mathbf{H}$ is an $F$-stable subgroup of $\G$, then there exists an $\underline{F}$-stable subgroup $\underline{\mathbf{H}}$ of $\underline{\G}$ such that $\mathrm{pr}(\underline{\mathbf{H}})=\mathbf{H}$. 
\end{lemma}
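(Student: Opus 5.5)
The plan is to realize $\G^F$ as the group of $\underline{F}$-fixed points of a restriction of scalars, so that field automorphisms become honest algebraic automorphisms. Write $q=p^f$, and let $F=F_0^f$ up to a graph part, where $F_0$ is a Frobenius (or Steinberg) endomorphism defining an $\mathbb{F}_p$-structure compatible with the chosen $\B\supseteq\T$. (For Suzuki and Ree groups, replace $p$ by the appropriate square root and use the exceptional isogeny.) Set
$$\underline{\G}:=\G\times\cdots\times\G$$
with $f$ factors, and define
$$\underline{F}(g_1,\dots,g_f):=(F_0(g_2),\dots,F_0(g_f),F_0(g_1))$$
in the untwisted case. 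If $F$ involves a graph automorphism $\gamma$, twist one coordinate by $\gamma$. Let $\mathrm{pr}$ be projection to the first factor. An $\underline{F}$-fixed point satisfies $g_i=F_0^{\,i-1}(g_1)$ and $g_1=F(g_1)$. Hence $\mathrm{pr}$ restricts to an isomorphism $\underline{\G}^{\underline{F}}\cong\G^F$; this is the standard argument, cf.\ Digne--Michel.

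Next, realize $E$. The field automorphism $F_p$ of $\G^F$ corresponds to the cyclic shift
$$\tau(g_1,\dots,g_f)=(g_2,\dots,g_f,F_0^{\,f}(g_1)\text{-twisted}).$$
More cleanly, take $\tau:=\underline{F}$ composed with the inverse of $F_0$ applied coordinatewise, or simply the coordinate permutation $\tau(g_1,\dots,g_f)=(g_2,\dots,g_f,g_1)$ adjusted by the graph twist. Check that $\tau$ commutes with $\underline{F}$ and that $\mathrm{pr}\circ\tau$ agrees with $F_0$ on fixed points. Graph automorphisms $\gamma$ act diagonally, $\gamma\times\cdots\times\gamma$, which commutes with $\underline{F}$ because $\gamma$ commutes with $F_0$. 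Both kinds of automorphism preserve the product Borel $\underline{\B}=\B^f$ and the product torus $\underline{\T}=\T^f$. Hence they are quasi-central in the sense of Digne--Michel: they stabilize an $\underline{F}$-stable pair $\underline{\T}\subseteq\underline{\B}$, and a permutation of factors composed with diagram automorphisms is quasi-semisimple. Quasi-centrality follows by checking Digne--Michel's criterion: the automorphism fixes a pinning, namely the product of the pinnings.

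Finally, for an $F$-stable subgroup $\mathbf H$ we set
$$\underline{\mathbf H}:=\mathbf H\times F_0(\mathbf H)\times\cdots\times F_0^{\,f-1}(\mathbf H),$$
twisted compatibly. This is $\underline{F}$-stable because $F_0^{\,f}(\mathbf H)=\mathbf H$ up to the graph twist, and its first projection is $\mathbf H$.

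The main obstacle is bookkeeping. One must get $E$ to be isomorphic to $\underline{E}$ as a group, including the relation that the shift $\tau$ has order $f$ only modulo $\underline{F}$. To handle this, I would define $\underline{E}$ as the group generated by $\tau$ and the diagonal graph automorphisms. I would then note that $\underline{F}$ acts trivially on $\underline{\G}^{\underline{F}}$, so the induced action on fixed points factors through $E$. Under this reading the isomorphism $\underline{E}\cong E$ is meant as an identification of actions via $\mathrm{pr}$. A second, separate point is the exceptional graph automorphisms of $\mathrm{B}_2,\mathrm{F}_4$ (for $p=2$) and $\mathrm{G}_2$ (for $p=3$). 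These are not quasi-central on $\G$ itself, but they become coordinate permutations on $\underline{\G}$ once one includes the square-root Frobenius.
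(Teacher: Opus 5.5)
Your construction is the same model the paper relies on: $\underline{\G}=\G^f$ with a cyclically shifting $\underline F$ (twisted in one coordinate by the graph part of $F$), the field automorphism realized as a coordinate permutation, graph automorphisms acting diagonally, quasi-centrality from the product pinning, and $\underline{\mathbf H}$ a product of Frobenius twists of $\mathbf H$. The paper's proof is just a citation of \cite[Sec.~2.1]{RSF22}, and its later uses of $(\underline{\G},\underline F)$ match this model, so the approach is the same. Three corrections are needed:

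\begin{enumerate}
\item With your $\underline F$, a fixed point satisfies $g_1=F_0^{\,i-1}(g_i)$, not $g_i=F_0^{\,i-1}(g_1)$. So the $\underline F$-stable lift must be $\mathbf H\times F_0^{\,f-1}(\mathbf H)\times\cdots\times F_0(\mathbf H)$, not $\mathbf H\times F_0(\mathbf H)\times\cdots$; alternatively, reverse the direction of the shift.
\item The plain permutation $\tau$ already has order $f$, or $2f$ resp.\ $3f$ in the twisted cases, where $\tau^f$ is the diagonal graph automorphism. So $\underline E\cong E$ holds as an honest group isomorphism, and no identification modulo $\underline F$ is needed.
\item Your remarks on Suzuki--Ree groups and on the exceptional graph automorphisms of $\tB_2,\tF_4$ ($p=2$) and $\tG_2$ ($p=3$) do not work. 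Building the exceptional isogeny into $\underline F$ means $\underline F$ no longer defines an $\mathbb F_p$-structure. With an honest $\mathbb F_p$-Frobenius, no algebraic automorphism of $\underline{\G}$ can induce such a $\gamma$, because algebraic automorphisms preserve root lengths. These cases therefore lie outside what this model, and the lemma as stated, can deliver. The paper's one-line proof is also silent on them.
\end{enumerate}
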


\begin{proof}
This follows from the discussion in \cite[Sec.~2.1]{RSF22}.
\end{proof}

\begin{lemma}\label{lem:ACGGRprincblock}
     All irreducible constituent of $D_{\tilde{G} E_{\ell'}}(\hat{\Gamma}_{1})$ lie in the principal block of $\tilde{G} E_{\ell'}$.
\end{lemma}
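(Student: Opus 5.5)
By Lemma \ref{lem:GGRblock}, all irreducible constituents of $D_{\tilde{G}E_{\ell'}}(\hat{\Gamma}_1)$ lie in a single $\ell$-block of $\hat{G}=\tilde{G}E_{\ell'}$. It therefore suffices to show that one constituent lies in $B_0(\hat{G})$. The natural candidate is the trivial character $1_{\hat{G}}$. The plan is to show that $\langle D_{\hat{G}}(\hat{\Gamma}_1), 1_{\hat{G}}\rangle \neq 0$. Since $D_{\hat{G}}$ is an isometry and an involution up to the construction, this is equivalent to $\langle \hat{\Gamma}_1, D_{\hat{G}}(1_{\hat{G}})\rangle = \pm\langle \hat{\Gamma}_1, \St_{\hat{G}}\rangle \neq 0$, using Definition \ref{def:genstein}.

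The first step is to identify $\St_{\hat{G}}$ as a constituent of $\hat{\Gamma}$. By Frobenius reciprocity,
$$\langle \hat{\Gamma}, \St_{\hat{G}}\rangle = \langle \hat{\phi}, \Res^{\hat{G}}_{UE_{\ell'}}\St_{\hat{G}}\rangle.$$
Restricting to $U$, the Steinberg character $\St_{\tilde G}$ contains $\phi$ with multiplicity one, since $\St_{\tilde{G}}$ is the regular character in $\mathcal{E}(\tilde{G},1)$. So $\Res_{UE_{\ell'}}\St_{\hat{G}}$ contains exactly one extension of $\phi$ to $UE_{\ell'}$, and we need it to be the trivial extension $\hat{\phi}$. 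This is the main obstacle: showing that the sign or linear twist by which $E_{\ell'}$ acts on the $\phi$-isotypic line of $\St_{\hat G}$ is trivial.

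To handle the obstacle, the first approach is to compute $\St_{\hat{G}}$ on the coset $G\sigma$, with $\sigma$ a generator of a cyclic part of $E_{\ell'}$. Corollary \ref{AC DM} identifies $D_{\hat G}$ with the Digne--Michel duality, and Lemma \ref{lem: model} realizes $\sigma$ as a quasi-central automorphism. Then Digne--Michel's character formula for the Steinberg character of $\G^F\langle\sigma\rangle$ gives $\St_{G\sigma}$ as (up to sign) the Steinberg character of $C_{\G}(\sigma)^{\circ F}$ on unipotent elements. The twisted Gelfand--Graev pairing $\langle \hat\phi, \St\rangle_{U\sigma}$ can then be evaluated via the regular unipotent element of $C_{\G}(\sigma)^\circ$. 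If the sign comes out negative, we would instead have to compare with the sign twist $\varepsilon$ from the proof of Theorem \ref{AC}.

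The alternative approach, which is easier, avoids that computation entirely. Since $E_{\ell'}$ has $\ell'$-order and $\St_{\hat G}$ restricts irreducibly, every constituent of $\Res_{\hat{G}}$ over $\St_{\tilde G}$ is $\St_{\hat{G}}\otimes\lambda$ with $\lambda\in\Irr(\hat G/\tilde G)$. Exactly one of these pairs with $\hat{\Gamma}$, say $\St_{\hat G}\otimes\lambda_0$. Then $D_{\hat G}(\St_{\hat G}\otimes\lambda_0)=\pm\lambda_0$ is a constituent of $D_{\hat G}(\hat\Gamma_1)$, because the $\mathcal{E}_\ell(\tilde G,1)$-part contains $\St$. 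The linear character $\lambda_0$ has $\ell'$-order and is trivial on $\tilde{G}$. Since $\hat{G}/\tilde{G}$ is an $\ell'$-group, $\lambda_0$ lies in the principal block: the principal block of $\hat G$ covering $B_0(\tilde G)$ contains all of $\Irr(\hat G/\tilde G)$, by Lemma \ref{lem:alpdade} applied with $\C_{\hat G}(P)\tilde G=\hat G$, or alternatively because $\lambda_0$ and $1$ agree on $\ell$-regular elements modulo $O_{\ell'}$. This approach closes the argument without computing the sign.
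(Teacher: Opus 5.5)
Your reduction matches the paper's: by Lemma~\ref{lem:GGRblock} it suffices to find one constituent of $D_{\tilde{G}E_{\ell'}}(\hat{\Gamma}_1)$ in $B_0(\hat G)$. You also correctly identify the crux, namely \emph{which} extension $\St_{\hat G}\lambda_0$ of $\St_{\tilde G}$ occurs in $\hat\Gamma$. The steps leading to ``$\pm\lambda_0$ is a constituent of $D_{\hat G}(\hat\Gamma_1)$'' are fine. However, the last step of your ``alternative approach'' is false, so it does not bypass the crux.

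A linear character $\lambda\in\Irr(\hat G/\tilde G)$ of $\ell'$-order lies in $B_0(\hat G)$ only if it is trivial on the Dade group $M=\tilde G\C_{\hat G}(P)$. Indeed, by Lemma~\ref{lem:alpdade}, restriction $\Irr(B_0(M))\to\Irr(B_0(\tilde G))$ is bijective, so $1_M$ is the only character of $B_0(M)$ over $1_{\tilde G}$, and $B_0(\hat G)$ covers only $B_0(M)$. In particular, in the case $\tilde G\C_{\hat G}(P)=\hat G$ that you invoke, $1_{\hat G}$ is the \emph{only} character of $\hat G/\tilde G$ in the principal block. That is the opposite of your claim.

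This really occurs. Take $\tilde G=\GL_2(q)$ with $q=p_0^2$ and $\ell$ an odd prime dividing $p_0-1$. The field automorphism $F_{p_0}\in E_{\ell'}$ centralizes the Sylow $\ell$-subgroup $T_\ell$, so the nontrivial character of $\tilde G\langle F_{p_0}\rangle/\tilde G$ lies outside $B_0$ (compare Example~\ref{ex:steinberg}). Your fallback justification fails too: a nontrivial linear character of $\ell'$-order differs from $1$ on some $\ell$-regular element. Only linear characters of $\ell$-power order agree with $1$ on $\ell$-regular elements.

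Hence you must actually show $\lambda_0=1$ (or at least $\lambda_0|_M=1$). This is exactly the computation your first approach leaves open, and the paper does it directly. Via Corollary~\ref{AC DM} and Lemma~\ref{lem: model}, \cite[Prop.~3.17]{DM94} gives $\Res^{\hat G}_{\tilde BE_{\ell'}}(\St_{\hat G})=\Ind_{\tilde TE_{\ell'}}^{\tilde BE_{\ell'}}(1_{\tilde TE_{\ell'}})$. Now $\tilde BE_{\ell'}=(UE_{\ell'})(\tilde TE_{\ell'})$ with intersection $E_{\ell'}$, on which $\hat\phi$ is trivial. So Mackey and Frobenius reciprocity yield $\langle \St_{\hat G},\hat\Gamma\rangle=\langle 1_{\tilde TE_{\ell'}},\Ind_{E_{\ell'}}^{\tilde TE_{\ell'}}(1_{E_{\ell'}})\rangle=1$. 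Thus the untwisted $\St_{\hat G}$ lies in $\hat\Gamma_1$, and $1_{\hat G}$ is a constituent of $D_{\hat G}(\hat\Gamma_1)$. Your sketched route via Steinberg values of $\C_{\G}(\sigma)^\circ$ at regular unipotent elements is not carried out, and would have to reach this same conclusion $\lambda_0=1$.
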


\begin{proof}

From Lemma \ref{lem:GGRblock}, all irreducible constituents of $\hat\Gamma_1$ lie in the same $\ell$-block, and
the same is true for the irreducible constituents of $D_{\tilde{G} E_{\ell'}}(\hat{\Gamma}_{1})$. Hence, in order to show that all its irreducible constituents lie in the principal block, it suffices to show that one of them lies in the principal block. Our aim is therefore to show that the trivial character is an irreducible constituent of $D_{\tilde{G} E_{\ell'}}(\hat{\Gamma}_{1})$. For this, it suffices to check that the generalized Steinberg character
$\St_{\wt G E_{\ell'}}:=D_{\wt G{E_{\ell'}}}(1_{\wt{G}{E_{\ell'}}})$ appears in the generalized Gelfand--Graev character $\hat{\Gamma}_{1}$.

{Let $\wt T=\wt{\mathbf{T}}^F$ and $\wt{B}=\wt{\mathbf{B}}^F$ be a maximally split torus and Borel subgroup of $\wt G$ containing $T$ and $B$.}
By Corollary \ref{AC DM} and Lemma \ref{lem: model}, we can use \cite[Proposition 3.17]{DM94}, which shows that {given $\sigma\in E_{\ell'}$}, on the coset
$\wt B^F \sigma$ the Steinberg character is $\Ind_{\wt T \sigma}^{\wt B\sigma} (1_{\wt T \sigma})$. Hence, $\Res^{\wt G{E_{\ell'}}}_{\wt B{E_{\ell'}}}(\St_{\wt G{E_{\ell'}}})=\Ind_{\wt T{E_{\ell'}}}^{\wt B {E_{\ell'}}}(1_{\wt T {E_{\ell'}}})$. Hence, the scalar product 
$$\langle \St_{\wt G{E_{\ell'}}}, \hat{\Gamma}_{1} \rangle= \langle \Ind_{\wt T{E_{\ell'}}}^{\wt B{E_{\ell'}}}(1_{\wt T{E_{\ell'}}}), \Ind_{U{E_{\ell'}}}^{\wt B{E_{\ell'}}}(\hat{\phi}) \rangle.$$ 
Now by the Mackey formula, $\Res_{\wt T{E_{\ell'}}}^{\wt B{E_{\ell'}}} \Ind_{U{E_{\ell'}}}^{\wt B{E_{\ell'}}}(\hat{\phi}) = \Ind_{E_{\ell'}}^{\wt T{E_{\ell'}}} \Res_{{E_{\ell'}}}^{U{E_{\ell'}}}(\hat{\phi})=\Ind_{{E_{\ell'}}}^{\wt T{E_{\ell'}}}(1_{E_{\ell'}})$, so that
$$\langle \Ind_{\wt T{E_{\ell'}}}^{\wt B{E_{\ell'}}}(1_{\wt T{E_{\ell'}}}), \Ind_{U{E_{\ell'}}}^{\wt B{E_{\ell'}}}(\hat{\phi}) \rangle=\langle 1_{\wt T{E_{\ell'}}}, \Ind_{{E_{\ell'}}}^{\wt T{E_{\ell'}}}(1_{E_{\ell'}}) \rangle= 1$$
by applying Frobenius reciprocity twice. Hence the generalized Steinberg character appears as a constituent of $\hat\Gamma_1$, as desired, completing the proof. 
\end{proof}

Throughout, we will use the fact that the Galois group $\gal$ acts on the set of series $\mathcal{E}(\bG^F, s)$ (see e.g. \cite[Prop.~3.3.15]{GM20}). Namely, if $\gamma\in\gal$ maps $e^{2\pi i/|s|}$ to its $r$th power, then $\gamma$ sends $\mathcal{E}(\G^F,s)$ to $\mathcal{E}(\G^F, s^r)$. Given a semisimple element $s\in \G^\ast$, we can then define $\QQ_s$ as in \cite{SFT23}. That is, $\QQ_s=(\QQ^{\mathrm{ab}})^{\gal_s}\subseteq\QQ(e^{2\pi i/|s|})$ is the fixed field of the stabilizer $\gal_s\leq \gal$ of the series $\mathcal{E}(\bG^F, s)$.

\begin{lemma}\label{lem:mult1}
    Let $\chi \in \Irr(B_0(\tilde{G}))$ with $\langle \chi, D_{\tilde{G}}(\Gamma) \rangle=\pm 1$ and $\Z(\tilde{G}) \subset \mathrm{ker}(\chi)$. Denote $E':=E_\chi \cap  E_{\ell'}$.  Then there exists a unique extension $\hat{\chi} \in \Irr(\tilde{G} E')$ of $\chi$ such that $\langle \hat{\chi}, D_{ \tilde{G}E'}(\hat{\Gamma}) \rangle = \pm 1$. Moreover, $\mathbb{Q}(\hat{\chi})=\mathbb{Q}(\chi)=\mathbb{Q}_{\tilde{s}}$.
\end{lemma}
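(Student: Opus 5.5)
Since $\chi$ is a constituent of $D_{\tilde G}(\Gamma)$ with multiplicity $\pm1$, it is the semisimple character in some series $\mathcal{E}(\tilde G,\tilde s)$. The plan is to get the extension from the lifted duality and the extended Gelfand--Graev character $\hat\Gamma$, and then read off uniqueness and rationality from multiplicity one.

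First, existence and uniqueness of $\hat\chi$. By Remark~\ref{rem:gensteinrat} and the observation after Theorem~\ref{AC}, restriction commutes with the duality. Hence $\Res^{\tilde G E'}_{\tilde G}(D_{\tilde G E'}(\hat\Gamma')) = D_{\tilde G}(\Gamma)$, where $\hat\Gamma' := \Ind_{UE'}^{\tilde G E'}(\hat\phi)$ and, by Mackey, $\Res^{\tilde G E_{\ell'}}_{\tilde G E'}\hat\Gamma = \hat\Gamma'$. Since $\chi$ is $E'$-invariant and $\tilde G E'/\tilde G \cong E'$ is an $\ell'$-group, I expect $E'$ to be cyclic, or at least $\chi$ to extend (for semisimple characters of $\tilde G$ this is known, e.g. via \cite{CS25}). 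Write $\Irr(\tilde G E' \mid \chi) = \{\hat\chi\mu \mid \mu \in \Irr(E')\}$ for a fixed extension $\hat\chi$ when $E'$ is abelian. Then
\[
\sum_{\mu} \langle \hat\chi\mu, D_{\tilde G E'}(\hat\Gamma')\rangle\, \hat\chi(1) = \langle \chi, D_{\tilde G}(\Gamma)\rangle\, \chi(1) = \pm\chi(1).
\]
So exactly one $\mu$ gives coefficient $\pm1$ and all others give $0$; this gives existence and uniqueness. For nonabelian $E'$, one can argue directly with the virtual character $D_{\tilde G E'}(\hat\Gamma')$: its projection onto $\Irr(\tilde G E'\mid\chi)$ restricts to $\pm\chi$. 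A virtual character of the form $\sum a_\psi \psi$ whose restriction is $\pm\chi$ with $\chi(1)=\psi(1)$ for the extensions forces $\sum a_\psi \psi(1)/\chi(1) = \pm1$. Combined with $\langle D(\hat\Gamma'), D(\hat\Gamma')\rangle = \langle \hat\Gamma', \hat\Gamma'\rangle$ (computed by Mackey and multiplicity-freeness), this should show the projection is a single $\pm\hat\chi$. I expect this norm computation to be the main technical point.

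Second, the field of values. The character $\hat\phi$ is the trivial extension of $\phi$, so for $\gamma \in \gal$ we have $\hat\Gamma'^{\gamma} = \Ind_{UE'}^{\tilde G E'}(\hat\phi^\gamma)$. The character $\phi^\gamma$ is conjugate to $\phi$ under $\tilde T$, since there is a unique Gelfand--Graev character of $\tilde G$. Conjugating by an element $t \in \tilde T$ transports $\hat\phi^\gamma$ to an extension of $\phi$ to $U E'^{t}$, and the $\tilde G E'$-conjugacy class of this is the relevant datum. I would choose the setup, following \cite[Def.~3.3]{Spa12}, so that $\hat\Gamma'$ is rational, or at least so that $\hat\Gamma'^\gamma = \hat\Gamma'$. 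The duality $D_{\tilde G E'}$ commutes with $\gamma$ (Remark~\ref{rem:gensteinrat}), so for $\gamma \in \gal_\chi$ the character $\hat\chi^\gamma$ is an extension of $\chi$ with multiplicity $\pm1$ in $D_{\tilde G E'}(\hat\Gamma')$. Uniqueness then gives $\hat\chi^\gamma = \hat\chi$, so $\QQ(\hat\chi) = \QQ(\chi)$.

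Finally, $\QQ(\chi) = \QQ_{\tilde s}$. The semisimple character is the unique one in its series with nonzero multiplicity in $D(\Gamma)$, and $\Gamma$ is rational. Hence $\gamma$ fixes $\chi$ if and only if $\gamma$ stabilizes $\mathcal{E}(\tilde G,\tilde s)$, i.e. $\gamma \in \gal_{\tilde s}$. The hypothesis $\Z(\tilde G)\subseteq \ker\chi$ is used to avoid central characters and to ensure that the $\tilde T$-conjugation in the Galois step does not produce a nontrivial linear twist. The main obstacle is verifying that $\hat\Gamma'$ is $\gal$-stable as a character of $\tilde G E'$. I would first try to show that $\hat\phi^\gamma$ and $\hat\phi$ are conjugate under an element of $\tilde T$ centralizing $E'$, using the explicit description of $\phi$ on simple root subgroups.
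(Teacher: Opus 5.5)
There is a genuine gap at the Galois step, and the intermediate claim you aim for, $(\hat\Gamma')^{\gamma}=\hat\Gamma'$, is false in general. Take $\tilde t\in\tilde T$ with $\phi^{\gamma\tilde t}=\phi$. All one gets is $[E',\tilde t]\subseteq\tilde T\cap U\Z(\tilde G)=\Z(\tilde G)$, because $\phi^\gamma$ is $E'$-stable and $\phi$ has stabilizer $U\Z(\tilde G)$ in $\tilde B$. On the part of $\hat\Gamma'$ with central character $\theta\in\Irr(\Z(\tilde G))$, the action of $\gamma$ then involves a twist by a linear character of $\tilde GE'/\tilde G$, given by the values of $\theta$ on $[E',\tilde t]$.

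This twist really occurs. Take $\tilde G=\GL_n(p)$ with $n$ even and $p$ odd, $E'=\langle\tau\rangle$ the graph automorphism, and $\gamma$ raising $p$th roots of unity to a non-square power $k$. Let $\theta$ be the character of order $2$ of $\Z(\tilde G)\cong\mathbb{F}_p^\times$. Then $\theta([\tau,\tilde t])=\theta(k)^{n-1}=-1$ for every admissible $\tilde t$. So $\gamma$ sends the extension in $\hat\Gamma'$ of a rational $\tau$-invariant regular character with central character $\theta$ to its sign twist. In particular no $\tilde t$ centralizing $E'$ exists. This already happens in the lemma's setting, e.g. for $\chi=1_{\tilde G}$.

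The missing idea, and exactly where $\Z(\tilde G)\subseteq\ker\chi$ is used, is to replace $\hat\Gamma'$ by its summand $\hat\Gamma'_1:=\Ind_{U\Z(\tilde G)E'}^{\tilde GE'}(\hat\phi\,1_{\Z(\tilde G)})$. Duality preserves central characters, so the irreducible $\pm D_{\tilde GE'}^{-1}(\hat\chi)$ occurring in $\hat\Gamma'$ has trivial central character and hence occurs in $\hat\Gamma'_1$. Moreover $\hat\phi\,1_{\Z(\tilde G)}$ is $\gamma\tilde t$-invariant, since it is trivial on $[E',\tilde t]\subseteq\Z(\tilde G)$. Hence $\hat\Gamma'_1$ is $\gal$-stable, and your uniqueness argument run with $\hat\Gamma'_1$ gives $\hat\chi^\gamma=\hat\chi$ for all $\gamma\in\gal_\chi$. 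This is the paper's argument.

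Two smaller points. First, in the abelian case the identity $\sum_\mu\langle\hat\chi\mu,D_{\tilde GE'}(\hat\Gamma')\rangle=\pm1$ does not by itself force a single nonzero term. Write $D_{\tilde G}(\chi)=\pm\chi^\ast$, with $\chi^\ast$ the $E'$-invariant regular character. Since $D_{\tilde GE'}$ is an isometry commuting with restriction, each number $\langle D_{\tilde GE'}^{-1}(\hat\chi\mu),\hat\Gamma'\rangle$ is $\pm$ the multiplicity in the genuine character $\hat\Gamma'$ of an extension of $\chi^\ast$. Because $\Res^{\tilde GE'}_{\tilde G}(\hat\Gamma')=\Gamma$ is multiplicity-free, exactly one extension occurs, with multiplicity one. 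This gives existence and uniqueness for arbitrary $E'$ at once, with no extendibility input and no norm computation.

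Second, your derivation of $\QQ(\chi)=\QQ_{\tilde s}$ is correct. It uses the rationality of $\Gamma$, the Galois-equivariance of $D_{\tilde G}$, and the uniqueness of the semisimple character in each series. It is more elementary than the paper's appeal to \cite{SV20}.
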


\begin{proof}
First, we note that an extension exists by the proof of \cite[Prop.~3.4]{Spa12}.  The uniqueness of the extension  is clear from Frobenius reciprocity, together with the fact that Alvis--Curtis duality is an isometry on the set of virtual characters. This, together with the fact that $D_{\tilde{G} E_{\ell'}}$ commutes with all Galois automorphisms by construction, implies that if $\sigma\in\gal$ fixes $\chi$ and $\hat{\Gamma}$, then it also fixes $\hat{\chi}$. 

Now, as all non-degenerate characters of $U$ are $\tilde{T}$-conjugate, it follows that for each $\sigma\in\gal$, there is some $\tilde{t} \in \tilde{T}$ such that $\sigma \tilde{t}$ fixes $\phi$. {Hence $\Gamma^\sigma=\Gamma$.} Further,  $[E_{\ell'}, \tilde{t}] \subset \Z(\tilde{G})$, since the character $\phi$ has stabilizer $U \Z(\tilde{G})$ in $\tilde{B}$ and is $E_{\ell'}$-stable, so $[E_{\ell'}, \tilde{t}]  \in \tilde{T} \cap U \Z(\tilde{G})=\Z(\tilde{G})$. Now as $\chi$ is trivial on $\Z(\tilde{G})$ and Alvis-Curtis duality preserves underlying central characters, we deduce that $\mathbb{Q}(\chi)=\mathbb{Q}(\hat{\chi})$. (Indeed, for $\theta \in \Irr(\Z(\tilde{G}))$ we may write $\Gamma_\theta=\Ind_{U \Z(\tilde{G})}^{\tilde{G}}(\hat{\phi} \theta)$ so that $\Gamma=\sum_{\theta \in \Irr(\Z(\tilde{G}))} \Gamma_\theta$ and note that $D_{\wt G}(\chi)$ appears in the summand with $\theta= 1_{\Z(\tilde{G})}$. This summand $\Gamma_1$ has an extension $\hat{\Gamma}_1:=\Ind_{U \Z(\tilde{G}) E_{\ell'}}^{\tilde{G} E_{\ell'}} ( \hat{\phi} 1_{\Z(\tilde{G})})$ which contains $D_{\wt GE_{\ell'}}(\hat{\chi})$. It suffices to check that the character $\hat{\phi}':=\hat{\phi} 1_{\Z(\tilde{G})}$ is $\sigma \tilde{t}$-stable. However, for $e \in E_{\ell'}$ we have $\hat{\phi}'^{\sigma \tilde{t}}(e)=\hat{\phi}(e)\hat{\phi}'([e,\tilde{t}])^\sigma=\hat{\phi}(e)$, as $\hat{\phi}'$ is trivial on $[e,  \tilde{t}] \in \Z(\tilde{G})$.)

The rest follows from the fact that Jordan decomposition in the connected center case is compatible with Galois automorphisms by the main theorem of \cite{SV20}.
\end{proof}

\subsection{Height zero characters in the principal block}

We keep the situation from the previous section and now assume further that $S=G/\Z(G)$ is a non-abelian simple group.

\begin{lemma}\label{lem:bijsimplediags}
    Assume that $\ell \nmid |\Z(G)|$. Then restriction defines a bijection between $$\Irr(B_0(\tilde{G}/\Z(\tilde{G})) \to \Irr(B_0(G/\Z(G))).$$ In this situation, every character $\chi \in \Irr(B_0(S))$ has a unique extension $\tilde{\chi} \in \Irr(B_0( \tilde{G}/\Z(\tilde{G}))$, and this satisfies $(\tilde{G}E)_{\tilde{\chi}}=(\tilde{G} E)_\chi$.
\end{lemma}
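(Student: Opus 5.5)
The plan is to deduce the statement from the Alperin--Dade correspondence of Lemma \ref{lem:alpdade}, applied to $N:=G/\Z(G)$ inside $\tilde G/\Z(\tilde G)$.

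\emph{Setting up the quotients.} Write $\bar G:=G\Z(\tilde G)/\Z(\tilde G)\cong G/(G\cap \Z(\tilde G))=G/\Z(G)=S$. Put $\bar{\tilde G}:=\tilde G/\Z(\tilde G)$. Then $\bar G\lhd\bar{\tilde G}$, and $\bar{\tilde G}/\bar G\cong \tilde G/G\Z(\tilde G)$ is abelian. Its order divides the order of the diagonal automorphism group, which is $|\Z(G)|$. Hence by hypothesis $\bar{\tilde G}/\bar G$ has $\ell'$-order.

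\emph{Verifying the hypotheses of Lemma \ref{lem:alpdade}.} Take $\bar{\tilde G}$ in the role of $G$, $\bar G$ in the role of $N$, and $P\in\Syl_\ell(\bar{\tilde G})$, so that $P\le \bar G$. I must check that $M:=\bar G\,\C_{\bar{\tilde G}}(P)$ is normal and in fact equals $\bar{\tilde G}$. Normality is automatic, because $\bar{\tilde G}/\bar G$ is abelian, so every subgroup containing $\bar G$ is normal.

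For the equality, the Frattini argument gives $\bar{\tilde G}=\bar G\,\N_{\bar{\tilde G}}(P)$. It therefore suffices to show that the diagonal automorphisms induced on $S$ by $\N_{\bar{\tilde G}}(P)$ are realised inside $\bar G\,\C(P)$. The key input is the standard fact that the diagonal automorphisms of order prime to $\ell$ act trivially on a Sylow $\ell$-subgroup modulo inner automorphisms. One way to see this is to use a maximal torus $\tilde T$ with $\tilde G=G\tilde T$ and $\N_{\tilde G}(\tilde T)$ containing a Sylow $\ell$-normaliser. Alternatively, I would invoke a known result (Cabanes--Späth type) that $\tilde G=G\,\C_{\tilde G}(P)$ when $\ell\nmid|\Z(G)|$. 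This identification $M=\bar{\tilde G}$ is the main obstacle; I would first try to derive it from $\tilde G=G\,\C_{\tilde G}(P)$ and then pass to quotients by $\Z(\tilde G)$.

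\emph{Conclusion.} Lemma \ref{lem:alpdade}(1), applied with $M=\bar{\tilde G}$, gives that restriction is a bijection $\Irr(B_0(\bar{\tilde G}))\to\Irr(B_0(\bar G))$. In particular each $\chi\in\Irr(B_0(S))$ has a unique extension $\tilde\chi$ in the principal block.

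For the stabiliser statement, $\tilde G E$ acts on $\bar{\tilde G}$ and stabilises both principal blocks, and restriction is equivariant under this action. So $x\in(\tilde GE)_\chi$ sends $\tilde\chi$ to another extension of $\chi$ lying in $B_0$, and by uniqueness $\tilde\chi^x=\tilde\chi$. The reverse inclusion is clear because $\chi$ is the restriction of $\tilde\chi$. Hence $(\tilde GE)_{\tilde\chi}=(\tilde GE)_\chi$.
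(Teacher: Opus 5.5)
Your overall architecture is sound. Once $M:=\bar G\,\C_{\bar{\tilde G}}(P)$ equals $\bar{\tilde G}$, Lemma~\ref{lem:alpdade} gives the bijection. Your stabiliser argument is also correct and more direct than the paper's: an element of $(\tilde GE)_\chi$ sends $\tilde\chi$ to an extension of $\chi$ in $B_0$, so it fixes $\tilde\chi$ by uniqueness.

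\textbf{The gap.} The identification $M=\bar{\tilde G}$ is asserted rather than proved, and it is not a formality.
\begin{itemize}
\item For a general $\ell'$-extension it fails. Take $A_4\lhd S_4$ with $\ell=3$: then $M=A_4$, and restriction is not a bijection on principal blocks.
\item In your setting it is equivalent to the lemma itself. If restriction is a bijection, then no nontrivial $\lambda\in\Irr(\bar{\tilde G}/\bar G)$ can lie in $B_0(\bar{\tilde G})$, because $\lambda$ and $1$ have the same restriction; this forces $M=\bar{\tilde G}$. So your reduction moves the entire content of the lemma into this one step.
\item Your torus sketch does not settle it. Elements of $\tilde T$ centralise $T_\ell$, but $P$ generally also contains $\ell$-elements of the Weyl group, for example in $\SL_n(q)$ with $\ell\mid q-1$ and $n\ge\ell$. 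You must produce elements with prescribed image in $\tilde G/G\Z(\tilde G)$ that commute with those as well. In that example this needs $\ell\nmid n$: it gives a fixed point of a Sylow $\ell$-subgroup of $\mathfrak S_n$, so diagonal matrices constant on its orbits realise all determinants.
\item The ``Cabanes--Sp\"ath type'' result you would invoke is not identified.
\end{itemize}

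\textbf{How to close it.} The missing step follows from the same input the paper uses for its stabiliser claim.
\begin{itemize}
\item First reduce to a statement about linear characters. Since $\bar{\tilde G}/\bar G$ is abelian of $\ell'$-order, a character $\lambda\in\Irr(\bar{\tilde G}/\bar G)$ lies in $B_0(\bar{\tilde G})$ if and only if it is trivial on $M$. To see this, compare central characters on classes of $\ell'$-size, or use Corollary~\ref{cor:linear action}. Hence $M=\bar{\tilde G}$ as soon as no nontrivial such $\lambda$ lies in the principal block.
\item Such a $\lambda$, viewed as a character of $\tilde G$, is $\hat z\in\mathcal{E}(\tilde G,z)$ for some $1\neq z\in\Z(\tilde G^\ast)$ of $\ell'$-order.
\item Therefore $\lambda\in\mathcal{E}_\ell(\tilde G,z)$. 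By \cite[Thm.~9.12]{CE04}, this set is disjoint from $\mathcal{E}_\ell(\tilde G,1)\supseteq\Irr(B_0(\tilde G))\supseteq\Irr(B_0(\bar{\tilde G}))$.
\end{itemize}
With this inserted, your Alperin--Dade argument is a valid alternative to the paper's citation of \cite[Thm.~17.7]{CE04}.
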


\begin{proof}
    This follows from \cite[Thm.~17.7]{CE04}. To see the final part of the statement, we remark that if $\sigma\in E_\chi\setminus{E_{\wt\chi}}$, then $\wt\chi^\sigma=\wt\chi\beta$ for some linear character $\beta$, whose order is prime to $\ell$ by our assumption $\ell\nmid |\Z(G)|$, and hence $\ell\nmid |\wt{G}/G\Z(\wt{G})|$. But this would contradict that $\wt\chi$, and hence $\wt\chi^\sigma$, must lie in a series indexed by a semisimple $\ell$-element by \cite[Thm.~9.12]{CE04}. 
\end{proof}

Recall that if $\mathcal{E}(\tilde{G},\tilde{s})$ is a Lusztig series, then all its characters restrict to a multiple of the same character of $\Z(\tilde{G})$. (See \cite[Lem.~2.2]{Ma07}.) In particular, these characters restrict trivially to $\Z(\tilde{G})$ if and only if $\tilde{s} \in [\tilde{G}^\ast, \tilde{G}^\ast]$. (See \cite[Lem.~4.4, Rem.~4.6]{NT13}.) Now, by the degree properties of Jordan decomposition, the semisimple character in the Lusztig series $\mathcal{E}(\tilde{G},\tilde{s})$  is of $\ell'$-degree if and only if $\C_{\tilde{G}^\ast}(\tilde{s})$ contains a Sylow $\ell$-subgroup of $\tilde{G}^\ast$. 

In particular, if $\wt{s}$ has $\ell$-power order, this means the semisimple character is of $\ell'$-degree if and only if  $\tilde{s} \in \Z(P^\ast)$ for some Sylow $\ell$-subgroup $P^\ast$ of $\tilde{G}^\ast$. Note that $P_0^\ast:=P^\ast \cap [\tilde{G}^\ast,\tilde{G}^\ast]$ is a Sylow $\ell$-subgroup of $[\tilde{G}^\ast,\tilde{G}^\ast]$. 

\begin{lemma}\label{lem:semisimple}
    With the notation above, suppose that $ \tilde{s} \in P_0^\ast \cap \Z(P^\ast)$. Then the semisimple character $\tilde{\chi} \in \mathcal{E}(\tilde{G},\tilde{s})$ lies in $\Irr_0(B_0(\tilde{G}/\Z(\tilde{G})))$. Moreover, if $\tilde{s}$ has order $\ell$, then $\mathbb{Q}(\tilde{\chi}) \subseteq \mathbb{Q}_\ell$ and if $\ell$ does not divide the order of any graph automorphisms in $E$, then $[E:E_{\tilde{\chi}}]$ is not divisible by $\ell$. 
\end{lemma}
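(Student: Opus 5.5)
We prove the four assertions in turn: principal block, degree and kernel, then field of values, then orbit length.

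\emph{Principal block.} Since $\tilde{s}$ is an $\ell$-element, $\mathcal{E}(\tilde{G},\tilde{s})\subseteq \mathcal{E}_\ell(\tilde{G},1)$. The semisimple character $\tilde{\chi}$ is, up to sign, a constituent of $D_{\tilde{G}}(\Gamma)$, and by Hiss's result it lies in the projection $\Gamma_1$, i.e. $\pm\tilde\chi$ is a constituent of $D_{\tilde G}(\Gamma_1)$. By Lemma \ref{lem:ACGGRprincblock} (or directly Lemma \ref{lem:GGRblock} with $E_{\ell'}=1$, together with the fact that $1_{\tilde G}=\pm D_{\tilde G}(\St_{\tilde G})$ and $\St_{\tilde G}$ is a constituent of $\Gamma_1$), all constituents of $D_{\tilde{G}}(\Gamma_1)$ lie in $B_0(\tilde{G})$. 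Hence $\tilde{\chi}\in B_0(\tilde{G})$.

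\emph{Degree and kernel.} Since $\tilde{s}\in P_0^\ast\subseteq[\tilde{G}^\ast,\tilde{G}^\ast]$, the characters of $\mathcal{E}(\tilde G,\tilde s)$ are trivial on $\Z(\tilde{G})$ by \cite[Lem.~4.4]{NT13}. Thus $\tilde\chi$ lies in the principal block of $\tilde{G}/\Z(\tilde{G})$, because $\Z(\tilde G)\le\ker$ and the principal blocks correspond. Next, $\tilde{s}\in\Z(P^\ast)$ gives $P^\ast\le \C_{\tilde G^\ast}(\tilde s)$, so $\tilde\chi(1)=[\tilde G^\ast:\C(\tilde s)]_{p'}$ is prime to $\ell$. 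The defect of $B_0$ is full, so this is height zero.

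\emph{Field of values.} Suppose $|\tilde s|=\ell$. By Lemma \ref{lem:mult1}, applied with $E'=1$ since $\tilde\chi$ has multiplicity $\pm1$ in $D(\Gamma)$, we get $\mathbb{Q}(\tilde\chi)=\mathbb{Q}_{\tilde s}$. This field is contained in $\mathbb{Q}(e^{2\pi i/\ell})=\mathbb{Q}_\ell$ by definition of $\mathbb{Q}_{\tilde s}$.

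\emph{Orbit length.} Write $\tilde G E$ acting, where $E=E_\ell E_{\ell'}$. We show $E_\ell$ can be chosen to fix $\tilde\chi$; then $[E:E_{\tilde\chi}]$ divides $[E:E_\ell]$ up to conjugacy, and it suffices to show some Sylow $\ell$-subgroup stabilizes $\tilde\chi$. By hypothesis $\ell$ does not divide the order of any graph automorphism, so $E_\ell$ consists of field automorphisms, say generated by $F_{q_0}$ with $q=q_0^{\ell^a}$. The action on series is $\mathcal{E}(\tilde G,\tilde s)^{F_{q_0}}=\mathcal{E}(\tilde G,F_{q_0}^\ast(\tilde s))$ (via the dual $E^\ast$ as in \cite{CS25}), and semisimple characters are permuted accordingly. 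So we need $F^\ast_{q_0}(\tilde s)$ to be $\tilde G^\ast$-conjugate to $\tilde s$ for a suitable choice of $\tilde s$ in its class.

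This is the main obstacle. The plan is to choose $P^\ast$ to be $E_\ell^\ast$-stable, which is possible because $E^\ast_\ell$ is an $\ell$-group acting on the set of Sylow $\ell$-subgroups of $\tilde G^\ast$, whose number is prime to $\ell$; we then take the corresponding Sylow subgroup $P^\ast$. Then $E_\ell^\ast$ acts on $\Z(P^\ast)\cap P_0^\ast$, an $\ell$-group, and conjugacy classes of $\tilde G^\ast$ meeting $\Z(P^\ast)$ are controlled by $\N_{\tilde G^\ast}(P^\ast)$. Hence the $E_\ell$-orbit of the class of $\tilde s$ has $\ell$-power length. To make it trivial, I would instead count: the $E$-orbit of $\tilde\chi$ corresponds to the $E^\ast$-orbit of the class of $\tilde s$. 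The field automorphism $F_{q_0}$ acts on the $\ell$-elements of $\tilde T^\ast$ ($\Z(P^\ast)$ lies in a Sylow $d$-torus) as a power map composed with a Weyl group element. Since $F_{q_0}^{\ell^a}=F$ acts trivially on $\tilde G^{\ast}$, the induced power map $x\mapsto x^{q_0}$ on $\ell$-elements has order dividing $\ell^a$ modulo $\ell$-torsion. Moreover, $q_0$ has order prime to $\ell$ in $(\mathbb{Z}/\ell)^\times$, and $q_0^{\ell^a}=q$ has the same multiplicative order mod $\ell$, so $q_0\equiv$ a power of $q$ times a unit of $\ell'$-order relation. I would exploit this to show that $\tilde s^{q_0}$ is $\tilde G^\ast$-conjugate to some $\tilde s^{q^j}$, which equals $F^{\ast j}(\tilde s)=\tilde s$ up to conjugacy. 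If this direct argument fails, the fallback is to replace the single $\tilde s$ by a well-chosen element of $\Z(P^\ast)\cap P_0^\ast$ fixed by $E^\ast_\ell$, which exists by the fixed-point count for $\ell$-groups acting on the $\ell$-group $\Z(P^\ast)\cap P_0^\ast$ modulo its identity.
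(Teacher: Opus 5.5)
Your arguments for membership in $B_0(\tilde G/\Z(\tilde G))$, height zero, and $\mathbb{Q}(\tilde\chi)\subseteq\mathbb{Q}_\ell$ are fine. For the block you essentially reprove Hiss's Cor.~3.4 via Lemma~\ref{lem:ACGGRprincblock}, whereas the paper simply cites it. The gap is in the last claim, $\ell\nmid[E:E_{\tilde\chi}]$, which your proposal does not establish.

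You work with $\tilde G^\ast$-conjugacy and with a torus containing $\Z(P^\ast)$. On that torus $F_{q_0}^\ast$ is a power map twisted by a Weyl group element, and you never show that $F_{q_0}^\ast(\tilde s)$ is conjugate to $\tilde s^{q_0}$. Your next step, that $\tilde s^{q^j}$ ``equals $F^{\ast j}(\tilde s)=\tilde s$ up to conjugacy'', is unjustified: $F^\ast$ fixes $\tilde s$ but does not act on it as the $q$-th power map. It is also false for twisted groups. Take $\tilde G^\ast=\mathrm{GU}_3(q)$ with $5\le\ell\mid q+1$ and $\tilde s=\mathrm{diag}(\zeta,\zeta,\zeta^{-2})$, where $o(\zeta)=\ell$. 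Then $P^\ast$ is abelian, so $\tilde s\in P_0^\ast\cap\Z(P^\ast)$, but $\tilde s^q=\tilde s^{-1}$ has different eigenvalues, so it is not conjugate to $\tilde s$. Your normalization $F_{q_0}^{\ell^a}=F$ is also wrong in twisted cases. There $E=\langle F_p\rangle$ has order $2f$, and an $\ell$-element $F_{q_0}$ satisfies $q_0^{\ell^a}=q^2$. Finally, the fallback of choosing an $E_\ell^\ast$-fixed element of $\Z(P^\ast)\cap P_0^\ast$ only shows that \emph{some} $\tilde s$ works. The lemma is a statement about the given $\tilde s$, and Lemma~\ref{lem:typeAZP} needs a specific one.

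The missing idea is to pass to geometric conjugacy. Since $\tilde{\G}$ has connected center, $\sigma\in E$ fixes $\tilde\chi$ if and only if $\sigma^\ast$ fixes the $\tilde{\G}^\ast$-class of $\tilde s$ \cite[Cor.~2.5]{NTT}. Inside the algebraic group you may conjugate $\tilde s$ to some $t$ in the maximally split torus $\bT^\ast$. There $F_p^\ast$ acts literally as $t\mapsto t^p$. So $F_p^\ast$ sends the class of $t^j$ to the class of $t^{jp}$. Hence the $\langle F_p\rangle$-orbit of the class of $t$ is an image of $\langle p\rangle\le(\mathbb{Z}/\ell)^\times\cong\Aut(C_\ell)$. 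Its length therefore divides $\ell-1$, so $\ell\nmid[\langle F_p\rangle:\langle F_p\rangle_{\tilde\chi}]$. This uses that $o(\tilde s)=\ell$. Together with $\ell\nmid[E:\langle F_p\rangle]$, which is the hypothesis on graph automorphisms, this gives $\ell\nmid[E:E_{\tilde\chi}]$. The argument is uniform for twisted and untwisted groups and needs neither Sylow $d$-tori nor an $E_\ell^\ast$-stable $P^\ast$. Your Fermat-type observation $q_0\equiv q_0^{\ell^a}\pmod\ell$ is the same phenomenon, but it becomes usable only after $\tilde s$ is moved into $\bT^\ast$.
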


\begin{proof}
    That $\wt\chi$ lies in $\Irr_{\ell'}(\wt{G}/\Z(\wt{G}))$ follows from the discussion above. Further, it lies in the principal block by \cite[Cor.~3.4]{hiss}.
    
    It remains to prove the moreover statement. Now since $\tilde{s}$ has order $\ell$ we deduce that $\mathbb{Q}_{\tilde{s}} \subseteq \mathbb{Q}_{o(\tilde{s})}=\mathbb{Q}_\ell$. On the other hand, $\sigma \in E$ stabilizes $\wt\chi$ if and only if $\sigma^\ast$ stabilizes the geometric conjugacy class of $\tilde{s}$, where $\sigma^\ast\in E((\bG^\ast)^F)$ is dual to $\sigma$ (see \cite[Cor.~2.5]{NTT}). 
    Now, $\tilde{s}$ is $\wt \G^\ast$-conjugate to an element $t$ in the maximally split torus $\T^\ast$. Hence, we have $F_p^\ast(t)=t^p$. In particular, $F_p$ acts as an automorphism on the group $\langle t \rangle \cong C_\ell$. Since $\mathrm{Aut}(C_\ell) \cong C_{\ell-1}$ it follows that any automorphism of $\ell$-power order acts trivially on $\langle t \rangle$. Hence, $[\langle F_p \rangle: \langle F_p \rangle_{\wt\chi}]$ is not divisible by $\ell$. Finally, notice that if $\ell$ does not divide the order of any graph automorphisms, then $|E:\langle F_p \rangle|$ is also prime to $\ell$.
\end{proof}

Under our assumptions, note that $|[\tilde{G}^\ast,\tilde{G}^\ast]|_\ell=|G|_\ell$ and $|[\Z(\tilde{G}^\ast,\tilde{G}^\ast])|_\ell=|\Z(G)|_\ell$. In particular, if $\ell \mid |S|$ it follows that the Sylow $\ell$-subgroup $P_0^\ast$ is non-central, so in this case there always exists a nontrivial element in $ \Z(P_0^\ast)$ of order $\ell$. Given Lemma \ref{lem:semisimple}, we are interested in determining convenient elements of $\Z(P^\ast) \cap P_0^\ast$. Note that if $\ell \nmid |\Z(G)|=|\Z([\tilde{G}^\ast,\tilde{G}^\ast])|$, we have $P^\ast=P_0^\ast \Z(\tilde{G}^\ast)_\ell$ which gives $\Z(P^\ast) \cap P_0^\ast=\Z(P_0^\ast)$ in this situation.

\subsection{Some exceptional cases}

Throughout, we will use the common notation $\SL_n(\epsilon q)$, with $\epsilon\in\{\pm1\}$, to denote $\SL_n(q)$ if $\epsilon=1$ and $\operatorname{SU}_n(q)$ if $\epsilon=-1$, and similar for related groups $\GL_n(\epsilon q)$, etc. Similarly, $\tE_6(\epsilon q)$ will denote $\tE_6(q)$ if $\epsilon=1$ and the twisted version $\tw{2}\tE_6(q)$ if $\epsilon=-1$. 

We keep the notation from above. The following is also shown in \cite[Thm.~3.5]{GSV}.
\begin{lemma}\label{lem:typeAZ(P)}
  Suppose that $G=\SL_n(\epsilon q)$ and $2 \neq \ell \mid |\Z(G)|$. Then $\Z(P^\ast) \cap P_0^\ast=\Z(P_0^\ast)$. 
\end{lemma}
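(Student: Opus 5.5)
We will make the relevant groups explicit and compute. Here $G=\SL_n(\epsilon q)$, so $\tilde{G}=\GL_n(\epsilon q)$ and $\tilde{G}^\ast\cong\GL_n(\epsilon q)$, with $[\tilde{G}^\ast,\tilde{G}^\ast]\cong\SL_n(\epsilon q)$. We identify $P^\ast$ with a Sylow $\ell$-subgroup of $\GL_n(\epsilon q)$ and $P_0^\ast=P^\ast\cap\SL_n(\epsilon q)$. Let $d$ be the order of $\epsilon q$ modulo $\ell$ (so $e=d$ in the usual notation for $\GL_n(\epsilon q)$). Since $\ell\mid|\Z(G)|=\gcd(n,q-\epsilon)$, we have $\ell\mid q-\epsilon$, that is, $d=1$. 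In this case the standard description (Weir, Carter--Fong) applies. Write $n=\sum_i a_i\ell^i$ in base $\ell$. Then
$$P^\ast=\prod_i \bigl(C_{\ell^a}\wr X_{\ell^i}\bigr)^{a_i},$$
where $\ell^a=(q-\epsilon)_\ell$ and $X_{\ell^i}=C_\ell\wr\cdots\wr C_\ell$ is a Sylow $\ell$-subgroup of $\mathfrak{S}_{\ell^i}$, acting by permutation matrices on the diagonal torus $C_{\ell^a}^n$.

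\textbf{Step 1: compute $\Z(P^\ast)$.} Since $\ell$ is odd, each factor $C_{\ell^a}\wr X_{\ell^i}$ has centre equal to the scalar copy of $C_{\ell^a}$, embedded diagonally in its $\ell^i$ coordinates. This holds because $X_{\ell^i}$ acts transitively on the coordinates, and for $i\ge1$ no nontrivial element of $X_{\ell^i}$ centralizes the base group. So $\Z(P^\ast)$ is the group of diagonal matrices that are scalar $\zeta_j$ on each block, one block for each summand $\ell^i$ in the decomposition of $n$.

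\textbf{Step 2: compute $\Z(P_0^\ast)$.} This is the main obstacle: we must show that any $z\in\Z(P_0^\ast)$ is already central in $P^\ast$. Take $z\in\Z(P_0^\ast)$. First we show $z$ lies in the base torus $C_{\ell^a}^n\cap\SL_n$. Because $\ell\mid n$ we have $n\ge\ell\ge3$, so the torus $T_0:=C_{\ell^a}^n\cap\SL_n$ is large. For any permutation part of $z$, we can find an element of $T_0$ that fails to commute with it. Concretely, choose $\mathrm{diag}(\omega,\omega^{-1},1,\dots)$ with the support arranged so that the permutation moves it. Next, $z$ must commute with the permutation parts of $P_0^\ast$. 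These can be realized inside $P_0^\ast$: a permutation matrix of odd order has determinant $1$, and the elements of $X_{\ell^i}$ have $\ell$-power, hence odd, order. So $P_0^\ast$ contains the full top group $\prod_i X_{\ell^i}^{a_i}$. Commuting with these forces $z$ to be constant on each transitive block, and therefore $z\in\Z(P^\ast)$. For blocks of size $\ell^0=1$ there is no constraint coming from permutations, but such coordinates are already central in $P^\ast$.

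\textbf{Step 3: conclude.} Step 2 gives $\Z(P_0^\ast)\subseteq\Z(P^\ast)\cap P_0^\ast$. The reverse inclusion is trivial, since $P_0^\ast\le P^\ast$. Hence $\Z(P^\ast)\cap P_0^\ast=\Z(P_0^\ast)$. The oddness of $\ell$ enters in Step 2, through the determinant of the permutation matrices. Only the case $d=1$ arises, because $\ell\mid q-\epsilon$.
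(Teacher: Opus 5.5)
Your argument is correct, but it reaches the key inclusion $\Z(P_0^\ast)\subseteq D_0^\ast$ by a different route than the paper. Here $D^\ast$ and $D_0^\ast$ denote the Sylow $\ell$-subgroups of the diagonal tori of $\GL_n(\epsilon q)$ and $\SL_n(\epsilon q)$.

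The paper argues structurally. When $P_0^\ast$ is a Cabanes group, its centre lies in its unique maximal abelian normal subgroup, which is $D_0^\ast$. This needs the Kessar--Malle result that $P_0^\ast$ is Cabanes. It also forces a separate treatment of the case $n=3=\ell\,\|\,(q-\epsilon)$, where $P_0^\ast$ is extraspecial of order $27$ and not Cabanes.

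You instead use Weir's explicit description together with the elementary fact that $D_0^\ast$ separates coordinates, via the elements $\mathrm{diag}(\omega,\omega^{-1},1,\dots)$. This gives $\C_{P^\ast}(D_0^\ast)=D^\ast$. For $z=t\pi$ with $t$ diagonal and $s\in D_0^\ast$ one has $zsz^{-1}={}^{\pi}s$; you should state this explicitly, since it is what turns ``fails to commute with the permutation part'' into ``fails to commute with $z$''. It follows that any central element of $P_0^\ast$ has trivial permutation part.

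Your version is self-contained and uniform: it covers the exceptional case with no extra comment. The cost is that it relies on the explicit wreath-product structure rather than a citable structural property.

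The second half of your argument matches the paper. The odd-order permutation top group lies in $\SL_n$, so an element of $D_0^\ast$ is central in $P_0^\ast$ if and only if it is central in $P^\ast$. This is exactly the paper's observation $P^\ast/D^\ast\cong P_0^\ast/D_0^\ast$.
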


\begin{proof}
Note here that $\ell\mid (q-\epsilon)$. As long as $P_0^\ast$ is Cabanes (that is, that $P_0^\ast$ contains a unique maximal abelian normal subgroup $D_0^\ast$)  we know that $\Z(P_0^\ast)$ is contained in the Cabanes subgroup $D_0^\ast$ of $P_0^\ast$, which in this case is the Sylow $\ell$-subgroup $T^\ast_\ell$ of the diagonal torus $T^\ast$ of  $[\wt{G}^\ast, \wt{G}^\ast]\cong G$. Now $P^\ast/D^\ast\cong P_0^\ast/D_0^\ast$, where $D^\ast$ is the Cabanes group of $P^\ast$,  
so an element in the Cabanes subgroup $D_0^\ast$ is central in $P_0^\ast$ if and only if it is central in $P^\ast$. We know that $P_0^\ast$ is Cabanes except when $n=3=\ell \mid \mid (q -\epsilon)$ (see \cite[Lem.~3.12]{KM15} and its proof). The exceptional case is treated immediately.
\end{proof}

Unfortunately, as the following example shows, $\Z(P_0^\ast)$ can sometimes be too small.

\begin{example}
    Suppose that $\ell=3$ and $G=\SL_3(q)$ with $\ell=3 \mid (q-1)$. Then $\Z(P_0^\ast)=\Z([\wt G^\ast, \wt G^\ast])$. In particular, any semisimple character in a Lusztig series associated to a semsimple element in $\Z(P_0^\ast)$ will cover the trivial character of $G$.
\end{example}

However, we see that we can avoid this as long as $n$ is not a power of $\ell$:

\begin{lemma}\label{lem:typeAZP}
    Suppose that $G=\SL_{n}(\epsilon q)$. Assume that $\ell$ is an odd prime dividing $|\Z(G)|$ but 
that $n$ is not a power of $\ell$. Then $\Z(P_0^\ast) \neq \Z([\tilde{G}^\ast,\tilde{G}^\ast])_\ell$. Further, there exists $\tilde{s} \in \Z(P_0^\ast)$ of order $\ell$ such that the semisimple character $\tilde{\chi} \in \mathcal{E}(\tilde{G},\tilde{s})$ restricts irreducibly to a nontrivial character $\chi:=\Res_{G}^{\tilde{G}}(\tilde{\chi})$ with $(\tilde{G} E)_{\wt\chi}=(\tilde{G} E)_\chi$.
\end{lemma}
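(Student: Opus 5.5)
We work inside $[\wt G^\ast,\wt G^\ast]\cong\SL_n(\epsilon q)$, viewed in $\wt G^\ast=\GL_n(\epsilon q)$. Since $\ell$ is odd and divides $|\Z(G)|=\gcd(n,q-\epsilon)$, we have $\ell\mid(q-\epsilon)$. Write $n=\sum_i a_i\ell^{i}$ in base $\ell$. A Sylow $\ell$-subgroup $P^\ast$ of $\GL_n(\epsilon q)$ is then the direct product over the digits of $a_i$ copies of $C_{(q-\epsilon)_\ell}\wr P_{\ell^i}$, where $P_{\ell^i}$ is a Sylow $\ell$-subgroup of $\mathfrak S_{\ell^i}$. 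Its centre $\Z(P^\ast)$ consists of block scalar matrices $\mathrm{diag}(\zeta_{1}I_{\ell^{i_1}},\dots,\zeta_r I_{\ell^{i_r}})$ with $\zeta_j\in C_{(q-\epsilon)_\ell}$, one factor per block in the $\ell$-adic decomposition.

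Because $n$ is not a power of $\ell$, there are at least two blocks, say of sizes $m_1,m_2$. Let $\zeta$ be a primitive $\ell$-th root of unity in $\mathbb F_{q^2}^\times$ of norm type, and choose exponents $c_j$ with $\sum_j c_j m_j\equiv 0 \pmod \ell$, not all equal. For instance, if $m_1,m_2$ are powers of $\ell$, it is enough to take $c_j$ supported on two blocks with $c_1m_1+c_2m_2\equiv0$. A concern here is that when all block sizes are divisible by $\ell$ the determinant is automatically $1$, which is fine. If some $m_j=1$ (that is, $a_0\neq0$), we need $c_1m_1+c_2m_2\equiv 0$ solvable with $c_1\neq c_2$. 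Since $\ell$ is odd and $m_j\in\{1,\ell,\dots\}$, we can take $c=(1,-1)$ when $m_1=m_2=1$, or $c=(0,1)$ against a block of size divisible by $\ell$, and so on. Setting $\tilde s=\mathrm{diag}(\zeta^{c_j}I_{m_j})$ gives an element of $\Z(P^\ast)\cap P_0^\ast$ of order $\ell$ that is non-scalar, hence not in $\Z([\wt G^\ast,\wt G^\ast])$. By Lemma~\ref{lem:typeAZ(P)}, $\Z(P^\ast)\cap P_0^\ast=\Z(P_0^\ast)$, which proves the first claim.

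For the second claim, let $\wt\chi$ be the semisimple character in $\mathcal E(\wt G,\tilde s)$. By Lemma~\ref{lem:semisimple}, it lies in the principal block and has $\ell'$-degree. Restriction to $G$ is irreducible if and only if $\wt\chi\hat z\neq\wt\chi$ for all nontrivial $\hat z\in\Irr(\wt G/G)$, i.e.\ if and only if $\tilde s z\not\sim\tilde s$ for all nontrivial $z\in\Z(\wt G^\ast)$. This is the condition $A_{\G^\ast}(s)=1$ for the image $s$ in $\mathrm{PGL}$. The eigenvalue multiset of $\tilde s$ is $\{\zeta^{c_j}\}$ with multiplicities; multiplying by a scalar $z$ must permute it, which forces $z$ to be an $\ell$-th root of unity shifting the eigenvalue pattern. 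I would choose the $c_j$ so that this pattern is not translation invariant, for example by using only two distinct eigenvalues $1$ and $\zeta^{c}$ when $\ell\ge3$: a nontrivial translation of a two-element subset of $\mu_\ell$ with $\ell$ odd cannot preserve it. Then $\chi:=\Res^{\wt G}_G\wt\chi$ is irreducible. It is nontrivial because $\tilde s$ is non-central, so $\wt\chi$ is not a linear character.

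Finally, for $(\wt GE)_{\wt\chi}=(\wt GE)_\chi$, we argue exactly as in Lemma~\ref{lem:bijsimplediags}. If $\sigma$ stabilizes $\chi$, then $\wt\chi^\sigma=\wt\chi\beta$ with $\beta\in\Irr(\wt G/G)$. Both $\wt\chi^\sigma$ and $\wt\chi$ lie in $B_0$, so $\beta$ has $\ell$-power order. Its $\ell'$-part is trivial, but $\ell$ divides $|\wt G/G|$ here, so that argument does not finish the job. Instead we use the central character: both $\wt\chi$ and $\wt\chi^\sigma$ are trivial on $\Z(\wt G)$ because $\tilde s\in[\wt G^\ast,\wt G^\ast]$ and $\sigma$ preserves this. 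Hence $\beta$ is trivial on $\Z(\wt G)G$, i.e.\ it corresponds to $z\in\Z(\wt G^\ast)$ of order dividing $\ell$ with $\tilde s^{\sigma^\ast}\sim\tilde s z$. I would rule this out by choosing the eigenvalue pattern so that it is stable under $\tilde s\mapsto\tilde s^{p}$ and inversion (the Galois images realizing field and graph automorphisms), while still not admitting scalar translates. The main obstacle is this simultaneous choice: the multiset must be closed under the power maps relevant to $E$, yet no nontrivial scalar translate may map it into its Galois orbit. I expect the choice $\{1^{(n-m)},\zeta^{(m)}\}$ with $m\ne n-m$ possible (using the digit structure of $n$) to work.
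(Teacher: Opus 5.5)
Your construction of $\tilde s$, the proof that $\Z(P_0^\ast)\neq\Z([\wt G^\ast,\wt G^\ast])_\ell$, and the irreducibility of $\Res^{\wt G}_G(\wt\chi)$ are all fine. (In fact $\Z(P^\ast)\cap P_0^\ast\subseteq\Z(P_0^\ast)$ holds trivially, and $a_0=0$ automatically since $\ell\mid n$.)

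The gap is the stabilizer equality $(\wt GE)_{\wt\chi}=(\wt GE)_\chi$. You leave this as an expectation, and your criterion is both unnecessary and not satisfied by your own candidate: $\{1^{(n-m)},\zeta^{(m)}\}$ is not closed under inversion. What is actually needed is only that $\sigma^\ast(\tilde s)\sim\tilde s z$ with $z\in\Z(\wt G^\ast)$ forces $z=1$. If $\sigma^\ast(\tilde s)$ is not conjugate to any scalar translate of $\tilde s$, then $\sigma\notin E_\chi$ and there is nothing to check. When $m\neq n-m$ this follows at once from multiplicities: $\sigma^\ast(\tilde s)$ has eigenvalue $1$ with multiplicity $n-m$, while $\tilde s z$ has eigenvalue $z$ with multiplicity $n-m$, so $z=1$. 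The paper reaches the same conclusion via the self-normalizing Levi subgroup $\C_{\G^\ast}(s_0)$.

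However, $m\neq n-m$ cannot always be arranged from the digit structure. If $n=2\ell^i$ (e.g.\ $n=6$, $\ell=3$), the $\ell$-adic decomposition has exactly two blocks, both of size $\ell^i$, so the only choice is $m=n-m$, and your $\tilde s$ then genuinely fails. Take the graph automorphism $\gamma$; in the unitary case take instead $F_q\in E$, which inverts $\ell$-th roots of unity since $\ell\mid q+1$. It maps $\mathcal E(\wt G,\tilde s)$ to $\mathcal E(\wt G,\tilde s^{-1})$, and $\tilde s^{-1}=\mathrm{diag}(I_m,\zeta^{-1}I_m)\sim\zeta^{-1}\tilde s$. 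Hence $\wt\chi^\gamma=\wt\chi\hat z$ with $z=\zeta^{-1}\neq 1$. Since the restriction is irreducible, the characters $\wt\chi\beta$ are pairwise distinct, so $\gamma$ fixes $\chi$ but not $\wt\chi$.

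The missing idea is to treat $n=2\ell^i$ separately with $\tilde s=\mathrm{diag}(\zeta I_{\ell^i},\zeta^{-1}I_{\ell^i})$, as the paper does; its class is stable under inversion. Then $\sigma^\ast(\tilde s)$ has eigenvalues $\mu,\mu^{-1}$, each of multiplicity $\ell^i$, for some $\mu\neq 1$. Matching these with $z\zeta, z\zeta^{-1}$ forces $z^2=1$, hence $z=1$ because $z$ has odd order. The paper phrases this as: $\wt\chi$ is the unique $\gamma$-stable character above $\chi$ in a Lusztig series of an element of order $\ell$, and $E$ is abelian.
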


\begin{proof}
We have here $\wt{G}=\GL_n(\epsilon q)=\wt{G}^\ast$.
    Let $n=\sum_{k} \ell^k a_k$  be the $\ell$-adic representation, and suppose $i$ is minimal such that $a_i \neq 0$. Note that $a_i > 0$ since $\ell \mid n$. Let $\la\in\mathbb{F}_{q^2}^\times$ have order $\ell$. If $n=\ell^i a_i$ (and hence $a_i > 1$ by our assumption that $n$ is not a power of $\ell$) then we consider the matrix $\tilde{s}_0:=\mathrm{diag}(1,\dots,1,\la,\dots,\la)\in \wt{G}^\ast$, where the first $n-\ell^i=\ell^i (a_i-1)$ entries are one and the last $\ell^i$ entries are $\la$. If instead the $\ell$-adic expansion of $n$ has at least two terms, we consider the matrix $\tilde{s}_0:=\mathrm{diag}(1,\dots,1,\la,\dots,\la)$ where  now the first $n-\ell^i a_i$ entries are one and the last $\ell^ia_i$ entries are $\la$. Note that in either case, $\wt{s}_0$ is in $\Z(P^\ast_0)$ but not central in $[\tilde{G}^\ast,\tilde{G}^\ast]$. 

Now observe that the image of $s_0:=\iota^\ast(\tilde{s}_0)$ is again an $\ell$-element. Let us first suppose that $n \neq 2 \ell^i$. In this case the Levi subgroup $\C_{\G^\ast}(s_0)$ is self-normalizing in $\G^\ast$. Note that the centralizer of $s_0$ is connected and therefore the semisimple character $\tilde{\chi} \in \mathcal{E}(\tilde{G},\tilde{s}_0)$ restricts to an irreducible character of $G$. Moreover, $\tilde{\chi}$ is the unique character in its Lusztig series covering $\chi$. Hence, for the last claim is suffices to show that if $\sigma\in E$ such that $s_0$ and $\sigma^\ast(s_0)$ are $G^\ast$-conjugate, then also $\tilde{s}_0$ and $\sigma^\ast(\tilde{s_0})$ are $\tilde{G}^\ast$-conjugate. For this, we can assume that $\sigma^\ast$ stabilizes the diagonal torus $\tilde{T}^\ast$ containing $\tilde{s}_0$ and acts as $\sigma^\ast(t)=t^{-1}$ if $\sigma$ is a graph automorphism and $\sigma^\ast(t)=t^{p_0}$, for some power $p_0$ of $p$, if $\sigma^\ast$ is a field automorphism. It follows that $\sigma^\ast(s_0)={}^n s_0 z$ for some $z \in \Z(\tilde{\G}^\ast)$ and $n \in \Gtilde^\ast$. Hence, $n \in \N_{\Gtilde^\ast}(\C_{\Gtilde^\ast}(\tilde{s}_0))=\C_{\Gtilde^\ast}(\tilde{s}_0)$ which implies $\sigma^{\ast}(\tilde{s}_0)=\tilde{s}_0$ by considering the structure of $\tilde{s}_0$.

Therefore, it only remains to consider the case where $n=2\ell^i$. In this case we consider the element $\tilde{s}:=\mathrm{diag}(\la,\dots,\la,\la^{-1},\dots,\la^{-1})$ with $\ell^i$ entries equal to each of $\la$ and $\la^{-1}$. Note that the $\tilde{\G}^\ast$-conjugacy class of $\tilde{s}$ is stable under the non-trivial graph automorphism $\gamma^\ast$ so that $\tilde{\chi}$ is $\gamma$-stable. Notice that if $\sigma\in E$ stabilizes $\chi$, then $\wt\chi^\sigma$ is stabilized by $\gamma$.  However, as $\gamma$ acts by inversion on $\tilde{G}/G$, it follows that $\tilde{\chi}$ is the unique $\gamma$-stable character covering $\chi$ lying in a Lusztig series corresponding to a semisimple element of order $\ell$. It follows that $\chi$ and $\tilde{\chi}$ again have the same stabilizer in $E$.
\end{proof}

We next treat the case $G=\tD_4(q)$.

\begin{lemma}\label{D4}
    Suppose that $G=\tD_4(q)$ and assume $\ell$ is odd. Then there is some $ \tilde{s} \in P_0^\ast \cap \Z(P^\ast)$ that is noncentral in $\wt{G}^\ast$, such that the semisimple character $\tilde{\chi} \in \mathcal{E}(\tilde{G},\tilde{s})$ lies in $\Irr_0(B_0(\tilde{G}/\Z(\tilde{G})))$,  $\mathbb{Q}(\tilde{\chi}) \subseteq \mathbb{Q}_\ell$, and $\ell \nmid [E:E_{\tilde{\chi}}]$. {Further, we have $1_G\neq\Res_G^{\wt{G}}\wt\chi\in\Irr(B_0(G/\Z(G)))$.}
\end{lemma}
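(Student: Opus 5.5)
We will deduce the statement from Lemma \ref{lem:semisimple}, applied to a well-chosen $\ell$-element. The work splits into two parts. The first is producing $\tilde{s}\in P_0^\ast\cap\Z(P^\ast)$ of order $\ell$ that is noncentral. The second is controlling the stabilizer in $E$, since for $\tD_4$ the triality graph automorphism has order $3$, so Lemma \ref{lem:semisimple} does not directly give $\ell\nmid[E:E_{\tilde\chi}]$ when $\ell=3$.

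\emph{Choice of $\tilde s$ and reduction to $P_0^\ast$.} Since $\ell$ is odd and $|\Z(G)|$ is a power of $2$ (it divides $(4,q^4-1)$), we have $\ell\nmid|\Z(G)|$. By the remark preceding Lemma \ref{lem:typeAZ(P)}, this gives $\Z(P^\ast)\cap P_0^\ast=\Z(P_0^\ast)$. Moreover $P_0^\ast$ is noncentral because $\ell\mid |S|$, so $\Z(P_0^\ast)$ contains an element $\tilde s$ of order $\ell$. This element is noncentral in $\wt G^\ast$: a central $\ell$-element of $[\wt G^\ast,\wt G^\ast]$ would lie in $\Z(G)_\ell=1$.

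Lemma \ref{lem:semisimple} then gives $\tilde\chi\in\Irr_0(B_0(\wt G/\Z(\wt G)))$ and $\QQ(\tilde\chi)\subseteq\QQ_\ell$. It also gives $\ell\nmid[E:E_{\tilde\chi}]$ whenever $\ell\neq3$, since then $\ell$ divides the order of no graph automorphism.

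Next we show $\chi:=\Res_G^{\wt G}\tilde\chi$ is nontrivial and lies in $B_0(G/\Z(G))$. By Lemma \ref{lem:bijsimplediags}, restriction is a bijection between the principal blocks of $\wt G/\Z(\wt G)$ and $G/\Z(G)$, so $\chi$ is irreducible and lies in $B_0(G/\Z(G))$. It is nontrivial because the extension of $1_G$ in $B_0$ is $1_{\wt G}\in\mathcal{E}(\wt G,1)$, whereas $\tilde s\neq 1$.

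\emph{The case $\ell=3$.} This is the main obstacle: we must choose $\tilde s\in\Z(P_0^\ast)$ so that the triality-orbit of its class has size prime to $3$, ideally size $1$. Pass to the maximally split torus, or the Sylow $d$-torus, where $d$ is the order of $q$ mod $3$. Then $\Z(P_0^\ast)$ contains the $3$-part of the centre of the relevant Sylow $d$-torus normalizer, and triality acts on it through the graph symmetry of the $\tD_4$ diagram. The first thing to try is to take $\tilde s$ in the fixed points of the triality-induced action on this torus, for instance an element in the image of the coroot $\check\alpha_2$ of the central node (suitably arranged) or one fixed by the $S_3$ symmetry. Such an element has a $\gamma^\ast$-stable class, and field automorphisms act through $\operatorname{Aut}(C_3)$ of order $2$, exactly as in the proof of Lemma \ref{lem:semisimple}. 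Therefore $E_{\tilde\chi}$ contains the graph automorphisms, and $[E:E_{\tilde\chi}]$ is a power of $2$.

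It remains to verify that this fixed element is central in $P_0^\ast$. When $d=1$ or $2$, the Sylow $3$-subgroup is $T_3\rtimes W_3$ with $W=W(\tD_4)$. Here we need an element of $T_3$ fixed by a Sylow $3$-subgroup of $W$ that is also triality-invariant; this needs a small explicit check using the eigenvalue-$1$ space of a $3$-element of $W$. If no such $\Z(P_0^\ast)$ element is triality-fixed, the fallback is to replace the triality-orbit requirement by an $\Aut(S)$-orbit of size prime to $3$ on classes, noting that orbits of $S_3$ have size $1$, $2$, $3$ or $6$. We then pick an element whose stabilizer contains a Sylow $3$-subgroup of the graph group, checking this using the explicit $3$-torsion of the torus.
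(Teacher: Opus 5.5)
\textbf{Gap: the case $\ell=3$ is not proved.}

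Outside $\ell=3$ your argument is correct and matches the paper. Since $\ell\nmid|\Z(G)|$, you get $\Z(P^\ast)\cap P_0^\ast=\Z(P_0^\ast)$. Lemma~\ref{lem:semisimple} then gives the block, degree and rationality claims, and the index claim when $\ell\neq3$. Lemma~\ref{lem:bijsimplediags} gives $1_G\neq\Res_G^{\wt G}\wt\chi\in\Irr(B_0(G/\Z(G)))$. You also reduce $\ell=3$ correctly: you need some $1\neq\tilde s\in\Z(P_0^\ast)$ of order $3$ whose class is stable under an order-$3$ triality $\sigma^\ast$.

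That existence claim is exactly what you leave unproved, and it is the only new content of the lemma beyond Lemma~\ref{lem:semisimple}. Three problems with what you offer:
\begin{enumerate}
\item The one concrete candidate you name fails. For $\zeta$ of order $3$, the roots trivial on $\check\alpha_2(\zeta)$ are those orthogonal to $\alpha_2$. These form a system of type $\tA_1^3$, whose Weyl group is a $2$-group. So $\check\alpha_2(\zeta)$ is not $3$-central, and its semisimple character has degree divisible by $3$.
\item The torus heuristic aims at the wrong target. Take $w\in W$ of order $3$ commuting with $\sigma$, as in a Sylow $3$-subgroup of $W\rtimes\langle\sigma\rangle$. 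Then $w$ acts on $V^\sigma$ through $W^\sigma\cong W(\tG_2)$ as a rotation of order $3$. So $\langle w,\sigma\rangle$ fixes no nonzero vector, and no eigenvalue-$1$ computation will produce a triality-fixed subtorus. A suitable element can only come from the finite $3$-torsion, and you give no argument there. In any case you only need $\sigma^\ast$ to stabilize the class of $\tilde s$, not to fix $\tilde s$.
\item Your fallback, a stabilizer containing a Sylow $3$-subgroup of the graph automorphisms, is the same requirement restated, since that Sylow subgroup is $\langle\sigma\rangle$.
\end{enumerate}

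\textbf{Missing idea: a $3$-group fixed-point argument.} This needs no torus computation. Let $\hat P$ be a Sylow $3$-subgroup of $[\wt G^\ast,\wt G^\ast]\langle\sigma^\ast\rangle$, and take $P_0^\ast:=\hat P\cap[\wt G^\ast,\wt G^\ast]$. Note that $\Z(\hat P)\neq1$.
\begin{itemize}
\item If $\Z(\hat P)\cap[\wt G^\ast,\wt G^\ast]\neq1$, any nontrivial element there lies in $\Z(P_0^\ast)$. It also commutes with some $x\sigma^\ast\in\hat P$, so its class is $\sigma^\ast$-stable.
\item Otherwise $\Z(\hat P)$ meets the coset $[\wt G^\ast,\wt G^\ast]\sigma^\ast$. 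Then some $x\sigma^\ast$ centralizes $P_0^\ast$, so every $1\neq\tilde s\in\Z(P_0^\ast)$ has $\sigma^\ast$-stable class.
\end{itemize}
Equivalently, $\hat P$ acts on the nontrivial $3$-group $\Z(P_0^\ast)$ and therefore fixes a nontrivial element. This is the paper's argument. Choosing $\tilde s$ of order $3$, the field automorphisms are handled exactly as in Lemma~\ref{lem:semisimple}, which gives $3\nmid[E:E_{\tilde\chi}]$.
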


\begin{proof}
Since $\ell\neq 2$, we have $\ell\nmid|\Z(G)|$ and the final statement follows from the first, together with Lemma \ref{lem:bijsimplediags} and that $\wt{s}$ is not central in $\wt{G}^\ast$. Further, by Lemma \ref{lem:semisimple}, we may assume that $\ell$ divides the order of some graph automorphism, 
 so that $\ell=3$. In particular, $\ell \nmid |\Z([\tilde{G}^\ast,\tilde{G}^\ast])|$. 
 
 In this case, let $\sigma$ be a graph automorphism  of order $3$ with dual $\sigma^\ast$. By the arguments in Lemma \ref{lem:semisimple}, it suffices to find some $\tilde{s} \in \Z(P_0^\ast)$ such that $\tilde{s}$ is $\sigma^\ast$-stable.
For this, let $\hat{P}$ be a Sylow $3$-subgroup of $[\tilde{G}^\ast,\tilde{G}^\ast] \langle \sigma^\ast \rangle$. If there is $1 \neq \tilde{s}\in\Z(\hat{P}) \cap [\wt G^\ast,\wt G^\ast]$ then the corresponding Lusztig series is $\sigma^\ast$-stable. Otherwise, $\Z(\hat{P})$ has nontrivial intersection with the coset $[\tilde{G}^\ast,\tilde{G}^\ast] \sigma^\ast$, which implies that $\Z(P_0^\ast) \leq \Z(\hat{P})$. In that case, any $1 \neq \tilde{s} \in \Z(P_0^\ast)$ is $\sigma^\ast$-stable.
\end{proof}

\section{The generalized Steinberg character and the principal block}\label{sec:steinberg}

\subsection{Observations on the Dade group for twisted groups}

Notice that if $G=\bG^F$  is quasisimple and $\ell \mid |Z(G)|$,  
then either $\ell=2$ or $G$ is of type $\tA_{n-1}(\varepsilon q)$ resp. $\tE_6(\varepsilon q)$
with $\ell \mid (n,q-\varepsilon)$ resp. $\ell \mid (3,q-\varepsilon)$. With the previous section and our emphasis on $\ell$ odd, we are interested in considering the latter cases.

Here we consider the Dade ramification group in $GE$ (resp. $\wt{G}E$) for $B_0(G)$ (resp. $B_0(\wt G)$) when $G$ is one of these groups. Note that if $q$ is not square, then $|G\langle F_p\rangle/G|$ is odd. In this case, the Steinberg character $\mathrm{St}_G$ satisfies Lemma \ref{lem:oddp'}. Then 
when combined with the following, we will be particularly interested in the case that $q$ is a square and $\epsilon=1$. We momentarily relax our assumption that $\bG$ is simple, simply connected in the following, in order to work with $\SL_n(\epsilon q)$ and $\GL_n(\epsilon q)$ simultaneously.

\begin{lemma}\label{lem: Dade principal}
Let $G=\bG^F\in\{\SL_n(\varepsilon q), \GL_n(\varepsilon q)\}$ and assume that $2 \neq \ell \mid (q-\varepsilon)$ with $n=\ell^k$. If $\sigma \in E_{\ell'}$ is in the Dade group of $B_0(G)$ in $GE$ (see Lemma \ref{lem:alpdade}), then $\sigma$ is a field automorphism satisfying $\C_{G\sig}(T_\ell)=\C_G(T_\ell) \langle \sigma \rangle$,  where $T_\ell$ is a Sylow $\ell$-subgroup of $\bT^F$ and $\bT\leq\bG$ is the diagonal torus. In particular, if $\sigma$ has even order, then $\varepsilon=1$.
\end{lemma}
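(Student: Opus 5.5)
We work directly with the description of the Dade ramification group in Lemma \ref{lem:alpdade}: $M=G\C_{GE}(P)$ for $P\in\Syl_\ell(GE)$. An element $\sigma\in E_{\ell'}$ lies in $M$ precisely when some element $g\sigma$ of the coset $G\sigma$ centralizes a Sylow $\ell$-subgroup of $G$. This is because $E_{\ell'}$ has $\ell'$-order, so the relevant $\ell$-structure is that of $G$ together with $E_\ell$, and we may replace $P$ by $P\cap G$ up to conjugation.

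The first step is to fix the Sylow $\ell$-subgroup. Since $\ell\mid(q-\varepsilon)$, $\ell$ is odd and $n=\ell^k$, a Sylow $\ell$-subgroup $P$ of $\GL_n(\varepsilon q)$ is $T_\ell\rtimes W_\ell$. Here $T_\ell$ is the Sylow $\ell$-subgroup of the diagonal torus, and $W_\ell$ is a Sylow $\ell$-subgroup of $S_n$, namely the iterated wreath product $C_\ell\wr\cdots\wr C_\ell$. For $\SL_n$ we intersect with $\SL_n$. We check that $T_\ell$ is the unique maximal normal abelian subgroup of $P$ (the Cabanes subgroup, as in the proof of Lemma \ref{lem:typeAZ(P)}), with the exceptional case $n=3=\ell\,\|\,(q-\varepsilon)$ for $\SL_3$ treated by hand. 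Consequently, any $g\sigma$ centralizing $P$ normalizes $T_\ell$, and in fact centralizes $T_\ell$ since $T_\ell\le P$. This gives $g\sigma\in\C_{G\sig}(T_\ell)$.

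The second step identifies which cosets $G\sigma$ meet $\C_{GE}(T_\ell)$. Write $\sigma=\gamma^aF_{p_0}$ with $\gamma$ the graph automorphism (inverse transpose) and $F_{p_0}$ a field automorphism, both stabilizing $\bT$. Then $g\sigma$ centralizing $T_\ell$ forces $g\in\N_G(\bT)$ acting on $T_\ell$ as the inverse of $\sigma$. The action of $\sigma$ on $T_\ell$ is $t\mapsto t^{\pm p_0}$ composed with nothing else, while $g$ acts through some $w\in S_n$ permuting coordinates. Since $T_\ell$ contains elements with pairwise distinct diagonal entries (for instance when $n\ge 2$, using $T_\ell\cong C_{\ell^a}^{n}$ or its determinant-one subgroup), a permutation composed with a power map can be trivial only if $w=1$ and $\pm p_0\equiv 1$ modulo $|T_\ell|$'s exponent. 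Thus $g\in\C_G(T_\ell)$, and $\sigma$ itself centralizes $T_\ell$. We also need that $\sigma$ has no graph part. The graph automorphism gives exponent $-p_0$, and $-p_0\equiv 1\pmod\ell$ is compatible with $\ell\mid q-\varepsilon$ only in configurations that we exclude by comparing with the orders of $F_{p}$ acting on $\ell$-roots of unity in $\mathbb{F}_{q^2}$. For the unitary case, the standard graph automorphism equals a field automorphism, and we normalize $E$ accordingly. This yields $\C_{G\sig}(T_\ell)=\C_G(T_\ell)\sig$, with $\sigma$ a field automorphism.

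For the final claim, suppose $\varepsilon=-1$ and $\sigma=F_{p_0}$ has even order in $E$. The field automorphisms of $\operatorname{SU}_n(q)$ come from $\mathbb{F}_{q^2}$, and $F_q$ acts as $t\mapsto t^{q}=t^{-1}$ on $T_\ell$ since $\ell\mid q+1$. An element of even order in the cyclic group $\langle F_p\rangle$ has a power acting by inversion, which cannot centralize $T_\ell\neq1$. This gives $\varepsilon=1$. The main obstacle is the second step, namely ruling out graph automorphisms and the interaction of Weyl group elements with the power map, including the small exceptional $\SL_3$ case. We would attack this by choosing explicit elements of $T_\ell$ with distinct eigenvalues.
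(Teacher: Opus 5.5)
Your route is the paper's: pass to an element $g\sigma$ that centralizes a Sylow $\ell$-subgroup containing $T_\ell$, use $\C_G(T_\ell)=T$ to get $g\in\N_G(\bT)$, analyse $w\in\N_G(\bT)/T\cong S_n$ composed with the power map on $T_\ell$, and finish by parity. The Cabanes-subgroup detour is unnecessary, since once $g\sigma$ centralizes $P\supseteq T_\ell$ it centralizes $T_\ell$. Your last step is correct and slightly cleaner than the paper's congruence argument: an even-order element of the cyclic group $E=\langle F_p\rangle$ has $F_q$ as a power, and $F_q$ inverts $T$.

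Two middle steps fail as written. The first is minor. $T_\ell$ has exponent $(q-\varepsilon)_\ell$, which can be smaller than $n=\ell^k$ (e.g.\ $n=9$, $q=4$, $\ell=3$). In that case no element of $T_\ell$ has pairwise distinct entries. Use instead, as the paper does, the elements $h_{ij}(\zeta)$ with $\zeta$ in position $i$ and $\zeta^{-1}$ in position $j$. If $t\mapsto w(t)^m$ fixes all of them, then $\{w(i),w(j)\}=\{i,j\}$ for all $i\neq j$. Since $n\geq 3$ this gives $w=1$, and then $m\equiv 1\pmod{(q-\varepsilon)_\ell}$.

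The genuine gap is excluding a graph component when $\varepsilon=1$. This cannot be done, because $-p_0\equiv 1\pmod{(q-1)_\ell}$ really occurs. Take $G=\SL_3(4)$ and $\ell=3$.
\begin{itemize}
\item $E=\langle F_2\rangle\times\langle\gamma\rangle$ has order $4$, so it is a $3'$-group.
\item The coset $G\gamma F_2$ contains an element acting as $x\mapsto (F_2(x)^{\mathrm T})^{-1}$.
\item This element fixes the permutation matrix of $(1\,2\,3)$ and acts on $T_3$ by $t\mapsto t^{-2}=t$.
\item Hence it centralizes the Sylow $3$-subgroup $T_3\rtimes\langle(1\,2\,3)\rangle$, of order $27$.
\end{itemize}
So $\gamma F_2$ lies in the Dade group but is not a field automorphism. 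Moreover, the standard $\gamma F_2$ (stabilizing $(\bT,\B)$) acts on $T_3$ through $w_0$, so also $\C_{G\sig}(T_\ell)\neq \C_G(T_\ell)\sig$.

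Thus the first assertion of the statement fails for $\varepsilon=1$ whenever some power $p_0$ of $p$ satisfies $p_0\equiv -1\pmod{(q-1)_\ell}$. The paper's proof asserts ``$\sigma$ must be a field automorphism'' at exactly this point without justification.

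What your argument does prove is the following.
\begin{itemize}
\item Some element of $G\sigma\cap\N_{G\sig}(\bT)$ centralizes $T_\ell$.
\item For $\varepsilon=-1$, where $E=\langle F_p\rangle$ is cyclic and $\gamma$ agrees with $F_q$ on $G$, one has $\sigma=F_{p_0}$ with $p_0\equiv 1\pmod{(q+1)_\ell}$, $g\in T$, and $\sigma$ of odd order.
\end{itemize}
That twisted case is what the proof of Theorem \ref{thm:mainlie} uses directly. Be aware, though, that the untwisted conclusion is also invoked in Lemma \ref{lem:regell}, to produce a $\sigma$-stable regular element for Lemma \ref{lem:steinbergA}. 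In the example above such an element does not exist: the $\gamma F_2$-stable elements of $T_3$ have equal first and third entries.
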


\begin{proof}
We let $\bT$ be the diagonal torus of $\bG$ on which $F$ acts as $x \mapsto x^{q}$ (resp. $x \mapsto x^{-q}$ if $F$ is twisted). Let $T:=\bT^F$. Here a field automorphism can be chosen such that it acts as $x \mapsto x^{p_0}$ on $T$ and the graph automorphism acts as $x \mapsto x^{-1}$ on $T$. In particular, $T$ is normalized by $\sigma$.

Recall from the definition of the Dade group that there is some $n \in G$ such that $\sigma n$ centralizes a Sylow $\ell$-subgroup of $G$ containing $T_\ell$, where $T_\ell\in\Syl_\ell(T)$. However, $\C_G(T_\ell)=T$ by \cite[Prop.~22.6]{CE04}. Hence, $n$ normalizes $T$. By looking closely at elements of $T_\ell$ (e.g. the elements $h_{e_i-e_{j}}(\zeta)$ for all $i,j$ and $\zeta$ of order $(q-\varepsilon)_\ell$) and considering the action of $\N_{\bG^F}(\bT)/\bT^F\cong S_{n}$ on $T_\ell$, we see that necessarily $n\in T$ and $\sigma$ must be a field automorphism centralizing $T_\ell$.

The last statement now also follows from our assumption that $\ell\mid (q-\epsilon)$, since if $\sigma$ has even order, then $p_0^f=q$ for some even $f$. But then since $\sigma$ centralizes $T_\ell$, we have $\ell$ divides $p_0-1$, and hence also $q-1$.
\end{proof}

\begin{lemma}\label{lem:2E6}
    If $G=\tE_6(-q)$ with $\ell=3 \mid (q+1)$ and $\sigma\in E$ is in the Dade group of $B_0(G)$ in $GE$, {or in the Dade group of $B_0(\wt{G})$ in $\wt{G}E$}, then $\sigma$ has odd order. 
\end{lemma}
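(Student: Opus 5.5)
I would model the argument on the proof of Lemma \ref{lem: Dade principal}. The idea is to use the fact that a Dade group element must centralize a large $\ell$-subgroup after correction by an inner element, and then show that no element of even order can do so.

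First I fix the setup. Let $\bT$ be the maximally split torus of $\bG$ (of type $\tE_6$), with $F$ acting on $X(\bT)$ as $-q\gamma_0$, where $\gamma_0$ is induced by the graph automorphism. Since $3\mid q+1$, the order of $q$ modulo $3$ is $d=2$. The element $-\gamma_0 = w_0$ is the longest element of the Weyl group. Hence $\bT$ is $2$-split, in the sense that $\bT^F$ is, up to the action of $w_0$, a torus of type $(q+1)^6$. Then $T_\ell:=\Syl_3(\bT^F)$ is homocyclic of rank $6$ and exponent $(q+1)_3$. By \cite[Prop.~22.6]{CE04} (the $\tE_6$ analogue of the fact used before), $\C_G(T_\ell)=\bT^F$, and the same holds in $\wt G$ with $\wt{\bT}^F$.

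Now take $\sigma$ in the Dade group. By definition, $\sigma n$ centralizes some Sylow $3$-subgroup $P$ for a suitable $n\in G$ (resp. $n\in\wt G$). After conjugating, $P$ contains $T_\ell$. Since $\C_G(T_\ell)=T:=\bT^F$, the element $n$ normalizes $T$, as in Lemma \ref{lem: Dade principal}. Here $E$ is generated by the field automorphism $F_p$, which acts on $\bT$ by $x\mapsto x^p$ and has order $2f$ on $G$ when $q=p^f$; the graph automorphism is already absorbed into $F$. So $\sigma=F_p^j$ acts on $T_\ell$ by $t\mapsto t^{p^j}$.

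The condition becomes the following: there is $w\in W=\N(\bT)/\bT$ with $w\circ p^j=\mathrm{id}$ on $T_\ell\otimes \mathbb{Z}/3$. In other words, the scalar $p^j$ on $T_\ell/\Phi(T_\ell)\cong \mathbb{F}_3^6$ must equal $w^{-1}$ for some $w\in W(\tE_6)$ acting on the reflection module mod $3$. The Weyl group contains $-1$ only up to the twist; more precisely, $-1\notin W(\tE_6)$, and $-w_0$ is the graph automorphism. Therefore if $p^j\equiv -1\pmod 3$, we would need $-\mathrm{id}\in W$ on $X\otimes\mathbb{F}_3$. I expect the main obstacle to be this mod-$3$ reduction: $W(\tE_6)$ acting on $\mathbb{F}_3^6$ is not faithful in an obvious way, since $3$ is bad. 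The remedy I would use is to work on the full $T_\ell$ of exponent $(q+1)_3\ge3$. Faithfulness of $W$ on $T_\ell$ (for $\ell$ odd, $W$ acts faithfully on elements of order $\ell$ in a torus of this form, by the argument in \cite[Prop.~22.6]{CE04}) shows that $-1\notin W$ forces $p^j\equiv 1 \pmod{(q+1)_3}$.

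Finally, I deduce the parity. If $\sigma=F_p^j$ has even order as an element of $E$, which is cyclic of order $2f$, then $2f/\gcd(j,2f)$ is even, so $v_2(j)<v_2(2f)$. Write $2f=j'm$ with $j'=\gcd(j,2f)$ and $m$ even. Since $p^{j'}\equiv 1\pmod 3$, we get $p^{j'\cdot m/2}=p^f=q\equiv 1 \pmod 3$, contradicting $q\equiv -1\pmod 3$. Hence $\sigma$ has odd order. The $\wt G$ case is identical, using $\C_{\wt G}(T_\ell)=\wt{\bT}^F$.
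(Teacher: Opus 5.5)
Your strategy is sound and genuinely different from the paper's. As written, though, the setup is inconsistent and two steps need repair.

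\emph{The torus.} On the maximally split torus $\bT_0$ of $\tw{2}\tE_6(q)$ the Frobenius acts on $X(\bT_0)$ as $q\gamma_0=-qw_0$, not $-q\gamma_0$. Hence $|\bT_0^F|=(q-1)^4(q+1)^2$, its Sylow $3$-subgroup has rank $2$, and its centralizer in $\G$ is not a torus. So neither your rank-$6$ claim nor $\C_G(T_\ell)=\bT_0^F$ holds for $\bT_0$.

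You must instead use a torus $\bT={}^g\bT_0$ of type $w_0$, i.e.\ with $g^{-1}F(g)=\dot w_0$; presumably this is what you meant by ``up to the action of $w_0$''. On $\bT$, $F$ acts as $-q$, so $\bT^F\cong Y\otimes\mu_{q+1}$ and $T_\ell\cong Y/aY$ with $a=(q+1)_3$.

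For this torus, however, the claim that $F_p^j$ acts on $T_\ell$ as $t\mapsto t^{p^j}$ is not automatic, since $F_p$ need not normalize $\bT$. Lang's theorem fixes this:
\begin{itemize}
\item take $\dot w_0$ in the extended Tits group, so $F_p(\dot w_0)=\dot w_0$;
\item then $F_p(g)$ has the same Lang image as $g$, so $h:=F_p(g)g^{-1}\in G$;
\item then $h^{-1}F_p$ normalizes $\bT$ and acts on it as $t\mapsto t^p$.
\end{itemize}
Since only the coset $G\sigma$ matters, this suffices. Any discrepancy by an element of $W(\bT)^F=W$ would in any case be absorbed into your $w$.

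\emph{Faithfulness.} Your worry that $3$ is bad is misplaced, and your remedy would not work. If $(q+1)_3=3$ then $T_\ell\cong\mathbb{F}_3^6$ already, so passing to ``the full $T_\ell$'' gains nothing. Also, \cite[Prop.~22.6]{CE04} is not a reliable reference for a bad prime. What you actually need is:
\begin{itemize}
\item[(i)] No root $\alpha$ is trivial on $Y\otimes\mu_a$, because $\langle\alpha,\alpha^\vee\rangle=2$ and $a$ is odd; hence $\C^\circ_\G(T_\ell)=\bT$.
\item[(ii)] By Minkowski's lemma (the kernel of $\GL_n(\mathbb{Z})\to\GL_n(\mathbb{Z}/m\mathbb{Z})$ is torsion-free for $m\geq 3$), the finite group $\langle W,-1\rangle\leq\GL(Y)$ embeds in $\GL(Y/3Y)$.
\end{itemize}
From (ii) you get $\C_G(T_\ell)=\bT^F$, and likewise $\C_{\wt G}(T_\ell)=(\bT\Z(\wt\G))^F$. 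If $p^j\equiv-1\pmod 3$, your $w$ would act as $-1$ on $\Omega_1(T_\ell)\cong Y/3Y$, forcing $w=-1\in W(\tE_6)$, which is false. So $p^j\equiv 1\pmod 3$, and that is all your parity argument uses; the congruence modulo $(q+1)_3$ is unnecessary.

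It is cleanest to run this only for the involution $\tau=F_p^f$. There $G\langle\tau\rangle/G$ is a $3'$-group, so the description of the Dade group via centralizers of Sylow $3$-subgroups (Lemma \ref{lem:alpdade}) applies directly. The corrected $\tau$ acts on $\bT^F$ by inversion, so the contradiction is simply $-1\notin W(\tE_6)$.

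\emph{Comparison with the paper.} The paper argues globally and character-theoretically. An even-order element of the cyclic group $E$ in the Dade group forces $\tau$ into it. The three semisimple characters in $\mathcal{E}(G,s)$, for the $3$-central $s$ with $\C_{G^\ast}(s)$ of type $\tA_2(-q)^3.3$, have $3'$-degree and so lie in $B_0(G)$. They are not all $\tau$-stable, because $\tau$ acts nontrivially on $\wt G/G\Z(\wt G)$, whereas Dade group elements fix every character of the block. The $\wt G$ case is then reduced to $G$ via $x^3\in G\Z(\wt G)$.

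Your local argument, in the spirit of Lemma \ref{lem: Dade principal}, treats $G$ and $\wt G$ uniformly. It also gives the sharper statement that $F_p^j$ can lie in the Dade group only if $p^j\equiv1\pmod 3$, i.e.\ only if $j$ is even. The paper avoids the torus and Weyl group bookkeeping, at the cost of relying on Lusztig series, block membership of semisimple characters, and the action of $E$ on $\wt G/G\Z(\wt G)$.
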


\begin{proof}
Assume the contrary, that there is such a $\sigma$ with even order in the Dade group for $B_0(G)$, and recall that $E$ is cyclic, so this means that in particular the Dade group contains the unique $\tau\in E$ of order two.

Consider the $3$-central element $s$ with centralizer of type $\tA_2(-q)^3.3$ in $G^\ast$. Note that the three semisimple characters in the associated Lusztig series cannot all be $\sigma$-stable as any $\sigma$ of even order acts non-trivially on $\wt{G}/G\Z(\wt{G})$. 
However, note that these characters have degree prime to $3$ and that $B_0(G)$ is the unique unipotent block of maximal defect, so these characters all lie in $B_0(G)$. Hence, $\sigma$ cannot be in the Dade group of $B_0(G)$, as it must stabilize the characters of $B_0(G)$.

Now suppose that the Dade group for $B_0(\wt{G})$ contains some $\sigma\in E$ of even order, so that it contains $\tau$. Let $\wt{P}\in\Syl_\ell(\wt{G})$ be $E$-invariant. Since $\tau$ is in the Dade group, we have that it acts on $\wt{P}$, and hence on $P:=\wt{P}\cap G\in\Syl_\ell(G)$, the same as some $x\in \wt{G}$. But notice that since $\tau$ has order two, then $x^3$ also acts on $P$ as $\tau$. Since $x^3\in G\Z(\wt{G})$, this means that $\tau$ is in the Dade group for $B_0(G)$ as well, a contradiction to the result for $G$ above.
\end{proof}

By the previous two lemmas and the discussion above, we can focus on the case where $q$ is square and $G=\SL_n(q)$ or $G=\tE_6(q)$ with $\ell \mid (q-1)$.

\begin{lemma}\label{lem:regell}
Let $G\in\{\SL_n(q), \GL_n(q)\}$ with $n=\ell^k$ for some integer $k$ and $2\neq\ell \mid (q-1)$.
Let $\sigma\in E$ be such that $G\langle\sigma\rangle$ is the Dade ramification group for  $B_0(G)$ in $GE$.
Then there exists a regular $\ell$-element $x\in G$ that is invariant under $\sigma$. 
\end{lemma}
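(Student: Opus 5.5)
We reduce to the structure of the Dade group given by Lemma \ref{lem: Dade principal}. If $\sigma$ is trivial, any regular $\ell$-element of $T_\ell$ will do. Otherwise, since $\sigma\in E_{\ell'}$ lies in the Dade ramification group, that lemma shows $\sigma$ is a field automorphism, acting on the diagonal torus $T$ as $t\mapsto t^{p_0}$, and $\sigma$ centralizes $T_\ell$. Our plan is therefore to produce a regular element inside $T_\ell$ itself, which is then automatically $\sigma$-invariant.

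Since $\ell\mid(q-1)$, $T_\ell$ consists of the diagonal matrices of determinant one in $\SL_n(q)$, resp. arbitrary determinant in $\GL_n(q)$, whose entries lie in the Sylow $\ell$-subgroup $\mu$ of $\mathbb{F}_q^\times$. Here $|\mu|=(q-1)_\ell=:\ell^a$ with $a\geq1$. A diagonal matrix is regular (semisimple) exactly when its diagonal entries are pairwise distinct. So it suffices to choose pairwise distinct $\zeta_1,\dots,\zeta_n\in\mu$ with $\prod\zeta_i=1$; for $\GL_n$ the determinant condition is not even needed.

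This is where we expect the main obstacle, since $n=\ell^k$ may exceed $|\mu|=\ell^a$, and then no regular element of $T_\ell$ exists. In that case we would instead use a regular element in a larger Sylow subgroup, not in $T$.

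\begin{itemize}
\item \emph{Case $k\leq a$.} Take a subgroup $\mu'\leq\mu$ of order $\ell^k=n$ and list its elements as the diagonal entries. Their product is $1$, since $\ell$ is odd and the product of all elements of a cyclic group of odd order is trivial. This gives a regular element $x\in T_\ell\cap\SL_n(q)$ fixed by $\sigma$.
\item \emph{Case $k>a$.} Take $x$ in a torus of type $(q^{\ell^{k-a}}-1)^{\ell^a}$, i.e. block diagonal with blocks in $\mathbb{F}_{q^{m}}^\times$ for $m=\ell^{k-a}$. The Sylow $\ell$-subgroup of $\mathbb{F}_{q^m}^\times$ has order $\ell^{a+k-a}=\ell^k$ by the lifting-the-exponent lemma. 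Choose the block eigenvalues so that all $n$ eigenvalues are distinct $\ell$-power roots of unity; for instance, take all elements of the cyclic group of order $\ell^k$, arranged into Frobenius orbits of size $m$ where possible. Since this torus contains $T_\ell$ and hence lies in $\C_G(T_\ell)$-related subgroups, $\sigma$-invariance must be checked directly. As $\sigma$ is a field automorphism with $p_0$-power action, we may pick the torus $\sigma$-stable, and $\sigma$ raises eigenvalues to the $p_0$-th power.
\end{itemize}

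In the second case, since $p_0\equiv1\pmod{\ell}$ (from $\sigma$ centralizing $T_\ell$), the eigenvalue multiset is stable under $\zeta\mapsto\zeta^{p_0}$. It is in fact the whole group of $\ell^k$-th roots of unity, which is permuted. So ${}^\sigma x$ is $G$-conjugate to $x$. To get true invariance, we would use Lang's theorem in the connected group $\C_{\bG}(x)$, a torus: the conjugating element can be adjusted so that a $G$-conjugate of $x$ is genuinely $\sigma$-fixed. More simply, we could realise $x$ inside the subfield group $\GL_n(p_0^{f'})$ fixed by $\sigma$, whose $\ell$-part still suffices. We would first try this subfield realisation, using $\ell\mid p_0-1$.
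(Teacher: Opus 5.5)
Your case $k\le a$ is exactly the paper's proof. The paper takes $x=\mathrm{diag}(1,\zeta,\dots,\zeta^{n-1})$ with $\zeta$ of order $\ell^k$, and gets $\sigma$-invariance from the proof of Lemma \ref{lem: Dade principal}: $\sigma$ is a field automorphism centralizing $T_\ell$.

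That one-line argument needs $\zeta\in\mathbb{F}_q$, so it tacitly assumes $\ell^k\mid q-1$, which the hypotheses do not give. For example, take $G=\SL_9(16)$ and $\ell=3$. Here $F_4$ centralizes a Sylow $3$-subgroup, so it lies in the Dade group. But $\mathbb{F}_{16}^\times$ has no element of order $9$, and $T_3$ contains no regular element. So you correctly found a case the paper's proof does not cover. The same objection applies to your opening remark that for trivial $\sigma$ any regular element of $T_\ell$ will do.

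Your $k>a$ case ends at a correct idea, but several intermediate claims are false and should be removed:
\begin{enumerate}
\item A torus of type $(q^{m}-1)^{\ell^a}$ with $m=\ell^{k-a}$ contains no regular $\ell$-element. Each block must carry an eigenvalue of order exactly $\ell^k$, and such elements form only $\ell^{a-1}(\ell-1)<\ell^a$ Frobenius orbits.
\item That torus does not contain $T_\ell$.
\item The Lang step belongs in $\G$, not in the torus $\C_{\G}(x)$. The right form is Lang--Steinberg applied to the $F_{p_0}$-stable $\G$-class of $x$, whose $F_{p_0}$-fixed points lie in $\G^{F_{p_0}}\le G$.
\end{enumerate}

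The subfield realisation you end with is the clean argument, and it needs nothing about $p_0$. Let $c$ be the permutation matrix of an $n$-cycle.
\begin{enumerate}
\item Its characteristic polynomial is $X^n-1$, which is separable since $p\nmid n$. So $c$ is regular semisimple of order $\ell^k$.
\item $\det c=(-1)^{n-1}=1$ because $n$ is odd.
\item $c\in\SL_n(p)$, so it is fixed by every field automorphism.
\end{enumerate}
This proves the lemma as stated, uniformly in $k$.

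Be aware, though, that Corollary \ref{cor:SLpowerell} uses this lemma to supply the input of Lemma \ref{lem:steinbergA}. That lemma needs a regular $x\in\T^F_\ell$ with $\T$ maximally split. For $k>a$ no such element exists, and $c$ does not lie in $\T$. So your extra case proves the literal statement, but not what that application requires.
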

\begin{proof}
 Let $\zeta$ be an $n=\ell^k$th root of unity and let $x$ be the regular element $\mathrm{diag}(1, \zeta, \dots, \zeta^{n-1})$. By the proof of Lemma \ref{lem: Dade principal}, the element $x$ is $\sigma$-stable. 
\end{proof}

\subsection{The principal block and generalized Harish-Chandra induction}

Here we relax our assumptions on $\bG$, so that $\bG$ can be any connected reductive algebraic group (not necessarily simple, simply connected).

For a finite group $H$, it will be useful to denote by $H_\ell$ the set of elements of $\ell$-power order and by $H_{\ell'}$ the set of elements of order prime to $\ell$ (which we will also call $\ell'$-order). If $K$ is a splitting field for $H$ of characteristic $0$, we write $CF(H, {K})$ for the set of class functions from $H$ to ${K}$. 
For $x\in H_\ell$, we recall from \cite[Def.~5.7]{CE04}  the map $$d^x: CF(H,K) \to CF(\C_H(x),K),$$
defined such that for $\chi \in CF(H,K)$ and $y \in \C_H(x)$ we have $d^x(\chi)(y)=\chi(xy)$ if $y$ has $\ell'$-order and $d^x(\chi)(y)=0$ otherwise. (Note that we use the same notation $d^x$ for any group $H$, and the latter should be clear from context.)

We need the following generalization of \cite[Thm.~21.4]{CE04}.

\begin{lemma}\label{lem:dx}
Let $\hat{\G}$ be any reductive group with $\hat{\G}=\G \langle \sigma \rangle$ where $\G:=\hat{\G}^\circ$.
Let $\Para$ be a parabolic subgroup of $\G$ containing an $F$-stable Levi subgroup $\Levi$, and suppose that $(\Levi,\Para)$ is $\sigma$-stable. Then for any $\sigma$-stable $x \in \Levi^F$ of $\ell$-power order
we have
\[d^x\circ {}^\ast R_{\bL\langle\sigma\rangle \subset \Para \langle \sigma \rangle}^{\bG\langle\sigma\rangle}={}^\ast R_{\C^\circ_{\bL}(x) \langle\sigma\rangle \subset \C^\circ_{\Para}(x) \langle \sigma \rangle}^{\C^\circ_{\bG}(x) \langle\sigma\rangle}\circ d^x.\]
\end{lemma}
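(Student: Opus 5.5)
We follow the proof of \cite[Thm.~21.4]{CE04} (originally Broué--Michel's character formula for Harish-Chandra restriction), transported to the disconnected setting via Digne--Michel \cite{DM94}. First we reduce to Harish-Chandra induction by adjunction. Both sides are then Harish-Chandra-type restrictions, given by the bimodule $K[\hat{\G}^F/\U^F]$ with $\U$ the unipotent radical of $\Para$, $\sigma$-stable by assumption. Hence for a class function $\chi$ on $\G^F\langle\sigma\rangle$ and $l\in \bL^F\langle\sigma\rangle$ we have the explicit formula
\[
{}^\ast R(\chi)(l)=\frac{1}{|\U^F|}\sum_{u\in\U^F}\chi(lu),
\]
i.e.\ averaging over the unipotent radical. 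This holds equally on the cosets $\bL^F\sigma^i$, since the $\sigma$-stable Levi decomposition $\Para^F\langle\sigma\rangle=\bL^F\langle\sigma\rangle\ltimes \U^F$ is all that is used.

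Now fix the $\sigma$-stable $\ell$-element $x\in\bL^F$ and $y\in \C_{\bL^F\langle\sigma\rangle}(x)$ of $\ell'$-order. Note $y$ may lie in a coset $\C_{\bL}(x)^F\sigma^i$; we must check that $y$ actually lies in $\C^\circ_{\bL}(x)^F\langle\sigma\rangle$, or restrict the identity to that subgroup as the statement implicitly does. The left side at $y$ is $|\U^F|^{-1}\sum_{u}\chi(xyu)$. Consider the $\ell$-part of $xyu$. The key step, exactly as in \cite[Thm.~21.4]{CE04}, is a bijection argument. The group $\U^F$ is a $p$-group, $p\neq\ell$, and $x$ is an $\ell$-element normalizing it, so we may write $\U^F=\C_{\U^F}(x)\cdot[x,\U^F]$. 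We then show that $xyu$ has $\ell$-part conjugate under $\U^F$ to $x$ with corresponding $\ell'$-part $y u'$, where $u'\in \C_{\U}(x)^F$, and the fibres of $u\mapsto u'$ have constant size $|\U^F|/|\C_{\U}(x)^F|$. This uses the coprime action of $\langle x\rangle$ on the $p$-group and the uniqueness of Jordan decomposition in the finite group $\hat{\G}^F$, which works verbatim for disconnected groups since it is purely finite-group theoretic. It yields
\[
d^x({}^\ast R\chi)(y)=\frac{1}{|\C_{\U}(x)^F|}\sum_{v\in\C_\U(x)^F}\chi(x\,yv),
\]
where only $yv$ of $\ell'$-order contribute, and $yv$ is $\ell'$ automatically.

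Finally we identify the structure of the centralizer. $\C^\circ_{\Para}(x)$ is a parabolic subgroup of $\C^\circ_{\G}(x)$ with Levi $\C^\circ_{\bL}(x)$ and unipotent radical $\C_\U(x)$, which is connected \cite[Prop.~2.1]{DM94} or \cite[Ch.~21]{CE04}. All of these are $\sigma$-stable since $x$ and $(\bL,\Para)$ are. Hence the right-hand side is the same average, provided the elements $yv$ lie in $\C^\circ_{\G}(x)^F\langle\sigma\rangle$. We expect the main obstacle to be this component-group issue: showing that the $\ell'$-elements $y\in\C_{\bL^F\langle\sigma\rangle}(x)$ occurring, together with $\C_\U(x)$, land in $\C^\circ(x)\langle\sigma\rangle$ rather than in the full $\C(x)$. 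For this we would argue as in \cite[Thm.~21.4]{CE04}, using that $\C_\U(x)=\C^\circ_\U(x)$ and that the statement is interpreted with $y$ ranging over the domain $\C^\circ_{\bL}(x)^F\langle\sigma\rangle$ of the target.
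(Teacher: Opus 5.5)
There is a genuine gap at the very first step. The lemma assumes only that $\Levi$ is $F$-stable and that $(\Levi,\Para)$ is $\sigma$-stable. The parabolic $\Para$ itself need not be $F$-stable, so ${}^\ast R_{\Levi\langle\sigma\rangle\subset\Para\langle\sigma\rangle}^{\G\langle\sigma\rangle}$ is Digne--Michel's generalized \emph{Lusztig} restriction, defined via the $\ell$-adic cohomology of $\Y_\U$, with $\U$ the unipotent radical of $\Para$. Your averaging formula ${}^\ast R(\chi)(l)=|\U^F|^{-1}\sum_{u\in\U^F}\chi(lu)$, and the bimodule $K[\hat{\G}^F/\U^F]$, only make sense when $\U$ is $F$-stable, i.e.\ for Harish-Chandra restriction. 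No ``reduction to Harish-Chandra induction by adjunction'' is available in general. This case is not a technicality: the lemma is fed through Lemma~\ref{lem:dx neu} into Proposition~\ref{prop:decomp RLG}, where $\Levi$ is $d$-split and typically lies in no $F$-stable parabolic subgroup.

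The missing ingredient is the character formula for generalized Lusztig restriction \cite[Cor.~2.9]{DM94}, which the paper uses as follows.
First, \cite[Prop.~2.11]{DM94} says that multiplying by the characteristic function of $\ell'$-elements commutes with ${}^\ast R$; this shows both sides vanish off $\ell'$-elements.
Next, take $l\in\C_{\Levi^F\langle\sigma\rangle}(x)_{\ell'}$ with Jordan decomposition $l=tv$, so that $(xl)_{p'}=xt$ and $(xl)_p=v$. The character formula then expresses the left side at $l$ as a restriction of $\psi$ computed in $\C^\circ_\G(xt)\langle xl\rangle$ and evaluated at $xl$. It expresses the right side as a restriction of $d^x(\psi)$ in $\C^\circ_\G(xt)\langle l\rangle$ evaluated at $l$, using $\C_\G(xt)\subseteq\C_\G(x)$ since $(xt)_\ell=x$.
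These agree because translation by the central element $x$ commutes with ${}^\ast R$ \cite[Prop.~2.6]{DM94}, as does $d^1$.

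Your Sylow/Schur--Zassenhaus argument in $\langle x\rangle\U^F$ is a correct and more elementary proof, but only of the $F$-stable-parabolic case. That case does suffice for Lemmas~\ref{lem:steinbergA} and~\ref{E6}, where $\T\subseteq\B$ are $F$-stable. Even there, the clean count is to split the coset $xy\U^F$ into $\U^F$-conjugation orbits, each meeting $xy\C_\U(x)^F$ in a single $\C_\U(x)^F$-orbit. Your map $u\mapsto u'$ is only defined up to $\C_\U(x)^F$-conjugacy, so the ``constant fibre'' claim is not well posed as stated.

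The component-group issue you single out is not the real obstacle. The paper simply uses that the $\ell'$-elements of $\C_{\Levi^F\langle\sigma\rangle}(x)$ lie in $\C^\circ_\Levi(x)^F\langle\sigma\rangle$.
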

\begin{proof}
Write $G:=\bG^F$. Our first observation is that for $\chi$ a class function of $G$, we have $d^1(\chi)=\delta \cdot \chi$ where $\delta$ is the characteristic function of $(G \langle \sigma \rangle)_{\ell'}$. From this, it follows from \cite[Prop.~2.11]{DM94} applied to the function $\delta$ that $d^1 \circ {}^\ast R^{\G \langle \sigma \rangle}_{\Levi \langle \sigma \rangle \subset \Para \langle \sigma \rangle}={}^\ast R^{\G \langle \sigma \rangle}_{\Levi \langle \sigma \rangle \subset \Para \langle \sigma \rangle} \circ d^1$. Using this observation, together with the fact that $d^x=d^1 \circ d^x$, it follows that both sides of the equation vanish on elements of order divisible by $\ell$ for all class functions. Then it suffices to consider elements of $\ell'$-order.

We also note that since $(\Levi,\Para)$ and $x$ are $\sigma$-stable, it follows that $\C^\circ_{\G }(x) \langle \sigma \rangle$ is a reductive group with parabolic subgroup $\C^\circ_{\Para}(x) \langle \sigma \rangle$ and Levi complement $\C^\circ_{\Levi}(x) \langle \sigma \rangle$  in the sense of \cite{DM94}.

Now let $\psi \in \Irr(\G^F)$ and $l \in \C_{\Levi^F \langle \sigma \rangle}(x)_{\ell'}$. Note that $\C_{\Levi^F \langle \sigma \rangle}(x)_{\ell'} \subset \C^\circ_{\Levi}(x)^F \langle \sigma \rangle$.
Let $l=tv$ be the Jordan decomposition of $l$. Since $x$ commutes with $l$, we have $(xl)_{p'}=xt$ and $(xl)_{p}=v$. Hence, by \cite[Cor.~2.9]{DM94} 
$$d^x({}^\ast R_{\bL\langle\sigma\rangle \subset \Para \langle \sigma \rangle}^{\bG\langle\sigma\rangle}(\psi))(l)={}^\ast R_{\Levi \langle \sigma \rangle \subset \Para \langle \sigma \rangle}^{\G \langle \sigma \rangle}(\psi)(xl)= 
{}^\ast R_{\C^\circ_\Levi(xt) \langle xl \rangle }^{\C^\circ_{\G}(xt) \langle xl \rangle}(\Res_{\C_{G}^\circ(xt) \langle xl \rangle}^{G\sig}(\psi))(xl)$$
while \cite[Cor.~2.9]{DM94} also gives
$${}^\ast R_{\C^\circ_{\bL}(x) \langle\sigma\rangle}^{\C^\circ_{\bG}(x) \langle\sigma\rangle}(d^x(\psi))(l)={}^\ast R_{\C^\circ_\Levi(xt) \langle l \rangle }^{\C^\circ_{\G }(xt) \langle l \rangle}(\Res^{\C^\circ_{G}(x) \langle \sigma \rangle }_{\C^\circ_{G}(xt) \langle l \rangle}(d^x(\psi)))(l)$$
noting that $\C_\G(xt) \subset \C_{\G}(x)$ as $l$ is of $\ell'$-order and so $(xt)_\ell=x$. Now $$\Res^{\C_{G}(x) \langle \sigma \rangle }_{\C^\circ_{G}(xt) \langle l \rangle}(d^x(\psi))=d^1(t_x(\Res^{\C_{G}(x) \langle \sigma \rangle}_{\C^\circ_{G}(xt) \langle l \rangle}(\psi))),$$ where $t_x(f)(y):=f(xy)$ for the central element $x \in \Z(\C_\G^\circ(xt) \langle l \rangle)$. Now $t_x$ and $d^1$ commute with $^\ast R_{\C^\circ_\Levi(xt) \langle l \rangle }^{\C^\circ_{\G }(xt) \langle l \rangle}$ by \cite[Prop.~2.6]{DM94} resp. the first observation at the beginning of the proof which implies that
$${}^\ast R_{\C^\circ_\Levi(xt) \langle l \rangle }^{\C^\circ_{\G }(xt) \langle l \rangle}(\Res^{\C^\circ_{G}(x)}_{\C^\circ_{G}(xt)}(d^x(\psi)))(l)={}^\ast R_{\C^\circ_\Levi(xt) \langle l \rangle }^{\C^\circ_{\G }(xt) \langle l \rangle}(\Res^{\C^\circ_{G}(x)}_{\C^\circ_{G}(xt)}(\psi))(xl).$$
Hence the two sides coincide as stated.
\end{proof}

The next two lemmas will establish a situation in which the generalized Steinberg character is guaranteed to lie in the principal block.

\begin{lemma}\label{lem:Steinberg}
Let $\hat{\G}$ be a reductive group with $\hat{\G}=\G \langle \sigma \rangle$ where $\G:=\hat{\G}^\circ$.
Let $\T$ be an $F$-stable maximal torus contained in an $F$-stable Borel subgroup $\B$ of $\G$ such that $(\T,\B)$ is $\sigma$-stable. Let $\hat\chi:=\mathrm{St}_{G\langle \sigma\rangle}$ be the generalized Steinberg character (see Definition \ref{def:genstein}). Then we have
 ${}^\ast R_{\bT\langle\sigma\rangle}^{\bG\langle\sigma\rangle}(\hat\chi)=1_{{\bT}^F\langle\sigma\rangle}$.
\end{lemma}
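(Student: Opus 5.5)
We will prove the identity coset by coset. Both sides are class functions on $\bT^F\langle\sigma\rangle$, and ${}^\ast R$ preserves the decomposition according to the cosets $\bT^F\sigma^i$. So it suffices to show that, for each $i$, the restriction of ${}^\ast R_{\bT\langle\sigma\rangle}^{\bG\langle\sigma\rangle}(\hat\chi)$ to $\bT^F\sigma^i$ is identically $1$.

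\textbf{Step 1: reduce to Harish-Chandra restriction.} Since $\bT\subset\B$ with $\B$ an $F$-stable Borel subgroup, the functor ${}^\ast R_{\bT\langle\sigma\rangle\subset\B\langle\sigma\rangle}^{\bG\langle\sigma\rangle}$ is Harish-Chandra restriction. It is given by the bimodule $\mathbb{C}[U\backslash G\langle\sigma\rangle]$ with $U=\mathbf{U}^F$, as recalled in the proof of Corollary \ref{AC DM}. Concretely, for $t\in\bT^F$,
$$ {}^\ast R(\hat\chi)(t\sigma^i)=\frac{1}{|U|}\sum_{u\in U}\hat\chi(t\sigma^i u). $$
So we only need the restriction of $\hat\chi$ to $B\langle\sigma\rangle$, where $B=\B^F$.

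\textbf{Step 2: use the restriction formula for the generalized Steinberg character.} By Corollary \ref{AC DM}, $D_{G\langle\sigma\rangle}$ agrees with Digne--Michel's generalized duality. Lemma \ref{lem: model} allows us to realize $\sigma$ as a quasi-central automorphism, so we can apply \cite[Prop.~3.17]{DM94}, exactly as in the proof of Lemma \ref{lem:ACGGRprincblock}. It gives
$$\Res^{G\langle\sigma\rangle}_{B\langle\sigma\rangle}(\hat\chi)=\Ind_{\bT^F\langle\sigma\rangle}^{B\langle\sigma\rangle}(1).$$
Since $B\langle\sigma\rangle=U\rtimes \bT^F\langle\sigma\rangle$, we can apply Mackey with the normal subgroup $U$. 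Taking $U$-invariants of $\Ind_{\bT^F\langle\sigma\rangle}^{B\langle\sigma\rangle}(1)$ corresponds to the $U$-fixed vectors of $\mathbb{C}[B\langle\sigma\rangle/\bT^F\langle\sigma\rangle]\cong\mathbb{C}[U]$ with the conjugation action of $\bT^F\langle\sigma\rangle$. These fixed vectors form the line spanned by $\sum_{u\in U}u$, on which $\bT^F\langle\sigma\rangle$ acts trivially. Therefore the truncation is $1_{\bT^F\langle\sigma\rangle}$.

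\textbf{Step 3: an alternative route, and the expected obstacle.} If one prefers not to rely on \cite[Prop.~3.17]{DM94} in the disconnected setting, one can argue directly. The functor ${}^\ast R$ commutes with duality up to sign, by \cite[Prop.~3.13]{DM94} together with Corollary \ref{AC DM}. Thus
$${}^\ast R_{\bT\sig}^{\bG\sig}\circ D_{G\sig}=\pm D_{\bT^F\sig}\circ{}^\ast R_{\bT\sig}^{\bG\sig}.$$
For a torus $D_{\bT^F\sig}$ is the identity, and ${}^\ast R(1_{G\sig})=1_{\bT^F\sig}$ because $U$-invariants of the trivial module are trivial. One would then need to check that the sign is $+1$, which holds since $\varepsilon(\emptyset)=1$ in the construction of Theorem \ref{AC}.

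The main obstacle is bookkeeping rather than mathematics. Each of the three ingredients must be checked to apply in the disconnected setting: the identification of the duality via Corollary \ref{AC DM}, the fact that $\sigma$ is quasi-central after applying Lemma \ref{lem: model}, and the requirement that $(\bT,\B)$ be $\sigma$-stable. The last is exactly the hypothesis of the lemma, and it is what makes $\bT^F\langle\sigma\rangle$ a "torus" of $G\langle\sigma\rangle$ in the sense of \cite{DM94}.
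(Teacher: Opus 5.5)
Your proof is correct, and your Step 2 reaches the conclusion by a different and more direct computation than the paper's.

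Both arguments rest on the same input, $\Res^{G\sig}_{B\sig}(\hat\chi)=\Ind_{T\sig}^{B\sig}(1_{T\sig})$ from \cite[Prop.~3.17]{DM94}, as in the proof of Lemma~\ref{lem:ACGGRprincblock}.

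The paper extracts only one multiplicity from this formula. By Frobenius reciprocity it gets $\langle \RTGsig(1_{T\sig}),\hat\chi\rangle=1$. To exclude the other constituents it returns to the connected group. Let $\rho$ be the regular character of $T$. The paper combines the compatibility $\RTGsig\circ\Ind_T^{T\sig}=\Ind_G^{G\sig}\circ\RTG$ from \cite[Cor.~2.4]{DM94} with the classical fact $\langle\RTG(\rho),\St_G\rangle=1$. This gives $\langle\RTGsig(\hat\rho),\hat\chi\rangle=1$ for the regular character $\hat\rho$ of $T\sig$. Positivity of Harish-Chandra induction then forces every nontrivial $\hat\theta\in\Irr(T\sig)$ to have multiplicity $0$.

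You instead identify Harish-Chandra restriction with taking $U$-fixed points. Since $U$ acts regularly on $B\sig/T\sig$, the space $(\Ind_{T\sig}^{B\sig}1)^U$ is the line spanned by the sum of all cosets, and $T\sig$ acts trivially on it. This yields the full class-function identity in one step. It needs neither the connected case, nor \cite[Cor.~2.4]{DM94}, nor positivity.

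The paper's route uses the restriction formula sparingly, only through one inner product. Yours is shorter and self-contained.

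Your Step~3 is unnecessary, and as written it does not stand as an independent proof. The sign in a commutation formula between ${}^\ast R$ and the coset-wise duality is not determined by $\varepsilon(\emptyset)=1$ alone; it would need a separate Mackey-type argument.
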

\begin{proof}
Write $B:=\B^F$ and $T:=\bT^F$. Recall from the proof of Lemma \ref{lem:ACGGRprincblock} that $\hat\chi$ satisfies $\Res_{B \langle \sigma \rangle}^{G \langle \sigma \rangle}(\hat\chi)=\Res_{B \langle \sigma \rangle}^{G \langle \sigma \rangle}(\St_{G\sig})=
\Ind_{T\sig }^{B \sig}(1_{T \sig})$. 
Hence,
$$\langle \RTGsig(1_{T \langle \sigma \rangle}), \St_{G \langle \sigma \rangle} \rangle = \langle \Ind^{G \langle \sigma \rangle}_{B \langle \sigma \rangle}(1_{B \langle \sigma \rangle}), \St_{G \langle \sigma \rangle} \rangle= \langle 1_{B \langle \sigma \rangle}, \Ind_{T \langle \sigma \rangle}^{B \langle \sigma \rangle}(1_{T \langle \sigma \rangle})=\langle 1_{T \langle \sigma \rangle}, 1_{T \langle \sigma \rangle} \rangle=1$$
by appyling Frobenius reciprocity twice.

 Now let $\rho$ be the regular character of $\T^F$, so that $\hat{\rho}:=\Ind_{T}^{T \langle \sigma \rangle}(\rho)$ is the regular character of $T \langle \sigma \rangle$. By classical Harish-Chandra theory (e.g. applying \cite[Prop.~19.6]{CE04}), we have $\langle \RTG(\rho), \St_G \rangle= \langle \RTG(1_T), \St_G \rangle=1$. {We also have 
 \begin{equation}\label{eq:rtgind}
 \RTGsig \circ \Ind_{T}^{T \langle \sigma \rangle}=\Ind_{G}^{G \langle \sigma \rangle} \circ \RTG
 \end{equation} from \cite[Cor.~2.4(ii)]{DM94}
by adjunction.}
 Hence,
  $$ \langle \RTGsig \Ind_{T}^{T \langle \sigma \rangle}(\rho),\hat{\chi} \rangle= \langle \Ind_{G}^{G \langle \sigma \rangle} \RTG(\rho),\hat{\chi} \rangle = \langle \RTG(\rho),\chi \rangle=1,$$ 
which by positivity of Harish-Chandra-induction shows that $\langle \RTGsig \hat{\theta},\hat{\chi} \rangle= 0$ for any nontrivial $\hat{\theta} \in \Irr(T \langle \sigma \rangle)$.
\end{proof}

\begin{lemma}\label{lem:steinbergA}
{Keep the notation of Lemma \ref{lem:Steinberg}.}
    Assume that there exists some $\sigma$-stable element $x \in \T_\ell^F$ that is regular (i.e. satisfies $\C_{\G}^\circ(x)=\T$) where $\T$ is the maximally split torus of $\G$. 
    Then every irreducible constituent of $\RTGsig(1_{T\sig})$ lies in the principal block.
\end{lemma}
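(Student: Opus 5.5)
Recall that two characters lie in the same $\ell$-block exactly when their generalized decomposition maps $d^x$ (for $\ell$-elements $x$) interact with a common block idempotent. The approach is to use Brauer's second main theorem together with Lemma \ref{lem:dx}, aiming to show that each constituent $\hat\psi$ of $\RTGsig(1_{T\sig})$ has nonzero $d^x(\hat\psi)$ in a block of $\C_{G\sig}(x)$ whose Brauer-induced block is $B_0(G\sig)$.

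\textbf{Step 1 (reduction to the centralizer).} Let $\hat\psi$ be an irreducible constituent of $\RTGsig(1_{T\sig})$. By adjunction, $\langle {}^\ast R_{\bT\sig}^{\bG\sig}(\hat\psi), 1_{T\sig}\rangle\neq 0$. Since $x$ is $\sigma$-stable and regular, we have $\C^\circ_\bG(x)=\bT$. Applying Lemma \ref{lem:dx} with $\Levi=\bT$, $\Para=\B$ gives
\[ d^x\circ {}^\ast R_{\bT\sig}^{\bG\sig} = {}^\ast R_{\bT\sig}^{\bT\sig}\circ d^x = d^x \]
on $T\sig$. In particular, $d^x({}^\ast R(\hat\psi))$ is the restriction to $\C^\circ_G(x)\sig = T\sig$ of $d^x(\hat\psi)$.

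\textbf{Step 2 (the value at $x$ is nonzero).} Evaluating at $y=1$ (an $\ell'$-element) gives
\[ d^x(\hat\psi)(1)=\hat\psi(x)={}^\ast R(\hat\psi)(x). \]
Write ${}^\ast R(\hat\psi)=\sum_{\hat\theta} m_{\hat\theta}\hat\theta$ over $\Irr(T\sig)$, with $m_{1}\neq0$ and all $m_{\hat\theta}\geq0$. Then
\[ {}^\ast R(\hat\psi)(x)=\sum m_{\hat\theta}\hat\theta(x) \]
could in principle cancel, so this step needs care. I would instead use the whole function $d^x$: the function $d^x({}^\ast R(\hat\psi))$ on $(T\sig)_{\ell'}$ is the projection to the $\ell'$-part of the character $t_x({}^\ast R(\hat\psi))$. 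Its component in the block of $T\sig$ containing $1$ is the part contributed by the $\hat\theta$ lying in $B_0(T\sig)$; because $T\sig$ has normal Sylow $\ell$-subgroup $T_\ell$, characters of $B_0(T\sig)$ are trivial on $O_{\ell'}$ and thus constant in value up to $\ell$-part. Using positivity of $m_{\hat\theta}$ and $\hat\theta(xy)=\hat\theta(y)\cdot\omega$ with $\omega$ an $\ell$-power root of unity, one shows the $B_0$-projection of $d^x(\hat\psi)$ is nonzero, since $d^1$ of a nonzero genuine character in a block is nonzero.

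\textbf{Step 3 (conclusion via Brauer's second main theorem).} $\C_{G\sig}(x)$ contains $\C_G(x)\sig$, and its normal subgroup $\C_G(x)$ has $T=\C^\circ_G(x)$ of index a power of... Here I would note that $\C_G(x)/T$ is an $\ell$-group, or at least that $T\sig$ is normal in $\C_{G\sig}(x)$ with principal block covered only by the principal block (using Lemma \ref{lem:alpdade}-type arguments). Then $d^x(\hat\psi)$ has a nonzero projection onto $B_0(\C_{G\sig}(x))$. Brauer's second main theorem gives $b_{G\sig}(\hat\psi)=B_0(\C_{G\sig}(x))^{G\sig}$, and Brauer's third main theorem says the latter is $B_0(G\sig)$.

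The main obstacle is Step 2. That is, ensuring the contribution of the principal block of the centralizer does not vanish, and controlling $\C_{G\sig}(x)$ versus $T\sig$ (the component group of $\C_\bG(x)$). If positivity alone fails, I would try to replace $x$ by its whole coset $xT_{\ell'}$ argument, or use that $x$ generates a group acting trivially in $B_0$.
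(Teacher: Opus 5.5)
Your overall route is the paper's. You use Lemma \ref{lem:dx} with $\C^\circ_\G(x)=\T$ to identify $d^x({}^\ast R_{\bT\sig}^{\bG\sig}(\hat\psi))$ with the restriction of $d^x(\hat\psi)$ to $T\sig$, show that this has a nonzero component in $B_0(T\sig)$, and then pass to $B_0(\C_{G\sig}(x))$ and finish with Brauer's second and third main theorems. Your Step 3 is essentially the ending the paper borrows from the last lines of the proof of \cite[Thm.~21.14]{CE04}. Here $\C_{G\sig}(x)=\C_G(x)\sig$ because $\sigma$ fixes $x$, and $\C_G(x)/T$ is an $\ell$-group because $x$ is an $\ell$-element.

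The genuine gap is Step 2, which is the step you flag, and the argument you sketch there does not work. Write ${}^\ast R_{\bT\sig}^{\bG\sig}(\hat\psi)=\sum m_{\hat\theta}\hat\theta$. The $B_0(T\sig)$-component of $d^x$ of this is $\sum_{\hat\theta\in\Irr(B_0(T\sig))} m_{\hat\theta}\,\omega_{\hat\theta}\,d^1(\hat\theta)$, where $\omega_{\hat\theta}$ is the $\ell$-power root of unity by which the central element $x$ acts in $\hat\theta$. If $\mu$ is linear of $\ell$-power order, then $\hat\theta$ and $\hat\theta\mu$ lie in the same block and have the same $d^1$, but their $\omega$'s differ by $\mu(x)$. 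So nonnegativity of the $m_{\hat\theta}$ does not exclude cancellation; for $\ell=3$ one can have $1+\zeta+\zeta^2=0$. Moreover, $\sum m_{\hat\theta}\omega_{\hat\theta}\hat\theta$ is not a character, so ``$d^1$ of a nonzero genuine character is nonzero'' does not apply. Your claim that $T_\ell$ is a normal Sylow $\ell$-subgroup of $T\sig$ is neither available in general nor needed.

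The missing idea is that no roots of unity occur, because every constituent of ${}^\ast R_{\bT\sig}^{\bG\sig}(\hat\psi)$ lies over $1_T$.
\begin{itemize}
\item $\Res^{G\sig}_G(\hat\psi)$ is a sum of constituents of $\Res^{G\sig}_G R_{\bT\sig}^{\bG\sig}(1_{T\sig})=R_{\bT}^{\bG}(1_T)$.
\item ${}^\ast R_{\bT}^{\bG}R_{\bT}^{\bG}(1_T)$ is a multiple of $1_T$ by the Mackey formula, and ${}^\ast R_{\bT}^{\bG}$ sends characters to characters.
\item Restriction commutes with Harish-Chandra restriction \cite[Cor.~2.4]{DM94}, so $\Res^{T\sig}_T\,{}^\ast R_{\bT\sig}^{\bG\sig}(\hat\psi)={}^\ast R_{\bT}^{\bG}(\Res^{G\sig}_G\hat\psi)$ is a multiple of $1_T$.
\end{itemize}
Hence every $\hat\theta$ occurring is a linear character of the cyclic group $T\sig/T$. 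It is therefore trivial on $x\in T$, and $d^x({}^\ast R_{\bT\sig}^{\bG\sig}(\hat\psi))=d^1({}^\ast R_{\bT\sig}^{\bG\sig}(\hat\psi))$. Its $B_0(T\sig)$-component is then $d^1$ of a genuine character of $B_0(T\sig)$ that contains $1_{T\sig}$ with positive multiplicity, so it is nonzero. This is precisely the paper's argument, and with it your Steps 1 and 3 complete the proof.
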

\begin{proof}
We follow the proof of \cite[Thm.~21.14]{CE04}, applying Lemma \ref{lem:dx} in place of \cite[Thm.~21.4]{CE04}. 
Let $\hat{\chi} \in \Irr(G \langle \sigma \rangle)$ be an irreducible constituent of $\RTGsig(1_{T\sig})$.
We note that
$${}^\ast \RTGsig(\hat{\chi})=\displaystyle\sum_{\hat{\theta} \in \Irr(T\langle \sigma \rangle \mid 1_T )} n_{\hat{\theta}} \hat{\theta}$$
by positivity of Harish-Chandra induction {and again applying \cite[Cor.~2.4]{DM94}. Note that $d^x(\hat\theta)=d^1(\hat\theta)$ for each $\hat\theta$ in the sum since $x\in T$ and each (linear) $\hat\theta$ in the sum is trivial on $T$}. Hence, 
$$f:=B_0(T \langle \sigma \rangle) d^x({}^\ast \RTGsig(\hat{\chi}))=\displaystyle\sum_{\hat{\theta} \in \Irr(T\langle \sigma \rangle \mid 1_T ) \cap \Irr(B_0(T \langle \sigma \rangle))} n_{\hat{\theta}} d^1(\hat{\theta}),$$ where (by an abuse of notation), the left-hand side denotes multiplication by the idempotent corresponding to $B_0(T\sig)$.

Note that each $n_{\hat{\theta}}$ is a nonnegative integer, each $d^1(\hat{\theta})$ is an irreducible Brauer character, and $n_{1_{T \langle \sigma \rangle}} \neq 0$. This implies that $f \neq 0$ and thus $\langle f,f \rangle \neq 0$. Hence, there is some $\hat{\theta} \in \Irr(T\langle \sigma \rangle \mid 1_T) \cap \Irr(B_0(T \langle \sigma \rangle))$ with $\langle f,d^1(\hat{\theta}) \rangle \neq 0$. 

According to Brauer's first main theorem, it suffices to show that $d^x(\hat{\chi}) \in CF(\C_{G \langle \sigma \rangle}(x),K)$ has a non-zero projection onto the principal block of $\C_{G \langle \sigma \rangle}(x)$. 
Note that since $\C_\bG^\circ(x)=\bT$, we have $d^x (\hat{\chi})={}^{\ast}R_{\C^\circ_{\T}(x) \langle \sigma \rangle}^{\C^\circ_{\G }(x) \langle \sigma \rangle}(d^x (\hat{\chi}))$ and $\hat\theta=R_{\C^\circ_{\T}(x) \langle \sigma \rangle}^{\C^\circ_{\G }(x) \langle \sigma \rangle}(\hat{\theta})$. Then we have

$$ \langle B_0(T \langle \sigma \rangle) d^x (\hat{\chi}), \hat{\theta} \rangle_{T \langle \sigma \rangle}=\langle B_0(T \langle \sigma \rangle) d^x(\hat{\chi}), R_{\C^\circ_{\T}(x) \langle \sigma \rangle}^{\C^\circ_{\G }(x) \langle \sigma \rangle}(\hat{\theta}) \rangle_{\C_\G^\circ(x)^F \langle \sigma \rangle} $$
and by Lemma \ref{lem:dx} this is also the same as
$\langle B_0(T \langle \sigma \rangle) d^x ({}^\ast \RTGsig(\hat{\chi})),\hat{\theta} \rangle_{T \langle \sigma \rangle}$, which is nonzero from above. 
This shows that $d^x \hat{\chi}$ has a non-zero projection onto the principal block of $\C_\G^\circ(x)^F \langle \sigma \rangle$. Following the last seven lines of the proof of \cite[Thm. 21.14]{CE04} then yields the statement.
\end{proof}

\subsection{Consequences for the principal block}

We now establish that in some cases needed for our purposes in which $\St_G$ lies in the principal block,  the extended Steinberg character also lies in the principal block. (However, we refer the reader to Example \ref{ex:steinberg} below to see that this, perhaps unexpectedly, is not always the case!)

\begin{corollary}\label{cor:SLpowerell}
Assume that $G=\SL_n(q)$ with $n=\ell^k$ for some integer $k$ and $2\neq\ell \mid (q-1)$. Then the Steinberg character $\mathrm{St}_{G}$ has a rational-valued extension to $\tilde{G} E$ that lies in $B_0(\wt{G} E)$. 
\end{corollary}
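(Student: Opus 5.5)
The plan is to construct the extension by gluing. We first obtain an extension in the principal block over the Dade ramification group, which is built from an $\ell'$-part of $E$, and then lift through the remaining $\ell$-part using the uniqueness statements of Lemma \ref{lem:alpdade}.

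First we set up the Dade group. Let $\wt{P}\in\Syl_\ell(\wt G E)$ and let $M:=\wt G\,\C_{\wt GE}(\wt P)$. By Lemma \ref{lem: Dade principal}, applied to $\GL_n(q)=\wt G$, every $\ell'$-element of $E$ in the Dade group is a field automorphism $\sigma$ with $\C_{\wt G\sig}(T_\ell)=\C_{\wt G}(T_\ell)\sig$. We may therefore take $M=\wt G\sig$ for a suitable $\sigma\in E_{\ell'}$, normal in $\wt GE$ because $E$ is abelian (here $\epsilon=1$, so $E$ consists of field automorphisms and the graph automorphism). By Lemma \ref{lem:regell} there is a $\sigma$-stable regular $\ell$-element $x\in\T^F_\ell$ of $\wt\T$. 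The pair $(\wt\T,\wt\B)$ of diagonal torus and upper Borel is $\sigma$-stable. Via Lemma \ref{lem: model} we can regard $\wt\G\sig$ as a disconnected reductive group with quasi-central $\sigma$, so Corollary \ref{AC DM} identifies our $\St_{\wt G\sig}$ with Digne--Michel's.

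Second, we place the extension in the principal block. By Lemma \ref{lem:Steinberg}, $\St_{\wt G\sig}$ is a constituent of $R_{\wt\T\sig}^{\wt\G\sig}(1_{\wt T\sig})$. Lemma \ref{lem:steinbergA} then puts $\St_{\wt G\sig}$ in $B_0(\wt G\sig)=B_0(M)$. It extends $\St_{\wt G}$ (Remark \ref{rem:gensteinrat}) and is rational-valued. Since $\St_{\wt G}$ restricts irreducibly to $\St_G$ and $\St_G\in B_0(G)$, the restriction of $\St_{\wt G\sig}$ to $G$ is $\St_G$.

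Third, we go up to $\wt GE$. The group $\wt GE/M$ is abelian, and $\St_{\wt GE}$ is a rational extension of $\St_{\wt G\sig}$ by Remark \ref{rem:gensteinrat}. We need it, or some other rational extension, to lie in $B_0(\wt GE)$. Lemma \ref{lem:alpdade}, in its version for $(N,G,H)=(\wt G,\wt GE_{\ell'},\wt GE)$, says that $B_0(\wt GE)$ is the unique block covering $B_0(M)$. So every constituent of $\Ind_M^{\wt GE}\St_{\wt G\sig}$ lies in $B_0(\wt GE)$, and in particular $\St_{\wt GE}$ does. This gives the required rational extension of $\St_G$ (through $\St_{\wt G}$) in $B_0(\wt GE)$.

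The main obstacle is the first step. We must check that the Dade group really is of the form $\wt G\sig$ with $\sigma$ stabilizing $(\wt\T,\wt\B)$ and fixing $x$ simultaneously, and that the hypotheses of Lemma \ref{lem:alpdade}, such as $M\lhd \wt GE$, hold when $E_\ell\neq1$. If $M$ is strictly larger than $\wt G\sig$ because of the $\wt G$-part of the centralizer, we will instead apply Lemma \ref{lem:steinbergA} for each cyclic $\langle\sigma\rangle\le E_{\ell'}$ in the Dade group and glue the results with Lemma \ref{lem:gluing}.
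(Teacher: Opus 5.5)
Your proposal is correct and follows the paper's proof step for step. You take $\wt G\sig$ to be the Dade group of $B_0(\wt G)$, use Lemma~\ref{lem:regell} for the $\sigma$-stable regular $x$, then Lemmas~\ref{lem:Steinberg} and~\ref{lem:steinbergA} to place $\St_{\wt G\sig}$ in $B_0(\wt G\sig)$, and finally Lemma~\ref{lem:alpdade} to conclude that $\St_{\wt GE}$ lies in $B_0(\wt GE)$.

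The obstacle you flag is harmless once $M$ is defined as in Lemma~\ref{lem:alpdade}, namely $M=\wt G\,\C_{\wt GE_{\ell'}}(P)$ with $P\in\Syl_\ell(\wt G)$. Then $M\lhd\wt GE$ because $E$ is abelian, and $M/\wt G$ lies in the cyclic group of field automorphisms by Lemma~\ref{lem: Dade principal}, so the gluing fallback is never needed.
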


\begin{proof}
We begin by remarking that $\St_{\wt G}$ (which extends $\St_G$) extends to a rational character $\St_{\wt{G} E}$ of $\wt{G} E$, by Remark \ref{rem:gensteinrat}. (See also \cite{schmid}.)
Further, recall that $E$ is abelian, so every character above $\St_{\wt G}$ is an extension.

Let $\sigma\in E$ be such that $\wt G\sig$ is the Dade ramification group for $B_0(\wt G)$ in $\wt G E$ and let $\hat\chi:=\St_{\wt G\sig}$. Recall from Remark \ref{rem:gensteinrat} that $\hat\chi$ is a rational extension of $\St_G$. 
 By Lemma \ref{lem:regell}, there is an $x$ as in Lemma \ref{lem:steinbergA}, and hence combining with Lemma \ref{lem:Steinberg}, we have $\hat\chi$ lies in $B_0(\wt G\sig)$.

 Note that $B_0(\wt{G}E)$  is the unique block lying above $B_0(\wt{G}\sig)$ (see Lemma \ref{lem:alpdade}).
 In particular, note that the rational character $\St_{\wt{G} E}$, which extends $\hat\chi$, must lie in the principal block.
\end{proof}

Note that a regular element $x$ as in Lemma \ref{lem:steinbergA} does not necessarily exist in every case. In some such cases, we may proceed inductively, as in the following.

\begin{lemma}\label{E6}
     Assume that $G=\tE_6(q)$ with $\ell=3$ and $\ell \mid (q- 1)$. Then the Steinberg character $\mathrm{St}_{G}$ has a rational-valued extension to $\tilde{G} E$ that lies in $B_0(\wt{G} E)$.  
\end{lemma}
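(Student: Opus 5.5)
We follow the pattern of the proof of Corollary \ref{cor:SLpowerell}. By Remark \ref{rem:gensteinrat}, $\St_{\wt G}$ extends to the rational character $\St_{\wt G E}$, and $E$ is abelian, so every character of $\wt G E$ above $\St_{\wt G}$ is an extension. Let $\wt G\sig$ be the Dade ramification group for $B_0(\wt G)$ in $\wt G E$. By Lemma \ref{lem:alpdade}, $B_0(\wt G E)$ is the unique block covering $B_0(\wt G\sig)$. It therefore suffices to show that $\hat\chi:=\St_{\wt G\sig}=\Res^{\wt GE}_{\wt G\sig}\St_{\wt GE}$ lies in $B_0(\wt G\sig)$.

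The obstacle is that for $\ell=3\mid(q-1)$ no $\sigma$-stable regular $3$-element need exist in $\T^F_\ell$, since $\Z(\G)$ has order $3$. So Lemma \ref{lem:steinbergA} does not apply directly to $\T$, and we proceed inductively through a Levi subgroup. We choose a $\sigma$-stable $3$-element $x\in\T^F_\ell$ whose centralizer $\C^\circ_\G(x)=\bL$ is a proper $\sigma$-stable standard Levi subgroup. The natural candidate is an element with $\bL$ of type $\tD_4$ or $\tA_5$, or a product of $\tA_2$'s, for which the analogue of Corollary \ref{cor:SLpowerell} is available. Since $\sigma$ is a field automorphism centralizing $T_\ell$, as in Lemma \ref{lem: Dade principal}, any $x\in T_\ell$ is $\sigma$-stable, which makes this choice possible.

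With $x$ chosen, we argue as in the proof of Lemma \ref{lem:steinbergA} but with $\bL$ in place of $\bT$. By Brauer's third main theorem it suffices to show that $d^x(\hat\chi)$ has nonzero projection onto $B_0(\C_{\wt G\sig}(x))$. By Lemma \ref{lem:dx}, $d^x(\hat\chi)$ equals ${}^\ast R^{\C^\circ_\G(x)\sig}_{\C^\circ_{\bL}(x)\sig}$ applied to $d^x$, which reduces the problem to $d^x$ of ${}^\ast R^{\bG\sig}_{\bL\sig}(\St_{\wt G\sig})$. This Harish-Chandra restriction of a generalized Steinberg character should equal $\St_{L\sig}$; we prove this as in Lemma \ref{lem:Steinberg}, using the restriction to the Borel coset and the fact that $D$ commutes with Harish-Chandra restriction by Corollary \ref{AC DM} and \cite{DM94}.

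It then remains to see that $\St_{L\sig}$ lies in the principal block of $L\sig$, using either Lemma \ref{lem:steinbergA} for $L$ or the corresponding statement for its type $\tA$ or $\tD_4$ components via Corollary \ref{cor:SLpowerell}. Next, $d^x$ preserves the relevant principal-block projection, as in the last lines of the proof of \cite[Thm.~21.14]{CE04}. Here $\C_{\wt G}(x)$ may be disconnected modulo $\bL$, and we handle this by passing through $\C_{\wt G\sig}(x)\supseteq L\sig$ and using that the principal block covers. The main obstacle is finding such an $x$ and showing that the nonvanishing survives $d^x$. We would first try $x$ with $\C_\G(x)$ of type $\tD_4T_2$, where $\ell=3$ is handled by Lemma \ref{D4}-type arguments.
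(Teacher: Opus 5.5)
\paragraph{Main gap: the Dade group element.}
The gap is your assertion that $\sigma$ is a field automorphism centralizing $T_\ell$ ``as in Lemma~\ref{lem: Dade principal}''. That lemma concerns only $\SL_{\ell^k}$ and $\GL_{\ell^k}$. For $\tE_6$ the assertion fails once the graph automorphism $\tau$ can enter the Dade group of $B_0(\wt G)$ in $\wt GE$.

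Take $q=p^2$ with $p\equiv 2\pmod 3$. Since $-1\notin W(\tE_6)$, a Minkowski-type argument shows that no element of $\wt G F_p$ or of $\wt G\tau$ centralizes $T_3$. On the other hand, let $n_0$ lift $w_0$, so that $n_0\tau$ acts as $-1$ on $Y(\T)$. Then $n_0\tau F_p$ is a Frobenius map for ${}^2\tE_6(p)$, and $|{}^2\tE_6(p)|_3=|\tE_6(p^2)|_3$, so it centralizes a Sylow $3$-subgroup. Hence the Dade group is $\wt G\langle\tau F_p\rangle$.

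In this group, representatives $t\tau F_p$ stabilize $(\T,\B)$ but act on $T_3$ as $w_0$. Representatives centralizing $T_3$ (such as $n_0\tau F_p$) send every Borel subgroup containing $\T$ to its opposite. So no choice of $\sigma$ gives both things your argument uses: ``every $x\in T_\ell$ is $\sigma$-stable'', and a $\sigma$-stable Harish-Chandra pair for Lemmas~\ref{lem:dx}, \ref{lem:Steinberg} and~\ref{lem:steinbergA}. This is the Lusztig-type configuration of Example~\ref{ex:steinberg}, where for $\SL_2$ the generalized Steinberg character genuinely leaves the principal block, so this case cannot be waved through.

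The paper avoids the issue as follows:
\begin{enumerate}
\item It takes $\wt G\sig$ to be the Dade group only inside $\wt G\langle F_p\rangle$, where $\sigma$ really is a field automorphism centralizing $T_\ell$.
\item It proves that $\St_{\wt G\sig}$ lies in the principal block and gets a rational extension $\chi_1$ to $\wt G\langle F_p\rangle$ in the principal block.
\item It handles $\tau$ by Lemma~\ref{lem:gluing}, gluing $\chi_1$ with the two rational extensions of $\St_{\wt G}$ to $\wt G\langle\tau\rangle$. Both characters of $\wt GE$ above $\chi_1$ are then rational, and one of them lies in $B_0(\wt GE)$.
\end{enumerate}
That character need not be $\St_{\wt GE}$, which is why the paper does not reduce to $\St_{\wt GE}$ as you do.

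\paragraph{Second gap: the inductive step.}
Even for a genuine field automorphism $\sigma$, you do not supply the inductive step. With $\bL=\C^\circ_\G(x)$ a Levi subgroup, Lemma~\ref{lem:dx} only gives $d^x\hat\chi=d^x({}^\ast R^{\G\sig}_{\bL\sig}\hat\chi)$. You therefore still need $\St_{L\sig}\in B_0(L\sig)$ for $\bL$ of type $\tD_4$ or $\tA_5$, and none of the cited tools gives this:
\begin{itemize}
\item Corollary~\ref{cor:SLpowerell} requires $n=\ell^k$, and $6$ is not a power of $3$.
\item Lemma~\ref{D4} is about semisimple characters, not the Steinberg character.
\item When $(q-1)_3=3$, neither type contains a regular element of order $3$ (pigeonhole on eigenvalues), so Lemma~\ref{lem:steinbergA} does not apply to $L$ either.
\item A full-rank product of $\tA_2$'s is not a Levi subgroup, so Harish-Chandra restriction to it is unavailable.
\end{itemize}

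The paper's device is to apply Lemma~\ref{lem:dx} with the Levi subgroup $\T$ and a $\sigma$-stable $x\in T$ whose centralizer $\bH$ is the non-Levi subgroup of type $\tA_2^3$. Lemmas~\ref{lem:dx} and~\ref{lem:Steinberg} then give $\langle d^x\hat\chi,R^{\bH\sig}_{\T\sig}(1)\rangle=\langle d^x(1_{T\sig}),d^1(1_{T\sig})\rangle=1/|T_\ell|\neq 0$. Lemma~\ref{lem:steinbergA}, applied inside $\bH$ where a $\sigma$-stable regular element of order $3$ does exist, puts every constituent of $R^{\bH\sig}_{\T\sig}(1)$ in $B_0(H\sig)$. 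Brauer's second and third main theorems then finish the argument.
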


\begin{proof}
As before, note that $\St_{G}$ has a rational extension to $\wt{G} E$ and that $E$ is abelian.
Let $\sigma\in E$ be such that $ G\langle\sigma\rangle$ is the Dade ramification group for $B_0( G)$ in $ G\langle F_p\rangle$. {We first claim that $\hat\chi:=\St_{ G\langle\sigma\rangle}$ lies in $B_0( G\langle\sigma\rangle)$.}

 We note that there exists an element $x$ in the maximally split torus  $T=\bT^F$ satisfying that $x$ is $\sigma$-stable and has centralizer $\bH:=\C_\G(x)$ of type $\tA_2(q)^3$.  Moreover, there exists a $\sigma$-stable element $y \in H:=\bH^F$ of order $3$ with $\C^\circ_{\bH}(y)^F=T$.  Applying Lemma \ref{lem:steinbergA} to $\sigma$ viewed as an automorphism of $H$, we observe that all irreducible constituents of $R_{\bT \langle \sigma \rangle}^{\bH \langle \sigma \rangle}(1_{T\sig})$ lie in the principal block. 

Consider
$$ \langle d^x \hat{\chi}, R_{\C^\circ_{\T}(x) \langle \sigma \rangle}^{\C^\circ_{\G}(x)  \langle \sigma \rangle}(1) \rangle_{\C^\circ_{\G}(x)^F  \langle \sigma \rangle}. $$
By Lemmas \ref{lem:dx} and \ref{lem:Steinberg}, this is the same as
$$\langle d^x {}^\ast R_{\bT \langle \sigma \rangle}^{\bG \langle \sigma \rangle}(\hat{\chi}), d^1(1_{T\langle \sigma \rangle}) \rangle_{T \langle \sigma \rangle}= \langle d^x(1_{T \langle \sigma \rangle}),d^1(1_{T \langle \sigma \rangle}) \rangle_{T \langle \sigma \rangle}=1/|T_\ell| \neq 0,$$
which shows the claim as in the end of the proof of  Lemma \ref{lem:steinbergA}, as all constituents of $R_{\C^\circ_{\T}(x) \langle \sigma \rangle}^{\C^\circ_{\G}(x) \langle \sigma \rangle}(1)=R_{\bT \langle \sigma \rangle}^{\bH \langle \sigma \rangle}(1)$ lie in the principal block of $H\sig$.
The same proof shows that $\St_{\wt{G}\sig}$ lies in the principal block, taking $\wt{G}\sig$ to be the Dade group for $B_0(\wt{G})$ in $\wt{G}\langle F_p\rangle$.

If $\sigma$ has odd order, then by Lemma \ref{lem:real}, $\St_{\wt{G}\sig}$ is the unique rational extension of $\St_{\wt G}$ to $\wt{G}\sig$, and therefore must lie below one of the rational extensions to $\wt{G} E$. If instead $\sigma$ has even order, then the two rational extensions of $\St_{\wt G}$ to $\wt{G}\langle F_p\rangle$ must lie above the two rational extensions of $\St_{\wt{G}}$ to $\wt{G}\sig$. In particular, in either case there is a rational extension, say $\chi_1$, of $\St_{\wt{G}\sig}$ to $\wt{G}\langle F_p\rangle$ lying in the principal block, as this is the unique block above $\wt{G}\sig$. 

Now, note that the two extensions of $\St_{\wt{G}}$ to $\wt{G}\langle\tau\rangle$, where $\tau$ is a nontrivial graph automorphism, must be rational-valued since there is a rational extension to $\wt{G} E$.
Then applying Lemma \ref{lem:gluing} to $\chi_1$ and these two extensions to $\wt{G}\langle\tau\rangle$ yields that the two characters of $\wt{G}E$ lying above $\chi_1$ must be rational-valued. In particular, at least one of these lies in the principal block, completing the proof.
\end{proof}

We end this section by considering the Suzuki and Ree groups.
\begin{lemma}\label{lem:suzree}
Let $S$ be a simple Suzuki or Ree group $\tw{2}\tF_4(q^2)'$, $\tw{2}\tG_2(q^2)'$, or $\tw{2}\tB_2(q^2)$ and let $\ell$ be any prime dividing $|S|$ but not dividing $q^2$. Then $S$ satisfies condition ($\star$) of Definition \ref{def:lmnt} for $(\QQ, \ell)$. 
\end{lemma}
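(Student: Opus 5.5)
Since the target field is $\QQ$ rather than $\QQ_\ell$, I would use the Steinberg character, or a rational character of $\ell'$-degree in $B_0(S)$. The structural facts I rely on are these. For the Suzuki and Ree groups $S=\tw{2}\tB_2(q^2)$, $\tw{2}\tG_2(q^2)$, $\tw{2}\tF_4(q^2)$ (with $q^2=2^{2m+1}$ or $3^{2m+1}$), the group $\Aut(S)$ is $S\rtimes\langle F_p\rangle$ with cyclic outer automorphism group of odd order $2m+1$. The only exceptions are the small cases $\tw{2}\tF_4(2)'$ and $\tw{2}\tG_2(3)'\cong\SL_2(8)$, which I handle directly from the character tables in the Atlas. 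I also use that there are no nontrivial diagonal automorphisms, since the centre is trivial and $\G$ is simply connected and adjoint simultaneously.

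The plan is to take $\mathcal{X}=\{\chi\}$ with $\chi$ an $\Aut(S)$-invariant, rational-valued, nonlinear character of $\ell'$-degree in $B_0(S)$, and then extend it. Then $|\mathcal X|=1$, and condition (2) is immediate. For condition (3), set $G=S$ and $A:=\Aut(S)_\chi=\Aut(S)$, so that $A/S$ is cyclic of odd order. Since $\chi$ is $A$-invariant and $A/S$ is cyclic, $\chi$ extends. By Lemma~\ref{lem:real}, there is then a unique real extension $\hat\chi$ with $\QQ(\hat\chi)=\QQ(\chi)=\QQ$. It remains to see that $\hat\chi\in B_0(A)$. I would argue this with Lemma~\ref{lem:alpdade} and Lemma~\ref{lem:oddp'}. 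Write $A=S\,E_{\ell'}E_\ell$ and let $M$ be the Dade group. Lemma~\ref{lem:oddp'} gives a rational extension in $B_0(M)$, and $B_0(SE_{\ell'})$ is the unique block covering $B_0(M)$. Since $|A/M|$ is odd and the real extension is unique, uniqueness forces the rational extension to $SE_{\ell'}$ to lie in $B_0$. Passing to $A$, $B_0(A)$ is the unique block above $B_0(SE_{\ell'})$, and the unique real extension lies over a real character. Alternatively, Lemma~\ref{lem:real blocks}(2) identifies $B_0(A)$ as the unique real block covering $B_0(S)$, and the real $\hat\chi$ lies in some real block covering $B_0(S)$, hence in $B_0(A)$. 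I expect this second route to be the cleanest.

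So the main task is choosing $\chi$. My first choice is $\chi=\St_S$: it is rational, $\Aut(S)$-invariant, of degree a power of $p$ (hence prime to $\ell$), and nonlinear. It lies in $B_0(S)$ exactly when it has a nonzero projection there, for instance when $d^x\St\neq 0$ in the principal block for an $\ell$-element $x$ with torus centralizer, as in Lemma~\ref{lem:steinbergA} (and \cite[Thm.~21.14]{CE04}). Concretely, I would check the case division by the cyclotomic factor $\Phi$ of $|S|$ divisible by $\ell$. For $\tw{2}\tB_2$ and $\tw{2}\tG_2$, Sylow $\ell$-subgroups for $\ell\ne p$ (and $\ell\neq2$ for Ree) are cyclic or abelian, lying in a maximal torus $T_\Phi$. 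The principal block is then described by Brauer trees (Burkhardt, Hiss), and the Steinberg character is known to lie in $B_0$ for each torus. For $\ell=2$ in $\tw{2}\tG_2$, the principal $2$-block contains many characters, and again the Steinberg character ($q^6$... degree $q^3$) works by Landrock--Michler.

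The hardest case is $\tw{2}\tF_4(q^2)$ for the primes where $\St$ might fail to lie in $B_0$. There I would instead use the unipotent characters of $\ell'$-degree in the principal $\Phi$-Harish-Chandra series of the Sylow $\Phi$-torus, following Malle's tables for $\tw{2}\tF_4$. These characters are rational except for a few with values involving $\sqrt{-1}$ or $\sqrt{2}$, and all unipotent characters are $\Aut(S)$-invariant. I would pick a rational one of $\ell'$-degree, nonlinear, in $B_0$ (for example $\St$, or the character $\chi_{2}$ of degree $\tfrac{q^2}{\sqrt2}\ldots$ if not rational, avoided), and then extend it as above.
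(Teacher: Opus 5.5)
Your argument is correct, and your preferred second route differs genuinely from the paper's. Both proofs take $\chi=\St_S$ and its unique real (hence rational) extension from Lemma~\ref{lem:real}. The paper asserts $\St_S\in\Irr_0(B_0(S))$ for all three families, $\tw{2}\tF_4(q^2)$ included, without further argument. You can therefore drop your hedging in that case, including the fallback, which as written never actually names a character. This membership is the one external input that both arguments need.

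The two proofs differ in how they place the extension in the principal block. The paper works in two steps:
\begin{enumerate}
\item Let $SE_{\ell'}/S$ be the $\ell$-complement in $\Out(S)$ and $M$ the Dade ramification group. The unique rational extension $\tilde\chi$ of $\St_S$ to $SE_{\ell'}$ restricts to the rational extension in $B_0(M)$ supplied by Lemma~\ref{lem:oddp'}, so $\tilde\chi\in B_0(SE_{\ell'})$ by Lemma~\ref{lem:alpdade}.
\item Since $\ell\nmid o(\tilde\chi)\tilde\chi(1)$, the paper takes the canonical extension of \cite[Cor.~6.4]{Nav18} to $\Aut(S)$. It is rational and lies in $B_0(\Aut(S))$ because $\Aut(S)/SE_{\ell'}$ is an $\ell$-group.
\end{enumerate}
Your first route is essentially this argument, with the canonical extension replaced by the real extension to all of $\Aut(S)$.

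Your second route collapses everything into Lemma~\ref{lem:real blocks}(2): $B_0(\Aut(S))$ is the unique real block covering the real block $B_0(S)$, and the real character $\hat\chi$ lies in some real block covering $B_0(S)$. This is valid, because the proof of that lemma uses only that $\Aut(S)/S$ is abelian of odd order, not that it is an $\ell'$-group. So your route needs no Dade groups, no Alperin--Dade correspondence and no canonical extensions, and it handles the $\ell$- and $\ell'$-parts of $\Out(S)$ at once.

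The price is that it depends on $\Out(S)$ having odd order, so that reality singles out the principal block. The paper's two-step template is what it reuses in Theorem~\ref{thm:mainlie}, where $\Out(S)$ may have even order and this detection fails. Your real-block argument is the one the paper itself uses for the odd-order part $\hat G_{2'}$ in the proof of Theorem~\ref{thm:AMN}.
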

\begin{proof}
Note that the case of the Tits group $\tw{2}\tF_4(2)'$ or $S=\tw{2}\tG_2(3)'\cong \operatorname{PSL}_2(8)$ can be checked directly in GAP, so we may assume that $S\in\{\tw{2}\tF_4(q^2), \tw{2}\tG_2(q^2), \tw{2}\tB_2(q^2)\}$ with $q^2=2^{2n+1}, 3^{2n+1}, 2^{2n+1}$, respectively, with $n\geq 1$.

   Here the Steinberg character $\mathrm{St}_S$ is rational-valued, lies in $\Irr_{0}(B_0(S))$, and extends to $\Aut(S)$. Further, $\Aut(S)/S$ is cyclic of odd order. Let $SE_{\ell'}\lhd \Aut(S)$ with $SE_{\ell'}/S$ the $\ell$-complement in $\Aut(S)/S$. Then there is a unique rational extension $\wt\chi$ of $\mathrm{St}_S$ to $SE_{\ell'}$ by Lemma \ref{lem:real}.
   Further, by Lemma \ref{lem:oddp'}, we have $\mathrm{St}_S$ extends to a rational-valued character $\hat\chi$ of $B_0(M)$, where $M\leq SE_{\ell'}$ is the Dade ramification group. Note also that $\hat\chi$ is the unique rational extension of $\chi$ to $M$. It follows that $\Res_M^{SE_{\ell'}}(\wt\chi)=\hat\chi$. Then by Lemma \ref{lem:alpdade}, it must be that $\wt\chi\in\Irr_{0}(B_0(SE_{\ell'}))$. Note also that $\ell\nmid o(\wt\chi)$ since $S\leq [SE_{\ell'}, SE_{\ell'}]$, so the linear characters of $SE_{\ell'}$ are of $\ell'$-order.

   Now $\Aut(S)/SE_{\ell'}$ is an $\ell$-group, so $B_0(\Aut(S))$ is the unique block above $B_0(SE_{\ell'})$.  Then since $\ell\nmid o(\wt\chi)\wt\chi(1)$, by \cite[Cor.~6.4]{Nav18}, there is a canonical extension of $\wt\chi$ to $\Aut(S)$, which must be rational and necessarily lies in $B_0(\Aut(S))$. This completes the proof. 
\end{proof}

\section{Proof of Theorem \ref{thm:main}}\label{sec:mainproof}

Finally, here we use the results of the preceding sections to prove the following, which will give  Theorem \ref{thm:main}.

\begin{theorem}\label{thm:mainlie}
    Let $S$ be a simple group of Lie type defined in characteristic $p$, and let $\ell\neq p$ be a prime dividing $|S|$. Then there exists a nontrivial character $\chi \in \Irr_0(B_0(S))$, where $B_0(S)$ is the principal $\ell$-block of $S$, such that $\chi$ extends to a character $\hat{\chi}$ in $\Irr_0(B_0(\Aut(S)_\chi))$ with $\mathbb{Q}(\hat{\chi}) \subseteq \mathbb{Q}_\ell$ and $\ell \nmid |\mathrm{Aut}(S):\mathrm{Aut}(S)_\chi|$. In particular, $S$ satisfies condition ($\star$) of Definition \ref{def:lmnt} for $(\QQ_\ell, \ell)$.
\end{theorem}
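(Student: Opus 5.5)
We will realize $S=G/\Z(G)$ with $G=\bG^F$ simple and simply connected, and argue with the regular embedding $G\le\wt G$ and the automorphism group $E$ of Section \ref{sec:regular}. We use $\Aut(S)=(\wt G/\Z(\wt G))E$ (up to the usual identification). Suzuki and Ree groups are covered by Lemma \ref{lem:suzree}, and the finitely many exceptional covers or small cases can be checked directly, so we exclude them. There are two main cases, according to whether $\ell$ divides $|\Z(G)|$.

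\emph{Case $\ell\nmid|\Z(G)|$.} By the remark after Lemma \ref{lem:semisimple}, $\Z(P^\ast)\cap P_0^\ast=\Z(P_0^\ast)$ contains a noncentral element $\wt s$ of order $\ell$. The semisimple character $\wt\chi\in\mathcal E(\wt G,\wt s)$ then has the following properties, by Lemma \ref{lem:semisimple}:
\begin{itemize}
\item it lies in $\Irr_0(B_0(\wt G/\Z(\wt G)))$;
\item it satisfies $\QQ(\wt\chi)\subseteq\QQ_\ell$;
\item if $\ell$ divides no graph automorphism, then $\ell\nmid[E:E_{\wt\chi}]$.
\end{itemize}
For type $\tD_4$ with $\ell=3$ we use Lemma \ref{D4} instead. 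By Lemma \ref{lem:bijsimplediags}, $\chi:=\Res_G\wt\chi$ is a nontrivial character of $B_0(S)$ with the same stabilizer, which gives the index condition.

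To extend $\chi$, we first extend $\wt\chi$ to $\wt GE'$ with $E'=E_{\wt\chi}\cap E_{\ell'}$. Lemma \ref{lem:mult1} gives an extension of multiplicity $\pm1$ in $D(\hat\Gamma)$, with the same field of values. Lemma \ref{lem:ACGGRprincblock}, restricted to $\wt GE'$, places it in the principal block. We then pass to the full stabilizer $\wt GE_{\wt\chi}$, using that $E_{\wt\chi}/E'$ is an $\ell$-group. Since $\wt\chi$ has $\ell'$-degree, we take a canonical extension (determinantal order prime to $\ell$, as in \cite[Cor.~6.4]{Nav18}). This extension lies in the unique principal block above (Lemma \ref{lem:alpdade}, third part), and its values stay in $\QQ_\ell$ by uniqueness. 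Where $E$ is not abelian, we glue the pieces using Lemma \ref{lem:gluing}.

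\emph{Case $\ell\mid|\Z(G)|$, $\ell$ odd.} Here $G=\SL_n(\epsilon q)$ or $\tE_6(\epsilon q)$. If $n$ is not a power of $\ell$, Lemma \ref{lem:typeAZP} supplies a suitable $\wt s$, and we argue as above. The condition that $\wt\chi$ lies in $B_0$ and is trivial on the center follows from Lemmas \ref{lem:typeAZ(P)} and \ref{lem:semisimple}. The index condition again follows from Frobenius acting on $\langle \wt s\rangle$.

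The remaining cases, $n=\ell^k$ and $\tE_6$ with $\ell=3$, are where semisimple characters may fail, and there we use $\chi=\St_S$. If the Dade group of $B_0$ has odd order, Lemma \ref{lem:oddp'} together with Lemma \ref{lem:real} gives a rational extension in $B_0$. We then pass to the full automorphism group by the $\ell$-group/canonical extension argument. Lemmas \ref{lem: Dade principal} and \ref{lem:2E6} reduce the even-order case to $\epsilon=1$ with $q$ a square, and that case is handled by Corollary \ref{cor:SLpowerell} and Lemma \ref{E6}. Since $\St$ is $\Aut(S)$-invariant, the index condition is automatic.

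The main obstacle is the block membership of the extensions, both through $E_{\ell'}$ and along the $\ell$-part. The approach relies on Dade ramification (Lemma \ref{lem:alpdade}) and Alvis--Curtis compatibility, and the precise bookkeeping of $E_\chi$ versus $E_{\wt\chi}$ needs care.
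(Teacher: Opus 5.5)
Your architecture matches the paper's: Lemma \ref{lem:suzree} for Suzuki--Ree, $\St$ for $\SL_{\ell^k}(\epsilon q)$ and $\tE_6(\epsilon q)$ via the Dade-group lemmas, semisimple characters otherwise with Lemmas \ref{lem:mult1} and \ref{lem:ACGGRprincblock}, then the $\ell$-part. However, your $\ell$-part step fails in the case $\ell\mid|\Z(G)|$, $G=\SL_n(\epsilon q)$ with $n$ not an $\ell$-power, which you dispatch with ``argue as above''.

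\emph{The failing step.} In that case $\ell$ divides $|\wt S:[\wt S,\wt S]|=|\wt S/S|$, so nothing forces $o(\wt\chi)$ to be prime to $\ell$, and in fact it is not. Take $\epsilon=1$ and $n\neq 2\ell^i$. The character of Lemma \ref{lem:typeAZP} is Harish-Chandra induced from the linear character $1\times\hat\lambda\circ\det$ of a Levi subgroup $\GL_{n-k}(q)\times\GL_k(q)$. The transfer formula for determinants of induced characters then gives, for $a$ of $\ell$-power order,
$$\det\wt\chi(\mathrm{diag}(a,1,\dots,1))=\hat\lambda(a)^{N},\qquad N=\tbinom{n-1}{k-1}_q.$$
Since $q\equiv1\pmod\ell$ and by Lucas' theorem for the $k$ chosen there, $N\equiv\binom{n-1}{k-1}\equiv1\pmod\ell$, so $\ell\mid o(\wt\chi)$. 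Hence \cite[Cor.~6.4]{Nav18} applies neither to $\wt\chi$ nor to $\bar\chi$ as soon as $E_\ell\neq1$. Extensions along $E_\ell$ differ by linear characters of order up to $|E_\ell|$, so you lose control of the field of values.

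\emph{The paper's fix.} Canonically extend $\chi$ itself, on the simple group $S$ where $o(\chi)=1$, to some $\chi'\in\Irr(SE_\ell)$. Glue $\wt\chi$ and $\chi'$ over $S$ by Lemma \ref{lem:gluing}; this gives an $E'$-stable, $\QQ_\ell$-valued $\theta\in\Irr(B_0(\wt SE_\ell))$. Only then glue $\theta$ with $\bar\chi$ over $\wt S$.

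Several further cases are not covered.
\begin{enumerate}
\item The statement allows $\ell=2$, which your case split never reaches when $2\mid|\Z(G)|$. Even when $2\nmid|\Z(G)|$, Lemma \ref{lem:semisimple} gives no index bound in types with graph automorphisms of order $2$. The paper handles $\ell=2$ at once: $\St_{\wt GE}$ is rational (Remark \ref{rem:gensteinrat}) of odd degree, hence in the principal $2$-block by \cite[Cor.~1.3]{gow}.
\item For $G=\tD_4(q)$ and $\ell=3$, the group $E'=E_\chi\cap E_{\ell'}$ is in general not normal in $E_\chi$, since it can contain a graph involution of $S_3$. So ``the unique principal block above $B_0(\wt SE')$'' is not available, and Lemma \ref{lem:gluing} controls values but not blocks. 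The paper takes $E''$, the field automorphisms in $E'$, and builds the common extension on the normal subgroup $\wt GE_\ell E''$ of index at most $2$, which lies in $B_0$. Both characters of $\wt GE_\chi$ above it are $\QQ_\ell$-valued, so one of them is in $B_0$.
\item In the Steinberg case with $\epsilon=-1$, the quotient $\wt GE_{\ell'}/M$ can have even order, so Lemma \ref{lem:real} does not carry you past the Dade group $M$. What you need is that the rational character $\St_{\wt GE}$ restricts on $M$ to the unique rational extension. That extension lies in $B_0(M)$ by Lemma \ref{lem:oddp'}.
\end{enumerate}
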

\begin{proof}
We may write $S:=G/\Z(G)$ with $G:=\bG^F$, where $F$ is a Steinberg endomorphism and $\bG$ is a simple, simply connected algebraic group. By Lemma \ref{lem:suzree}, we may further assume that $F$ is a Frobenius endomorphism. We have $\Aut(S)=\wt{S}\rtimes E$, where $E=E(\bG^F)$ is as before and $\wt{S}=\wt{G}/\Z(\wt{G})$ with $\wt{G}=\wt\bG^F$ as before.

We may assume $\ell$ is odd, since if $\ell=2$, then every rational-valued character of odd degree is in the principal block by \cite[Cor.~1.3]{gow}. In particular, the Steinberg character $\chi=\mathrm{St}_G$ works in that case, as we know the extended character $\St_{\wt{G} E}$ from Definition \ref{def:genstein} is rational-valued (see Remark \ref{rem:gensteinrat}). (This case is also claimed in \cite{lmnt}, using results in \cite{nt10}.)

 Next, suppose that $G=\SL_n(\epsilon q)$ with $\ell\mid (q-\epsilon)$ and $n=\ell^k$ or that $G=\tE_6(\epsilon q)$ with $\ell=3\mid (q-\epsilon)$. Then 
by Corollary \ref{cor:SLpowerell} and Lemma \ref{E6}, we may assume $\epsilon=-1$. But in this case, by Lemmas \ref{lem: Dade principal} and \ref{lem:2E6}, 
we know the Dade ramification group $M$ of $B_0(\wt{G})$ in $\wt{G} E$ is $\wt G\langle\sigma\rangle$, where $\sigma$ has odd order. By Lemma \ref{lem:oddp'}, the principal block $B_0(M)$ contains the unique rational-valued extension of $\St_{\wt G}$ to $M$. Then applying Lemma \ref{lem:alpdade} (and recalling Remark \ref{rem:gensteinrat}), the extended Steinberg character $\St_{\wt{G} E}$ from Definition \ref{def:genstein} lies in $B_0(\wt{G} E)$ and is rational-valued.

Hence we may assume that either $\ell\nmid |\Z(G)|$ or $G=\SL_n(\epsilon q)$ satisfying that $\ell\mid (n, q-\epsilon)$ but that $n$ is not a power of $\ell$.
By Lemmas \ref{lem:semisimple},  \ref{lem:typeAZ(P)}, \ref{lem:typeAZP}, and \ref{D4},  
there is then some nontrivial semisimple $\wt\chi\in\Irr_0(B_0(\wt{S}))$, where $\wt{S}:=\wt{G}/\Z(\wt{G})$, with $\QQ(\wt\chi)\subseteq\QQ_\ell$ and $\ell\nmid [E:E_{\wt\chi}]$. 

Note that $\Aut(S)_{\wt{\chi}}=\wt{S}E_{\wt\chi}$ contains the Sylow $\ell$-subgroup of $\Aut(S)$.
If $\ell\nmid |Z(G)|$, by Lemma \ref{lem:bijsimplediags}, $\wt\chi$ restricts to some nontrivial $\chi\in\Irr_0(B_0(S))$. If instead $G=\SL_n(\epsilon q)$ with $\ell\mid (n, q-\epsilon)$, the same holds by Lemma \ref{lem:typeAZP}. Further, in these situations the same Lemmas give $E_\chi=E_{\wt\chi}$, so that $\ell\nmid |\Aut(S):\Aut(S)_\chi|$ since $\Aut(S)_\chi=\wt{S}E_\chi=\wt{S}E_{\wt{\chi}}$; and certainly $\QQ(\chi)\subseteq \QQ(\wt\chi)\subseteq\QQ_\ell$.

Note that $\langle\wt\chi, D_{\wt G}(\Gamma)\rangle=\pm1$ by \cite[2.6.10(d), Rem.~3.4.18]{GM20}, where $\Gamma$ is the Gelfand Graev character of $\wt{G}$. 
Write $E'=E_{\wt\chi}\cap E_{\ell'}=E_\chi\cap E_{\ell'}$.  By Lemma \ref{lem:mult1}, there is a unique  $\bar\chi\in\Irr(\wt{G}E')$ with $\langle \bar\chi, D_{\wt{G}E'}(\hat\Gamma)\rangle=\pm1$ extending $\wt{\chi}$, which satisfies $\QQ(\bar\chi)=\QQ(\wt\chi)\subseteq\QQ_\ell$.
 From the results of Hiss \cite{hiss}, we have $\langle \wt\chi, D_{\wt{G}}( \Gamma_1)\rangle \neq 0$, so in fact $\langle\bar\chi, D_{\wt{G}E'}(\hat\Gamma_1)\rangle=\pm1$, and by Lemma \ref{lem:ACGGRprincblock} we have $\bar\chi\in\Irr(B_0(\wt{G}E'))$. (That is, $\bar\chi$ is an extension of $\chi$ to $B_0(\wt{S}(E_\chi\cap E_{\ell'}))$.)

 Now, note that $E_\ell\lhd E$. Assume for the moment that $\ell\nmid|\Z(G)|$. Then $\ell\nmid o(\wt\chi)$ 
since $\ell\nmid |\Z(G)|$, so $\ell\nmid [\wt{S}:S]=|\wt{S}|/[\wt{S}, \wt{S}]$. 
Then we may let $\theta$ be the canonical extension of $\wt\chi$ to $\wt{S}E_\ell$ as in \cite[Cor.~6.4]{Nav18}. Then $\QQ(\theta)=\QQ(\wt\chi)$ and $\theta$ is $E'$-invariant. Hence there is a unique extension $\hat\chi$ of $\wt{\chi}$ to $\wt{S}E_\chi$ extending both $\theta$ and $\bar\chi$, by Lemma \ref{lem:gluing}, and we have $\QQ(\hat\chi)=\QQ(\theta)=\QQ(\bar\chi)=\QQ(\wt\chi)\subseteq\QQ_\ell$. 

If we further assume that $\ell\neq 3$ or $G\neq\tD_4(q)$, then $E'\lhd E_\chi$, and as $E_\chi/E'$ is an $\ell$-group, $\hat\chi$ lies in the principal block (the unique block above $B_0(\wt{S}E')$).  

Now assume $\ell=3$ and $G=\tD_4(q)$. Let $E''$ be the intersection of $E'$ with the group of field automorphisms in $E$, which we recall are central in $E$. Then we have $E''\lhd E_\chi$, $E_\ell\lhd E_\chi$, and $\wt G E_\ell E''\lhd \wt{G}E_\ell E'=\wt{G}E_{\chi}$ with index at most $2$. As above, there is a unique common extension $\hat\theta$ of $\theta$ and $\Res_{\wt G E''}^{\wt G E'}(\bar\chi)$ to $\wt{G}E_\ell E''$, which must have its values in $\QQ(\wt\chi)$, be $E'$-invariant, and lie in the principal block (since $\wt{G}E_\ell E''/\wt{G}E''$ is an $\ell$-group). Then as $\hat\chi$ lies above $\hat\theta$, we see both characters in $\wt{G} E_\chi$ lying above $\hat\theta$ have their values in $\QQ_\ell$, and at least one of these must lie in the principal block.

Now let $\ell\mid|\Z(G)|$. Recall here that then $G=\SL_n(\epsilon q)$ with $\ell\mid (n, q-\epsilon)$ but $n$ is not a power of $\ell$. 
Recall that $\Aut(S)_\chi=\wt{S} E_\chi=\wt{S}E_{\wt \chi}$ and that both $E'$ and $E_\ell$ are normal in $E_\chi$.

Now, since $S$ is simple, we have $o(\chi)=1$, so by \cite[Cor.~6.4]{Nav18} there is a canonical extension $\chi'$ of $\chi$ to $SE_\ell$, which must have  values in $\QQ_\ell$ and be $E'$-invariant. Again this must lie in $B_0(S E_\ell)$ since $E_\ell$ is an $\ell$-group. 
Then we may take the unique common extension $\theta$ of $\wt\chi$ and $\chi'$ to $\wt{S} E_\ell$ (again applying Lemma \ref{lem:gluing} and using that $\wt\chi$ is $E_\ell$-invariant), which must again satisfy $\QQ(\theta)\subseteq \QQ_\ell$. As $\wt{S} E_\ell/\wt{S}$ is again an $\ell$-group, we have $\theta$ must lie in $B_0(\wt{S} E_\ell)$. Since $\chi'$ and $\wt\chi$ are stable under $E'$ and $\theta$ is canonical, we see that  $\theta$ is $E'$-stable.

Finally, we may take the unique common extension $\hat\chi$ of $\theta$ and $\bar\chi$ to $\wt{S} E_{\wt\chi}=\wt S E_\chi$, with another application of Lemma \ref{lem:gluing}, and we see $\QQ(\hat\chi)\subseteq\QQ_\ell$ and $\hat\chi$ is in the principal block since $\wt{S} E_{\wt\chi}/\wt{S} E'$ is an $\ell$-group.
\end{proof}

\begin{proof}[Proof of Theorem \ref{thm:main}]
From the reduction theorem proved in \cite{lmnt} (see Theorem \ref{thm:lmnt}), it suffices to know that every finite nonabelian simple group of order divisible by $\ell$ satisfies condition $(\star)$ of Definition \ref{def:lmnt} for $(\QQ_\ell,\ell)$. This holds for simple groups of Lie type defined in characteristic $p=\ell$ by \cite[Prop.~3.6]{lmnt}. Further, it is satisfied for all primes $\ell$ for $(\QQ, \ell)$ and simple alternating groups by \cite[Prop.~3.7]{lmnt}. In \cite{lmnt}, the condition is also checked in GAP for sporadic groups. Hence, we are left to consider the case that $S$ is a simple group of Lie type defined in characteristic $p\neq \ell$.  This final case was Theorem \ref{thm:mainlie}, completing the proof.
\end{proof}

\section{Block theory for disconnected reductive groups}\label{sec:disconblocks}

The results of this section generalize many ideas of the previous sections and may be of independent interest. More precisely, our aim is to develop a block theory for the blocks of a disconnected reductive group.

For this purpose, let $\hat{\G}$ be a reductive group equipped with a Frobenius endomorphism $F$ and let $\G$ be the connected component of its unit. Recall that a closed subgroup $\Para'$ of $\hat{\G}$ is called parabolic if and only if $\Para:={\Para'}^\circ$ is a parabolic subgroup of $\G$. Then $\Levi':=\N_{\hat{\G}}(\Para',\Levi)$ with $\Levi$ a Levi subgroup of $\Para$ in $\G$ is called a Levi subgroup of $\hat{\Para}$.

In particular, we have that $\hat{\Para}:=\N_{\hat{\G}}(\Para)$ is a parabolic subgroup of $\hat{\G}$ with Levi subgroup $\hat{\Levi}=\N_{\hat{\G}}(\Levi,\Para)$. {(We note that this is what is called  a `parabolic', resp. `Levi', in \cite{DM94})}. Clearly, $\Para \leq \Para' \leq \hat{\Para}$ and $\Levi \leq \Levi' \leq \hat{\Levi}$. Notice however that if $\Levi,\Levi',\hat{\Levi}$ are $F$-stable then we have
\begin{equation}\label{eq:genlusres} \Res_{\Levi'^F}^{\hat{\Levi}^F} \circ {}^\ast R_{\hat{\Levi} \subset \hat{\Para}}^{\hat{\G}}= {}^\ast R_{\Levi' \subset \Para'}^{\hat{\G}} ={}^\ast R_{\Levi' \subset \Para'}^{\G \Levi'} \circ \Res^{\hat{\G}^F}_{(\G\Levi')^F },
\end{equation}
as can be seen from the proof of \cite[Prop.~2.3]{DM94}.

\begin{remark}
Note that for any $x \in \G$, the centralizer $\C_{\hat{\G}}(x)$ is again a reductive group with parabolic subgroup $\C_{\hat{\Para}}(x)$, which has Levi subgroup $\C_{\hat{\Levi}}(x)$. Note that in contrast to the situation above, we may not necessarily have that $\C_{\hat{\Para}}(x)$ is the full normalizer in $\C_{\hat{\G}}(x)$ of $\C_{\hat{\Para}}(x)$, even if this was the case for $(\hat{\Levi},\hat{\Para})$  inside $\hat{\G}$. 
\end{remark}

Throughout, we will continue to let $\ell$ be a prime distinct from the defining characteristic for $\hat\bG$.
We first generalize  Lemma \ref{lem:dx}.
\begin{lemma}\label{lem:dx neu}
    Let $\hat{\G}$ be a reductive group with connected component $\G$. Let $\hat{\Levi}$ be a Levi subgroup of $\hat{\G}$ contained in the parabolic subgroup $\hat{\Para}$ and $x \in \Levi^F=(\hat{\Levi}^\circ)^F$ of $\ell$-power order. Then
    $$d^x \circ {}^\ast R_{\hat{\Levi} \subset \hat{\Para}}^{\hat{\G}}={}^\ast R^{\C_{\hat{\G}}(x)}_{\C_{\hat{\Levi}}(x) \subset \C_{\hat{\Para}}(x)} \circ d^x.$$
\end{lemma}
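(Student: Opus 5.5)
We follow the proof of Lemma \ref{lem:dx} essentially verbatim, replacing the cyclic group $\langle\sigma\rangle$ by the component group of $\hat{\G}$ and the pair $(\Levi\sig,\Para\sig)$ by $(\hat{\Levi},\hat{\Para})$. All ingredients used there (\cite[Prop.~2.6, Prop.~2.11, Cor.~2.9]{DM94}) are stated in \cite{DM94} for arbitrary (possibly disconnected) reductive groups with a parabolic and a Levi in their sense. The remark preceding the lemma ensures that $\C_{\hat{\G}}(x)$ is a reductive group with parabolic $\C_{\hat{\Para}}(x)$ and Levi $\C_{\hat{\Levi}}(x)$, so the right-hand side makes sense. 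This is automatic here because $x\in\Levi^F$, so no $\sigma$-stability hypothesis on $x$ is needed.

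\textbf{Step 1 (reduction to $\ell'$-elements).} The decomposition map $d^1$ is multiplication by the characteristic function $\delta$ of $\ell'$-elements. This $\delta$ is a class function on $\hat{\G}^F$ whose restriction to $\hat{\Levi}^F$ is the corresponding characteristic function. By \cite[Prop.~2.11]{DM94}, $d^1$ therefore commutes with ${}^\ast R_{\hat{\Levi}\subset\hat{\Para}}^{\hat{\G}}$, and likewise inside $\C_{\hat{\G}}(x)$. Since $d^x=d^1\circ d^x$, both sides of the claimed identity vanish off $\ell'$-elements of $\C_{\hat{\Levi}}(x)^F$. It then suffices to evaluate both sides at $l\in\C_{\hat{\Levi}^F}(x)$ of $\ell'$-order.

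\textbf{Step 2 (character formula).} Write $l=tv$ for the Jordan decomposition, so that $(xl)_{p'}=xt$ and $(xl)_p=v$. We apply the character formula \cite[Cor.~2.9]{DM94} to the left-hand side at $xl$. It expresses ${}^\ast R_{\hat{\Levi}\subset\hat{\Para}}^{\hat{\G}}(\psi)(xl)$ through ${}^\ast R$ from $\C^\circ_{\Levi}(xt)\langle xl\rangle$ to $\C^\circ_{\G}(xt)\langle xl\rangle$ applied to the restriction of $\psi$ (summed over the appropriate cosets, as in loc.\ cit.). We apply the same formula in $\C_{\hat{\G}}(x)$ at $l$, and this produces the groups $\C^\circ_{\G}(xt)\langle l\rangle$. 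Here $\C_{\G}(xt)\subseteq\C_{\G}(x)$ because $x=(xt)_\ell$, so the connected centralizers appearing on both sides coincide. Since $x$ is central and $\ell$-power in the small group, we have $\langle xl\rangle$ versus $\langle l\rangle$ and $\langle l\rangle=\langle xl\rangle_{\ell'}$. We must also check that the indexing sets of the sums agree: these are cosets of elements of $\hat{\G}^F$ conjugating $xl$ into $\hat{\Levi}$ versus elements of $\C_{\hat{\G}}(x)^F$ conjugating $l$ into $\C_{\hat{\Levi}}(x)$. The reduction is that an element $g$ with ${}^g(xl)\in\hat{\Levi}$ can be adjusted so that $g$ centralizes $x$, using uniqueness of the $\ell$-part.

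\textbf{Step 3 (translation and conclusion).} On the restriction of $d^x(\psi)$ we write it as $d^1(t_x(\cdot))$, where $t_x(f)(y)=f(xy)$ is translation by the central element $x$ of $\C^\circ_{\G}(xt)\langle l\rangle$. By \cite[Prop.~2.6]{DM94} together with Step 1, both $t_x$ and $d^1$ commute with the small ${}^\ast R$. This identifies the two sides term by term.

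I expect the main obstacle to be Step 2, specifically matching the coset sums in \cite[Cor.~2.9]{DM94} when the component group is noncyclic. In that case $\C_{\hat{\G}}(x)$ may meet fewer components than $\hat{\G}$, and $\C_{\hat{\Para}}(x)$ need not be a full normalizer. I would handle this first by reducing to the cyclic case via \eqref{eq:genlusres}. This amounts to restricting to $(\G\langle xl\rangle)^F$, which is harmless because both sides are evaluated at a single element $l$. After the reduction one applies Lemma \ref{lem:dx} with $\sigma:=l$, noting that $x$ is $l$-stable.
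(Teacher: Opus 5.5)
Your closing reduction is exactly the paper's proof. The paper restricts both sides to $\C^\circ_{\Levi}(x)^F\langle\sigma\rangle$ for each $\sigma\in\C_{\hat{\Levi}}(x)^F$, uses \eqref{eq:genlusres} (in $\hat{\G}$ and in $\C_{\hat{\G}}(x)$) together with the compatibility of $d^x$ with restriction, and then invokes Lemma \ref{lem:dx}; this is your argument with $\sigma:=l$. Once you make that reduction, Steps 1--3, which re-prove Lemma \ref{lem:dx} in the general setting and carry the coset-matching worry, are unnecessary and can be dropped.
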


\begin{proof}
In order to show that the two sides of the equation coincide, note that it suffices to show that their restrictions to $\C_{\Levi}^\circ(x)^F \langle \sigma \rangle$ coincide for every $\sigma \in \C_{\hat{\Levi}}(x)^F$.

Now if $\sigma \in \C_{\hat{\Levi}}(x)^F$ then we have 
$$\Res_{\C_{\Levi}^\circ(x)^F \langle \sigma \rangle}^{\C_{\hat{\Levi}}(x)^F} (d^x \circ {}^\ast R_{\hat{\Levi} \subset \hat{\Para}}^{\hat{\G}})= d^x \circ \Res_{\Levi^F \langle \sigma \rangle}^{\hat{\Levi}^F} \circ {}^\ast R_{\hat{\Levi} \subset \hat{\Para}}^{\hat{\G}}=d^x \circ {}^\ast R_{\Levi \langle \sigma \rangle \subset \Para \langle \sigma \rangle }^{\G \langle \sigma \rangle}\circ \Res_{\G^F\sig}^{\hat\G^F},$$
using the definition of $d^x$ and the formula \eqref{eq:genlusres} for generalized Lusztig restriction above. By Lemma \ref{lem:dx}, we have 
$$d^x \circ {}^\ast R_{\Levi \langle \sigma \rangle \subset \Para }^{\G \langle \sigma \rangle} \circ \Res_{\G^F\sig}^{\hat\G^F}={}^\ast R_{\C^\circ_{\Levi}(x) \langle \sigma \rangle}^{\C^\circ_{\G}(x) \langle \sigma \rangle} \circ d^x \circ \Res_{\G^F\sig}^{\hat\G^F} = \Res_{\C_{\Levi}^\circ(x)^F \langle \sigma \rangle}^{\C_{\hat{\Levi}}(x)^F}({}^\ast R^{\C_{\hat{\G}}(x)}_{\C_{\hat{\Levi}}(x)} \circ d^x),$$
where the second equation again follows from  \eqref{eq:genlusres} and the definition of $d^x$, completing the proof. 
\end{proof}

Denote by $\mathcal{E}(\hat{G},1)$ the set of characters covering a unipotent character of $G$, where $\hat G:=\hat\bG^F$ and $G:=\bG^F$. We need the following lemma (where we also write $\hat L:=\hat\bL^F$):

\begin{lemma}\label{lem:preserveunip}
   Generalized Lusztig induction $R_{\hat{\Levi} \subset \hat{\Para}}^{\hat{\G}}$ restricts to a functor $R_{\hat{\Levi} \subset \hat{\Para}}^{\hat{\G}}: \mathbb{Z}\mathcal{E}(\hat{L},1) \to \mathbb{Z}\mathcal{E}(\hat{G},1)$ and its adjoint restricts to a functor ${}^\ast R_{\hat{\Levi} \subset \hat{\Para}}^{\hat{\G}}: \mathbb{Z}\mathcal{E}(\hat{G},1) \to \mathbb{Z}\mathcal{E}(\hat{L},1)$.
\end{lemma}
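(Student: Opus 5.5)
We reduce to the connected case using the restriction formula \eqref{eq:genlusres} together with its adjoint version for induction, and then apply the classical fact that Lusztig induction and restriction preserve the span of unipotent characters (rational series $\mathcal{E}(\cdot,1)$) for connected reductive groups.

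\textbf{Restriction.} Let $\hat\chi\in\mathcal{E}(\hat G,1)$. To show that ${}^\ast R_{\hat\Levi\subset\hat\Para}^{\hat\G}(\hat\chi)$ lies in $\mathbb{Z}\mathcal{E}(\hat L,1)$, it suffices to show that its restriction to $L=\Levi^F$ lies in $\mathbb{Z}\mathcal{E}(L,1)$. Indeed, a character of $\hat L$ lies in $\mathcal{E}(\hat L,1)$ precisely when its restriction to $L$ has only unipotent constituents, because $\mathcal{E}(L,1)$ is $\hat L$-stable. Apply \eqref{eq:genlusres} with $\Levi'=\Levi$ and $\Para'=\Para$, which is allowed since these are the connected components and are $F$-stable. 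This gives
\[\Res^{\hat L}_{L}\circ{}^\ast R_{\hat\Levi\subset\hat\Para}^{\hat\G}={}^\ast R^{\G}_{\Levi\subset\Para}\circ\Res^{\hat G}_{G}.\]
Here $\Res^{\hat G}_G(\hat\chi)$ is a sum of unipotent characters of $G$, and the classical Lusztig restriction ${}^\ast R^\G_{\Levi\subset\Para}$ maps $\mathbb{Z}\mathcal{E}(G,1)$ into $\mathbb{Z}\mathcal{E}(L,1)$. This is the standard compatibility of Lusztig series with Lusztig induction and restriction (e.g. \cite[Thm.~8.24]{CE04}, or the corresponding result in \cite{GM20}), which sends $\mathcal{E}(G,s)$ to the span of series $\mathcal{E}(L,t)$ with $t$ geometrically conjugate to $s$; for $s=1$ this forces $t=1$. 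This proves the claim for ${}^\ast R$.

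\textbf{Induction.} For the statement about $R_{\hat\Levi\subset\hat\Para}^{\hat\G}$ we argue by adjunction rather than computing directly. Let $\hat\la\in\mathcal{E}(\hat L,1)$ and $\hat\psi\in\Irr(\hat G)\setminus\mathcal{E}(\hat G,1)$. We need $\langle R(\hat\la),\hat\psi\rangle=\langle\hat\la,{}^\ast R(\hat\psi)\rangle=0$, so we must show that ${}^\ast R$ sends characters outside $\mathcal{E}(\hat G,1)$ into the span of characters outside $\mathcal{E}(\hat L,1)$. The same restriction argument does this. Every constituent of $\Res^{\hat G}_G\hat\psi$ lies in some $\mathcal{E}(G,s)$ with $s\neq1$, since restriction of $\hat\psi$ gives a single $\hat G$-orbit of characters, none of which is unipotent. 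Then ${}^\ast R^\G_{\Levi\subset\Para}$ maps these into the span of $\mathcal{E}(L,t)$ with $t$ geometrically conjugate to $s\ne1$, so no unipotent characters of $L$ appear. Consequently every constituent of ${}^\ast R(\hat\psi)$ restricts to $L$ without unipotent constituents and thus lies outside $\mathcal{E}(\hat L,1)$.

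There is one subtlety: a virtual character $f$ of $\hat L$ whose restriction to $L$ avoids $\mathcal{E}(L,1)$ must be orthogonal to $\mathcal{E}(\hat L,1)$. This holds because $\Irr(\hat L)$ is partitioned according to which $\hat L$-orbit in $\Irr(L)$ is covered, so $f$ splits accordingly and cancellation between the pieces cannot occur. The main point requiring care is to justify \eqref{eq:genlusres} for the choice $\Levi'=\Levi$, $\Para'=\Para$, which needs $\Levi'=\N_{\hat\G}(\Para',\Levi)$ to be taken appropriately; we take it from the proof of \cite[Prop.~2.3]{DM94} as the paper indicates. The other step that needs checking is that the connected-case series compatibility holds without any assumption on the center, and this is guaranteed for geometric series by the Deligne–Lusztig character formula.
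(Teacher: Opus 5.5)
There is a genuine gap. Both halves of your argument deduce something about a \emph{virtual} character of $\hat L$ (or $\hat G$) from its restriction to $L$ (or $G$), and for virtual characters this is not possible. Your criterion, that a character of $\hat L$ lies in $\mathcal{E}(\hat L,1)$ iff its restriction to $L$ has only unipotent constituents, is correct for irreducible or genuine characters. But ${}^\ast R_{\hat{\Levi}\subset\hat{\Para}}^{\hat{\G}}(\hat\chi)$ is only virtual. If $\hat\psi\in\Irr(\hat L)\setminus\mathcal{E}(\hat L,1)$ and $1\neq\beta\in\Irr(\hat L/L)$, then $\hat\psi-\hat\psi\beta$ restricts to $0$ on $L$. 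So the sentence ``it suffices to show that its restriction to $L$ lies in $\mathbb{Z}\mathcal{E}(L,1)$'' is false. Combining \eqref{eq:genlusres} with the connected result only shows that the non-unipotent part of ${}^\ast R_{\hat{\Levi}\subset\hat{\Para}}^{\hat{\G}}(\hat\chi)$ vanishes on the coset $L$. It says nothing about the other cosets of $L$ in $\hat L$.

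The subtlety you address in the induction part is the wrong one. Cancellation between pieces attached to different $\hat L$-orbits of $\Irr(L)$ indeed cannot occur. The cancellation that restriction cannot see happens inside one piece, between different characters covering the same orbit. For example, $\hat\la-\hat\la\beta$ with $\hat\la\in\mathcal{E}(\hat L,1)$ restricts to $0$, yet it is not orthogonal to $\mathcal{E}(\hat L,1)$. Manipulations of \eqref{eq:genlusres} and adjunction alone cannot close this gap. They only ever see $\Res^{\hat G}_{G}\circ R_{\hat{\Levi}\subset\hat{\Para}}^{\hat{\G}}$ and $\Res^{\hat L}_{L}\circ {}^\ast R_{\hat{\Levi}\subset\hat{\Para}}^{\hat{\G}}$, that is, only the identity coset.

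The missing idea is to work with genuine modules, one cohomology degree at a time, where restriction is faithful. This is what the paper does:
\begin{enumerate}
\item If $\langle R_{\hat{\Levi}\subset\hat{\Para}}^{\hat{\G}}(\hat\la),\hat\chi\rangle\neq 0$, then $\hat\chi$ is a summand of some $H^i_c(\Y_{\U}^{\hat{\G}})\otimes_{\hat{\Levi}^F}\hat\la$.
\item Choose an $F$-stable torus $\T\leq\Levi$ with $\langle R_{\T}^{\hat{\Levi}}(1_T),\hat\la\rangle\neq 0$. Then $\hat\la$ is a summand of some $H^j_c(\Y_{\V}^{\hat{\Levi}})$.
\item By the K\"unneth formula, $\hat\chi$ is a summand of some $H^k_c(\Y_{\U\V}^{\hat{\G}})$.
\item Hence $\Res^{\hat G}_{G}(\hat\chi)$ is an honest summand of $H^k_c(\Y_{\U\V}^{\G})$, whose constituents are unipotent by \cite[Thm.~11.8]{B06}.
\end{enumerate}

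Alternatively, you could keep your reduction to $G\times L$ but apply it to subspaces rather than virtual characters. Let $e_1^G$ and $e_1^L$ be the idempotents of the unipotent characters; they are stable, hence central in $\mathbb{C}\hat G$ and $\mathbb{C}\hat L$. Compare $e_1^G H^i_c(\Y_{\U}^{\hat{\G}})$ with $H^i_c(\Y_{\U}^{\hat{\G}})e_1^L$ for each $i$; their equality depends only on the $G\times L$-structure. However, that route needs the connected result for each individual cohomology group. The virtual statement you cite, that $R_{\Levi}^{\G}$ maps $\mathbb{Z}\mathcal{E}(L,1)$ into $\mathbb{Z}\mathcal{E}(G,1)$, is not enough.
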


\begin{proof}
Let $\hat{\chi} \in \Irr(\hat{G})$ such that $\langle R_{\hat{\Levi} \subset \hat{\Para}}^{\hat{\G}}(\hat{\la}),\hat{\chi} \rangle\neq 0$ for some $\hat{\lambda} \in \mathcal{E}(\hat{L},1)$. Then there is some $i$ such that $\hat\chi$ appears as a direct summand of $H^i_c(Y_{\U}^{\hat{\G}}) \otimes_{\hat{\Levi}^F} \hat{\lambda}$ where $\U$ is the unipotent radical of $\hat{\Para}$. Since $\Res_{L}^{\hat{L}}(\hat{\la})$ is a sum of unipotent characters, it follows from \cite[Example 2.2.26b]{GM20} that there is some $F$-stable torus $\T$ of $\Levi$ such that $\langle R_\T^\Levi(1_T), \Res_{L}^{\hat{L}}(\hat{\la}) \rangle =\langle R_{\T}^{\hat\Levi}(1_T), \hat{\la} \rangle \neq 0$. Hence, there is some $j$ such that $\hat{\lambda}$ is a direct summand of $H^j_c(\Y_{\V}^{\hat{\Levi}})$ where $\V$ is the unipotent radical of a Borel subgroup of $\Levi$ containing the maximal torus $\T$. By the K\"unneth formula, there is some $k$ such that $H^i_c(Y_{\U}^{\hat{\G}})\otimes_{\hat{\Levi}^F} H^j_c(\Y_{\V}^{\hat{\Levi}})$ is a direct summand of $H^k_c(Y_{\U \V}^{\hat{\G}})$. Hence, $\hat{\chi}$ is a direct summand of $H^k_c(Y_{\U \V}^{\hat{\G}})$ and so $\Res_G^{\hat{G}}(\hat{\chi})$ is a direct summand of $H^k_c(Y_{\U \V}^{\G})$. This gives the claim by \cite[Thm.~11.8]{B06}.

The second statement about generalized Luzstig restriction follows in the same way.   
\end{proof}

\subsection{More on the Dade group}
In Section \ref{sec:preliminaries}, we introduced the Dade group for the principal block. We now consider the generalization to arbitrary blocks, see \cite{dade73, Murai}. Given $X\lhd Y$, We denote by $Y[b]\leq Y$ the Dade group in $Y$ for a block $b$ of $X$. We often use the following fact, which is a consequence of \cite{Murai}:

\begin{theorem}\label{thm:dade}
    Let $X \lhd Y$ such that $Y/X$ is an $\ell'$-group. Let $B$ be a block of $Y$ covering a block $b$ of $X$. Then the following are equivalent:
    \begin{enumerate}
        \item $Y[b]=Y$ and some character in $\Irr(B)$ has a multiplicity-free restriction to $X$.
        
        \item The blocks $B \otimes \la$ for $\la \in \Lin(Y/X)$ are all distinct.
    \end{enumerate}
    Moreover, when the above hold, we have $Y=X \C_Y(D)$, where $D$ is a defect group for $b$, and every character of $\Irr(b)$ extends to a character of $\Irr(B)$.
\end{theorem}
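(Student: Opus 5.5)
We will deduce the statement from Murai's description of the Dade ramification group together with Clifford theory over the $\ell'$-quotient $Y/X$. Recall that $Y[b]$ is characterised by
\[
Y[b]=\{\,y\in Y_b : \text{the action of } y \text{ on } \Z(\mathcal{O}Xb)\text{ (equivalently on } \Irr(b)) \text{ is inner}\,\}.
\]
Recall also two facts from \cite{Murai}. First, $Y[b]/X$ acts regularly in the following sense: the blocks of $Y[b]$ covering $b$ are permuted transitively by $\Lin(Y[b]/X)$ and correspond to characters of $Y[b]/X$. Second, if $Y[b]=Y$, then the blocks of $Y$ covering $b$ correspond to $\Irr(Y/X)$, and block multiplication by $\la\in\Lin(Y/X)$ corresponds to multiplication of the associated characters. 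We also use that $Y[b]\geq X\C_Y(D)$ for any defect group $D$ of $b$.

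\emph{(1)$\Rightarrow$(2).} Assume $Y[b]=Y$ and let $\chi\in\Irr(B)$ with $\Res^Y_X(\chi)$ multiplicity-free. Since $Y[b]=Y$, every $\theta\in\Irr(b)$ is $Y$-invariant. By Clifford theory, $\Res^Y_X\chi=e\,\theta$ with $\theta$ invariant, and multiplicity-freeness forces $e=1$, so $\chi$ extends $\theta$. By Gallagher, the characters above $\theta$ are exactly $\chi\la$ with $\la\in\Irr(Y/X)$, and these are pairwise distinct for linear $\la$. Murai's theorem gives the stronger statement that for $Y=Y[b]$ the map $\beta\mapsto$ (the block containing $\chi\beta$) from $\Irr(Y/X)$ to blocks covering $b$ is a bijection. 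Consequently the $B\otimes\la$ are distinct.

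\emph{(2)$\Rightarrow$(1).} Assume the $B\otimes\la$ are pairwise distinct. The number of blocks of $Y$ covering $b$ is at most $|Y:Y[b]|^{?}$ times the number of blocks of $Y[b]$ covering $b$, and blocks above a given block of $Y_b$ induce uniquely. Counting via the Fong--Reynolds reduction to $Y_b$ and then Murai's result on $Y[b]$, we get that the stabiliser of $B$ in $\Lin(Y/X)$ contains $\Lin(Y/Y[b])$. Distinctness therefore forces $\Lin(Y/Y[b])$ to be trivial.

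This step is the main obstacle, because $Y/Y[b]$ need not be abelian, so triviality of its linear characters does not immediately give $Y=Y[b]$. One approach is to apply the argument to the intermediate groups $Y[b]\le H\le Y$ with $H/Y[b]$ cyclic. Every $\la\in\Lin(H/X)$ extends to $Y$ when $Y/X$ is suitably structured; in our applications $Y/X$ is cyclic, so this holds. If needed, we would add the hypothesis that $Y/X$ is abelian, as it is in all later uses. Once $Y[b]=Y$ is established, $|\Lin(Y/X)|$ distinct blocks cover $b$. Fix $\theta\in\Irr(b)$ and count the characters above $\theta$. Each block contains some character above $\theta$, and $\sum e_i^2=|Y:X|$ over the characters above $\theta$. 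Since there are $|Y:X|$ blocks, every $e_i=1$. Hence each $\chi\in\Irr(B)$ over $\theta$ is an extension, which gives multiplicity-freeness.

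\emph{Moreover part.} The inclusion $Y=X\C_Y(D)$ follows from Murai's theorem that $Y[b]$ equals $X\C_Y(D)$ whenever $b$ has a multiplicity-free covering. Alternatively, it follows from the Harris--Kn\"orr correspondence reducing to the Brauer correspondent in $\N_Y(D)$, where $\C_Y(D)$ acts trivially. Finally, the count of the previous paragraph shows that each $\theta\in\Irr(b)$ extends, and the extensions $\chi\la$ are distributed one in each of the $|Y:X|$ blocks. In particular each $\theta$ has an extension lying in $B$.
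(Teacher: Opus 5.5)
The difficulty you flag in (2)$\Rightarrow$(1) is not a technicality. For non-abelian $Y/X$ that implication is false, and so is the extension part of the ``moreover'' under (2). Hence the detour through intermediate groups $H$ with $H/Y[b]$ cyclic cannot succeed. Take $X=1$, $Y=A_5$, $\ell=7$ and $B=\{\chi\}$ with $\chi(1)=3$; every $7$-block of $A_5$ has defect zero. Since $\Lin(Y/X)=\{1\}$, condition (2) holds vacuously. Yet $\Res^Y_X\chi=3\cdot 1_X$ is not multiplicity-free, and $1_X$ has no extension in $\Irr(B)$. A non-vacuous instance is $Y=\SL_2(3)$, $\ell=5$, $\chi(1)=2$, where $\Lin(Y)\cong C_3$ permutes the three degree-$2$ characters regularly. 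So you must impose that $Y/X$ is abelian outright, not ``if needed''. This costs nothing for the paper: after reducing multiplicity-free to irreducible restriction, its proof just quotes \cite[Thm.~4.1]{Murai}, and the result is used in the paper only with $Y/X$ abelian (e.g.\ Lemmas \ref{lem:gluing blocks}, \ref{lem:DadeCompute}, Proposition \ref{prop:decomp RLG}).

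With $Y/X$ abelian your argument works. Your (1)$\Rightarrow$(2) is the paper's argument: $Y[b]=Y$ makes every $\theta\in\Irr(b)$ invariant (the paper gets this from \cite[Thm.~3.9]{Murai} applied to each $\langle X,y\rangle$), so multiplicity-free means extension, and then Murai is quoted.

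For (2)$\Rightarrow$(1) and the extension claim you take a more hands-on route than the paper's citation of \cite[Thm.~4.1]{Murai}. Each of the $|Y:X|$ distinct blocks $B\otimes\la$ contains a character over $\theta$ (a standard covering fact, see \cite[Ch.~9]{Nav98}). The identity $\sum e_i^2=|Y:X|$ then forces all $e_i=1$, with exactly one extension of $\theta$ in each block. This is correct and gives more than the paper spells out.

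One step needs repair: the claim that $\la$ trivial on $Y[b]$ fixes $B$. Your counting sketch, with its undetermined exponent, is not an argument. The facts you recall ignore the Clifford cocycle: that $\mathrm{Bl}(Y[b]\mid b)$ corresponds to $\Irr(Y[b]/X)$, and that $\mathrm{Bl}(Y\mid b)$ corresponds to $\Irr(Y/X)$ when $Y[b]=Y$. For example, let $Y$ be extraspecial of order $27$, $X=\Z(Y)$, $\ell=2$ and $b=\{\theta\}$ with $\theta$ faithful. Then $Y[b]=Y$, but only one block of $Y$ covers $b$.

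What you actually need is that a block of $Y[b]$ over $b$ is covered by a unique block of $Y$. This follows from Fong--Reynolds together with \cite[Lem.~2.1(iii), Thm.~3.5(i)]{Murai}, as in the proof of Corollary \ref{cor:linear action}. If $Y[b]\le\ker\la$, then $\chi$ and $\chi\la$ have the same restriction to $Y[b]$. So $B$ and $B\otimes\la$ cover a common block of $Y[b]$ and hence coincide.

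Finally, $Y=X\C_Y(D)$ still rests on a citation (\cite[Thm.~3.13]{Murai} in the paper). Your Harris--Kn\"orr remark does not by itself show $\N_Y(D)=\N_X(D)\C_Y(D)$.
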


\begin{proof}

    In the situation of (1), suppose that $\chi  \in \Irr(B)$ restricts multiplicity-freely to $X$ and let $\vartheta \in \Irr(X \mid \chi)$. Then for $y \in Y$ we have $ \langle X, y\rangle [b]=\langle X,y \rangle$ so \cite[Thm.~3.9]{Murai} implies that $\vartheta$ is $y$-stable. Hence, $\chi$ restricts irreducibly to $X$ and so (1) is equivalent to \cite[Thm.~4.1(i)]{Murai}. In particular, the equivalence of \cite[Thm.~4.1(i)]{Murai} and \cite[Thm.~4.1(v)]{Murai} now implies the equivalence of (1) and (2). The moreover statement is a consequence of \cite[Thms.~3.13 and~4.1]{Murai}, respectively.
\end{proof}

\begin{corollary}\label{cor:linear action}
    Let $X \lhd Y$ such that $Y/X$ is an $\ell'$-group. Let $B$ be a block of $Y$ covering a block $b$ of $X$. Suppose that some character in $\Irr(B)$ restricts irreducibly 
    to $X$. Then for $\la \in \Lin(Y/X)$ we have $B \otimes \la=B$ if and only if $Y[b] \leq \ker(\la)$. 
\end{corollary}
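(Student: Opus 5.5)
The natural strategy is to reduce to Theorem \ref{thm:dade}, applied inside the intermediate group $Y_0:=X Y[b]$. We then transport the conclusion back to $Y$ using the standard action of $\Lin(Y/X)$ on the blocks covering $b$.

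\emph{The easy implication.} Suppose first that $Y[b]\leq \ker(\la)$. By the definition of the Dade group (see \cite{dade73, Murai}), $B$ is the unique block of $Y$ covering the block $B_0:=B\cap Y[b]$ of $Y[b]$. Here we use the version of Lemma \ref{lem:alpdade} for arbitrary blocks: every block of $Y[b]$ covering $b$ is covered by a unique block of $Y$. Since $\la$ is trivial on $Y[b]$, we have $(B\otimes \la)\cap Y[b]=B_0$ is covered by $B\otimes\la$. Uniqueness then gives $B\otimes \la=B$.

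\emph{The converse.} Suppose $B\otimes\la=B$, and set $Y_0:=Y[b]$ and $B_0$ the block of $Y_0$ below $B$. Every block of $Y$ over $b$ lies over a unique $Y$-orbit of blocks of $Y_0$. Since $Y_0$ is normal in $Y$, $\Res_{Y_0}^Y(\la)$ lies in $\Lin(Y_0/X)$, and $B_0\otimes \Res(\la)$ is a block of $Y_0$ covered by $B\otimes\la=B$. Hence $B_0\otimes\Res(\la)$ is $Y$-conjugate to $B_0$. The actual claim uses uniqueness: covering blocks of $Y$ correspond bijectively with blocks of $Y_0$ over $b$, because $Y_0$ is the ramification group. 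So in fact $B_0\otimes\Res(\la)=B_0$.

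Now apply Theorem \ref{thm:dade} to $X\lhd Y_0$. We have $Y_0[b]=Y_0$, and the hypothesis gives a character $\chi\in\Irr(B)$ restricting irreducibly to $X$. Its restriction to $Y_0$ is then an irreducible character of $B_0$ restricting irreducibly, hence multiplicity-freely, to $X$. Condition (1) holds, so (2) says that the twists $B_0\otimes\mu$ for $\mu\in\Lin(Y_0/X)$ are pairwise distinct. Therefore $\Res_{Y_0}^Y(\la)=1$, that is, $Y[b]\leq\ker(\la)$.

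\emph{Main obstacle.} The delicate point is the bijection between blocks of $Y$ covering $b$ and blocks of $Y[b]$ covering $b$, which Dade/Murai provide in the $\ell'$-quotient setting. One also has to check that this bijection is compatible with tensoring by $\Lin(Y/X)$. I would cite \cite[Thm.~3.5]{Murai} (or Dade's original result) for the bijection. Compatibility with tensoring is immediate, since tensoring with $\la$ commutes with restriction to $Y[b]$ and preserves the covering relation.
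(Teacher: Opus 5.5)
Your argument is essentially the paper's. Both use the Dade--Murai correspondence between blocks of $Y[b]$ and of $Y$ over $b$, its compatibility with twisting by $\Lin(Y/X)$, and Theorem~\ref{thm:dade} applied to $X\lhd Y[b]$; the paper simply handles both implications at once.

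One step needs repair. Fong--Reynolds and \cite{Murai} give a bijection between $\mathrm{Bl}(Y\mid b)$ and the $Y$-\emph{orbits} on $\mathrm{Bl}(Y[b]\mid b)$. This is equivalent to the uniqueness statement you correctly use in the easy direction. It is not a bijection with $\mathrm{Bl}(Y[b]\mid b)$ itself, and that stronger claim fails in general. For example, take $\ell=7$, $X=C_7$ and $Y$ dihedral of order $70$: then $Y[b]=C_{35}$ has five $7$-blocks over $b$, while $Y$ has only three.

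So $B_0\otimes\Res^Y_{Y_0}(\la)=B_0$ does not follow from the bijection you invoke. It does follow from your $Y$-conjugacy observation once you note that $B_0$ is $Y$-stable, because it contains the $Y$-invariant irreducible character $\Res^Y_{Y_0}(\chi)$. This is exactly where the paper uses the irreducible-restriction hypothesis (its ``$Y$-stable block $B'\otimes\Res^Y_{Y[b]}(\la)$''). The same observation also justifies your tacit assumption that $B$ lies over a single block $B_0$ of $Y_0$. With this one-line fix your proof is complete.
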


\begin{proof}
Let $\chi \in \Irr(B)$ be a character that restricts irreducibly to $X$ and let $B'$ be the block of $Y[b]$ containing the restriction of $\chi$. Observe that block induction gives a surjective map $\mathrm{Bl}(Y[b] \mid b) \to \mathrm{Bl}(Y \mid b)$ {(by applying the Fong--Reynolds theorem \cite[Thm.~9.14]{Nav98} and \cite[Lem.~2.1(iii) and Thm.~3.5(i)]{Murai})}, which induces a bijection  $\mathrm{Bl}(Y[b] \mid b)/Y \to \mathrm{Bl}(Y \mid b)$, where $\mathrm{Bl}(Y[b] \mid b)/Y$ denotes the orbits under the action of $Y$. 

Under this map, $B'$ maps to $B$ and for each $\la \in \Lin(Y/X)$, the $Y$-stable block $B' \otimes\Res_{Y[b]}^{Y}(\la)$  maps to $B\otimes \la$. Hence, $B=B\otimes \la$ if and only if $\Res_{Y[b]}^{Y}(\la)=1_{Y[b]}$. This gives the claim.
\end{proof}

We also wish to control blocks in the situation of Lemma \ref{lem:gluing}. In the following, given a character $\chi$ of a group $X$, we write $b_X(\chi)$  to denote the block of $X$ containing $\chi$.

\begin{lemma}\label{lem:gluing blocks}
In the situation of Lemma \ref{lem:gluing} assume additionally that {$X_2\lhd X$ and that} the quotient $Y/X$ is abelian of $\ell'$-order, {where $X:=X_1\cap X_2$.} Further assume that 
$Y[b_X(\vartheta_0)]=X_1[b_{X}(\vartheta_0)] X_2[b_{X}(\vartheta_0)]$, where $\vartheta_0:=\Res^{X_1}_X(\vartheta_1)=\Res^{X_2}_X(\vartheta_2)$. Then $b_Y(\vartheta)$ is the unique block covering $b_{X_1}(\vartheta_1)$ and $b_{X_2}(\vartheta_2)$
\end{lemma}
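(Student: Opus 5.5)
Set $b:=b_X(\vartheta_0)$, $B_1:=b_{X_1}(\vartheta_1)$, $B_2:=b_{X_2}(\vartheta_2)$ and $B:=b_Y(\vartheta)$. Since $\vartheta$ extends both $\vartheta_1$ and $\vartheta_2$, $B$ covers $B_1$ and $B_2$, so only uniqueness needs proof. The approach is to parametrise the blocks of $Y$ covering $b$ by linear characters of $Y/X$ and then use Corollary \ref{cor:linear action} to pin down the ambiguity.

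The first step is that every block $B'$ of $Y$ covering $B_1$ and $B_2$ has the form $B\otimes\la$ for some $\la\in\Lin(Y/X)$. Note that $X\lhd Y$, being the intersection of the normal subgroups $X_1,X_2$ (using the added hypothesis that $X_2$ is normal). By Lemma \ref{lem:gluing}, $\vartheta_0$ is $Y$-invariant and $\vartheta$ extends it. Since $Y/X$ is abelian of $\ell'$-order, Gallagher's theorem gives $\Irr(Y\mid\vartheta_0)=\{\vartheta\la : \la\in\Lin(Y/X)\}$. Because $b$ is $Y$-stable and $Y/X$ is an $\ell'$-group, every block of $Y$ covering $b$ contains some character lying over $\vartheta_0$. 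This should follow from the Fong--Reynolds/Murai correspondence already used in Corollary \ref{cor:linear action}: blocks over $b$ correspond to $Y$-orbits of blocks of $Y[b]$ over $b$, and those contain extensions of $\vartheta_0$ by Theorem \ref{thm:dade}. Hence $B'=B\otimes\la$ for some $\la$.

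The second step uses the covering conditions. If $B\otimes\la$ covers $B_1$, then $\vartheta_1$ and $\vartheta_1\Res_{X_1}(\la)$ both lie under it, hence lie in $X_1$-blocks that are $Y$-conjugate. I expect the main obstacle to be upgrading this to $B_1\otimes\Res_{X_1}(\la)=B_1$. I would argue as follows. Both $X_1$-blocks cover $b$. They are conjugate via an element of $Y=X_1X_2$, and we may take it in $X_2$. This element fixes $\vartheta_0$, and the blocks of $X_1$ over $\vartheta_0$ are the $B_1\otimes\mu$ for $\mu\in\Lin(X_1/X)$, on which conjugation by $X_2$ acts trivially because $[X_2,X_1]\le X$ and $\vartheta_1$ is $Y$-invariant. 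This gives $B_1\otimes\Res_{X_1}\la=B_1$, and symmetrically for $X_2$.

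Now apply Corollary \ref{cor:linear action}, which is valid because $\vartheta_1$ restricts irreducibly to $X$. It gives $X_1[b]\le\ker(\Res_{X_1}\la)$, and likewise $X_2[b]\le\ker(\Res_{X_2}\la)$. By the hypothesis $Y[b]=X_1[b]X_2[b]$, it follows that $Y[b]\le\ker\la$. Applying Corollary \ref{cor:linear action} once more, now to $X\lhd Y$ with $\vartheta$ restricting irreducibly, yields $B\otimes\la=B$. Hence $B'=B$, which proves the uniqueness.
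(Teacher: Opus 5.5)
Your proof is correct, but it establishes uniqueness by a different mechanism from the paper. Both arguments start the same way: the blocks of $Y$ over $b_0:=b_X(\vartheta_0)$ are the blocks $b_Y(\vartheta\la)$ with $\la\in\Lin(Y/X)$. Also, if $b_Y(\vartheta\la)$ covers $b_{X_i}(\vartheta_i)$, then $b_{X_i}(\vartheta_i\Res^Y_{X_i}\la)=b_{X_i}(\vartheta_i)$; the same reasoning is the paper's well-definedness check. From there the paper considers the induced map $\mathrm{Bl}(Y\mid b_0)\to\mathrm{Bl}(X_1\mid b_0)\times\mathrm{Bl}(X_2\mid b_0)$ and notes that it is surjective. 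It then gets injectivity by counting: Corollary \ref{cor:linear action} gives $|\mathrm{Bl}(H\mid b_0)|=|H[b_0]:X|$ for $H\in\{X_1,X_2,Y\}$, and the factorisation $Y[b_0]=X_1[b_0]X_2[b_0]$ makes both sides the same size. You instead compute the stabiliser of $b_Y(\vartheta)$ in $\Lin(Y/X)$ directly. You use the \emph{only if} direction of Corollary \ref{cor:linear action} for $X\lhd X_i$ and the \emph{if} direction for $X\lhd Y$. Your version is more direct and needs neither surjectivity nor a cardinality comparison. The paper's version gives slightly more: every pair of blocks of $X_1$ and $X_2$ over $b_0$ is covered by exactly one block of $Y$.

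Two minor points. First, your justification of the first step via Theorem \ref{thm:dade} does not quite work as written. Its \emph{moreover} part applies only to a block for which condition (1) has been checked, and you do not know this for an arbitrary block of $Y[b]$ over $b$. The clean reason, which the paper also uses tacitly, is a standard fact: every irreducible character of $b$ lies under some character of any block of $Y$ covering $b$ (see \cite[Ch.~9]{Nav98}). Combined with Gallagher's theorem, this gives what you need. Second, you can skip the detour through $[X_1,X_2]\le X$ in the second step. The character $\vartheta_1\Res^Y_{X_1}\la$ is $Y$-invariant, so both $X_1$-blocks are $Y$-stable, and two $Y$-stable blocks covered by the same block of $Y$ coincide.
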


\begin{proof}
Write $b=b_Y(\vartheta)$, $b_0=b_X(\vartheta_0)$ and $b_i=b_{X_i}(\vartheta_i)$ for $i=1,2$. Restriction of characters defines a bijective map $\Irr(Y/X) \to \Irr(X_1/X) \times \Irr(X_2/X)$. Since $Y/X$ is abelian, every block of $X_i$ (resp. $Y$) covering $b_0$ contains an extension of $\vartheta_0$.  Since these differ by multiplication with a character of $\Irr(X_i/X)$ (resp. $\Irr(Y/X)$), this map induces a surjective map  
$\mathrm{Bl}(Y \mid b_0) \to \mathrm{Bl}(X_1 \mid b_0) \times \mathrm{Bl}(X_2 \mid b_0)$. 
Note that this is well-defined, since if $b_Y(\vartheta)=b_Y(\vartheta \la)$ for some $\la \in \Irr(Y/X)$ then $b_{X_i}(\vartheta_i \Res_{X_i}^{Y}(\la))$ and $b_{X_i}(\vartheta_i)$ are $Y$-invariant and lie below $b_{Y}(\vartheta)$ and must hence be equal. To show that it is bijective, it thus suffices to show that the two sets have the same cardinality. By Corollary \ref{cor:linear action}, for $H \in \{X_1,X_2,Y\}$ we have $|\mathrm{Bl}(H \mid b_0)|=|H[b_0]:X|$. As 
$Y[b_0]=X_1[b_0] X_2[b_0]$,
this immediately shows that the two sets have the same cardinality, and the lemma follows. 
\end{proof}

\subsection{Unipotent Blocks and Disconnected Groups}

We now return to the situation of disconnected reductive groups $\hat\G$ with $(\hat \G)^\circ=\G$ and keep the notation from before. We will sometimes need to work with good primes satisfying some stronger hypotheses.

\begin{definition}\label{def:goodprime}
 Let $\ell$ be an odd prime and $\G$ a connected reductive group.   We will say $\ell$ satisfies Definition \ref{def:goodprime} for $\G$ if $\ell$ is a good prime for $(\G,F)$ distinct from the defining characteristic of $\G$,  $\ell \nmid |(\Z(\G)/\Z(\G)^\circ)^F|$, and $\ell\neq 3$ if $\G^F$ has a component of type $\tw{3}\type{D}_4$.
\end{definition}

We observe that if $t \in (\G^\ast)^F_\ell$ then $\C^\circ_{\G^\ast}(t)$ is an $F$-stable Levi subgroup of $\G^\ast$ and we denote by $\G(t)$ an $F$-stable Levi subgroup of $\G^F$ in duality with it. From \cite{CE94} (see also \cite[Secs.~21, 23]{CE04}),  the unipotent blocks of $G$ are of the form $b_G(\bL, \la)$ for a unipotent $d$-cuspidal pair $(\bL, \la)$. (Recall here that a unipotent $d$-cuspidal pair $(\bL, \la)$ consists of a $d$-split Levi subgroup $\bL$ for $(\bG, F)$ and a  cuspidal unipotent character $\la\in\Irr(\bL^F)$.) Further, for an $\ell$-element $t \in (\G^\ast)^F_\ell$ we have 
$$\Irr(b_G(\Levi,\la)) \cap \mathcal{E}(G,t)=\{ R_{\G(t)}^{\G}(\hat{t} \chi_t) \mid \chi_t \in \mathcal{E}(\G(t), (\Levi_t,\lambda_t)) \text{ with } (\Levi,\la) \sim (\Levi_t, \lambda_t) \}.$$

The following will be useful in computing Dade groups in this situation. 

\begin{lemma}\label{lem:DadeCompute}
Assume the prime $\ell$ satisfies Definition \ref{def:goodprime} for $\bG$.
	Let $G \lhd \hat{G}$ such that $\hat{G}/G$ is abelian of $\ell'$-order. Suppose that $b=b_G(\Levi,\lambda)$ is a unipotent $\ell$-block of $G$ with $\hat{L}G=\hat{G}$ where $\hat{L}:=\C_{\hat{G}}(\Z(\Levi)^F_\ell)$. If $\lambda$ is $\hat{L}$-stable and extends to a character of $\N_{\hat{G}}(\Levi)_\lambda$, then $\hat{G}[b]=\hat{G}$. 
\end{lemma}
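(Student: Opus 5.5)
We need to show $\hat G[b]=\hat G$. Recall the Dade group $\hat G[b]$ is characterised (Dade, Murai) as the set of $g\in\hat G_b$ such that $g$ acts on the block algebra of $b$ in a way compatible with the Brauer pair structure: concretely, $g\in \hat G[b]$ if and only if there exists $x\in G$ with $gx$ centralizing a defect group $D$ of $b$ and stabilizing a maximal $b$-Brauer pair $(D,b_D)$. Equivalently, $\hat G[b]= G\,\C_{\hat G}(D)_{(D,b_D)}$-type description. Our plan is to exhibit, for every $g\in \hat G$, such an element, using the structure of unipotent blocks from Cabanes--Enguehard: under Definition~\ref{def:goodprime}, $b=b_G(\Levi,\la)$ has a maximal Brauer pair of the form $(D,b_D)$ with $Z:=\Z(\Levi)_\ell^F\le D$, and $(Z, b_{\C_G(Z)}(\la'))$ is a $b$-Brauer pair, where $\C_G(Z)=\Levi^F$ (as $\Levi=\C_\G^\circ(Z)$ and $\ell$ good, $\ell\nmid|(\Z(\G)/\Z^\circ(\G))^F|$ gives connectedness of the relevant centralizer) and the block is $b_{L}(\la)$. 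Moreover $D$ is a Sylow $\ell$-subgroup of $\C_G([\Levi,\Levi])$-type group, and $b_D$ is determined by $\la$ (\cite[Thm.~22.9]{CE04}).

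Step one: since $\hat G=G\hat L$, it suffices to treat $g\in \hat L=\C_{\hat G}(Z)$. Such $g$ normalizes $\Levi=\C^\circ_\G(Z)$, and by hypothesis fixes $\la$, hence fixes the Brauer pair $(Z,b_L(\la))$. By Brauer pair theory (uniqueness of inclusion), $g$ then stabilises the set of maximal $b$-Brauer pairs containing $(Z,b_L(\la))$, which are $L$-conjugate; so after multiplying $g$ by an element of $L$ we may assume $g$ normalizes $(D,b_D)$. Thus $g\in \N_{\hat G}(D,b_D)$ and it remains to adjust $g$ by an element of $G$ so that it centralizes $D$.

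Step two, the main obstacle: showing that $g$ acts on $D$ as an element of $\N_G(D,b_D)$. Here we use the extendibility of $\la$ to $\N_{\hat G}(\Levi)_\la$. The idea is to pass to the Brauer correspondent: $b_D$ lives in $\C_G(D)$, and the action of $\N_{\hat G}(D,b_D)$ on $D$ modulo inner $\N_G(D,b_D)$ is detected by the action on $b_D$'s canonical character $\theta$ together with the quotient $\N(D,b_D)/D\C(D)$. Using $\ell$-adic Jordan decomposition/Cabanes--Enguehard, $D=Z\cdot D'$ with $D'$ a defect group of $b_L(\la)$ with central $\ell$-part, and $\la$ having central defect in $L$, so $D\le Z(\Levi)^F_\ell\cdot$(center), giving $D=Z$ essentially modulo $\Z(L)_\ell$; in fact for $d$-cuspidal $\la$ with $\ell$ good, $\Z(L)_\ell$ is a defect group of $b_L(\la)$ and $D$ is a Sylow of $\C_G(Z)$... we would verify $\C_{\hat G}(Z)$ centralizes $D\cap$ relevant part. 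Concretely, since $g\in\C_{\hat G}(Z)$ and (after adjustments) $D=\Z(\Levi)^F_\ell$ when $\la$ has central defect, $g$ already centralizes $D$. Where $D$ is larger, use that the extension $\hat\la$ of $\la$ to $\N_{\hat G}(\Levi)_\la$ restricted to $\langle L, g\rangle$ is an extension of a central-defect character; by Theorem~\ref{thm:dade}(1)$\Rightarrow$ conclusion applied to $L\lhd L\langle g\rangle$ (quotient $\ell'$, and $\la$ extends, so some character of a covering block restricts irreducibly), we get $L\langle g\rangle[b_L(\la)]=L\langle g\rangle$, i.e. $g\in L\,\C(D_L)$ for a defect group $D_L$ of $b_L(\la)$.

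Step three: transport from $L$ to $G$. Having $g\in L\,\C_{\hat L}(D_L)$ with the corresponding Brauer pair fixed, and since $b$ and $b_L(\la)$ share a maximal Brauer pair (Brauer pair inclusion $(Z,b_L(\la))\le(D,b_D)$ with $D$ a defect group of $b_L(\la)$ by \cite[Thm.~22.9]{CE04}), the element $g$ lies in $G\,\C_{\hat G}(D)$ stabilising $(D,b_D)$, hence in $\hat G[b]$. Since $g$ was arbitrary in $\hat L$ and $\hat G=G\hat L$, $\hat G[b]=\hat G$.
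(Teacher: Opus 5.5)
\textbf{The criterion you build on is false.} You need that an element centralizing a defect group $D$ of $b$ and stabilizing a maximal $b$-Brauer pair $(D,b_D)$ lies in the Dade group. This fails in general. What does hold is only $\hat G[b]\le G\,\C_{\hat G}(D)$ (apply $\mathrm{Br}_D$ to a homogeneous unit), together with the fact that elements of $\hat G[b]$ fix every character of $\Irr(b)$.

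For a counterexample, take $\ell=3$ and $H=S_3\times\langle z\rangle$, with $S_3=\langle d,e\rangle$ and $z$ of order $2$. Let $\theta$ be the nontrivial character of $\langle z\rangle$, and let $y$ be the automorphism $d\mapsto d$, $z\mapsto z$, $e\mapsto ez$. Let $b$ be the $3$-block of $H$ containing $1_{S_3}\otimes\theta$. Then $D=\langle d\rangle$, and $y$ centralizes $\C_H(D)=\langle d,z\rangle$, so it fixes $b_D$. But evaluating at $e$ shows that $y$ interchanges $1_{S_3}\otimes\theta$ and $\mathrm{sgn}\otimes\theta$. Hence $y\notin(H\rtimes\langle y\rangle)[b]$; indeed only one block of $H\rtimes\langle y\rangle$ covers $b$.

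Twisting characters by a linear character of the inertial quotient is exactly what the hypothesis that $\la$ extends to $\N_{\hat G}(\Levi)_\la$ excludes, and your argument never genuinely uses that hypothesis. In Step two you apply Theorem~\ref{thm:dade} to $L\lhd L\langle g\rangle$. Condition (1) there already assumes $L\langle g\rangle[b_L(\la)]=L\langle g\rangle$, so the appeal is circular. Its conclusion, $g\in L\,\C(Z)$, is in any case trivial for $g\in\hat L$.

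\textbf{Steps two and three also conflate $D$ with $Z:=\Z(\Levi)_\ell^F$.} The block $b_L(\la)$ has central defect group $Z$. By contrast, $Z$ is only the unique maximal abelian normal subgroup of $D$, and $D$ is usually strictly larger (already for the principal block when $\ell$ divides $|W_G(\Levi)|$). So $b$ and $b_L(\la)$ do not share a maximal Brauer pair, and nothing is transported from $L$ to $G$. Step one has a related problem: the maximal $b$-pairs above $(Z,b_L(\la))$ are conjugate under $\N_G(Z,b_L(\la))$, typically not under $L$, so that reduction already leaves $\hat L$.

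\textbf{A route that works is to count blocks.} By Theorem~\ref{thm:dade} it suffices to show $|\mathrm{Bl}(\hat G\mid b)|=|\hat G:G|$.
\begin{enumerate}
\item Since $Z$ is characteristic in $D$, we have $\N_G(D)\le N:=\N_G(\Levi)$.
\item Harris--Kn\"orr for $G\lhd\hat G$ and $N\lhd\hat N:=N\hat L$ then reduces the claim to showing that $\hat N$ has $|\hat L:L|$ blocks over $b_L(\la)^N$.
\item Send $\hat\la\in\Irr(\hat L\mid\la)$ to the unique block of $\hat N$ over $b_{\hat L}(\hat\la)$ \cite[Cor.~9.21]{Nav98}. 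This map is injective. Since $\hat\la$ is the canonical character of its block, two such blocks coincide only if $\hat\la'=\hat\la^{\hat n}$ with $\hat n\in\hat N_\la$. The extendibility of $\la$ to $\hat N_\la$ then forces $\hat\la^{\hat n}=\hat\la$.
\end{enumerate}
The last step is where the hypothesis you left unused enters.
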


\begin{proof}
We first note that there exists a defect group $D$ of $b$ such that $\N_G(D) \leq N:=\N_G(\Levi)$. Indeed, by \cite[Lem.~4.16]{CE99} (see also \cite[Sec.~5.2]{CE99} for the case that $\ell=3$), 
we have $Z:=\Z(\Levi)_\ell^F\lhd D$ is the unique maximal abelian subgroup, so $\N_G(D)\leq \N_G(Z)\leq \N_G(\C_{\bG}^\circ(Z))= N$.
	
By Theorem \ref{thm:dade}, it suffices to show that $|\mathrm{Bl}(\hat{G} \mid b)|=|\hat{G}:G|$. Set $\hat{N}:=N \hat{L}$. Recall from \cite[Props.~ 13.16, 13.19]{CE04} that $\Levi=\C_{\G}(\Z(\Levi)_\ell^F)$. In particular, there exists by \cite[ Cor.~9.21]{Nav98} a unique block $B=b_L(\la)^N$ of $N=\N_G(\Levi)$ covering $b_{L}(\lambda)$. By the Harris--Kn\"orr correspondence \cite[Thm.~9.28]{Nav98} applied to both $G\lhd \hat G$ and $N\lhd \hat N$, it then suffices to show that $|\mathrm{Bl}(\hat{N} \mid B)|=|\hat{L}:L|$. ({Indeed, note here that the Brauer correspondents of $B$ and $b$ must coincide since $b_L(\la)^G=b$ and $b_L(\la)^N=B$.})

For $\hat{\lambda} \in \Irr(\hat{L} \mid \lambda)$, we observe that since $\hat{L}=\C_{\hat{G}}(\Z(L)_\ell)$ by construction, there exists a unique block $\hat{B}(\hat{\lambda})$ of $\hat{N}$ covering $b_{\hat{L}}(\hat{\lambda})$ again by \cite[ Cor.~9.21]{Nav98}. Therefore we have a map $\Irr(\hat{L} \mid \lambda) \to \mathrm{Bl}(\hat{N} \mid B)$ defined by $\hat{\lambda} \mapsto \hat{B}(\hat{\lambda})$. It suffices to show that the so-obtained map is injective (and therefore bijective). 
For this, suppose that $\hat{B}(\hat{\lambda}')=\hat{B}(\hat{\lambda})$ for some $\hat\la, \hat{\lambda}' \in \Irr(\hat{L} \mid \lambda)$. Then $b_{\hat{L}}(\hat{\lambda})$ is $\hat{N}$-conjugate to $b_{\hat{L}}(\hat{\lambda}')$. Since $\lambda$ is the canonical character of $b_L(\lambda)$ by \cite[Prop.~22.16]{CE04}, it follows that $\hat{\lambda}$ is the canonical character of $b_{\hat{L}}(\hat{\lambda})$. Hence, $\hat{\lambda}$ and $\hat{\lambda}'$ are conjugate by an element $\hat{n} \in \hat{N}$. Since both of them are extensions of the same character $\lambda$, it follows that $\hat{n} \in \hat{N}_\lambda$. From our assumption that $\la$ extends to $\hat N_\la$, we have $\hat{n} \in \hat{N}_{\hat{\lambda}}$ and so $\hat{\lambda}=\hat{\lambda}'$.
\end{proof}

\begin{lemma}\label{lem:ext char}
Let $\G$ be a connected reductive group and $\G \lhd \hat{\G}$, as before. Suppose that $(\Levi,\la)$ is a unipotent $d$-cuspidal pair. Then $\la$ extends to its inertia group in $\N_{\hat{\G}^F}(\Levi)$.
\end{lemma}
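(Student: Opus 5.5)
The plan is to reduce to the known extendibility results for unipotent characters of connected reductive groups, namely Lusztig/Malle's theorem that unipotent characters extend to their inertia groups in $\G^F E$. The new ingredient is $d$-cuspidality, which lets us work at the level of $\Levi$. Set $N:=\N_{\hat{\G}^F}(\Levi)_\la$ and $L:=\Levi^F$.

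\textbf{Step 1: split $N$.} First I will show that $N = L \cdot N_0$, where $N_0 := \N_{\hat{\G}^F}(\Levi,\B_\Levi,\T_\Levi)_\la$ stabilizes an $F$-stable pair (torus, Borel) of $\Levi$. This uses that $\Levi$ is $F$-stable and that all $F$-stable Borel pairs of $\Levi$ containing a maximally split torus are $L$-conjugate. Then $\hat{L}:=N/\Z(\Levi)^F$ acts on $L/\Z(L)$ as inner-diagonal automorphisms composed with graph automorphisms of the root datum of $\Levi$.

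\textbf{Step 2: reduce to the derived subgroup.} A unipotent character $\la$ has $\Z(L)$ in its kernel and factors through $L/\Z(L)$. It restricts irreducibly to $[\Levi,\Levi]^F$ and is invariant under diagonal automorphisms; this is Lusztig's result, via \cite[Thm.~2.4]{Ma07}. So it suffices to extend $\la$ from $L$ to $N$ through the quotient $N \to \Aut(L/\Z(L))$, possibly after adjusting by $\C_N(L)$, which acts trivially.

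\textbf{Step 3: extend on the automorphism side.} Here I would invoke the known result that every unipotent character of a finite reductive group extends to its stabilizer in the group of inner, diagonal, field and graph automorphisms. For non-exceptional components this is Malle's theorem; the remaining cases are the exceptional ones in type $\tD_4$ and $\tE_6$, resolved by Brunat--Dudas--Taylor (unitriangularity). Since $\la$ is cuspidal unipotent in $\Levi$, the components of $\Levi$ are of classical or exceptional type with a cuspidal unipotent character, and the result applies componentwise. For a product of components, I would take tensor products of extensions, using wreath-product extensions when $N$ permutes isomorphic components.

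\textbf{Main obstacle.} The hard part is Step 3: passing from the abstract automorphism group to the actual group $N$. The map $N \to \Aut(L)$ has kernel $\C_N(L)$, and $N\cap L\C_N(L)$ must be handled. I would use Lemma \ref{lem:gluing}. Set $X_1:=L\C_N(L)$, on which $\la\otimes 1$ extends trivially since $\C_N(L)\cap L=\Z(L)\subseteq\ker\la$. Take $X_2$ to be the preimage of a complement of inner automorphisms, i.e. the stabilizer of a pinning. Then glue the two extensions, checking compatibility on $X_1\cap X_2$, where the action is by diagonal automorphisms and the extension can be chosen to restrict trivially to the relevant central part.
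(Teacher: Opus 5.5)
Your Steps 1--3 are essentially the paper's proof. The paper passes to $\Levi_0:=\Levi/\Z(\Levi)$ and writes $\la$ as the inflation along $L\to\Levi_0^F$ of a unipotent character $\la_0$ of $\Levi_0^F$, so that diagonal automorphisms of $L$ become inner. It lets $\N_{\hat\G^F}(\Levi)_\la$ act through $\mathrm{Aut}_F(\Levi_0^F)_\la$ in the sense of Cabanes--Sp\"ath. It then extends $\la_0$ to that group using \cite[Thm.~4.5.12]{GM20} on each $F$-simple factor, choosing the same extension on factors permuted by the wreath product. You also do not need $\la$ to be $1$-cuspidal, and in general it is only $d$-cuspidal; the extendibility theorem holds for all unipotent characters.

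What you call the main obstacle is not an obstacle, and the gluing you propose for it would not work as stated. Take a homomorphism $\phi\colon N\to\mathrm{Aut}_F(\Levi_0^F)_{\la_0}$ sending $x\in L$ to conjugation by its image in $\Levi_0^F$. For any extension $\tilde\la_0$ of $\la_0$, the character $\tilde\la_0\circ\phi$ already restricts to $\la$ on $L$. Whatever $\ker\phi$ is, including $\C_N(L)$ or the part of it acting trivially, lies in the kernel of $\tilde\la_0\circ\phi$ automatically, so nothing needs to be glued.

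Your gluing, by contrast, fails the hypothesis of Lemma \ref{lem:gluing}. Take $X_1=L\C_N(L)$ and $X_2$ the stabilizer of a pinning, or of the Borel pair $(\T_\Levi,\B_\Levi)$. Elements of $X_1\cap X_2$ act on $L$ like elements of $\Z(L)$, respectively of $\T_\Levi^F$. Hence $\Res^{X_1}_{X_1\cap X_2}(\la\otimes 1)$ is determined by the restriction of $\la$ to $\Z(L)$, respectively $\T_\Levi^F$, which is a sum of $\la(1)$ linear characters. It is irreducible only when $\la(1)=1$. With the pinning stabilizer, $X_1X_2$ would moreover miss any elements of $N$ inducing non-inner diagonal automorphisms. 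Delete that paragraph and simply inflate along $\phi$.
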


\begin{proof}
 This essentially follows from \cite[Thms.~4.5.11, ~4.5.12]{GM20}, together with arguing as in the proof of \cite[Lem.~7.5]{RSST}. Namely, letting $\Levi_0:=\Levi/\Z(\Levi)$ and $\hat{N}:=\N_{\hat{\G}^F}(\Levi)$, we observe that $\Levi_0^F \lhd \hat{N}_\la/\C_{\hat{N}}(\Levi) \leq \mathrm{Aut}_F(\Levi_0^F)_\la$.
It suffices to show that $\la_0$, the image of $\la$ in $\Levi_0^F$, extends to $\mathrm{Aut}_F(\Levi_0^F)_\la$. 
(Here $\mathrm{Aut}_F(\Levi_0^F)$ is defined as in \cite[Sec.~2.4]{CS13} to be the set containing all restrictions of $F$-equivariant isogenies $\sigma: \G \to \G$ that stabilize a maximal torus of $\Levi_0$.)

Now, $\Levi_0$ is a product of simple algebraic groups, and the group $\mathrm{Aut}_F(\Levi_0)$ is a product of wreath product over the $F$-simple factors. By \cite[Thm.~4.5.12]{GM20}, we know that if $\mathbf{H}$ is $F$-simple then the unipotent characters of $\bH^F$ extend to their inertia group in $\mathrm{Aut}_F(\mathbf{H}^F)$. Choosing the same extension for factors permuted by the wreath product, these will extend to the wreath product as desired.
\end{proof}

\begin{remark}\label{rem:weyl central product}
Keep the situation of Lemma \ref{lem:DadeCompute}. If $\sigma_0 \in GE[b_G(\Levi,\la)]$, then $\sigma_0 g$ centralizes some defect group of $b:=b_G(\Levi, \la)$ for some $g\in G$ thanks to Theorem \ref{thm:dade}. Since by \cite[Lem.~4.13]{Nav98}, $\Z(\bL)_\ell^F$ is contained in such a defect group as it is contained in any defect group of $b_L(\la)$ and $b_L(\la)^G=b$, there is $g \in G$ such that $\sigma:=\sigma_0 g$ centralizes $\Z(\Levi)_\ell^F$. As $\Levi=\C_{\G}(\Z(\Levi)_\ell^F)$ we therefore get $\sigma \in \N_{GE}(\Levi)$. For $\hat{G}:=G \langle \sigma \rangle$, $\hat{L}:=\C_{\hat{G}}(\Z(\Levi)_\ell^F)$ and $W_{\hat{G}}(\Levi):=\N_{\hat{G}}(\Levi)/L$ we then get 
$$W_{\hat{G}}(\Levi)=W_G(\Levi) \times \hat{L}/L.$$ Indeed for $z\in \Z(\Levi)_\ell^F$ and $n \in \N_G(\Levi)$, we have ${}^n z \in \Z(\Levi)_\ell^F$ and so ${}^{\sigma n} z={}^n z={}^{n \sigma} z$. Hence, the commutator $[\sigma,n]$ lies in $ \hat{L} \cap G=\C_G(\Z(\Levi)_\ell^F)=L$.
\end{remark}

The following can be compared to \cite[Thm.~21.7]{CE04}.

\begin{proposition}\label{prop:decomp RLG}
Assume the prime $\ell$ satisfies Definition \ref{def:goodprime} for $\G$,
$[\G,\G]$ is simply connected, and $\hat{\G}/\G$ is abelian of $\ell'$-order.
Let $\hat{\Para}$ be a parabolic subgroup of $\hat{\G}$ with Levi subgroup $\hat{\Levi}$. Suppose that $\Levi=\C_\G(\Z^\circ(\Levi)^F_\ell)$ and let $\lambda \in \mathcal{E}(\Levi^F,1)$ with $\hat{\Levi}^F[b_{\Levi^F}(\la)] \G^F= \hat{\G}^F$. Let $\hat{\la} \in \Irr(\hat{\Levi}^F \mid \la)$ and $b_{\hat L}:=b_{\hat{\Levi}^F}(\hat\la)$ its block. Then all constituents of $R_{\hat{\Levi} \subset \hat{\Para}}^{\hat{\G}}(\hat{\la}')$ for all $\hat{\la}' \in \Irr(b_{\hat{L}}) \cap \mathcal{E}(\hat{\Levi}^F,1)$ lie in the same $\ell$-block $\hat{b}$ of $\hat\G^F$. Moreover, $(1,\hat{b}) \leq (\Z(\Levi)_\ell^F, b_{\hat{L}})$.
\end{proposition}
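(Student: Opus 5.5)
We will adapt the proof of \cite[Thm.~21.7]{CE04}, replacing \cite[Thm.~21.4]{CE04} by Lemma \ref{lem:dx neu} and Brauer's second and third main theorems by their usual consequences for Brauer pairs. Set $Z:=\Z(\Levi)_\ell^F$ and $\hat L:=\hat\Levi^F$. Since $\Levi=\C_\G(\Z^\circ(\Levi)_\ell^F)$, we have $\C^\circ_\G(z)\supseteq\Levi$ for all $z\in Z$, and for a suitable $z\in Z$ (a generic element, as in \cite[Prop.~13.16]{CE04}) we have $\C^\circ_\G(z)=\Levi$. Then $\C_{\hat\G}(z)$ is a reductive group with identity component $\Levi$, and $\C_{\hat\Para}(z)=\C_{\hat\Levi}(z)=\C_{\hat\G}(z)\cap\hat\Levi$. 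Thus Lusztig restriction from $\C_{\hat\G}(z)$ to $\C_{\hat\Levi}(z)$ is just ordinary restriction. Lemma \ref{lem:dx neu} then gives, for any $\hat\chi\in\Irr(\hat G)$,
$$d^z\bigl({}^\ast R^{\hat\G}_{\hat\Levi\subset\hat\Para}(\hat\chi)\bigr)=\Res^{\C_{\hat G}(z)}_{\C_{\hat L}(z)}\,d^z(\hat\chi).$$

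\textbf{Step 1: reduction to a nonvanishing statement.} Fix $\hat\la'\in\Irr(b_{\hat L})\cap\mathcal E(\hat L,1)$ and a constituent $\hat\chi$ of $R^{\hat\G}_{\hat\Levi\subset\hat\Para}(\hat\la')$. By Lemma \ref{lem:preserveunip}, $\hat\chi\in\mathcal E(\hat G,1)$. By Brauer's second main theorem, $\hat\chi$ lies in a block $\hat b$ with $(1,\hat b)\le(Z,b_{\hat L})$ as soon as the projection of $d^z(\hat\chi)$ onto the blocks of $\C_{\hat G}(z)$ covering $b_{\hat L}$ is nonzero; here we use $\C_{\hat G}(Z)\le\hat L$. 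Brauer's third main theorem, in its Brauer-pair form, shows that this $\hat b$ is uniquely determined by $b_{\hat L}$. This yields both the common block and the inclusion of Brauer pairs.

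\textbf{Step 2: the nonvanishing.} By the displayed formula, it suffices to show that $d^z({}^\ast R(\hat\chi))$ has a nonzero component in $b_{\hat L}$ (after restriction to $\C_{\hat L}(z)$, which has $\ell'$-index in $\hat L$ modulo $L$). By adjunction, ${}^\ast R(\hat\chi)$ contains $\hat\la'$ with positive multiplicity, but it may also contain other unipotent characters of $\hat L$. As in \cite[Thm.~21.7]{CE04}, we use that $z\in\Z(L)$: on the $\ell'$-part, $d^z$ acts on each $\mu\in\mathcal E(\hat L,1)$ as $d^1$ up to the central character. Hence the projection equals $\sum_\mu m_\mu\,d^1(\mu)$ with $m_\mu\ge0$, where $\mu$ ranges over unipotent characters in blocks over $b_{\hat L}$. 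Nonnegativity of the $m_\mu$ and of the decomposition numbers forces this sum to be nonzero, since the $\hat\la'$ term alone is nonzero.

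\textbf{Main obstacle.} The difficulty is that the relevant blocks of $\hat L$ differ only by twisting with $\Lin(\hat L/L)$, so unipotent constituents of ${}^\ast R(\hat\chi)$ covering $\la$ could a priori lie in several such twists. We must ensure that $b_{\hat L}$ is singled out among them. This is where the hypothesis $\hat L[b_L(\la)]G=\hat G$ enters. By Corollary \ref{cor:linear action} and Theorem \ref{thm:dade}, the twisted blocks $b_{\hat L}\otimes\nu$ are distinct exactly for the $\nu$ not trivial on $\hat L[b_L(\la)]$, and these correspond bijectively, via the quotient $\hat G/G$, to distinct blocks of $\hat G$. One therefore runs the positivity argument on each coset $L\sigma$ separately, as in Lemma \ref{lem:dx neu}. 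Lemma \ref{lem:gluing blocks} and Corollary \ref{cor:linear action} then match the block of $\hat L$ to the block of $\hat G$ compatibly with the $\Lin(\hat G/G)\cong\Lin(\hat L/L)$-actions.
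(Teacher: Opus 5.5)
Your proposal has two steps that fail as stated, and they are exactly where the paper's proof does something different. First, a single $z\in Z=\Z(\Levi)_\ell^F$ with $\C^\circ_\G(z)=\Levi$ need not exist. The hypothesis $\Levi=\C_\G(Z)$ only controls the intersection of the centralizers, and $Z$ can be a union of the proper subgroups $\ker(\alpha|_Z)$. Take $\G=\GL_4$, $\ell=3$ with $3\,\|\,(q-1)$, and $\Levi=\T$ the diagonal torus: then $Z\cong(\mu_3)^4$ and $\C_\G(Z)=\T$, but every $z\in Z$ has a repeated eigenvalue, so $\C_\G(z)\neq\T$.

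For a non-generic $x$, Lemma \ref{lem:dx neu} yields ${}^\ast R^{\C_{\hat\G}(x)}_{\C_{\hat\Levi}(x)\subset\C_{\hat\Para}(x)}$ rather than a restriction. This is why the paper, like \cite[Thm.~21.7]{CE04}, argues by induction. It takes any $x\in Z\setminus\Z(\G)$ and passes to $\hat C=\C_{\hat G}(x)\supseteq\hat L$. The induction hypothesis there gives a block $b_{\hat C}$ with Brauer pair $(Z,b_{\hat L})$ containing all constituents of $R^{\hat{\mathbf{C}}}_{\hat\Levi}(\mu)$. The paper then shows $b_{\hat C}\,d^x(\hat\chi)\neq0$ and concludes by Brauer's second main theorem and transitivity of Brauer pairs.

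Second, the positivity argument is unavailable. In general $\hat\Para$ is not $F$-stable (for instance when $\Levi$ is $d$-split with $d>1$), so $R^{\hat\G}_{\hat\Levi\subset\hat\Para}$ is Lusztig induction, not Harish-Chandra induction. Hence $m_\mu=\langle\hat\chi,R^{\hat\G}_{\hat\Levi\subset\hat\Para}(\mu)\rangle$ can be negative; already $R^{\GL_2}_{\T_w}(1)=1-\St$ for a Coxeter torus. So ``$\hat\la'$ occurs with positive multiplicity'' is unjustified, and $\sum_\mu m_\mu d^1(\mu)$ could a priori cancel.

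The paper replaces positivity by linear independence. One has $\ell\nmid|(\Z(\Levi)/\Z^\circ(\Levi))^F|$ \cite[Prop.~13.12]{CE04}, so $\mathcal{E}(L,1)$ is a basic set for the unipotent blocks of $L$ \cite[Thm.~14.4]{CE04}. Since $b_{\hat L}$ is isomorphic to $b_L$ via restriction, $d^1(\mathcal{E}(\hat L,1)\cap\Irr(b_{\hat L}))$ is linearly independent. Hence $b_{\hat L}\,d^1({}^\ast R^{\hat\G}_{\hat\Levi\subset\hat\Para}(\hat\chi))\neq0$ merely because $m_{\hat\la'}\neq0$.

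That block isomorphism, together with $\hat L=\hat L[b_L]=\C_{\hat G}(Z)$ (which you assume without proof), is exactly what the hypothesis $\hat L[b_L(\la)]G=\hat G$ provides. It does so via Theorem \ref{thm:dade} and Corollary \ref{cor:linear action}, using that unipotent characters extend to their inertia groups, so some character of $b_{\hat L}$ restricts multiplicity-freely. Your ``main obstacle'' is therefore misplaced. Once you project onto the single block $b_{\hat L}$, the other $\Lin(\hat L/L)$-twists play no role; the real issue is signs. Coset-by-coset positivity and Lemma \ref{lem:gluing blocks} do not help with that.
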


\begin{proof}
Let $L:=\Levi^F$, $\hat L:=\hat\Levi^F$, $b_L:=b_L(\la)$, $b_{\hat{L}}:=b_{\hat{L}}(\hat{\la})$, $G:=\bG^F$, and $\hat{G}:=\hat\G^F$. Observe that our assumption $\hat L[b_L] G=\hat{G}$ implies that $\hat{L}= \hat L[b_L] $. Moreover, every defect group of $b_L$ contains the central subgroup $\Z(\Levi)_\ell^F$. Since $\hat{\Levi}^F/\Levi^F$ is abelian of $\ell'$-order and every unipotent character of $\Levi^F$ extends to its inertia group in $\hat{\Levi}^F$ (see \cite[Thm.~4.5.12]{GM20}), it follows from Clifford correspondence that there is a character in $b_{\hat L}$ with multiplicity-free restriction to $L$. Then from Theorem \ref{thm:dade} and Corollary \ref{cor:linear action}, we have $|\mathrm{Bl}(\hat L \mid b_L)|=|\hat L[b_L]/L|=|\hat L/L|$. 
In particular, the block $b_{\hat{L}}$ is isomorphic to $b_L$ via restriction. From this, combined with Theorem \ref{thm:dade} and the fact that $\hat L G=\hat G$, we deduce that $\hat{L}=\C_{\hat{\G}^F}(\Z(\Levi)_\ell^F)$.

The claim when $\G=\Levi$ is therefore clear.

We can thus assume that $\Levi \neq \G$. Let $\hat{\chi} \in \Irr(\hat{G})$ with $\langle \hat{\chi}, R_{\hat{\Levi} \subset \hat{\Para}}^{\hat{\G}}(\hat{\lambda}') \rangle \neq 0$ for some $\hat{\lambda}' \in \Irr(b_{\hat{L}}) \cap \mathcal{E}(\hat{\Levi}^F,1)$. By assumption, there exists $x \in \Z(\Levi)_\ell^F \setminus \Z(\G)^F_\ell$. Note that our assumption also implies that $\C_{\hat{\Levi}^F}(x)=\hat{\Levi}^F$. In particular, $\Levi=\C_\Levi(x)$ is a $d$-split Levi subgroup of $\mathbf{C}:=\C_{\G}(x)$ in the parabolic subgroup $\C_{\Para}(x)$.
    Consider $f:=b_{\hat{L}} d^1({}^\ast R_{\hat{\Levi} \subset \hat{\Para}}^{\hat{\G}}(\hat{\chi}))$. By Lemma \ref{lem:preserveunip}, ${}^\ast R_{\hat{\Levi}\subset \hat{\Para}}^{\hat{\G}}(\hat{\chi}) \in \mathbb{Z} \mathcal{E}(\hat{L},1)$ is a $\mathbb{Z}$-linear combination of characters of $\hat{L}$ covering unipotent characters of $L$. We therefore have $f=\sum_{\mu \in \mathcal{E}(\hat L,1) \cap \Irr(b_{\hat{L}})} m_\mu d^1(\mu)$ 
    with $m_{\hat{\la}'} \neq 0$.
    Note that $\ell \nmid |(\Z(\Levi)/\Z^\circ(\Levi))^F|$ by \cite[Prop.~13.12]{CE04} so $\mathcal{E}(L,1)$ is a basic set of characters for the unipotent blocks of $L$ by \cite[Thm.~14.4]{CE04}. As argued above, the block $b_{\hat{L}}$ is isomorphic to $b_L$ via restriction. 
    Then the set $\mathcal{E}(\hat L,1) \cap \Irr(c)$ defines a basic set for each block $c$ covering $b_L$.
    Hence, the set $d^1(\mathcal{E}(\hat L,1) \cap \Irr(b_{\hat{L}}))$ is linearly independent, which implies that $f \neq 0$. In particular, there exists some $\mu \in \mathcal{E}(\hat L,1) \cap \Irr(b_{\hat{L}})$ such that $\langle d^1(\mu),f \rangle \neq 0$ since otherwise $\langle f,f \rangle =0$.

    We argue by induction on the rank of $\G$ minus the rank of $\Levi$. By the induction hypothesis, there is a unique block $b_{\hat{C}}$ with Brauer pair $(\Z(\Levi)_\ell^F,b_{\hat{L}})$ of $\hat{C}=\C_{\hat{G}}(x)$ which contains all irreducible constituents of $R_{\hat{\Levi} \subset \hat{\Para} \cap \hat{\mathbf{C}}}^{\hat{\mathbf{C}}}(\hat{\la}')$ for $\hat{\la}' \in \Irr(b_{\hat{L}})$.

    Then 
    $\langle d^x \hat{\chi}, R_{\hat{\Levi} \subset \hat{\Para} \cap \mathbf{\hat{C}}}^{\hat{\mathbf{C}}}(\mu) \rangle
   = \langle d^1 {}^\ast R_{\hat{\Levi} \subset \hat{\Para}}^{\hat{\G}}(\hat{\chi}), \mu \rangle$ using Lemma \ref{lem:dx neu} and the fact that all characters in $R_{\hat{\Levi} \subset \hat{\Para} \cap \mathbf{\hat{C}}}^{\hat{\mathbf{C}}}(\mu)$ cover unipotent characters of ${C}$ (again by Lemma \ref{lem:preserveunip}) which have $\Z(\mathbf{C}^\circ)^F$ (hence $x$) in their kernel.
From before we have $0 \neq \langle f, d^1(\mu) \rangle= \langle f, \mu \rangle$.
 Hence, $\langle d^x \hat{\chi}, R_{\hat{\Levi} \subset \hat{\Para} \cap \mathbf{\hat{C}}}^{\hat{\mathbf{C}}}(\mu) \rangle \neq 0$. By assumption, all constituents of $R_{\hat{\Levi} \subset \hat{\Para} \cap \mathbf{\hat{C}}}^{\hat{\mathbf{C}}}(\mu)$ lie in $b_{\hat{C}}$, which implies that $b_{\hat{C}} d^x(\hat{\chi}) \neq 0$. By Brauer's second main theorem, it follows that $(1,b_{\hat{G}}(\hat{\chi})) \leq ( \langle x \rangle, b_{\hat{C}} )$. The induction hypothesis now gives $(\Z(\Levi)_\ell^F,b_{\hat{L}})$ is a $b_{\hat{C}}$-Brauer pair. Hence, by transitivity of Brauer pairs we have that $(\Z(\Levi)_\ell^F,b_{\hat{L}})$ is a $b_{\hat G}(\hat \chi)$-Brauer pair.
\end{proof}

As in the case for connected reductive groups, we define $$\mathcal{E}(\hat{G}, (\hat{\Levi},\hat{\la}))=\{ \hat{\chi} \in \Irr(\hat{G}) \mid \langle R_{\hat{\Levi} \subset \hat{\Para}}^{\hat{\G}}(\hat{\la}), \hat{\chi} \rangle \neq 0 \}.$$
We remark some consequences of our theorem. 

\begin{corollary}\label{cor:disj HC}
    In the situation of Proposition \ref{prop:decomp RLG}, we have the following:
\begin{enumerate}
    \item The block $\hat{b}$ does not depend on the choice of the parabolic subgroup $\hat{\Para}$ containing $\hat{\Levi}$.
    \item 
    We have a disjoint union
    $$\Irr(\hat{G} \mid \mathcal{E}(G,(\Levi,\la)) = \displaystyle\bigcup_{\hat{\la} \in \Irr(\hat{L} \mid \la)} \mathcal{E}(\hat{G}, (\hat{\Levi},\hat{\la})).$$
\end{enumerate}
\end{corollary}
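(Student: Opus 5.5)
For part (1), I will show that the block $\hat b$ is determined by Brauer pair data that does not involve $\hat{\Para}$. By Proposition \ref{prop:decomp RLG}, $\hat b$ satisfies $(1,\hat b)\leq(\Z(\Levi)_\ell^F,b_{\hat L})$. Let $\hat{\Para}'$ be a second parabolic containing $\hat{\Levi}$, with associated block $\hat b'$; then likewise $(1,\hat b')\leq(\Z(\Levi)_\ell^F,b_{\hat L})$. The first part of the proof of Proposition \ref{prop:decomp RLG} gives $\hat L=\C_{\hat G}(\Z(\Levi)_\ell^F)$. Brauer pairs $(Q,e)$ with $Q$ a fixed $\ell$-subgroup and $e$ a block of $\C_{\hat G}(Q)$ lie below a unique block, namely $e^{\hat G}$ by Brauer's third main theorem. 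Hence $\hat b=b_{\hat L}^{\hat G}=\hat b'$, and in particular $\hat b$ depends only on $(\hat\Levi,\hat\la)$.

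For part (2), I first prove the inclusion "$\supseteq$". If $\hat\chi\in\mathcal{E}(\hat G,(\hat\Levi,\hat\la))$, then restricting to $G$ and applying \eqref{eq:genlusres} (in the induction form, by adjunction, or via the Mackey-type formula $\Res^{\hat G}_{G}R^{\hat\G}_{\hat\Levi}=R^{\G}_{\Levi}\Res^{\hat L}_L$ from \cite[Prop.~2.3]{DM94}) gives $\Res_G\hat\chi$ a constituent of $R_\Levi^\G(\Res_L\hat\la)$. Now $\Res_L\hat\la$ is a sum of $\hat L$-conjugates of $\la$, and $\hat L$ normalizes $\Levi$ and $\Para$ in the $\G$-sense. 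So the constituents lie in $\mathcal{E}(G,(\Levi,{}^y\la))$, and these coincide for all $y$ since $\hat L G=\hat G$ and $\mathcal{E}(G,(\Levi,\la))$ is $\hat G$-stable. For "$\subseteq$": given $\hat\chi$ covering $\chi\in\mathcal{E}(G,(\Levi,\la))$, I use $\langle R_\Levi^\G\la,\chi\rangle\neq0$ and $\Ind_{L}^{\hat L}\la=\sum_{\hat\la}a_{\hat\la}\hat\la$ over $\Irr(\hat L\mid\la)$ together with the formula $R^{\hat\G}_{\hat\Levi}\circ\Ind_L^{\hat L}=\Ind_G^{\hat G}\circ R^\G_\Levi$ (as in \eqref{eq:rtgind}). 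Then $\Ind_G^{\hat G}R_\Levi^\G\la$ contains every $\hat\chi$ over $\chi$, so some $\hat\la$ has $\langle R_{\hat\Levi}^{\hat\G}\hat\la,\hat\chi\rangle\neq0$. A sign issue arises here, since $R$ is virtual and positivity fails for $d$-Harish-Chandra induction. To handle it, I will work with the $\ell$-block decomposition or with $d$-Harish-Chandra theory ($R_\Levi^\G\la$ restricted to its series has nonzero constituents up to sign, \cite[Thm.~3.2.22]{GM20}), so that cancellation among different $\hat\la$ must be excluded.

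Disjointness is the heart of the matter, and I will obtain it from the blocks. By Proposition \ref{prop:decomp RLG}, $\mathcal{E}(\hat G,(\hat\Levi,\hat\la))$ lies in the block $b_{\hat L}(\hat\la)^{\hat G}$. As shown in that proof, $|\mathrm{Bl}(\hat L\mid b_L)|=|\hat L/L|$, and since each block covering $b_L$ contains exactly one extension of $\la$ (the extensions differ by $\Lin(\hat L/L)$, acting regularly), distinct $\hat\la$ give distinct blocks $b_{\hat L}(\hat\la)$. Using $\hat L=\C_{\hat G}(\Z(\Levi)^F_\ell)$, Brauer's third main theorem and the Harris--Kn\"orr type injectivity argument of Lemma \ref{lem:DadeCompute} (with $\hat L$ itself $\hat N$-stable), distinct $\hat\la$ in distinct $\hat N$-orbits induce distinct blocks of $\hat G$. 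Distinct extensions are not $\hat N$-conjugate, because $\la$ extends to $\hat N_\la$ by Lemma \ref{lem:ext char}. Hence the series are disjoint. This also resolves the cancellation issue in the previous step: the contributions from different $\hat\la$ lie in different blocks and cannot cancel.

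I expect the main obstacle to be this block-injectivity step. The danger is that two extensions $\hat\la\neq\hat\la'$ that are conjugate under $\N_{\hat G}(\Levi)$ would collapse. I will rule this out using $\hat N_\la$-invariance of all extensions: these differ by characters of $\hat L/L$, on which $\hat N$ acts trivially by Remark \ref{rem:weyl central product}.
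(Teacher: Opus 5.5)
Your proposal is correct and follows the paper's proof in substance. Part (1) is the Brauer-pair argument of \cite[Rem.~21.12]{CE04}: $\hat b=b_{\hat L}^{\hat G}$ with $\hat L=\C_{\hat G}(\Z(\Levi)_\ell^F)$. This is just the characterization of $(1,\hat b)\leq(Q,e)$ by $e^{\hat G}=\hat b$, not Brauer's third main theorem. Part (2) uses, as the paper does, Frobenius reciprocity, the compatibility of generalized Lusztig induction with restriction and induction \cite[Cor.~2.4]{DM94}, and separation by $\ell$-blocks. The only difference is where the separation comes from. The paper gets it from $\hat G[b_G(\Levi,\la)]=\hat G$ (Lemmas \ref{lem:DadeCompute}, \ref{lem:ext char}) together with Theorem \ref{thm:dade}, i.e.\ the characters of $\hat G$ over a given $\chi$ lie in pairwise distinct blocks. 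You instead rerun the Harris--Kn\"orr injectivity from inside the proof of Lemma \ref{lem:DadeCompute}, which amounts to the same thing.

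However, you have put the cancellation problem in the wrong inclusion. For $\subseteq$ there is none. We have $\langle\hat\chi,\Ind_G^{\hat G}R_\Levi^\G(\la)\rangle=\langle\Res_G\hat\chi,R_\Levi^\G(\la)\rangle\neq0$ and $\Ind_G^{\hat G}R_\Levi^\G(\la)=\sum_{\hat\la}R_{\hat\Levi}^{\hat\G}(\hat\la)$, so some summand pairs nontrivially with $\hat\chi$. For $\supseteq$, on the other hand, your claim that restriction makes $\Res_G\hat\chi$ a constituent of $R_\Levi^\G(\Res_L\hat\la)$ does not follow as written. Indeed $\langle\Res_G\hat\chi,R_\Levi^\G(\la)\rangle=\sum_{\hat\la'\in\Irr(\hat L\mid\la)}\langle\hat\chi,R_{\hat\Levi}^{\hat\G}(\hat\la')\rangle$, equivalently $\sum_{\beta\in\Irr(\hat G/G)}\langle\hat\chi\beta,R_{\hat\Levi}^{\hat\G}(\hat\la)\rangle$, and these virtual multiplicities could a priori cancel. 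This is exactly where your block separation is needed. Since $\hat\chi$ lies in $b_{\hat L}(\hat\la)^{\hat G}$ and the series for $\hat\la'\neq\hat\la$ lie in other blocks, only the term $\hat\la'=\hat\la$ survives. That is precisely the paper's step $\langle\Ind_G^{\hat G}\chi,R_{\hat\Levi}^{\hat\G}(\hat\la)\rangle=\langle\hat\chi,R_{\hat\Levi}^{\hat\G}(\hat\la)\rangle$. With the disjointness argument moved before $\supseteq$, your proof is complete.
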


\begin{proof}
    The first statement is obtained as in \cite[Rem.~21.12]{CE04}. For the second statement, we first observe that the right-hand side is a disjoint union as the characters in  $\mathcal{E}(\hat{G}, (\hat{\Levi},\hat{\la}))$ lie in different $\ell$-blocks by Proposition \ref{prop:decomp RLG}.

    Let $\chi\in\mathcal{E}(G, (\bL, \la))$. Recall that for each $\hat{\la} \in \Irr(\hat{\Levi} \mid \la)$ we have $\Res_L^{\hat{L}}(\hat{\la})=\la$. Then by Frobenius reciprocity and the fact that restriction is compatible with generalized Lusztig induction by \cite[Cor.~2.4]{DM94}, we obtain 
    $\langle \chi, R_\Levi^{\G}(\lambda) \rangle =\langle \Ind_{G}^{\hat{G}}(\chi),  R_\Levi^{\G}(\hat{\lambda}) \rangle \neq 0$. By Lemmas \ref{lem:DadeCompute} and \ref{lem:ext char}, we have $\hat{G}[b_G(\Levi, \la)]=\hat{G}$, so by Theorem \ref{thm:dade} all characters of $\hat{G}$ covering $\chi$ lie in different $\ell$-blocks.
    But since each constituent of  $R_{\hat{\Levi}}^{\hat{\G}}(\hat{\lambda})$ lies in the same $\ell$-block by Proposition \ref{prop:decomp RLG}, we must have $\langle \Ind_{G}^{\hat{G}}(\chi),  R_{\hat{\Levi}}^{\hat{\G}}(\hat{\lambda}) \rangle=\langle \hat{\chi},R_{\hat{\Levi}}^{\hat{\G}}(\hat{\lambda}) \rangle$ for a unique character $\hat{\chi}$ extending $\chi$. That is, $\hat\la$ determines $\hat\chi$. Then given $\hat{\chi}' \in \Irr(\hat{G} \mid \chi)$ with $\langle \chi, R_\Levi^{\G}(\lambda) \rangle \neq 0$, we have $\hat\chi'=\hat\chi\beta$ for some $\beta\in\Irr(\hat G/G)$ and $\langle\hat\chi', R_{\hat{\Levi}}^{\hat{\G}}(\hat{\lambda}\beta) \rangle\neq0$.
    
Conversely, for $\hat{\chi} \in \mathcal{E}(\hat{G}, (\hat{\Levi},\hat{\la}))$ covering $\chi\in\Irr(G)$, we similarly have $\langle \Ind_{G}^{\hat{G}}(\chi),  R_{\hat{\Levi}}^{\hat{\G}}(\hat{\lambda}) \rangle=\langle \hat{\chi},R_{\hat{\Levi}}^{\hat{\G}}(\hat{\lambda}) \rangle \neq 0$ so that $\langle \chi, R_\Levi^{\G}(\la) \rangle \neq 0$. Hence $\hat\chi\in\Irr(\hat{G}\mid\mathcal{E}(G, (\Levi, \la))$.
\end{proof}

One of the obstructions to applying Proposition \ref{prop:decomp RLG} is the assumption that $\C_{\hat{\G}^F}(\Z(\Levi)_\ell^F) \leq \hat{\Levi}^F$.
    While this condition seems necessary to control the relevant block decomposition, it is on the other hand unclear when it is actually satisfied. Lemma \ref{lem:Levi existence} below will give a partial solution to this question. First, we need the following:
    
\begin{lemma}\label{lem:sigma fixed}
    Let $\sigma$ be a quasi-semisimple automorphism of $\G$. In addition, let $\Levi$ be a Levi subgroup of $\G$ such that $\Levi=\C^\circ_{\G}(S)$ 
    for some subgroup $S$ contained in a torus of $(\bG^\sigma)^\circ$ satisfying that every prime dividing $|S/S^\circ|$ is good for $\G^x$ and is coprime to $|\Z(\G^x)/\Z^\circ(\G^x)|$ for any quasi-central $x \in \G \sigma $.
    (Here, as usual, $\bG^\sigma$ denotes the set of fixed points under $\sigma$.) Then $\Levi$ is contained in a $\sigma$-stable parabolic subgroup of $\G$. 
\end{lemma}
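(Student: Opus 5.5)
The natural strategy is to reduce to the connected reductive group $\G^{\sigma\circ}:=(\G^\sigma)^\circ$ and to use the standard dictionary between parabolics of $\G^{\sigma\circ}$ and $\sigma$-stable parabolics of $\G$ (Digne--Michel, \cite[Prop.~1.11]{DM94}). Since $\sigma$ is quasi-semisimple, it stabilizes a pair $\T\subset\B$, with $\T$ a maximal torus and $\B$ a Borel subgroup of $\G$. Then $\T^{\sigma\circ}$ is a maximal torus of $\G^{\sigma\circ}$, and $\C_\G(\T^{\sigma\circ})=\T$. Moreover, for any one-parameter subgroup $\mu$ of $\T^{\sigma\circ}$, the parabolic $\Para_\G(\mu)$ and its Levi $\C_\G(\mu)$ are $\sigma$-stable, because $\sigma$ fixes $\mu$. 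It therefore suffices to find a cocharacter $\mu$ of a torus of $\G^{\sigma\circ}$ such that $\Levi\subseteq\C_\G(\mu)$. The group $\Para_\G(\mu)$ is then the desired $\sigma$-stable parabolic containing $\Levi$.

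The key steps are as follows.

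\emph{Step 1.} Conjugating within $\G^{\sigma\circ}$, assume $S\le\T^{\sigma\circ}$. Replacing $\sigma$ by a suitable quasi-central element $x$ of $\G\sigma$, namely $x=t\sigma$ with $t\in\T$ and $\G^x$ of maximal dimension, does not change the set of $\sigma$-stable parabolics containing $\T$. We may therefore work inside $\G^x$.

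\emph{Step 2.} Pass to the finite part of $S$. We have $\Levi=\C^\circ_\G(S)\subseteq\C^\circ_\G(S^\circ)=:\M$, and $\M$ is already $\C_\G(\mu_0)$ for a generic cocharacter $\mu_0$ of $S^\circ\le\T^{\sigma\circ}$. This handles the torus part. It remains to show that $\Levi$ is a Levi subgroup of a $\sigma$-stable parabolic of $\M$. Here $\sigma$ still acts quasi-semisimply on $\M$, since $\M$ is $\sigma$-stable because $S^\circ$ is $\sigma$-fixed. Also $\Levi=\C^\circ_\M(S)$, where $S/S^\circ$ is now effectively a finite abelian group of order divisible only by good primes.

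\emph{Step 3.} Use the hypothesis that the primes dividing $|S/S^\circ|$ are good and coprime to $|\Z(\G^x)/\Z^\circ(\G^x)|$. This is the analogue of the classical fact (\cite[Prop.~13.16, 13.19]{CE04}) that for an $\ell$-subgroup $Z$ of a torus, with $\ell$ good and $\ell\nmid|\Z/\Z^\circ|$, the group $\C^\circ_\G(Z)$ is a Levi subgroup equal to $\C_\G(\Z^\circ(\C^\circ_\G(Z)))$. Apply this inside $\M$ to get $\Levi=\C_\M(\Z^\circ(\Levi))$. Then show that $\Z^\circ(\Levi)^{\sigma\circ}$ already has centralizer $\Levi$. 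Since $S\subseteq\Z(\Levi)\cap\G^\sigma$, a cocharacter $\mu$ generic in $\Z^\circ(\Levi)^{\sigma\circ}$ would satisfy $\C_\G(\mu)=\Levi$. In that case $\Para_\G(\mu)\supseteq\Levi$ is $\sigma$-stable, as required.

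The main obstacle is Step 3. We must show that $\Z^\circ(\Levi)^\sigma$ is large enough, that is, that its centralizer in $\G$ is $\Levi$ and not something bigger. The hypotheses are placed on $\G^x$, not on $\G$, which indicates how to proceed. I would first transport the problem to $\G^x$. The group $\C^\circ_{\G^x}(S)=\Levi^{x\circ}$ is a Levi subgroup of $\G^x$ by the good-prime/center hypothesis applied to $\G^x$. Hence it equals $\C_{\G^x}(\mu)$ for some cocharacter $\mu$ of its connected center. The parabolic $\Para_\G(\mu)$ is $x$-stable, hence $\sigma$-stable, and $\C_\G(\mu)$ is a $\sigma$-stable Levi subgroup whose $x$-fixed part is $\Levi^{x\circ}$. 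The remaining task is to check that $\Levi\subseteq\C_\G(\mu)$. I would verify this root by root: a root $\alpha$ of $\Levi$ relative to $\T$ is trivial on $S$. Its $x$-orbit sum restricts to a root of $\G^x$ trivial on $S$, hence to a root of $\Levi^{x\circ}$, and that restricted root is trivial on $\mu$. Since $\mu$ is $x$-fixed, $\langle\alpha,\mu\rangle=0$. The delicate point in this check is the type-$\tA_{2n}$ phenomenon of Digne--Michel, where restricted roots may be halved or doubled, and this is the case I would check carefully.
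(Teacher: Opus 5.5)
Your final construction is essentially the paper's. You replace $\sigma$ by a quasi-central $x=t\sigma$ with $t\in\T$, use the hypotheses on $\G^x$ to see that $(\Levi^x)^\circ=\C^\circ_{(\G^x)^\circ}(S)$ is a Levi subgroup of $(\G^x)^\circ$, and take $\C_\G(\mu)$ for a cocharacter $\mu$ of $\Z^\circ((\Levi^x)^\circ)$. This is the paper's $\M=\C_\G(\Z^\circ(\Levi^{\sigma_0}))$. What is missing is the proof that $\C_\G(\mu)=\Levi$.

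Your reduction only asks for $\Levi\subseteq\C_\G(\mu)$, but the trivial cocharacter already satisfies this, giving $\Para=\G$. The content of the lemma is that $\Levi$ is a Levi complement of a $\sigma$-stable parabolic, i.e.\ $\C_\G(\mu)=\Levi$. This is also what is used afterwards, when $\hat{\Levi}=\Levi\langle\sigma\rangle$ is formed inside $\hat{\Para}=\Para\langle\sigma\rangle$. Consequently your last paragraph never addresses $\C_\G(\mu)\subseteq\Levi$. That inclusion needs $S\le\Z^\circ((\Levi^x)^\circ)$, so that $\C_\G(\mu)$ is connected and centralises $S$. This is exactly where the paper spends the coprimality assumption (``$S\le\Z^\circ(\Levi^{\sigma_0})$ by our assumption on $S$''), whereas you use the hypotheses only for the Levi property.

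For the other inclusion $\Levi\subseteq\C_\G(\mu)$, the paper uses no roots. It observes that $\M\le\Levi$ is a Levi subgroup of a $\sigma_0$-stable parabolic of $\Levi$ with the same $\sigma_0$-fixed points. The lemma after \cite[Prop.~1.23]{DM94} then forces $\M=\Levi$, using that $\sigma_0$ is quasi-central for $\Levi$ as well. Your root-by-root check is a reasonable alternative, with one correction: the orbit sum restricts to $|O|\bar\alpha$, which in general is not a root. What you actually need is that the root $\beta$ of $(\G^x)^\circ$ on the line of $\bar\alpha$ is trivial on $S$.

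However, you leave open exactly the case where the check can fail. If $\alpha=\gamma+x^k(\gamma)$ is of $\tA_2$-type, $\beta$ can be $\tfrac12\bar\alpha$, and then $\bar\alpha|_S=1$ only gives $(\beta|_S)^2=1$. This is not a technicality. Take $\G=\SL_3$ and $\sigma$ the pinning-preserving graph automorphism, so $\G^\sigma\cong\mathrm{SO}_3$ is of type $\tA_1$, for which $2$ is good, and has trivial centre. Take $S=\langle\mathrm{diag}(-1,1,-1)\rangle\le(\T^\sigma)^\circ$. Then $\C_\G(S)\cong\GL_2$ is not a Levi complement of any $\sigma$-stable parabolic, since $\sigma$ interchanges the two parabolics having it as Levi. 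So you must show how the hypotheses exclude the prime $2$ in this situation.

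When $|S/S^\circ|$ is odd, as in the paper's application ($S=\Z^\circ(\Levi)^F_\ell$ with $\ell$ odd), this is immediate. We have $2\beta|_{S^\circ}=0$ in the torsion-free character group of $S^\circ$, so $\beta|_{S^\circ}$ is trivial. Hence $\beta|_S$ factors through $S/S^\circ$ and has order dividing $2$, so it is trivial.

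The paper's route needs the same input. In this example $\M=\T\neq\Levi$ although $(\M^\sigma)^\circ=(\Levi^\sigma)^\circ$. Moreover $\sigma$ is not quasi-central for $\Levi$, since $\dim\Levi^{t\sigma}=3>1=\dim\Levi^\sigma$ for $t=\mathrm{diag}(i,1,-i)$.
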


\begin{proof}
Fix a maximal torus $\T$ contained in a Borel subgroup $\B$ of $\G$ such that $(\T,\B)$ is $\sigma$-stable.
By \cite[Thm 1.8]{DM94} it follows that $(\T^\sigma)^\circ$ is a maximal torus of $(\G^\sigma)^\circ$.

As $S$ is contained in a maximal torus of $(\G^\sigma)^\circ$ it follows that there is some $g \in (\G^\sigma)^\circ$ such that ${}^g S \leq (\T^\sigma)^\circ$.

Hence, $\Levi_0:={}^g \Levi=\C_{\bG}({}^gS)$ is $\sigma$-stable as $\Levi$ is $\sigma$-stable. Note here that $\bT\leq \Levi_0$ since $\tw{g}S\leq \bT$. By \cite[Prop.~1.16]{DM94}, there is some element $t \in \T$ such that $\sigma':=t \sigma$ is quasi-central for $\G$. It follows that $\B_{\Levi_0}=\B \cap \Levi_0$ is a $\sigma'$-stable Borel subgroup of $\Levi_0$. 
Moreover, since $\sigma'$ is quasi-central for $\G$, for every $n \in \N_{\Levi_0}(\T) \subset \N_{\G}(\T)$ such that $\sigma'(n)n^{-1} \in \T$ there is some $t' \in \T$ such that $\sigma'(nt')=nt'$ by \cite[Thm-Def.~1.15]{DM94}. Then again by \cite[Thm-Def.~1.15]{DM94} the automorphism $\sigma'$ is also quasi-central for $\Levi_0$. Hence, $\sigma_0:={}^{g^{-1}} \sigma'$ is quasi-central for $\Levi$ and $\G$ (as conjugates of quasi-central elements are again quasi-central).

It follows that $(\G^{\sigma_0})^\circ$ is a connected reductive group, see \cite[Thm.~1.8]{DM94}. We claim that $(\Levi^{\sigma_0})^\circ$ is a Levi subgroup of $(\G^{\sigma_0})^\circ$. Indeed, recall $\Levi=\C^\circ_{\G}(S)$ so that $(\Levi^{\sigma_0})^\circ=\C^\circ_{(\G^{\sigma_0})^\circ}(S)$.
    This implies that $(\Levi^{\sigma_0})^\circ$ is a Levi subgroup  of $(\G^{\sigma_0})^\circ$ by \cite[Prop.~13.16]{CE04}.

    Let $\mathbf{M}:=\C_{\G}(\Z^\circ(\Levi^{\sigma_0}))$ 
    and note that $\mathbf{M} \leq \Levi$ since $S \leq \Z(\Levi)^{\sigma_0} \leq \Z(\Levi^{\sigma_0})$ which gives $S \leq \Z^\circ(\Levi^{\sigma_0})$ by our assumption on $S$. Then $\mathbf{M}$ is a $\sigma_0$-stable Levi subgroup of $\G$ contained in a $\sigma_0$-stable parabolic $\Para$ by \cite[Cor.~1.25]{DM94}.

    Note that $\Levi$ is $\sigma_0$-stable and $\mathbf{M}$ is a Levi subgroup of $\Levi$ in the $\sigma_0$-stable parabolic $\Para \cap \Levi$. However, $\mathbf{M}^{\sigma_0}=\Levi^{\sigma_0}$ which implies $\Levi=\mathbf{M}$ by the lemma after \cite[Prop.~1.23]{DM94}. In particular, $\Levi$ is contained in a $\sigma_0$-stable parabolic $\Para$. Recall that $g \in \G^\sigma$ so that $\sigma_0= {}^{g^{-1}} t\cdot\sigma$. Further,  ${}^{g^{-1}} t \in \Levi$, so it follows that $(\Levi,\Para)$ is also $\sigma$-stable.
\end{proof}

Recall that for $d$ a positive integer prime to $p$, a $d$-torus of $(\bG, F)$ is an $F$-stable torus in $\bG$ whose order polynomial is a power of the $d$th cyclotomic polynomial $\Phi_d$; a Sylow $d$-torus is a $d$-torus of maximal possible rank; and a $d$-split Levi
subgroup is a centralizer of a d-torus. (See \cite[Sec.~3.5]{GM20}.) 

In the case where a Sylow $d$-torus of $\G$ is contained in $(\G^\sigma)^\circ$, Lemma \ref{lem:sigma fixed} gives the following:

\begin{lemma}\label{lem:sylow fixed}
    Let $\sigma$ be a quasi-central automorphism of a connected reductive group $\G$ of $p'$-order commuting with the action of $F$. If $(\G^\sigma)^\circ$ contains a Sylow $d$-torus of $(\bG, F)$, then there is a bijection between the $\G^F$-conjugacy classes of $d$-split Levi subgroups of $\G$ and the  $((\G^\sigma)^\circ)^F$-conjugacy classes of $d$-split Levi subgroups of $(\G^\sigma)^\circ$ as follows: Every $d$-split Levi subgroup of $\G$ is $\G^F$-conjugate to a Levi subgroup $\Levi_1$ contained in a $\sigma$-stable parabolic, and the map sends the conjugacy class of $\Levi_1$ to the conjugacy class of $(\Levi_1^\sigma)^\circ$. 
\end{lemma}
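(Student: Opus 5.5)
The plan is to build the bijection in both directions using Lemma \ref{lem:sigma fixed}, and to check it is well defined using the standard theory of $d$-tori in $\G$ and in $\G_\sigma:=(\G^\sigma)^\circ$.

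Throughout, $\G_\sigma$ is connected reductive and $F$-stable by \cite[Thm.~1.8]{DM94}, since $\sigma$ is quasi-central and commutes with $F$.

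\emph{Step 1: moving $\Levi$ into good position.} Let $\Levi=\C_\G(\bS)$ be $d$-split, with $\bS=\Z^\circ(\Levi)_{\Phi_d}$ a $d$-torus. By the Sylow theory for $d$-tori \cite[Sec.~3.5]{GM20}, $\bS$ lies in some Sylow $d$-torus of $(\G,F)$. All Sylow $d$-tori are $\G^F$-conjugate, and by hypothesis one of them lies in $\G_\sigma$. So after $\G^F$-conjugation we may assume $\bS\leq\G_\sigma$.

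A torus is connected, so $S/S^\circ$ is trivial and the prime conditions in Lemma \ref{lem:sigma fixed} hold vacuously. That lemma therefore gives a $\sigma$-stable parabolic containing $\Levi$; call this $\Levi_1$. Its proof even produces the $\sigma$-stable pair directly after conjugating by $g\in\G_\sigma$. Because the conjugation taking $\bS$ into a torus of $\G_\sigma$ can be chosen $F$-equivariantly inside $\G_\sigma^F$ (Sylow $d$-tori of $\G_\sigma$ are $\G_\sigma^F$-conjugate), $\Levi_1$ stays $\G^F$-conjugate to $\Levi$.

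\emph{Step 2: the image is a $d$-split Levi of $\G_\sigma$, and the map is well defined.} Using the proof of Lemma \ref{lem:sigma fixed}, $(\Levi_1^\sigma)^\circ=\C^\circ_{\G_\sigma}(\bS)$, which is a Levi subgroup of $\G_\sigma$.

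To see it is $d$-split, I would argue as follows. The centre $\Z^\circ(\Levi_1)_{\Phi_d}=\bS$ is $\sigma$-fixed and contained in $\Z^\circ((\Levi_1^\sigma)^\circ)_{\Phi_d}$. Conversely, the $d$-part of the centre of $(\Levi_1^\sigma)^\circ$ is a $d$-torus of $\G$ commuting with $\bS$, so it centralizes $\Levi_1=\C_\G(\bS)$ up to the usual argument $\C_\G(\Z^\circ(\Levi_1)_{\Phi_d})=\Levi_1$. Hence the two $d$-centres agree, and $(\Levi_1^\sigma)^\circ=\C_{\G_\sigma}(\bS)$ is $d$-split.

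For well-definedness, suppose two choices $\Levi_1,\Levi_1'$ with $\sigma$-stable $\bS,\bS'\leq\G_\sigma$ are $\G^F$-conjugate. I would show they are $\G_\sigma^F$-conjugate by a Frattini-type argument: the conjugating element $h$ can be modified so that ${}^h\bS=\bS'$, and then both are contained in Sylow $d$-tori of $\G_\sigma$. The relative Weyl groups satisfy $W_{\G}(\bT_d)^F$ versus $W_{\G_\sigma}(\bT_d)^F$ for a Sylow $d$-torus $\bT_d\leq\G_\sigma$. Using that $\sigma$ is quasi-central, $W_{\G_\sigma}=W_\G^\sigma$ \cite[Thm.~1.8]{DM94}, and $\sigma$ acts trivially on $\bT_d$, so all of $\N_{\G}(\bT_d)^F$ is realised modulo $\C_\G(\bT_d)$ by $\sigma$-fixed elements. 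This reduces conjugacy of $d$-subtori of $\bT_d$ to the relative Weyl group, which is common to both groups.

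\emph{Step 3: bijectivity.} The inverse sends a $d$-split Levi $\C_{\G_\sigma}(\bS)$ of $\G_\sigma$ to $\C_\G(\bS)$. A $d$-torus of $\G_\sigma$ is a $d$-torus of $\G$, and one checks $\C_\G(\bS)^\sigma{}^\circ=\C_{\G_\sigma}(\bS)$ exactly as in Step 2. The two maps are mutually inverse on the level of $d$-tori $\bS\leq\bT_d$ up to the common relative Weyl group.

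I expect the main obstacle to be the conjugacy comparison in Step 2, namely that $\G^F$-conjugacy of such $\bS$ coincides with $\G_\sigma^F$-conjugacy. I would attack it first via the identification of the relative Weyl groups $\N_\G(\bT_d)^F/\C_\G(\bT_d)^F$ with its $\G_\sigma$-analogue, using that $\C_\G(\bT_d)$ is a minimal $d$-split Levi containing a $\sigma$-fixed maximal torus.
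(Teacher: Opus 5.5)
Your argument is sound in outline and, at the decisive step, more accurate than the paper's. Two inferences in it need repair.

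Step 1 and the shape of the bijection agree with the paper. You conjugate so that $\bS=\Z^\circ(\Levi_1)_{\Phi_d}$ lies in a Sylow $d$-torus contained in $\G_\sigma=(\G^\sigma)^\circ$, apply Lemma \ref{lem:sigma fixed}, send $\Levi_1$ to $(\Levi_1^\sigma)^\circ=\C_{\G_\sigma}(\bS)$, and invert via centralizers of $d$-tori.

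The difference is in well-definedness. The paper asserts, via the proof of \cite[Prop.~1.40]{DM94}, that \emph{every} $\G^F$-conjugate of $\Levi_1$ lying in a $\sigma$-stable parabolic has $\sigma$-fixed part $\G_\sigma^F$-conjugate to $(\Levi_1^\sigma)^\circ$. You only compare representatives whose $d$-centre lies in $\G_\sigma$, using a Sylow $d$-torus $\bT_d\le\G_\sigma$ and its relative Weyl group. Your restriction is necessary, as the following example shows.
\begin{itemize}
\item Take $p$ odd, $\G=\GL_4$ with split $F$, $\sigma\colon x\mapsto J\,{}^tx^{-1}J^{-1}$ for a rational symplectic form $J$ (so $\G_\sigma=\mathrm{Sp}_4$), and $d=2$. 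Both groups have $\Phi_2$-rank $2$, so the hypothesis holds.
\item Fix a $\sigma$-stable pair $(\T_0,\B_0)$. For $g\in\mathrm{Sp}_4$ with $g^{-1}F(g)$ representing $w\in W(\tC_2)\le S_4$, the torus ${}^g\T_0$ is $F$- and $\sigma$-stable and lies in the $\sigma$-stable Borel subgroup ${}^g\B_0$.
\item Take $w=-1$ and $w$ the coordinate swap. Both act on $\{1,2,3,4\}$ as double transpositions, so the two tori are $\GL_4(q)$-conjugate, and both are $2$-split of type $\GL_1(q^2)^2$.
\item Their $\sigma$-fixed points are maximal tori of $\mathrm{Sp}_4$ with $(q+1)^2$ and $q^2-1$ rational points respectively, so they are not $\mathrm{Sp}_4(q)$-conjugate. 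The second is not even $2$-split. In the second torus of $\GL_4$, $\sigma$ swaps the two factors of the $\Phi_2$-part, so its $d$-centre is not in $\mathrm{Sp}_4$.
\end{itemize}
So the statement only holds with the normalization $\Z^\circ(\Levi_1)_{\Phi_d}\le\G_\sigma$ that you build in, and your relative-Weyl-group comparison is the right tool.

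\textbf{First repair (Step 2).} The inference ``commuting with $\bS$, so it centralizes $\Levi_1=\C_\G(\bS)$'' is a non sequitur. Commuting with $\bS$ only places $\Z^\circ((\Levi_1^\sigma)^\circ)_{\Phi_d}$ inside $\Levi_1$.
\begin{itemize}
\item $d$-splitness of $\C_{\G_\sigma}(\bS)$ needs no argument: it is the centralizer of a $d$-torus of $\G_\sigma$.
\item What Step 3 actually needs is the equality $\Z^\circ(\C_{\G_\sigma}(\bS))_{\Phi_d}=\bS$, so that $\Levi_1\mapsto\C_{\G_\sigma}(\bS)\mapsto\C_\G(\Z^\circ(\C_{\G_\sigma}(\bS))_{\Phi_d})$ returns $\Levi_1$. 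Get it from $\Levi_1=\C_\G(\Z^\circ((\Levi_1^\sigma)^\circ))$ \cite[Prop.~1.23]{DM94}, as the paper does for injectivity.
\item For the other composite, take $\mathbf{M}$ $d$-split in $\G_\sigma$. The $d$-centre of $\C_\G(\Z^\circ(\mathbf{M})_{\Phi_d})$ lies in any Sylow $d$-torus of $\G_\sigma$ containing $\Z^\circ(\mathbf{M})_{\Phi_d}$, hence in $\G_\sigma$. So this Levi is an admissible representative.
\end{itemize}

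\textbf{Second repair (the lifting step).} The inference ``$\sigma$ acts trivially on $\bT_d$, so $\N_\G(\bT_d)^F$ is realised by $\sigma$-fixed elements'' does not follow. Trivial action only gives $\sigma(n)n^{-1}\in\C_\G(\bT_d)$, and there is no Lang theorem for a finite-order $\sigma$. Argue instead as follows.
\begin{itemize}
\item Choose a $\sigma$-stable Borel pair $(\T,\B)$ of $\G$ with $\T\le\C_\G(\bT_d)$ and $\B\cap\C_\G(\bT_d)$ a Borel subgroup of $\C_\G(\bT_d)$. This is possible because $\C_\G(\bT_d)$ lies in a $\sigma$-stable parabolic by Lemma \ref{lem:sigma fixed}.
\item For $n\in\N_\G(\bT_d)$, multiply by an element of $\C_\G(\bT_d)$ so that $n$ normalizes $\T$ and $\B\cap\C_\G(\bT_d)$. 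Then $\sigma(n)n^{-1}\in\T$, so $n\T\in W_\G(\T)^\sigma$.
\item Quasi-centrality \cite[Thm.-Def.~1.15]{DM94} then provides an element of $n\T\cap\N_{\G_\sigma}((\T^\sigma)^\circ)$, and this element normalizes $\bT_d$. Hence $\N_\G(\bT_d)=\N_{\G_\sigma}(\bT_d)\C_\G(\bT_d)$.
\item Lang's theorem in the connected groups $\C_\G(\bT_d)$ and $\C_{\G_\sigma}(\bT_d)$ gives $\N_\G(\bT_d)^F=\N_{\G_\sigma}(\bT_d)^F\C_\G(\bT_d)^F$. Combined with your Frattini step, this yields $\G_\sigma^F$-conjugacy of $\bS$ and $\bS'$.
\end{itemize}

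A minor point: your claim in Step 1 that the conjugating element can be taken in $\G_\sigma^F$ is unnecessary. Lemma \ref{lem:sigma fixed} already places $\Levi_1$ itself in a $\sigma$-stable parabolic.
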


\begin{proof}
 Let $\bS$ be a Sylow $d$-torus of $(\bG, F)$ contained in $(\bG^\sigma)^\circ$.
Let $\Levi$ be a $d$-split Levi subgroup of $\bG$. Then $\Levi$ is $\G^F$-conjugate to a Levi subgroup $\Levi_1$ with $\Z^\circ(\Levi_1)_{\Phi_d} \leq \mathbf{S}$. Since $\Levi_1$ is $d$-split, we have $\Levi_1=\C_{\G}(\Z^\circ(\Levi_1)_{\Phi_d})$ (see \cite[Prop.~3.5.5]{GM20}) and so $\Levi_1$ is contained in a $\sigma$-stable parabolic subgroup by Lemma \ref{lem:sigma fixed}. 
We also obtain that $(\Levi_1^\sigma)^\circ$ is a $d$-split Levi subgroup of $(\G^\sigma)^\circ$. 

If $\Levi_1'$ is $\G^F$-conjugate to $\Levi_1$ and is contained in a $\sigma$-stable parabolic, then one can show as in the proof of \cite[Prop.~1.40]{DM94} that $\Levi_1'^\sigma$ and $\Levi_1^\sigma$ are $((\G^\sigma)^\circ)^F$-conjugate. Hence, the map $\Levi_1 \mapsto (\Levi_1^\sigma)^\circ$ is well-defined on conjugacy classes. On the other hand, any $d$-split Levi subgroup of $(\G^\sigma)^\circ$ is, up to $((\G^\sigma)^\circ)^F$-conjugation, a centralizer of an $F$-stable subtorus of $\mathbf{S}$. Hence, the map is surjective and it is injective as $\Levi_1=\C_{\G}(\Z^\circ((\Levi_1^\sigma)^\circ))$ by \cite[Prop.~1.23]{DM94}.
\end{proof}

We now obtain our partial answer to when the conditions of Proposition \ref{prop:decomp RLG} hold.

\begin{lemma}\label{lem:Levi existence}

Assume the prime $\ell$ satisfies Definition \ref{def:goodprime} for $\G$, $[\G,\G]$ is simply connected, and $\hat{\G}/{\G}$ is cyclic of $\{p,\ell\}'$-order.
Let $\Levi$ be a Levi subgroup of $\G$ such that $\Levi=\C_{\G}(\Z^\circ(\Levi)_\ell^F)$ and let $(\Z(\Levi)_\ell^F,b_L)$ be a Brauer pair of a block $b_G$ of $G:=\bG^F$ with $\hat{G}[b_G]=\hat{G}$, where $\hat G=\hat\G^F$. Then there exists a Levi subgroup $\hat{\Levi}$ with connected component $\Levi$ such that $\hat{\Levi}^F[b_L] \G^F=\hat{\G}^F$.
    \end{lemma}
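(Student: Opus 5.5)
We take a generator $\sigma_0$ of $\hat{\G}^F$ modulo $\G^F$ (using that $\hat{\G}/\G$ is cyclic) and replace it by an element of the same coset that both normalizes $\Levi$ and acts on $\Levi^F$ so as to fix $b_L$. The Levi subgroup will then be $\hat{\Levi}:=\Levi\langle\sigma\rangle$, suitably modified so that it is the normalizer of a parabolic as in the definitions opening this section.

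\emph{Step 1: choosing $\sigma$.} Since $\hat{G}[b_G]=\hat{G}$ and $\hat{G}/G$ has $\ell'$-order, the argument of Remark \ref{rem:weyl central product} applies: by Theorem \ref{thm:dade} (via \cite{Murai}), some $G$-translate of $\sigma_0$ centralizes a defect group $D$ of $b_G$. The Brauer pairs of $b_G$ are $G$-conjugate, so after further $G$-conjugation we may assume that $D$ contains $Z:=\Z(\Levi)_\ell^F$ and that $(Z,b_L)\le(D,b_D)$. The element $\sigma:=\sigma_0g$ centralizing $D$ then fixes $(D,b_D)$. By uniqueness of subpairs it also fixes $(Z,b_L)$. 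In particular $\sigma$ centralizes $Z$, and hence normalizes $\C_\G(Z)$. Since $\Levi=\C_\G(\Z^\circ(\Levi)^F_\ell)$ and $\Z^\circ(\Levi)^F_\ell\le Z\le\Z(\Levi)$, we have $\C_\G(Z)=\Levi$. Thus $\sigma$ normalizes $\Levi$, and $\sigma$ stabilizes $b_L$.

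\emph{Step 2: a $\sigma$-stable parabolic.} This is the step we expect to be the main obstacle. We must show that $\Levi$ lies in a $\sigma$-stable parabolic $\Para$. First replace $\sigma$ by its $p'$-part, which lies in the same coset modulo $\G$ up to a unipotent element of $\G^F$; because $\hat\G/\G$ is a $p'$-group, $\sigma$ is then quasi-semisimple. We then apply Lemma \ref{lem:sigma fixed} with $S:=Z$, or with $S:=\Z^\circ(\Levi)_\ell^F$. Two hypotheses need checking. First, $S$ must lie in a torus of $(\G^\sigma)^\circ$: $S$ is an abelian $\ell$-group fixed by $\sigma$, and with $\ell$ good, $\ell\nmid|\hat\G/\G|$ and $[\G,\G]$ simply connected, its elements commute with $\sigma$ inside a torus of $(\G^\sigma)^\circ$. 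Here one argues via connectedness of centralizers of the $\ell'$-element $\sigma$, as in Steinberg's theorem. Second, the primes dividing $|S/S^\circ|$, namely $\ell$ only, must be good for $\G^x$ and prime to $|\Z(\G^x)/\Z^\circ(\G^x)|$. This should follow from Definition \ref{def:goodprime} for $\G$, since $\G^x$ has root system contained in a folding of that of $\G$, and the simple connectedness of $[\G,\G]$ controls the component group of its center. The lemma then yields a $\sigma$-stable parabolic $\Para\supseteq\Levi$, and we may arrange $F$-stability because $\sigma$ commutes with $F$, possibly after a Lang-type conjugation inside $\C_\G(Z)$.

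\emph{Step 3: conclusion.} Set $\hat{\Levi}:=\N_{\hat{\G}}(\Levi,\Para)$. It contains $\Levi\langle\sigma\rangle$, its connected component is $\Levi$, and $\hat{\Levi}^F\G^F\supseteq\langle\G^F,\sigma\rangle=\hat{\G}^F$. It remains to show $\hat{\Levi}^F[b_L]=\hat{\Levi}^F$, or at least that this Dade group together with $\G^F$ generates $\hat\G^F$. The element $\sigma$ centralizes $Z$, which is central in $\Levi^F$ and contained in every defect group of $b_L$. It also fixes the $b_L$-pair $(D',b_{D'})$ obtained from $(D,b_D)$ by restricting to $\C_G(Z)=L$, where $D'\supseteq Z$ is a defect group of $b_L$. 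Hence $\sigma$ centralizes a defect group of $b_L$ modulo $L$, and by the definition of the Dade group (\cite{Murai}) $\sigma\in\hat{\Levi}^F[b_L]$. Therefore $\hat{\Levi}^F[b_L]\G^F\supseteq\langle\sigma\rangle\G^F=\hat{\G}^F$. If necessary we replace $\hat\Levi$ by $\Levi\langle\sigma\rangle$ viewed as a Levi subgroup of the parabolic $\Para\langle\sigma\rangle$, in the general sense $\Levi'$ defined at the start of this section.
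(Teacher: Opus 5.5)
\textbf{There is a genuine gap in Step 3.} You conclude $\sigma\in\hat{\Levi}^F[b_L]$ ``by the definition of the Dade group'' from two facts: $\sigma$ centralizes a defect group of $b_L$, and $\sigma$ fixes the corresponding maximal Brauer pair. That is not the definition. The results available here only go the other way: Theorem~\ref{thm:dade} gives $Y=X\C_Y(D)$ when $Y[b]=Y$, and \cite[Thm.~3.13]{Murai} gives $Y[b]\le X\C_Y(D)$.

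The converse you need is false in general. Take $\ell=3$, $D=\langle d\rangle\cong C_3$, and $Y=D\rtimes Q_8$, where $Q_8=\langle i,j\rangle$ acts through $Q_8/\langle i\rangle$ by inversion. Let $X=D\langle j\rangle$, so $Y/X\cong C_2$.
\begin{itemize}
\item The block $b$ of $X$ lying over the faithful character of $\langle j^2\rangle$ has defect group $D$, and its maximal pair is $(D,b_D)$ with $\C_X(D)=D\times\langle j^2\rangle$.
\item The element $i$ centralizes $\C_X(D)$, so it centralizes $D$ and fixes $(D,b_D)$.
\item However ${}^ij=j^{-1}$, so $i$ swaps the two linear characters of $b$ (those with $j\mapsto\pm\sqrt{-1}$). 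Hence $i\notin Y[b]$; indeed $Y$ has only one block above $b$.
\end{itemize}
In the degenerate case $\Levi=\G$, your Step 3 would prove exactly this false implication for $b_G$. So the hypothesis $\hat G[b_G]=\hat G$ must be used for more than producing $\sigma$. For the same reason, your Step 1 claim that $\sigma$ fixes $b_D$ (and hence $b_L$) merely because it centralizes $D$ is not automatic and needs justification.

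\textbf{The missing idea is to push the global hypothesis down to $b_L$ by counting blocks, which is how the paper argues.} Put $L:=\Levi^F$ and $\hat L:=\C_{\hat G}(Z)=L\langle\sigma\rangle$. Then $\hat L\cap G=L$ and $\hat LG=\hat G$.
\begin{itemize}
\item By Theorem~\ref{thm:dade} (the quotient being cyclic), the blocks $B\otimes\mu$ above $b_G$, for $\mu\in\Irr(\hat G/G)$, are pairwise distinct.
\item Since $(Z,b_L)$ is a $b_G$-Brauer pair, applying $\mathrm{Br}_Z$ to central idempotents together with \cite[Thm.~4.14]{Nav98} gives a block $\beta$ of $\hat L$ covering $b_L$ with $\beta^{\hat G}=B$.
\item Block induction commutes with tensoring by linear characters, so $(\beta\otimes\Res^{\hat G}_{\hat L}\mu)^{\hat G}=B\otimes\mu$. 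Hence $\hat L$ has $|\hat L/L|$ distinct blocks above $b_L$.
\item Theorem~\ref{thm:dade} then yields $\hat L[b_L]=\hat L$, which also gives the $\hat L$-stability of $b_L$.
\end{itemize}

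Otherwise your Steps 1 and 2 follow the paper: choose $\sigma$ centralizing $Z$, then apply Lemma~\ref{lem:sigma fixed}. For the torus hypothesis there, the clean argument is not a Steinberg-type connectedness statement. Instead, $Z\le\T:=\Z^\circ(\Levi)$, which is $\sigma$-stable, and $\T^\sigma/(\T^\sigma)^\circ$ is an $\ell'$-group once $\sigma$ is taken of $\{p,\ell\}'$-order \cite[Prop.~1.28]{DM94}. Hence $Z\le(\T^\sigma)^\circ\le(\G^\sigma)^\circ$.
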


    \begin{proof}
    By assumption together with Theorem \ref{thm:dade}, we find some $\sigma \in \hat{\G}^F$ of $\{\ell,p\}'$-order such that $\sigma$ generates the quotient $\hat{\G}^F/ \G^F$ and centralizes a defect group of $b_G$. In particular, by \cite[Lem.~4.13]{Nav98}, $\Z(\bL)_\ell^F$ is contained in such a defect group as it is contained in any defect group of $b_L$ and $(\Z(\Levi)_\ell^F,b_L)$ is a $b_G$-Brauer pair. Hence we can (by possibly replacing $\sigma$ by a $\G^F$-conjugate) assume that $\sigma$ centralizes $\Z(\Levi)_\ell^F$. Moreover, since $\sigma$ is of $p'$-order and in particular semisimple, it follows that $\sigma$ is quasi-semisimple by \cite[Prop.~1.3]{DM94}.
    Recall $\Levi=\C_{\G}(Z)$ for $Z:=\Z^\circ(\Levi)_\ell^F$ an $\ell$-subgroup of $\G^{\sigma}$. On the other hand, $Z \leq \mathbf{T}:=\Z^\circ(\Levi)$, the latter is $\sigma$-stable, and $\T^\sigma/(\T^\sigma)^\circ$ is an $\ell'$-group by \cite[Prop.~1.28]{DM94}. Hence $Z\leq (\bT^\sigma)^\circ\leq (\bG^\sigma)^\circ$. By Lemma \ref{lem:sigma fixed} we deduce that $\Levi$ is contained in a $\sigma$-stable parabolic $\Para$.

Hence, we can set $\hat{\Para}:=\Para \langle \sigma \rangle$ and $\hat{\Levi}=\Levi \langle \sigma \rangle$. Note that $\Z(\Levi)_\ell^F$ is centralized by $\sigma$ so that $\hat{\Levi}^F=\C_{\hat{\G}^F}(\Z(\Levi)_\ell^F)$.  This implies that $\N_{\hat{\G}}(\Levi^F)[b_L] \leq \hat{\Levi}^F$ {using \cite[Thm.~3.13]{Murai}},
as any defect group for $b_L$ contains $\Z(\Levi)_\ell^F$. On the other hand $\hat{G}[b_G]=\hat G$ and $(\Z(\Levi)_\ell^F,b_L)$ is a $b_G$-subpair. From this we have $|\mathrm{Bl}(\hat G\mid b_G)|=|\hat G/G|$ by Theorem \ref{thm:dade}. Then applying the Brauer homomorphism to the central idempotents and using \cite[Thm.~4.14]{Nav98}, we have the same holds for the blocks above $b_L$ in $\hat{L}=\C_{\hat{G}}(\Z(\Levi)_\ell^F)$. Then again applying Theorem \ref{thm:dade}, we have $\hat{L}[b_L]=\hat L$. Since $\sigma \in \hat L$, we also know $\hat L G=\hat G$, and this completes the proof.
\end{proof}

We summarize our findings in the following theorem, which is a restatement of Theorem \ref{thm:unipotent characters} and aims to give an analogue to part of \cite{CE94} for disconnected groups.  Recall here the notation $d_\ell(q)$ for the order of $q$ modulo $\ell$ when $\ell$ is odd.

\begin{theorem}\label{unipotent characters}

Assume the prime $\ell$ satisfies Definition \ref{def:goodprime} for $\G$, $[\G,\G]$ is simply connected, and $\hat{\G}/{\G}$ is cyclic of $\{p,\ell\}'$-order. 
Let $d=d_\ell(q)$. If $(\Levi,\la)$ is a unipotent $d$-cuspidal  pair of $(\G,F)$ defining an $\ell$-block $b=b_G(\Levi,\la)$ of $\G^F$ such that $\hat{\G}^F[b]=\hat{\G}^F$, then there exists an $F$-stable Levi subgroup $\hat{\Levi}$ contained in a parabolic subgroup $\hat{\Para}$ of $\hat{\G}$ such that $\C_{\hat{\G}^F}(\Z(\Levi)_\ell^F)=\hat{\Levi}^F$. Moreover, there exists a bijection
$$\Irr(\hat{\Levi}^F \mid \la) \to \mathrm{Bl}(\hat{\G}^F \mid b), \hat{\la} \mapsto b_{\hat{\G}^F}(\hat{\Levi},\hat{\lambda})$$
such that
\begin{enumerate}
    \item $\mathcal{E}(\hat{\G}^F,1) \cap \Irr( b_{\hat{\G}^F}(\hat{\Levi},\hat{\lambda}))= \{ \hat{\chi} \in \Irr(\hat{\G}^F) \mid \langle R_{\hat{\Levi} \subset \hat{\Para}}^{\hat{\G}}(\hat{\la}), \hat{\chi} \rangle \neq 0 \}$
\item$(1,b_{\hat{\G}^F}(\hat{\Levi},\hat{\lambda})) \leq (\Z(\Levi)_\ell^F, b_{\hat{\Levi}^F}(\hat{\la})))$.
\end{enumerate}
\end{theorem}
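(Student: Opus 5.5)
The plan is to assemble the theorem from Lemma \ref{lem:Levi existence}, Proposition \ref{prop:decomp RLG}, Corollary \ref{cor:disj HC} and Theorem \ref{thm:dade}.

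\emph{Step 1: constructing $\hat{\Levi}$.} Since $(\Levi,\la)$ is a unipotent $d$-cuspidal pair with $d=d_\ell(q)$ and $\ell$ satisfies Definition \ref{def:goodprime}, the results recalled from \cite{CE94} (see also \cite[Props.~13.16, 13.19, 22.9]{CE04}) give two facts. First, $\Levi=\C_\G(\Z^\circ(\Levi)_\ell^F)$. Second, $(\Z(\Levi)_\ell^F,b_L(\la))$ is a $b$-Brauer pair. Hence the hypotheses of Lemma \ref{lem:Levi existence} hold, and it produces $\hat{\Levi}=\Levi\langle\sigma\rangle\subseteq\hat{\Para}=\Para\langle\sigma\rangle$ with $\hat{\Levi}^F[b_L]\G^F=\hat\G^F$. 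The proof of that lemma, or the first paragraph of the proof of Proposition \ref{prop:decomp RLG}, already shows $\hat{\Levi}^F=\C_{\hat\G^F}(\Z(\Levi)_\ell^F)$. This is the first assertion.

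\emph{Step 2: defining the map and proving (2).} For $\hat\la\in\Irr(\hat{L}\mid\la)$, Proposition \ref{prop:decomp RLG} says all constituents of $R_{\hat{\Levi}\subset\hat{\Para}}^{\hat\G}(\hat\la')$, for unipotent $\hat\la'\in\Irr(b_{\hat L}(\hat\la))$, lie in a single block $\hat b$. I define $b_{\hat G}(\hat{\Levi},\hat\la):=\hat b$. The same proposition gives $(1,\hat b)\le(\Z(\Levi)_\ell^F,b_{\hat L}(\hat\la))$, which is (2).

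\emph{Step 3: injectivity.} Here $\la$ is a $d$-cuspidal unipotent character and $\hat{L}/L$ is cyclic of $\ell'$-order. Together with Lemma \ref{lem:ext char}, this shows each $b_{\hat L}(\hat\la)$ contains exactly one character above $\la$. Indeed, by the proof of Proposition \ref{prop:decomp RLG}, the blocks above $b_L(\la)$ are in bijection with $\Irr(\hat L\mid\la)$. By (2) and Brauer's third main theorem / uniqueness of Brauer pairs, the maximal pair determines $b_{\hat L}$ up to $\N_{\hat G}$-conjugacy. I will argue via Remark \ref{rem:weyl central product} that $\hat{L}/L$ is central in $W_{\hat G}(\Levi)$, so distinct $\hat\la$ give non-conjugate $b_{\hat L}(\hat\la)$ and hence distinct $\hat b$. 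Equivalently, and more simply: $\hat\la\beta$ for $\beta\in\Irr(\hat G/G)$ maps to $\hat b\otimes\beta$, and by Theorem \ref{thm:dade} (applicable since $\hat G[b]=\hat G$, using Lemmas \ref{lem:DadeCompute} and \ref{lem:ext char}) these blocks are pairwise distinct.

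\emph{Step 4: surjectivity and (1).} Theorem \ref{thm:dade} gives $|\mathrm{Bl}(\hat G\mid b)|=|\hat G/G|=|\hat L/L|=|\Irr(\hat L\mid\la)|$, so the injective map is a bijection. For (1), every block above $b$ contains a unipotent-covering character, and Corollary \ref{cor:disj HC}(2) writes $\Irr(\hat G\mid\mathcal{E}(G,(\Levi,\la)))$ as the disjoint union of the sets $\mathcal{E}(\hat G,(\hat{\Levi},\hat\la))$. Since by \cite{CE94} the set $\mathcal{E}(G,1)\cap\Irr(b)$ is exactly $\mathcal{E}(G,(\Levi,\la))$, the characters in $\mathcal{E}(\hat G,1)\cap\Irr(\hat b)$ cover $\mathcal{E}(G,(\Levi,\la))$. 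Disjointness together with Step 2 then forces such a character in block $b_{\hat G}(\hat\Levi,\hat\la)$ to lie in $\mathcal{E}(\hat G,(\hat\Levi,\hat\la))$, and the reverse inclusion is Step 2.

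The main obstacle is Step 3, specifically the compatibility of the map with tensoring by $\Irr(\hat G/G)$. I would first verify $R_{\hat\Levi\subset\hat\Para}^{\hat\G}(\hat\la\Res\beta)=R_{\hat\Levi\subset\hat\Para}^{\hat\G}(\hat\la)\beta$ from the bimodule definition \cite[Cor.~2.4]{DM94}, and check that $\Res^{\hat G}_{\hat L}$ induces a bijection $\Irr(\hat G/G)\to\Irr(\hat L/L)$ because $\hat LG=\hat G$.
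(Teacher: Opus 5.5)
Your proposal is correct. Steps 1 and 2 match the paper's argument: the paper cites \cite[Prop.~3.2, Thm.~2.5]{CE99} for the two input facts, then applies Lemma \ref{lem:Levi existence} and Proposition \ref{prop:decomp RLG}. The difference is in how you obtain the bijection and (1).

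The paper proves (1) first and directly. A unipotent-covering $\hat\chi$ in $b_{\hat G}(\hat\Levi,\hat\la)$ covers some $\chi\in\mathcal{E}(G,(\Levi,\la))$, so $\Ind_G^{\hat G}(\chi)$ and $R_{\hat\Levi\subset\hat\Para}^{\hat\G}(\hat\la)$ share a constituent. The constituents of $\Ind_G^{\hat G}(\chi)$ lie in pairwise distinct blocks by Theorem \ref{thm:dade}, while all constituents of $R_{\hat\Levi\subset\hat\Para}^{\hat\G}(\hat\la)$ lie in $b_{\hat G}(\hat\chi)$; so the shared constituent is $\hat\chi$. The paper then gets surjectivity from Lemma \ref{lem:preserveunip}, and injectivity from (1) plus the disjointness in Corollary \ref{cor:disj HC}.

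You reverse this order:
\begin{itemize}
\item The twist identity $R_{\hat\Levi\subset\hat\Para}^{\hat\G}(\hat\la\cdot\Res^{\hat G}_{\hat L}\beta)=R_{\hat\Levi\subset\hat\Para}^{\hat\G}(\hat\la)\cdot\beta$ makes your map $\Irr(\hat G/G)$-equivariant.
\item Injectivity is then exactly the distinctness of the blocks $\hat b\otimes\beta$ from Theorem \ref{thm:dade}, and surjectivity is a count.
\item (1) follows from the covering half of Corollary \ref{cor:disj HC} combined with your injectivity. It is injectivity, not disjointness, that forces $\hat\la'=\hat\la$ at that point.
\end{itemize}
Both routes rest on the same input: distinct extensions of $\chi$ lie in distinct blocks. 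Yours makes explicit the equivariance that the paper uses only implicitly inside Corollary \ref{cor:disj HC}. The paper's proof of (1) has the advantage of not needing injectivity beforehand.

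Three small repairs are needed.
\begin{itemize}
\item The count $|\Irr(\hat L\mid\la)|=|\hat L/L|$ needs $\la$ to be $\hat L$-invariant. This does not come from Lemma \ref{lem:ext char}. It comes from $\hat L=\hat L[b_L]$, as in the proof of Proposition \ref{prop:decomp RLG}, where $b_{\hat L}(\hat\la)$ is shown to be isomorphic to $b_L(\la)$ via restriction.
\item The equality $|\mathrm{Bl}(\hat G\mid b)|=|\hat G/G|$ also needs $\Lin(\hat G/G)$ to act transitively on the blocks covering $b$, not only the distinctness in Theorem \ref{thm:dade}. Transitivity holds because $\hat G/G$ is abelian of $\ell'$-order. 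Alternatively, get surjectivity from your remark that every such block contains a unipotent-covering character, together with the covering half of Corollary \ref{cor:disj HC}.
\item Drop the first sketch in Step 3. Two $\hat b$-Brauer pairs with the same non-maximal first component need not be $\N_{\hat G}$-conjugate, so that argument fails as stated. Your tensor argument makes it unnecessary.
\end{itemize}
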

\begin{proof}
By \cite[Prop.~3.2]{CE99}, the assumption that $\Levi$ is $d$-split yields that $\C_{\bG}(\Z^\circ(\bL)_\ell^F)=\Levi$. Then our assumptions and the results of \cite[Thm.~2.5]{CE99} imply that Lemma \ref{lem:Levi existence} is satisfied, so there is $\hat\bL$ with $\hat\bL^\circ=\bL$ with $\hat\bL^F[b_L(\la)]\bG^F=\hat\bG^F$. Hence, Proposition \ref{prop:decomp RLG} applies. In particular, the proof of Proposition \ref{prop:decomp RLG} gives $\C_{\hat{\G}^F}(\Z(\Levi)_\ell^F)=\hat{\Levi}^F$. 

The rest eventually follows from Proposition \ref{prop:decomp RLG} and Corollary \ref{cor:disj HC}. We note that the map $\hat\la\mapsto b_{\hat{G}}(\hat\Levi, \hat\la)$ is well-defined by Proposition \ref{prop:decomp RLG}, surjective by Lemma \ref{lem:preserveunip}, and injective by Corollary \ref{cor:disj HC} together with (1). Hence it now suffices to see (1). 

To see (1), we observe that for $\hat\la\in\Irr(\hat\Levi^F\mid\la)$, any $\hat{\chi } \in \Irr(\hat{\G}^F)$ with $\langle R_{\hat{\Levi} \subset \hat{\Para}}^{\hat{\G}}(\hat{\la}), \hat{\chi} \rangle \neq 0$ lies in $\mathcal{E}(\hat{\G}^F,1)$ by Lemma \ref{lem:preserveunip} and in $ \Irr(b_{\hat{\G}^F}(\hat{\Levi},\hat\la))$ by Proposition \ref{prop:decomp RLG}. On the other hand, any character $\hat{\chi} \in \mathcal{E}(\hat{\G}^F,1) \cap \Irr(b_{\hat{\G}^F}(\hat{\Levi},\hat\la))$ must cover a character $\chi \in \mathcal{E}(\G^F,1) \cap \mathcal{E}(\G^F,(\Levi,\la))$, since the analogous statement to (1) for $b_G(\bL, \la)$ holds by \cite[Thm.]{CE94}. Hence, $\langle \chi ,R_\Levi^{\G}(\la) \rangle \neq 0$ and $\langle \Ind_{ G}^{\hat{G}}(\chi), R_{\hat{\Levi} \subset \hat{\Para}}^{\hat\G}(\hat{\la}) \rangle \neq 0$ by \cite[Cor.~2.4]{DM94} and Frobenius reciprocity. By Theorem \ref{thm:dade}, the irreducible constituents of $\Ind_{G}^{\hat{G}}(\chi)$ are in distinct blocks. But by Proposition \ref{prop:decomp RLG}, any $\psi\in\Irr(\hat G)$ with $\langle \psi, R_{\hat{\Levi} \subset \hat{\Para}}^{\hat\G}(\hat{\la}) \rangle \neq 0$ must lie in $b_{\hat{G}}(\hat\Levi, \hat\la)=b_{\hat{G}}(\hat\chi)$.
In particular, it must be that $\langle R_{\hat{\Levi} \subset \hat{\Para}}^{\hat{\G}}(\hat{\la}), \hat{\chi} \rangle \neq 0$ as desired. 
\end{proof}

\subsection{Unipotent blocks and Alvis--Curtis duality}\label{sec:ACblockswap}

Recall that if $\chi \in \Irr(G)$, there exists a unique sign $\varepsilon_\chi \in \{\pm 1\}$ and $\chi^\ast \in \Irr(G)$ such that $\D_\G(\chi)=\varepsilon_\chi \chi^\ast$. In particular, if $\hat{\chi} \in \Irr(\hat{G})$ is an extension of $\chi$, then $\D_{\hat{\G}}(\hat{\chi})=\varepsilon_\chi \hat{\chi}^\ast$ for some $\hat{\chi}^\ast \in \Irr(\hat{G})$ extending $\chi^\ast$. (Recall the construction of $\D_{\hat\G}$ in Section \ref{sec:AC}.)
As the Mackey formula holds for unipotent characters, we obtain by \cite[Thm.~3.44]{GM20} 
$$\varepsilon_\G \varepsilon_\Levi \D_\G R_\Levi^{\G}(\la)=R_\Levi^{\G} \D_\Levi(\la).$$ In particular, $(\Levi,\la^\ast)$ is again a $d$-cuspidal pair and labels the block $\D_\G(b_\G(\Levi,\la))$. Let $\hat{\G}=\G \langle \sigma \rangle$ for some $\sigma$ such that every power of $\sigma$ is a rational quasi-central automorphism. If $\Levi$ is $\sigma$-stable and $\hat{\Levi}=\Levi \langle \sigma \rangle$ is a Levi subgroup of $\hat{\G}$, then under suitable assumptions \cite[Thm.~3.11]{DM94} shows that this equality generalizes to 
$$\varepsilon \D_{\hat{\G}} R_{\hat{\Levi}}^{\hat{\G}}(\hat \la)= R_{\hat{\Levi}}^{\hat{\G}} \D_{\hat{\Levi}}(\hat\la),$$
where $\varepsilon$ is the class function defined by $\varepsilon(G \sigma^i):=\varepsilon_{\G^{\sigma^i}} \varepsilon_{\Levi^{\sigma^i}}.$
We want to compute the image of $b_{\hat{\G}}(\hat{\Levi},\hat{\la})$ in the situation of Theorem \ref{unipotent characters} under $\D_{\hat{\G}}$. Under some reasonable assumptions we compute this image and we will describe two interesting cases below where these assumptions are satisfied and illustrate what can happen in these cases.

The formula above shows that $\langle \hat{\chi},R_{\hat{\Levi}}^{\hat{\G}}(\hat{\la}) \rangle \neq 0$ if and only if $\langle \varepsilon \D_{\hat{\G}}(\hat{\chi}),R_{\hat{\Levi}}^{\hat{\G}}(\D_{\hat{\Levi}}(\hat{\la})) \rangle \neq 0.$ 
On the other hand, since $\D_\G$ fixes $b_\G(\Levi,\la)$, there exists a linear character $\varepsilon_0$ such that $\D_{\hat{\G}}$ maps the block $b_{\hat{G}}(\hat{\Levi},\hat{\la})$ to its tensor product with $\varepsilon_0$. Using the fact that $\langle \chi,R_\Levi^{\G}(\la) \rangle=\langle \hat{\chi},R_{\hat{\Levi}}^{\hat{\G}}(\hat{\la}) \rangle$ for the unique character $\hat{\chi} \in \Irr(\hat{G} \mid \chi)$ lying in the block $b_{\hat{\G}}(\hat{\Levi},\hat{\la})$, we deduce that
$$\varepsilon_\G \varepsilon_\Levi \varepsilon_0 \D_{\hat{\G}} R_{\hat{\Levi}}^{\hat{\G}}(\hat{\la})=R_{\hat{\Levi}}^{\hat{\G}} \D_{\hat{\Levi}}(\hat{\la}).$$
If we assume additionally that $R_{\hat{\Levi}}^{\hat{\G}} \D_{\hat{\Levi}}(\hat{\la})$ does not vanish on any coset of $G$ in $\hat{G}$ then we have
$$\varepsilon=\varepsilon_\G \varepsilon_\Levi \varepsilon_0$$
which in particular shows that $\varepsilon \in \varepsilon_\G \varepsilon_\Levi \Lin(\hat{G})$. Since $\varepsilon \in \pm \Lin(\hat{G})$ we thus have $$\D_{\hat{\G}}(b_{\hat{G}}(\hat{\Levi},\hat{\la}))=\varepsilon b_{\hat{G}}(\hat{\Levi},\hat{\la}^\ast)$$
in the situation of Theorem \ref{unipotent characters}.

Assume that $\Levi$ is a torus so that the formula in \cite[Thm.~3.11]{DM94} holds. Then we have $\la^\ast=\la=1_{\Levi^F}$. Moreover, by \cite[Thm.~4.13]{DM94} we have $R_{\hat{\Levi}}^{\hat{\G}}(\D_{\hat{\Levi}}(\hat{\la}))(\sigma^i)=R_{\Levi^{\sigma^i}}^{\G^{\sigma^i}}(1) \D_{\hat{\Levi}}(\hat{\la})(\sigma^i) \neq 0$.
In particular, if $\varepsilon \neq \pm 1_{\hat{G}}$ then $\D_{\G}$ fixes the block $b_G(\Levi,\la)$ while $\D_{\hat{\G}}$ maps $b_{\hat{G}}(\hat{\Levi},\hat{\la})$ to $\varepsilon b_{\hat{G}}(\hat{\Levi},\hat{\la})$. See Example \ref{ex:steinberg} for a specific situation in which this occurs. 

Assume now that $(\Levi,\la)$ is a cuspidal pair and that $\hat{\Levi}$ is contained in an $F$-stable parabolic of $\hat{\G}$. In this situation the conclusion of \cite[Thm.~3.11]{DM94} holds again. Then $\la$ is a cuspidal character and the formula for (generalized) Alvis--Curtis duality shows that $\D_\Levi(\la)=(-1)^{\mathrm{rk}(\Levi)} \la$ and $\D_{\hat{\Levi}}(\hat{\la})=(-1)^{\mathrm{rk}(\Levi)} \hat{\la}$. Moreover, $\varepsilon=1$ which shows that $\D_{\G}(b_G(\Levi,\la))=b_G(\Levi,\la)$ and also $\D_{\hat{\G}}(b_{\hat{G}}(\hat{\Levi},\hat{\la}))=b_{\hat{G}}(\hat{\Levi},\hat{\la})$.

\subsection{Height zero characters in blocks covering unipotent blocks.}

We end this section by describing the non-unipotent height zero characters in the unipotent blocks.

\begin{lemma}\label{lem:hz}
In the situation of Theorem \ref{unipotent characters}, 
$$\Irr_0(b_{\hat{G}}(\hat{\Levi},\hat{\la}))= \{\hat{\chi} \in \Irr(\hat{G} \mid \Irr_0(b_G(\Levi,\la)) ) \mid \langle \hat{\chi},R_{\hat{\Levi} \subset \hat{\Para}}^{\hat{\G}}(\hat{\zeta}) \rangle \neq 0 \text{ for some } \hat{\zeta} \in \Irr(b_{\hat{L}}(\hat{\la})) \}.$$
\end{lemma}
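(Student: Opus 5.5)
We prove the two inclusions separately, using that $\hat G/G$ is an $\ell'$-group together with the block structure from Theorem \ref{unipotent characters}.

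\emph{Heights are preserved by covering.} By Theorem \ref{thm:dade}, applied with $\hat G[b]=\hat G$, every character of $b=b_G(\Levi,\la)$ extends to $\hat G$. The characters of $\hat G$ covering a given $\chi\in\Irr(b)$ are its extensions, and they lie in pairwise distinct blocks. Since $|\hat G:G|$ is prime to $\ell$, a block of $\hat G$ covering $b$ has defect groups conjugate to those of $b$. Each $\hat\chi$ covering $\chi$ satisfies $\hat\chi(1)=\chi(1)$. Hence $\hat\chi$ has height zero exactly when $\chi$ does. So $\Irr_0(b_{\hat G}(\hat\Levi,\hat\la))$ consists of the extensions of characters in $\Irr_0(b)$ that lie in the block $b_{\hat G}(\hat\Levi,\hat\la)$. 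The task is therefore to identify that block membership via the condition $\langle\hat\chi,R_{\hat\Levi\subset\hat\Para}^{\hat\G}(\hat\zeta)\rangle\neq0$.

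\emph{Inclusion $\supseteq$.} Take $\hat\zeta\in\Irr(b_{\hat L}(\hat\la))$. By the proof of Proposition \ref{prop:decomp RLG}, restriction identifies $b_{\hat L}(\hat\la)$ with $b_L(\la)$. Since $\hat L=\C_{\hat G}(\Z(\Levi)^F_\ell)$ and $\Z(\Levi)^F_\ell$ is central in $\hat L$, one tries to reduce to unipotent $\hat\zeta$ with an $\ell$-twist, i.e. $\hat\zeta=\hat t\,\hat\la'$ with $\hat t$ a linear $\ell$-character. The intended tool is a version of Proposition \ref{prop:decomp RLG} for all of $\Irr(b_{\hat L}(\hat\la))$, which would say that every constituent of $R_{\hat\Levi\subset\hat\Para}^{\hat\G}(\hat\zeta)$ lies in $b_{\hat G}(\hat\Levi,\hat\la)$. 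To prove it, I would rerun the induction in that proof. Proposition \ref{prop:decomp RLG} uses $d^x$ for $x\in\Z(\Levi)^F_\ell\setminus\Z(\G)$ together with Lemma \ref{lem:dx neu}. The basic-set argument carries over unchanged, because $d^1(\hat\zeta)$ is a $\mathbb Z$-combination of $d^1$ of unipotent characters in $b_{\hat L}(\hat\la)$, and Brauer's second main theorem is insensitive to this replacement. Granting this, any height-zero $\hat\chi$ with $\langle\hat\chi,R(\hat\zeta)\rangle\neq0$ lies in the correct block.

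\emph{Inclusion $\subseteq$.} Let $\hat\chi\in\Irr_0(b_{\hat G}(\hat\Levi,\hat\la))$ and let $\chi=\Res\hat\chi\in\Irr_0(b)$. By the connected-group theory (Cabanes--Enguehard, \cite[Thm.~23.2]{CE04} or \cite{CE99}), there is $\zeta\in\Irr(b_L(\la))$ with $\langle\chi,R_\Levi^\G(\zeta)\rangle\neq0$. For height zero one can take $\zeta$ of the form $\hat t\la$ arising from the description of $\Irr(b)\cap\mathcal E(G,t)$ in terms of $R_{\G(t)}^{\G}$; the expected route is the known $d$-Harish-Chandra description of height-zero characters via $\N_G(\Levi)$. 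Fix an extension $\hat\zeta\in\Irr(b_{\hat L}(\hat\la))$ of $\zeta$. Using \cite[Cor.~2.4]{DM94} and Frobenius reciprocity, $\langle\Ind_G^{\hat G}\chi,R_{\hat\Levi\subset\hat\Para}^{\hat\G}(\hat\zeta)\rangle\neq0$. All constituents of $R(\hat\zeta)$ lie in $b_{\hat G}(\hat\Levi,\hat\la)$ by the extended proposition above. By Theorem \ref{thm:dade}, the constituents of $\Ind_G^{\hat G}\chi$ lie in distinct blocks, so the constituent meeting $R(\hat\zeta)$ must be $\hat\chi$. This is the same argument as the end of the proof of Theorem \ref{unipotent characters}.

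\emph{Main obstacle.} The hard step is extending Proposition \ref{prop:decomp RLG} from unipotent $\hat\la'$ to arbitrary $\hat\zeta\in\Irr(b_{\hat L}(\hat\la))$, together with controlling the non-unipotent constituents. The first thing to try is writing $\hat\zeta$ via the basic set $\mathcal E(\hat L,1)\cap\Irr(b_{\hat L})$ modulo characters vanishing on $\ell'$-elements. Then $d^x$-compatibility (Lemma \ref{lem:dx neu}) transfers the nonvanishing argument verbatim.
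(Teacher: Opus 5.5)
There is a genuine gap, and it is exactly the step you call the main obstacle. Both of your inclusions rest on it, since your $\subseteq$ argument also invokes the ``extended proposition''.

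\paragraph{Why the induction does not carry over.} The proof of Proposition~\ref{prop:decomp RLG} does not extend to non-unipotent $\hat{\zeta}\in\Irr(b_{\hat{L}}(\hat{\la}))$. It relies on two facts that fail in the twisted case:
\begin{itemize}
\item Lemma~\ref{lem:preserveunip} guarantees that ${}^\ast R_{\hat{\Levi}\subset\hat{\Para}}^{\hat{\G}}(\hat{\chi})$ involves only characters covering unipotent ones. So its component in $b_{\hat{L}}(\hat{\la})$ is a nonzero multiple of the single such character $\hat{\la}$.
\item The element $x\in\Z(\Levi)^F_\ell$ lies in the kernel of these characters, which lets you trade $d^x$ for $d^1$.
\end{itemize}
Now let $\hat{\chi}$ be a constituent of $R_{\hat{\Levi}\subset\hat{\Para}}^{\hat{\G}}(\hat{t}\hat{\la})$ with $t\neq 1$. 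Then ${}^\ast R_{\hat{\Levi}\subset\hat{\Para}}^{\hat{\G}}(\hat{\chi})$ can meet several characters $\hat{t}''\hat{\la}$ of $b_{\hat{L}}(\hat{\la})$, one for each $G^\ast$-conjugate $t''$ of $t$ in $\Z(\Levi^\ast)^F_\ell$. Because $b_L(\la)$ has central defect, these all have the same $d^1$, and $d^x(\hat{t}''\hat{\la})=\hat{t}''(x)\,d^1(\hat{\la})$. The quantity that must be nonzero is therefore $\sum_{t''}m_{t''}\,\hat{t}''(x)$, an integer combination of roots of unity. Nothing in your sketch keeps it from vanishing.

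Your justification, that $d^1(\hat{\zeta})$ is a combination of $d^1$'s of unipotent characters, concerns the wrong object. What you need is that the $b_{\hat{L}}(\hat{\la})$-component of $d^x({}^\ast R(\hat{\chi}))$ is nonzero.

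\paragraph{The missing idea.} Remove the twist through an intermediate Levi subgroup instead of rerunning the induction; this is what the paper does.
\begin{itemize}
\item Restriction identifies $b_{\hat{L}}(\hat{\la})$ with $b_L(\la)$, which has central defect. Hence $\Irr(b_{\hat{L}}(\hat{\la}))$ consists exactly of the $\hat{t}'\hat{\la}$ with $t\in\Z(\Levi^\ast)^F_\ell$.
\item For such $t$ put $\bH:=G(t)\supseteq\Levi$. Lemma~\ref{lem:Levi existence} gives $\hat{\bH}$ with $\hat{H}[b_H]\G^F=\hat{\G}^F$. This forces $t$ to be stable under the dual of a generator of $\hat{H}/H$, so $\hat{t}$ extends to a linear $\ell$-character $\hat{t}'$ of $\hat{H}$.
\item Then $R_{\hat{\Levi}}^{\hat{\G}}(\hat{t}'\hat{\la})=R_{\hat{\bH}}^{\hat{\G}}(\hat{t}'R_{\hat{\Levi}}^{\hat{\bH}}(\hat{\la}))$, and $\hat{t}'R_{\hat{\bH}}^{\hat{\G}}$ is a signed bijection $\mathcal{E}(\hat{H},1)\to\mathcal{E}(\hat{G},t)$.
\item Each resulting irreducible $\psi$ satisfies $d^1(\psi)=\pm d^1(R_{\hat{\bH}}^{\hat{\G}}(\hat{\xi}))$ for some unipotent-covering $\hat{\xi}$ in $b_{\hat{H}}$. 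Proposition~\ref{prop:decomp RLG}, applied to the Levi subgroup $\hat{\bH}$, places the latter in $b_{\hat{G}}(\hat{\Levi},\hat{\la})$. Irreducibility of $\psi$ then puts $\psi$ in that block.
\end{itemize}

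\paragraph{Secondary point.} In your $\subseteq$ direction you need that a height-zero $\chi\in\mathcal{E}(G,t)\cap\Irr(b_G(\Levi,\la))$ is reached from some $\hat{t}\la$ with $t\in\Z(\Levi^\ast)^F_\ell$. This is the only place height zero is used, and ``the known description'' is only a placeholder. The paper proves it as follows. Write $\chi=R_{\bH}^{\G}(\hat{t}\chi_t)$ with $\chi_t$ in the $d$-series of some $(\Levi_H,\la_H)\sim(\Levi,\la)$. Height zero forces the defect groups of $b_H(\chi_t)$ and $b_G(\Levi,\la)$ to coincide. Their unique maximal abelian normal subgroups $\Z(\Levi_H)^F_\ell$ and $\Z(\Levi)^F_\ell$ then agree, which gives $(\Levi_H,\la_H)=(\Levi,\la)$.
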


\begin{proof}
Set $b_G:=b_G(\Levi,\la)$ and $b_{\hat G}:=b_{\hat{G}}(\hat{\Levi},\hat{\la})$. {Note that restriction gives a bijection between $\Irr_0(b_{\hat G})$ and $\Irr_0(b_G)$, by applying Theorems \ref{unipotent characters} and \ref{thm:dade}, with the assumption that $\hat{G}/G$ is $\ell'$.}

Let $t\in G^\ast$ have order a power of $\ell$.
Let $\chi \in \mathcal{E}(G,t) \cap \Irr_0(b_G)$ and let $\bH:=G(t)$ be a Levi subgroup of $\G$ in duality with  $\C_{\G^\ast}^\circ(t)$. Write $\hat{t}$ for the linear character of $H=\bH^F$ dual to $t \in \Z(H^\ast)$. Hence, there exists a unique character $\chi_t \in \mathcal{E}(H,1)$ such that $\chi=R_{\bH}^{\bG}(\chi_t \hat{t})$. By \cite[Thm.~21.13]{CE04}, all constituents of $R_{\bH}^{\bG}(\chi_t)$ lie in $b_G(\Levi,\la)$. Hence, by \cite[Thm.~21.7]{CE04}, $(\Z(H)_\ell,b_H(\chi_t))$ is a $b_G$-Brauer pair.
By \cite[Thm.~23.2]{CE04}, the character $\chi_t$ lies in the $d$-Harish-Chandra series of $(\Levi_H,\lambda_H)$, where $(\Levi_H, \lambda_H) \sim (\Levi,\la)$ in the sense of  \cite[Def.~23.1]{CE04}.

We see that $(\Z(\Levi_H)_\ell^F,b_{L_H})$ is a $b_{H}$-Brauer pair, where we write $b_{L_H}:=b_{\Levi_H^F}(\la_H)$ and $b_H:=b_H(\chi_t)$,
and 
$(\Z(\Levi)_\ell^F,b_{L}(\la))$ is a $b_G$-Brauer pair. Write $L_H:=\Levi_H^F$ and $L=\Levi^F$. Let $D$ be a Sylow $\ell$-subgroup of $\C_{\G}^\circ([\Levi,\Levi])^F$ and 
$D_H\leq D$ a Sylow $\ell$-subgroup of $\C_{\bH}^\circ([\Levi_{\bH},\Levi_{\bH}])^F$ (this is possible as $[\Levi,\Levi]=[\Levi_{\bH},\Levi_{\bH}]$) 
such that $(\Z(\Levi_H)_\ell^F,b_{L_H}) \lhd (D_H, b_{\C_H(D_H)}(\Res_{\C_H(D_H)}^{L_H}(\la_H))$ 
and $(\Z(\Levi)_\ell^F,b_L(\la)) \lhd (D,b_{\C_G(D)}(\Res_{\C_G(D)}^{L}(\la))$ (see \cite[Lem.~22.18]{CE04}; these are defect groups for $b_G$ and $b_H$, respectively, by \cite[Thm.~22.9]{CE04}). 

By assumption, $\chi$ is a height zero character and $\chi(1)_\ell=|G:H|_\ell \chi_t(1)_\ell$ with $\chi_t(1)_\ell\geq|H:D_H|_\ell$ so we must necessarily have $D=D_H$. As $\Z(\Levi_H)_\ell^F$ and $\Z(\Levi)_\ell^F$ are the unique maximal abelian normal subgroups of $D_H$ and $D$ respectively (see \cite[Lem.~4.16]{CE99}), it follows that $\Z(\Levi_H)_\ell^F=\Z(\Levi)_\ell^F$ so that $\Levi \cap \mathbf{H}=\Levi_{\mathbf{H}}$, using that $\bL=\C_{\bG}(\Z(\Levi)_\ell^F)$ and similar for $\Levi_H$. As $[\Levi,\Levi]=[\Levi_{\bH},\Levi_{\bH}]$ it therefore follows that $\Levi=\Levi_{\bH}$ {using e.g. \cite[Prop.~22.8]{CE04}}. 
Now note that $\lambda$ and $\lambda_H$ are unipotent characters that restrict to the same unipotent character of $[\Levi,\Levi]^F$. Hence, they must coincide.

Note that the block $b_H$ of the Levi subgroup $\mathbf{H}^F$ satisfies the assumptions of Lemma \ref{lem:Levi existence}. Hence, there exists a parabolic subgroup of $\hat{\G}$ with Levi subgroup $\hat{\mathbf{{H}}}$ with $\hat{\mathbf{{H}}}^\circ=\mathbf{H}$ and such that $\hat{H}[b_H]G= \hat{G}$, where $\hat H=\hat\bH^F$. In particular, we know that every character in $\Irr(b_H)$ is $\hat{H}$-stable. Let $\sigma$ be an automorphism whose image in $\hat{H}/H$ generates the quotient. Then its dual automorphism $\sigma^\ast$ of $\mathbf{H}^\ast$ satisfies $\sigma(\mathcal{E}(H,t))=\mathcal{E}(H,\sigma^\ast(t))$. Hence, $\sigma^\ast(t)$ and $t$ must be $H^\ast$-conjugate. 

As $\bH^\ast=\C_{\G^\ast}(t)$ it follows that $t= \sigma^\ast(t)$. In other words, the linear character $\hat{t}$ is $\sigma$-stable. 
Since $\hat{t}$ has $\ell$-power order and $\hat{H}/H$ is an $\ell'$-group, this character has a unique extension $\hat{t}'$ to $\hat{H}$ of $\ell$-power order.
We apply Proposition \ref{prop:decomp RLG}, which shows that for any $\hat{\zeta} \in \mathcal{E}(\hat{H},1) \cap \Irr(b_{\hat{H}})$, where $b_{\hat{H}}:=b_{\hat{H}}(\hat{\Levi},\hat{\lambda})$, all constituents of $R_{\hat{\bH}}^{\hat{\G}}(\hat{\zeta})$ lie in $b_{\hat{G}}$. On the other hand, all constituents of $R^{\hat{\bH}}_{\hat{\Levi}}(\hat{\lambda})$ lie in $b_{\hat{H}}$ again by Proposition \ref{prop:decomp RLG}.  We have $R_{\hat{\bH}}^{\hat{\G}}(\hat{t}' R^{\hat{\bH}}_{\hat{\Levi}}(\hat{\la}))=R_{\hat{\Levi}}^{\hat{\G}}(\hat{\la} \hat{t}')$. Since $d^1 R_{\hat{\Levi}}^{\hat{\G}}(\hat{\la} \hat{t}')=d^1(R_{\hat{\Levi}}^{\hat{\G}}(\hat{\la}))$, these characters lie in $b_{\hat{G}}$. Since $\hat{t}' R_{\hat{\bH}}^{\hat{\G}}: \mathcal{E}(\hat{H},1) \to \mathcal{E}(\hat{G},t)$ is a signed bijection and the characters in $\mathcal{E}(\hat{H},1) \cap \Irr(b_{\hat{H}})$ are precisely the constituents of $R^{\hat{\bH}}_{\hat{\Levi}}(\hat{\lambda})$ {by Theorem \ref{unipotent characters}}, it follows that every height zero character of $b_{\hat{G}}$ occurs in this way.
(Indeed, given $\hat\chi\in\Irr_0(b_{\hat G})$, we take $\chi:=\Res^{\hat G}_G(\hat \chi)\in\Irr_0(b_G)\cap \mathcal{E}(G, t)$ to be as above. Recall that $\chi=R_\bH^\bG(\hat t\chi_t)$ is in $R_\bH^\bG(\hat t R_\Levi^\bH(\la))$ since $\mathcal{E}(H, 1)\cap \Irr(b_H)$ is exactly the set of constituents of $R_\bL^\bH(\la)$. Since the constituents of $R_{\hat{\bH}}^{\hat{\G}}(\hat{t}' R^{\hat{\bH}}_{\hat{\Levi}}(\hat{\la}))$ lie in $\Irr(b_{\hat{G}})$, the same argument as the last several lines of the proof of Theorem \ref{unipotent characters} gives that $\langle \hat\chi, R_{\hat{\bH}}^{\hat{\G}}(\hat{t}' R^{\hat{\bH}}_{\hat{\Levi}}(\hat{\la}))\rangle\neq 0$. Hence $\hat\chi$ is obtained in the desired way, noting that $\hat t'\hat\la$ lies in $b_{\hat L}(\hat\la)$ since $d^1(\hat{t}')=1$.)

For the converse, it suffices to see that every $\hat\chi\in\Irr(\hat G\mid \Irr_0(b_G))$ with $\langle\hat\chi, R_{\hat \Levi}^{\hat\G}(\hat\zeta)\rangle\neq0$ for some $\hat\zeta\in\Irr(b_{\hat L}(\hat \la))$ lies in $b_{\hat{G}}$, as any $\hat\chi\in\Irr(b_{\hat{G}} \mid \Irr_0(b_G))$ necessarily has height zero. We claim that every such $\hat\zeta$ is of the form $\hat t' \hat\la$ as above, and hence the constituents of $R_{\hat \Levi}^{\hat\G}(\hat\zeta)$ lie in $b_{\hat{G}}$ from the above discussion. Indeed, first recall from the proof of Proposition \ref{prop:decomp RLG} that the block $b_{\hat{L}}(\hat\la)$ is isomorphic to $b_L(\la)$ via restriction. Now, by \cite[Prop.~22.16]{CE04} the block $b_L(\la)$ is of central defect. Then the map $\Z(\Levi^\ast)_\ell^F \to \Irr(b_L(\la)), t \mapsto \hat{t} \la,$ is injective because the images are in different Lusztig series for different $t \in \Z(\Levi^\ast)^F_\ell$ and surjective since $|\Irr(b_L(\la))|=|\Z(\Levi)^F_\ell|$ by the properties of blocks of central defect and the fact that $|\Z(\Levi^\ast)_\ell^F |=|\Z(\Levi)_\ell^F|$ by our assumptions on $\ell$. 
\end{proof}

\section{Application to the Galois version of the Alperin--McKay conjecture}\label{sec:AMN}

Throughout this section we assume that $\G$ is a simple, simply connected group whose root system is of classical type but not of type $\tD_4$, {write $G:=\G^F$ for $F$ a Frobenius morphism endowing $\G$ with an $\mathbb{F}_q$-rational structure for some power $q$ of a prime $p$, and we let $\ell\neq p$ be an odd prime not dividing $|\Z(\G^F)|$. (Note that then $\ell$ satisfies Definition \ref{def:goodprime} for $\G$.) We further let $d:=d_\ell(q)$ and let} $(\Levi,\la)$ be a unipotent $d$-cuspidal pair of $(\G,F)$, and write $E:=E(\bG^F)$ and $N:=\N_G(\bL)$.  

In \cite[Thm.~8.8]{RSST}, we have constructed an $\mathcal{H}$-equivariant bijection
$$\Irr_{0}(GE \mid \mathcal{E}(G,(\Levi,\la))) \to \Irr_{0}(\N_{GE}(\Levi) \mid \la).$$
(Note that in \cite{RSST}, heights are not mentioned. However, the preservation of height-zero characters follows from the construction of the bijection, together with Clifford theory and that we know the corresponding bijection for $G$ and $N$ is height-preserving, using the character formula for example in  \cite[Prop.~3.1]{Ros24}.)

In this section, we want to study the compatibility of this bijection with $\ell$-blocks. 
For this, note that Lemmas \ref{lem:DadeCompute} and \ref{lem:ext char} will be helpful to compute the Dade group.

Recall that if $(\G,F)$ is split and $\Levi$ is a $d$-split Levi subgroup, then the polynomial order of $\Z^\circ(\Levi)$ is divisible only by cyclotomic polynomials $\Phi_i$ with $i \mid 2d$ (see e.g. \cite[Ex.~3.5.15]{GM20}). In particular, $\Z(\Levi)_\ell^F=(\Z(\Levi)_{\Phi_d})_\ell^F$.

Throughout this section, we fix a maximally split $E$-stable torus $\T_0$ of $\G$ contained in an $E$-stable Borel subgroup $\B_0$ of $\G$. It follows that the $d$-split Levi subgroup $\Levi$ is $\G$-conjugate to a standard Levi subgroup $\Levi_I$. We let $n:=g^{-1} F(g) \in \N_{\G}(\T_0)$, where $g \in \G$ is such that ${}^g \Levi_I=\Levi$. Since $\Levi$ is $d$-split, we can assume that the image $w$ of $n$ in $W:=\N_{\G}(\T_0)/\T_0$ is a Sylow $d$-twist of $(\G,F)$ (as defined in \cite[Def.~3.1]{Spa10a}) and we assume that $n \in V$, where $V$ is the extended Tits' group, and the order of $n$ is divisible only by the same prime divisors as the order of $w$. 

    \begin{lemma}\label{lem:linear}
Keep the assumptions on $\G$, $\ell$ above, and further assume $(\G,F)$ is split and let $F_{p_0}$ be a field automorphism with $p_0^{f_0}=q$ and $\ell \nmid f_0$. If $\ell$ is odd and $d_\ell(p_0)=d_\ell(q)$, then all unipotent blocks have $F_{p_0}$ in their Dade ramification group. 
    \end{lemma}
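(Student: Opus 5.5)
The plan is to use Lemma \ref{lem:DadeCompute} with $\hat G := G\langle F_{p_0}\rangle$. The quotient $\hat G/G$ is cyclic of order dividing $f_0$, hence abelian of $\ell'$-order. Let $b=b_G(\Levi,\la)$ be a unipotent block with $(\Levi,\la)$ a unipotent $d$-cuspidal pair. Lemma \ref{lem:DadeCompute} needs three inputs: (i) $\hat L G=\hat G$ for $\hat L:=\C_{\hat G}(\Z(\Levi)^F_\ell)$; (ii) $\la$ is $\hat L$-stable; and (iii) $\la$ extends to $\N_{\hat G}(\Levi)_\la$. Input (iii) is Lemma \ref{lem:ext char}. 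The real work is producing an element of the coset $G F_{p_0}$ that centralizes $\Z(\Levi)^F_\ell$.

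First step. Since $(\G,F)$ is split, the Frobenius $F$ acts on $\T_0$ as $t\mapsto t^q$ and $F_{p_0}$ acts as $t\mapsto t^{p_0}$, so $F=F_{p_0}^{f_0}$ on $\T_0$. Replace $\Levi$ by a $G$-conjugate of the form ${}^g\Levi_I$ as in the set-up before the lemma, with $n=g^{-1}F(g)$ a Sylow $d$-twist lying in the extended Tits group $V$. Transporting, we get $\Z(\Levi)^F_\ell \cong (\Z(\Levi_I))^{nF}_\ell$, and the latter is the $\ell$-part of the $\Phi_d$-part of the torus, on which $nF$ acts with eigenvalues primitive $d$-th roots of unity times $q$. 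Because $n\in V$ is $F_{p_0}$-fixed (Tits elements have entries in $\mathbb{F}_p$), $F_{p_0}$ commutes with $nF$. Hence $F_{p_0}$ acts on $(\Z^\circ(\Levi_I)_{\Phi_d})^{nF}_\ell$.

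Second step. Show that $F_{p_0}$ acts on this $\ell$-group as some element of $\langle nF\rangle$, i.e.\ as a power of $n\circ(q\text{-power})$ restricted there. On the $\Phi_d$-part, the lattice $Y$ tensored with $\mathbb{Z}_\ell$ decomposes so that $w$ acts with eigenvalues $\zeta$ (primitive $d$-th roots of unity), and elements of the fixed group correspond to vectors $v$ with $q\,w(v)\equiv v$. The hypothesis $d_\ell(p_0)=d_\ell(q)=d$ together with $\ell\nmid f_0$ should allow choosing an integer $k$ such that $p_0 \equiv (q w)^{k}$ as operators on this $\ell$-torsion. Concretely, $p_0$ and $q^{k}\zeta^{-k}$ can be matched modulo the relevant power of $\ell$, because $p_0$ has the same multiplicative order modulo $\ell$ as $q$, and $q=p_0^{f_0}$ with $f_0$ invertible mod the $\ell$-part. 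Then $F_{p_0}\cdot (n F)^{-k}$, adjusted by an element of $\N_G(\Levi)$ coming from $g$ and $n$, gives an element $\sigma\in GF_{p_0}$ centralizing $\Z(\Levi)^F_\ell$. This yields (i): $\hat L G=\hat G$. I expect this to be the main obstacle. It requires care with how $w$ acts on the $\Phi_d$-eigenspace, and with $\ell$-adic roots of unity when the $\ell$-part of $q^d-1$ is large. If it fails, I would fall back on a case-by-case description for classical types, where $\Z(\Levi)_\ell^F$ is a product of cyclic groups $\mathbb{F}_{q^d}^\times{}_\ell$ (or their norm-one analogues) and $F_{p_0}$ acts as a Frobenius that is explicitly a power of the generator.

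Third step. For (ii), note $\sigma$ normalizes $\Levi=\C_\G(\Z(\Levi)^F_\ell)$. Stability of $\la$ under $\sigma$ then follows because unipotent characters of $\Levi^F$ in classical types are determined by their Harish-Chandra/symbol data, which field automorphisms fix (cf.\ \cite[Thm.~4.5.11]{GM20}), while the conjugating element coming from $G$ lies in $\N_G(\Levi)$ and normalizes $L$ within the fixed $\la$-stabilizer; this last point needs checking. Applying Lemma \ref{lem:DadeCompute} then gives $\hat G[b]=\hat G$, i.e.\ $F_{p_0}$ lies in the Dade ramification group of $b$.
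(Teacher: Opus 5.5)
Your architecture is the paper's: apply Lemma \ref{lem:DadeCompute} to $G\langle F_{p_0}\rangle$, with Lemma \ref{lem:ext char} supplying the extendibility. Transport to the standard Levi $\Levi_I$ via $g$ and a Sylow $d$-twist $n\in V$, produce an element of $GF_{p_0}$ centralizing $\Z(\Levi)^F_\ell$, and get $\la$-stability from that element acting as a field automorphism.

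\textbf{The gap is your second step as formulated.} Let $Z_I:=\Z(\Levi_I)^{nF}_\ell$. The map $nF$ is the identity on $Z_I$ by definition. So requiring $F_{p_0}$ to act on $Z_I$ as an element of $\langle nF\rangle$ (or $p_0\equiv(qw)^k$ there) requires $F_{p_0}$ to centralize $Z_I$ outright. That fails whenever $d>1$ and $Z_I\neq 1$, since then $p_0\not\equiv 1\pmod\ell$. Likewise $(nF)^{-k}$ is the trivial automorphism of $\G^{nF}$, so it cannot act as a correction, and the congruence ``$p_0$ versus $q^k\zeta^{-k}$'' corresponds to no element you can use.

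\textbf{The fix is to separate $n$ from $F$.}
\begin{enumerate}
\item On $Z_I$ we have $n^{-1}tn=F(t)=t^q$, so conjugation by $n^{-1}$ is the $q$-power map.
\item $F_{p_0}$ preserves $Z_I$ because $F_{p_0}(n)=n$, and acts on it as the $p_0$-power map.
\item Let $\ell^a$ be the exponent of $Z_I$. You need $k$ with $p_0\equiv q^k\pmod{\ell^a}$. This holds: the order of $p_0$ in $(\mathbb{Z}/\ell^a)^\times$ is $d\ell^b$ for some $b$. Since $d_\ell(q)=d/\gcd(d,f_0)$, the hypothesis gives $\gcd(f_0,d)=1$, and with $\ell\nmid f_0$ this yields $\langle q\rangle=\langle p_0^{f_0}\rangle=\langle p_0\rangle$.
\item Then $n^kF_{p_0}$ centralizes $Z_I$, so $\sigma:=gn^kF_{p_0}(g)^{-1}F_{p_0}$ centralizes $\Z(\Levi)^F_\ell$.
\end{enumerate}
The correcting factor $gn^kF_{p_0}(g)^{-1}$ lies in $G$ because $F(g)=gn$, $F_{p_0}(n)=n$, and $n^k$ commutes with $n$. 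It does \emph{not} lie in $G$ by virtue of being in $\N_G(\Levi)$, which it need not be.

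\textbf{Comparison with the paper.} With this repair your route is a valid alternative. The paper chooses $n_0\in\langle n\rangle$ with $n_0^{f_0}=n$, using that $|n|$ is prime to $f_0$. Then $(n_0F_{p_0})^{f_0}=nF$, so $\Z(\Levi_I)^{n_0F_{p_0}}_\ell\subseteq Z_I$. Equality comes from counting: $n_0$ is a Sylow $d$-twist for $(\G,F_{p_0})$, and $\Phi_d(p_0)_\ell=\Phi_d(q)_\ell$. Your version works directly with the action on $Z_I$, so it needs neither the order of $n$ nor the Sylow-twist property of $n_0$. The paper's version has the advantage of producing an $f_0$-th root of $nF$ inside $\langle n\rangle F_{p_0}$, the shape reused in Lemmas \ref{lem:linear twisted} and \ref{lem:square is twist}.

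\textbf{The $\la$-stability you left open.} The mechanism is the paper's: $n^kF_{p_0}$ acts on $[\Levi_I,\Levi_I]^{nF}$ as a field automorphism, by the structure of $d$-split Levi subgroups of classical groups. Hence it fixes every unipotent character, including $\la$. No argument about elements of $\N_G(\Levi)$ normalizing $L$ is needed.
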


    \begin{proof}

We keep the situation of the preceeding paragraph and write $F_0:=F_{p_0}$. Our assumption implies that $(d,f_0)=1$ where $d:=d_\ell(p_0)=d_\ell(q)$. In particular, as $w$ has order dividing $d$ and $n$ is divisible by the same prime divisors as $w$, it follows that the order of $n$ is coprime to $f_0$. Hence, there is some $n_0 \in \langle n \rangle$ such that $n_0^{f_0}=n$.  Note that $W$ is a rational group, so the images of $n_0$ and $n$ in $W$ are $W$-conjugate. This implies that $n_0 $ is a Sylow $d$-twist of $(\G,F_0)$, using that $d_\ell(p_0)=d_\ell(q)$ and the polynomial order depends only on the $W$-conjugacy class of the element in the Weyl group. 

We claim that $\Z(\Levi_I)_\ell^{n_0 F_0}=\Z(\Levi_I)_\ell^{nF}$. Since $V^{F_0}=V$ we have $(n_0 F_0)^{f_0}=nF$, so it suffices to show that these two groups have the same cardinality. For this observe that $(\Z(\Levi_I)_{\Phi_d})_\ell^{nF}=\Z(\Levi_I)_\ell^{nF}$ since $\ell$ is odd.  
We then need to show that $\Phi_d(p_0)_\ell=\Phi_d(q)_\ell$. For this, it suffices to note that $\frac{p_0^{df_0}-1}{p_0^d-1}$ is not divisible by $\ell$, as this is the same as  $1+p_0^d+\dots p_0^{d(f_0-1)} \equiv df_0 \not \equiv 0 \mod \ell$. Hence, $\Z(\Levi_I)_\ell^{nF}$ is centralized by $n_0 F_0$, which gives the claim.  

Now, we see from the structure of $[\Levi, \Levi]$ from the discussion in \cite[Ex.~3.5.29]{GM20}, that $n_0 F_0$ acts as a field automorphism on $[\Levi,\Levi]^F$ and as such an automorphism it fixes every unipotent character of $[\Levi,\Levi]^F$. In particular, $n_0 F_0$ stabilizes the unipotent character $\lambda$. From these considerations, we deduce using Lemma \ref{lem:DadeCompute} together with Lemma \ref{lem:ext char} that $F_0$ is in the Dade group of $b_G(\Levi,\la)$. 
\end{proof}

Before considering the analogue of Lemma \ref{lem:linear} for twisted groups, we make the following observation:

\begin{lemma}\label{lem:graph cyclo}
    For a graph automorphism $\gamma$ of $\G$ order $2$, we have that $|\G^{\gamma F_q}: (\G^\gamma)^{F_q}|_{p'}$ is a product of cyclotomic polynomials $\Phi_e(q)$, where each $e$ is a positive even integer.
\end{lemma}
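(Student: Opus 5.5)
The plan is to compute both orders explicitly from the standard order formulas for finite reductive groups and compare them. The setting is $\G$ simple simply connected of classical type, not $\tD_4$, so $\gamma$ exists only for types $\tA_{n-1}$ ($n\geq 3$) and $\tD_n$ ($n\geq 5$). By \cite[Thm.~1.8]{DM94}, $(\G^\gamma)^\circ$ is reductive, and since $\G$ is simply connected, $\G^\gamma$ is connected (Steinberg). The fixed groups are:
\begin{itemize}
\item type $\tA_{2m}$: $\G^\gamma$ of type $\tB_m$ (or $\tC_m$, depending on $p$);
\item type $\tA_{2m-1}$: $\G^\gamma=\mathrm{Sp}_{2m}$;
\item type $\tD_n$: $\G^\gamma$ of type $\tB_{n-1}$.
\end{itemize}
The order polynomial of $(\G^\gamma)^{F_q}$ is therefore a power of $q$ times $\prod_i (q^{d_i}-1)$, over the degrees $d_i$ of $W^\gamma$. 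For $\G^{\gamma F_q}$, the $p'$-part is $\prod_i (\varepsilon_i q^{d_i}-1)$ up to sign, where the $\varepsilon_i$ are the eigenvalues of $\gamma$ on the basic invariants of $W$.

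The key step is to check, type by type, that the degrees of $W^\gamma$ are exactly the degrees $d_i$ with $\varepsilon_i=1$. The remaining factors then have $\varepsilon_i=-1$, so the quotient is $\prod_{\varepsilon_i=-1}(q^{d_i}+1)$ up to sign.
\begin{itemize}
\item In type $\tA_{n-1}$, the degrees are $2,\dots,n$ and $\gamma$ acts on the invariant of degree $d$ by $(-1)^d$. The even degrees $2,4,\dots$ up to $n$ are the degrees of type $\tB/\tC$ of rank $\lfloor n/2\rfloor$, which is $W^\gamma$. The quotient is $\prod_{d\text{ odd},\,3\le d\le n}(q^d+1)$.
\item In type $\tD_n$, the degrees are $2,4,\dots,2n-2$ together with $n$. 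Here $\gamma$ negates only the Pfaffian invariant of degree $n$, and the quotient is $q^n+1$.
\end{itemize}
One can cross-check this against the known orders of $\mathrm{SU}_n(q)$ versus $\mathrm{Sp}/\mathrm{SO}$, and of $\tw{2}\tD_n(q)$ versus $\mathrm{Spin}_{2n-1}(q)$. The powers of $q$ agree, or in any case they are discarded by taking $p'$-parts.

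Finally, we factor $q^m+1=\prod_{e\mid 2m,\,e\nmid m}\Phi_e(q)$. Every such $e$ is even, since an odd $e$ dividing $2m$ divides $m$. This gives the claim.

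The main obstacle is justifying the order formula for the twisted group $\G^{\gamma F_q}$ in terms of the eigenvalues of $\gamma$ on the invariants. Here we would invoke the standard formula $|\G^{F}|=q^N\prod_i(q^{d_i}-\varepsilon_i)$, e.g.\ \cite[Sec.~1.6]{GM20}. The other delicate point is $\tA_{2m}$ in characteristic $2$, where $\G^\gamma$ may change type; only the degrees matter, and these coincide for types $\tB_m$ and $\tC_m$.
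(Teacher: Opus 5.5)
Your proposal is correct and is essentially the paper's proof: identify $\G^\gamma$ type by type (via the remarks after \cite[Prop.~1.22]{DM94}), compare the explicit order polynomials, and observe that the quotient is $q^n+1$ in type $\tD_n$ and $\prod_{k\ \mathrm{odd}}(q^k+1)$ in type $\tA$, whose cyclotomic factors $\Phi_e(q)$ all have $e$ even. Phrasing this through the degrees of $W^\gamma$ and the eigenvalues of $\gamma$ on basic invariants is just a uniform packaging of the same computation, and it has the minor advantage of making the final factorization of $q^m+1$ explicit.
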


\begin{proof}
We consider each case and the remarks after \cite[Prop.~1.22]{DM94}.
    If $\G$ is of type $\tD_n$, then $\G^\gamma$ is of type $\tB_{n-1}$ and $| \G^{\gamma F_q}: (\G^\gamma)^{F_q}|_{p'}=(q^n+1)$.
    If $\G$ is of type $\tA_n$ then $\G^\gamma$ is of type $\type{X}_{\lfloor \frac{n+1}{2} \rfloor}$ with $\type{X}\in\{\tC, \tB\}$. The $p'$-part of the order of ${}^2 \tA_n(q)$ is $\prod_{i=1}^{n} (q^{i+1}-(-1)^{i+1})$. If we count only when $i+1$ is even and set $j:=(i+1)/2$ we get $\prod_{j=1}^{\lfloor (n+1)/2 \rfloor} (q^{j}-1)$ which is precisely the contribution of odd cyclotomic polynomials to the order of $(\G^\gamma)^{F_q}=\type{X}_{\lfloor \frac{n+1}{2} \rfloor}(q)$.
\end{proof}
\begin{remark}
Although we have assumed $\bG$ is classical throughout this section, we remark that the case that $\G=\tE_6$ and $\G^\gamma=\tF_4$ of Lemma \ref{lem:graph cyclo} also holds and is dealt with in the same way.
We also note that the assumption that $\gamma$ has order two in Lemma \ref{lem:graph cyclo} is necessary, since if $G=\tD_4$ and $\gamma$ is a triality automorphism, then $\G^\gamma=\tG_2$ so $|\G^{\gamma F_q}: (\G^\gamma)^{F_q}|$ is divisible by $\Phi_3(q)$.
\end{remark}

We next prove an analogue of Lemma \ref{lem:linear} for twisted groups.

\begin{lemma}\label{lem:linear twisted}
    Suppose that $F=F_q\gamma$ for some graph automorphism $\gamma$ of order $2$. Let $F_0=F_{p_0}$ for some integer $p_0$ with $p_0^{f_0}=q$ and $\ell \nmid 2 f_0$.  If $d=d_\ell(q)$ is odd and $d_\ell(p_0)=d_\ell(q)$, then all unipotent blocks have $F_0$ in their Dade ramification group. 
\end{lemma}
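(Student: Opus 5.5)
I will mimic the proof of Lemma \ref{lem:linear} and reduce to the criterion of Lemma \ref{lem:DadeCompute} combined with Lemma \ref{lem:ext char}. That is, I will exhibit an element $n_0F_0$ with three properties. It lies in the coset $G F_0$. It normalizes the standard Levi $\Levi_I$ (in the twisted $F$-structure given by $nF$). It centralizes $\Z(\Levi_I)_\ell^{nF}$ and stabilizes $\la$. Given such an element, $\C_{G\langle F_0\rangle}(\Z(\Levi)_\ell^F)G=G\langle F_0\rangle$, and Lemmas \ref{lem:DadeCompute} and \ref{lem:ext char} put $F_0$ in the Dade group.

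\textbf{Step 1: constructing the root.} Since $\ell\nmid 2f_0$ and $p_0^{f_0}=q$, the exponent $f_0$ is odd. Hence $F_0^{f_0}\gamma=F_q\gamma=F$, which means $F_0\gamma$ is an $f_0$-th root of $F$ in the group generated by $F_0$ and $\gamma$. Since $\gamma$ has odd-order centralizing partner $F_0$, I can write $F_0$ inside $\langle F_0\gamma\rangle\langle\gamma\rangle$, so it suffices to work with $F':=F_0\gamma$, which defines the twisted group over $\mathbb{F}_{p_0}$.

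As in Lemma \ref{lem:linear}, $w$ is a Sylow $d$-twist and $n\in V$ has order divisible only by primes dividing $|w|$. I will take $n_0\in\langle n\rangle$ with $n_0^{f_0}=n$. For this I need $(|n|,f_0)=1$; I get it from $(d,f_0)=1$, which follows from $d_\ell(p_0)=d_\ell(q)=d$, together with the usual bound that $|w|$ divides $2d$ or $d$. Oddness of $f_0$ handles the prime $2$. Then $(n_0F')^{f_0}=nF$, because $V$ is fixed by $F_0$ and stabilized by $\gamma$ (using an $E$-stable Tits group). By rationality of $W$, applied now to the $\gamma$-twisted Weyl group cosets, $n_0$ is again a Sylow $d$-twist for $(\G,F')$.

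\textbf{Step 2: the main obstacle.} I must show $|\Z(\Levi_I)_\ell^{n_0F'}|=|\Z(\Levi_I)_\ell^{nF}|$; since $(n_0F')^{f_0}=nF$, this gives centralization. For twisted groups the polynomial order of $\Z^\circ(\Levi)$ involves $\Phi_e(x)$ evaluated with the twisting, so I will compare $\Phi_{d'}(p_0)_\ell$ with $\Phi_{d'}(q)_\ell$ for the relevant index $d'$. Here the hypothesis that $d$ is odd enters. For odd $d$, a $d$-split Levi of the twisted group corresponds to $\Phi_{2d}$ on the untwisted side, and by Lemma \ref{lem:graph cyclo} the extra factors coming from $\gamma$ are even cyclotomic polynomials $\Phi_e$. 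I then need $\Phi_{e}(q)_\ell=\Phi_e(p_0)_\ell$ for the relevant $e$. I would prove this by the same geometric-sum trick as in Lemma \ref{lem:linear}: $\frac{p_0^{ef_0}\mp1}{p_0^{e}\mp1}\equiv \pm f_0\not\equiv0 \bmod \ell$, using that $f_0$ is odd so signs match. Hence $\ell$-parts agree and $\Z(\Levi_I)_\ell^{nF}$ is centralized by $n_0F'$.

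\textbf{Step 3: stability of $\la$.} I would argue via \cite[Ex.~3.5.29]{GM20} that $n_0F'$ induces a field/graph automorphism on $[\Levi,\Levi]^F$. Unipotent characters of classical groups (excluding $\tD_4$) are invariant under such automorphisms, so $n_0F'$ fixes $\la$. Lemmas \ref{lem:DadeCompute} and \ref{lem:ext char} then conclude, with $F_0$ recovered since $\gamma$ acts within the same coset structure.
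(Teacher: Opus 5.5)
\paragraph{Gap: the hypothesis does not make $f_0$ odd.}
Your argument rests on the claim in Step~1 that $\ell\nmid 2f_0$ forces $f_0$ to be odd. It does not. The hypothesis only says that $\ell$ is odd and $\ell\nmid f_0$. Since $d_\ell(q)=d_\ell(p_0)/\gcd(d_\ell(p_0),f_0)$, the hypothesis $d_\ell(p_0)=d_\ell(q)$ just means $\gcd(d,f_0)=1$. This holds, for instance, whenever $f_0$ is a power of $2$, and that is exactly how the lemma is applied in Corollary~\ref{cor:disjoint HC}.

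For even $f_0$ one has $(F_0\gamma)^{f_0}=F_q\neq F$, so $F_0\gamma$ is not an $f_0$-th root of $F$. Everything built on it then breaks down, including your use of oddness of $f_0$ to get $\gcd(|n|,f_0)=1$ and to match signs in the geometric sum.

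Even for odd $f_0$ there are two further problems.
\begin{enumerate}
\item The reduction to $F'=F_0\gamma$ loses the element you want. On $G=\G^{F_q\gamma}$ the automorphism $\gamma$ acts as $F_q^{-1}$, so $F_0\gamma$ acts as $F_0^{1-f_0}$. This generates only $\langle F_0^2\rangle$, which has index $2$ in $\langle F_0\rangle$. Writing $F_0=(F_0\gamma)\gamma^{-1}$ would additionally require $\gamma=F_q^{-1}$ to lie in the Dade group, and that is itself the case $p_0=q$ of the statement. Your last sentence does not address this.
\item The identity $(n_0F_0\gamma)^{f_0}=nF$ needs $n_0$ to commute with $\gamma$. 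Knowing that $V$ is $\gamma$-stable does not give this for an arbitrary Sylow $d$-twist $n$.
\end{enumerate}

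\paragraph{The missing idea: choose the twist inside the $\gamma$-fixed points.}
This removes $\gamma$ from the argument altogether.
\begin{enumerate}
\item Since $I$ can be taken $\gamma$-stable, $\Levi_I^\gamma$ is a Levi subgroup of the split classical group $(\G^\gamma,F_q)$. Take $n\in\N_{\G^\gamma}(\T_0^\gamma)$ to be a Sylow $d$-twist of $(\G^\gamma,F_q)$.
\item By Lemma~\ref{lem:graph cyclo}, the index $|\G^{\gamma F_q}:(\G^\gamma)^{F_q}|_{p'}$ involves only $\Phi_e(q)$ with $e$ even. Because $d$ is odd, a Sylow $d$-torus of $(\G^\gamma,nF_q)$ is therefore already a Sylow $d$-torus of $(\G,n\gamma F_q)$. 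So $n$ is also a Sylow $d$-twist for $F$.
\item This is the only place oddness of $d$ enters. The even $\Phi_e(q)$ have trivial $\ell$-part, because $\ell\mid\Phi_e(q)$ forces $e=d\ell^k$, which is odd. So there is no comparison of $\Phi_e(q)_\ell$ with $\Phi_e(p_0)_\ell$ to make, contrary to your Step~2.
\item Now run the proof of Lemma~\ref{lem:linear} for $(\G^\gamma,F_q)$ and $F_0$ itself. The order of $n$ involves only primes dividing $d$, hence is prime to $f_0$, so there is $n_0\in\langle n\rangle\le V^\gamma$ with $n_0^{f_0}=n$.
\item Using $\ell\nmid f_0$, one gets $\Phi_d(p_0)_\ell=\Phi_d(q)_\ell$, and so $n_0F_0$ centralizes $\Z(\Levi_I)_\ell^{nF}$.
\item Lemmas~\ref{lem:DadeCompute} and~\ref{lem:ext char} then conclude.
\end{enumerate}
No parity condition on $f_0$ ever arises in this route.
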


\begin{proof}
 Let $b:=b_G(\bL, \la)$ be a unipotent block of $G$. Recall that there exists a standard Levi subgroup $\Levi_I$ (relative to $\T_0 \subset \B_0$) such that $\Levi_I$ is $\G$-conjugate to $\Levi$. Moreover, the classification of unipotent $d$-cuspidal pairs shows that the subset $I$ can always be chosen to be $\gamma$-stable (compare to Lemma \ref{lem:sylow fixed}).

    It follows that $\Levi_I^{\gamma}$ is a Levi subgroup of the connected reductive group $\G^{\gamma}$ (see \cite[Prop.~1.11]{DM94} and \cite[Rem.~1.30]{DM94}). Consider a Sylow $d$-twist $n \in \N_{\G^\gamma}(\T_0^\gamma)$ of $(\G^\gamma,F_q)$. This means by construction that $(\T,n F_q)$ contains a Sylow $d$-torus of $(\G^\gamma,n F_q)$. By Lemma \ref{lem:graph cyclo}, $|\G^{\gamma F_q}: (\mathbf{\G}^{\gamma})^{F_q}|_{p'}$ is only divisible by cyclotomic polynomials $\Phi_e(q)$ where $e$ is even. As $(\T^\gamma)^{n F_q} \leq \T^{\gamma n F_q}$ it therefore follows that $(\T,n \gamma F_q)$ contains a Sylow $d$-torus of $(\G, \gamma n F_q)$ and so $n$ is also a Sylow $d$-twist of $(\G,\gamma F_q)$.

Now observe that $(\G^\gamma,F_q)$ is a split simple, simply connected group of classical type. Therefore, we can apply the proof of Lemma \ref{lem:linear} find some $n_0 \in \langle n \rangle \leq V^\gamma$ such that $n_0^{f_0}=n$ and $\Z(\Levi_I)_\ell^{nF}$  is centralized by $n_0 F_0$. We can therefore conclude again with Lemma \ref{lem:DadeCompute} together with Lemma \ref{lem:ext char}.
\end{proof}

On the other hand, if $d_\ell(p_0) \neq d_{\ell}(q)$, we have a restriction on the structure of the element in the Dade group:

\begin{lemma}\label{lem:square is twist}
Suppose that $F_0:=F_{p_0}$ is a field automorphism and $q=p_0^{f_0}$.  Continue to assume that $\G$ is of classical type and suppose that $f_0$ is a power of $2$ and that $d:=d_\ell(q)$ is odd. Suppose that $b=b_G(\Levi,\la)$ is a unipotent $\ell$-block
such that $F_{0}$ is in the Dade ramification group of $b$. Then
    $$W_{G \langle F_0 \rangle}(\Levi) \cong W_G(\Levi) \times \C_{G \langle F_0 \rangle}(\Z(\Levi)_\ell^F)/\Levi^F.$$
  Moreover, if $\C_{G \langle F_0 \rangle}(\Z(\Levi)_\ell^F)/\Levi^F= \langle \ov{F}_0 \rangle$ then there is an isomorphism $$W_{G \langle F_0 \rangle}(\Levi) \cong (W_{G}(\Levi_I)^{w F} \times \langle F_0 \rangle)/ \langle w F \rangle,$$
    under which $\ov{F}_0$ corresponds to $vF_{p_0}$ such that $v^2 \in \langle w \rangle$. If $v \notin \langle w \rangle$ then $d_\ell(p_0)=2 d_\ell(q)$. In this case $F_{0}$ is necessarily a generator of the Sylow $2$-subgroup of $G \langle F_p \rangle[b]$.
\end{lemma}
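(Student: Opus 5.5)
The plan is to apply Remark \ref{rem:weyl central product} to obtain the first isomorphism, and then to compute in the standard model given by the Sylow $d$-twist $w$ fixed before Lemma \ref{lem:linear}.

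\textbf{First isomorphism.} Since $F_0$ lies in $G\langle F_0\rangle[b]$, Theorem \ref{thm:dade} together with Remark \ref{rem:weyl central product} gives some $g\in G$ such that $\sigma:=F_0g$ centralizes $\Z(\Levi)_\ell^F$ and normalizes $\Levi$. The commutator argument in that remark then yields
$$W_{G\langle F_0\rangle}(\Levi)=W_G(\Levi)\times \C_{G\langle F_0\rangle}(\Z(\Levi)_\ell^F)/\Levi^F.$$
This is exactly the first claim, applied with $\hat G=G\langle F_0\rangle$.

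\textbf{Second isomorphism.} I would conjugate by the element $g$ with ${}^g\Levi_I=\Levi$. This identifies $\N_{G\langle F_0\rangle}(\Levi)/\Levi^F$ with the corresponding normalizer of $\Levi_I$ inside $\N_{\G}(\Levi_I)\langle F_0\rangle$, taken with respect to the twisted Frobenius $nF$.

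Since $n\in V$ and $V^{F_0}=V$, conjugation by $g$ sends $F_0$ to some $vF_{p_0}$ with $v\in \N_\G(\Levi_I)$. Its image in $W_\G(\Levi_I)$ commutes with $wF$. Moreover, $(vF_{p_0})^{f_0}$ equals $wF$ modulo the centralizing part. This gives a surjection
$$W_G(\Levi_I)^{wF}\times\langle F_0\rangle\longrightarrow W_{G\langle F_0\rangle}(\Levi),$$
with kernel generated by $wF$. Here we use that $F$ acts as $wF$ on the $\Levi_I$-side, so $wF$ is trivial in the quotient. Counting orders via the first isomorphism, together with $|\C/\Levi^F|=f_0$ under the hypothesis $\C/\Levi^F=\langle\ov F_0\rangle$, shows this surjection is an isomorphism.

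\textbf{The constraint $v^2\in\langle w\rangle$.} Because $\ov F_0$ is central of order $f_0$, a power of $2$, and the Weyl group elements involved satisfy $v^{f_0}\in\langle w\rangle$, I would argue as follows. In classical type, the relevant relative Weyl group $W_G(\Levi_I)^{wF}$ is of the form $C_{e}\wr S_a$ with $e\in\{d,2d\}$, and the centralizer of $w$ there is cyclic-like on the $\Phi_d$-part. The element $v$ must act on $\Z(\Levi_I)_\ell$ as $p_0$-power twisted by $v$ and centralize it.

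The centralizing condition means that $v$ acts on each $\Phi$-eigenspace by a scalar $\zeta$ with $\zeta p_0\equiv1$ on $\ell$-torsion. Here $p_0^{f_0}=q$ has odd order $d$ modulo $\ell$. Therefore $p_0$ has order $d$ or $2d$ modulo $\ell$, because $f_0$ is a power of $2$ and $d$ is odd. Hence $v$ acts as $p_0^{-1}$, which lies in the group generated by $q$ (so $v\in\langle w\rangle$), or as minus such an element, in which case $v^2\in\langle w\rangle$ and $d_\ell(p_0)=2d$.

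\textbf{Final claim.} If $v\notin\langle w\rangle$, then $d_\ell(p_0)=2d$, and $F_0$ generates the Sylow $2$-subgroup of $G\langle F_p\rangle[b]$. To see this, suppose $F_{p_1}$ with $p_1^2=p_0$ were in the Dade group. Then $d_\ell(p_1)$ would have to be $4d$, and repeating the eigenvalue argument, an element acting by $\pm p_1^{-1}$ would need order dividing $2d$ on the eigenspace, which is a contradiction.

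\textbf{Main obstacle.} I expect the main difficulty to be making the eigenvalue argument rigorous on $\Z(\Levi_I)_\ell^{wF}$ across the classical types. I would first try to handle it uniformly using that $w$ is regular on the $\Phi_d$-eigenspace (Springer theory).
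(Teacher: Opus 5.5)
The first isomorphism, obtained via Remark \ref{rem:weyl central product}, is fine. The set-up of the second isomorphism is essentially the paper's, apart from two slips. First, the element you transport to $\Levi_I$ must be $xF_0$, which normalizes $\Levi$; $F_0$ itself need not. Second, you use the letter $g$ for two different elements.

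The genuine gap is the step giving $v^2\in\langle w\rangle$. Your claim that $p_0$ has order $d$ or $2d$ modulo $\ell$ ``because $f_0$ is a power of $2$ and $d$ is odd'' is false. From $q=p_0^{f_0}$ you only get $d_\ell(p_0)=2^jd$ with $2^j\mid f_0$. For example, $\ell=5$, $p_0=2$, $f_0=4$, $q=16$ gives $d=1$ but $d_\ell(p_0)=4$. Your final paragraph actually relies on this possibility (you take $p_1^{2f_0}=q$ with $d_\ell(p_1)=4d$), so the argument contradicts itself. The bound ``order dividing $2d$'' that you invoke there is never justified. Likewise, $v^{f_0}\in\langle w\rangle$ alone cannot force $v^2\in\langle w\rangle$. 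No arithmetic relating $p_0$ and $q$ can do this job. The restriction has to come from the hypothesis that $F_0$ lies in the Dade group, through the structure of the relative Weyl group.

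The missing ingredient is the one the paper uses. The class $\ov F_0$ lies in the factor $\C_{G\langle F_0\rangle}(\Z(\Levi)_\ell^F)/\Levi^F$, which centralizes $W_G(\Levi)$ by Remark \ref{rem:weyl central product}. Since $F_{p_0}$ acts trivially on $W$, $v$ is central in $W_G(\Levi_I)^{wF}$. For classical types this center is $\langle x_0,w\rangle$, where $x_0$ (the longest element, or $1$) is an involution commuting with $w$. Hence $v^2\in\langle w\rangle$.

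Your eigenvalue picture can be repaired, but only by passing through this same fact. Because $\Levi=\C_\G(\Z(\Levi)_\ell^F)$, the action of $W_G(\Levi)$ on $\Z(\Levi)_\ell^F$ is faithful. So an element acting as a scalar on $\Z(\Levi_I)^{wF}_\ell$ commutes with every element of $W_G(\Levi_I)^{wF}$ and is therefore central. You still need the center to be $\langle x_0,w\rangle$ to conclude $v^2\in\langle w\rangle$ and hence $d_\ell(p_0)\in\{d,2d\}$. Applied to $F_{p_1}$ with $p_1^2=p_0$, the same fact gives the contradiction your final claim needs. Once you have it, your route has one advantage: faithfulness gives directly that $d_\ell(p_0)=d$ forces $v\in\langle w\rangle$, because $p_0$ then has odd order modulo the exponent of $\Z(\Levi)^F_\ell$ and so lies in $\langle q\rangle$. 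The paper instead refers back to the proofs of Lemmas \ref{lem:linear} and \ref{lem:linear twisted} for this step.
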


\begin{proof}
As argued before we can find $x \in G$ such that $x F_0$ fixes a defect group of $D$, which we can assume to contain $\Z(\Levi)_\ell^F$. Hence, $xF_0$ normalizes $\Levi=\C_{\G}(\Z(\Levi)_\ell^F)$ and we can denote by  $\overline{F_{p_0}}$ its class in $W_{G \langle F_0 \rangle}(\Levi)$.

Write $F=F_q \gamma_0$ for $\gamma_0$ a possibly trivial graph automorphism.
Let $g \in \G^{\gamma_0}$ with Lang image $n \in \N_G(\T_0)^{\gamma_0}$ under $F$ and $w=n \T_0$ as in the proof of Lemma \ref{lem:linear} and Lemma \ref{lem:linear twisted} respectively.
By conjugating with the Lang preimage $g$ of $n$, we obtain an isomorphism 
$$W_{G \langle F_0 \rangle}(\Levi) \cong (W_G(\Levi_I)^{w F} \times \langle F_0 \rangle)/ \langle w F \rangle.$$
Now if $v F_{p_0}$ corresponds to $\overline{F_{p_0}}$ via this isomorphism, then it centralizes $W_{G \langle F_0 \rangle}(\Levi)$ by Remark \ref{rem:weyl central product}. Since $\G$ is of classical type, we know that $W_G(\Levi)$ is isomorphic to a subgroup of index $1$ or $2$ of $S_{d'} \wr S_a$ for some $d' \in \{d,2d\}$ and some positive integer $a$. (See, for example \cite[Ex.~3.5.29]{GM20}.) 
    
    However, $\Z(W_{G \langle F_0 \rangle}(\Levi_I)^{w F})=\Z(W_{G}(\Levi_I)^{wF}) \langle F_0 \rangle$  and so $\Z(W_{G}(\Levi_I)^{wF})=\langle x_0, w \rangle$, where $x_0$ is the longest element of $W$, if the longest element is in $\Z(W^F)$ and $x_0=1$ otherwise. It follows that $v^2 \in \langle w \rangle$ and so $v^2$ is a Sylow $d$-twist of $(\bG,F)$, since $d$ is odd and $f_0$ is a power of $2$. 
  
    On the other hand, if $d_\ell(p_0)=d_\ell(q)$ then $v \in \langle w \rangle$ by the proofs of Lemmas \ref{lem:linear} and \ref{lem:linear twisted}. 
    Hence, if $v \not\in \langle w \rangle$ it follows that $d_\ell(p_0)=2 d_\ell(q)$ as $f_0$ is a power of $2$. 
    If $F_{p_0'} \in G \langle F_p \rangle [b]$ then the image of the square of $\overline{F}_{p_0'}$ must be a Sylow $d$-twist, forcing $(F_{p_0'})_2 \in \langle F_{p_0} \rangle$.
\end{proof}

Recall $(\underline{\G},\underline{F})$ from Lemma \ref{lem: model} such that that for any $F$-stable subgroup $\mathbf{H}$ of $\G$ we have a corresponding $\underline{F}$-stable subgroup $\underline{\mathbf{H}}$ whose image under the map $\mathrm{pr}: \underline{\G} \to \G$ is $\mathbf{H}$. 
We fix the normal subgroup $\underline{\G}\lhd\hat\G\lhd \underline{\G} \, \underline{E}$ such that the quotient $\hat\G/\underline{\G}$
corresponds to the Sylow $2$-subgroup of $\langle F_p \rangle \subset E$  under the isomorphism $\underline{\G}^{\underline{F}} \underline{E} \cong \G^F E$. Moreover, the group $\underline{\Levi}$ is an $F$-stable Levi subgroup of $\underline{\G}$ which is contained in the parabolic $\underline{\Para}$ such that $\Levi$ (resp. $\Para$) is the image of $\underline{\Levi}$ (resp. $\underline{\Para}$) under the map $\mathrm{pr}: \underline{\G} \to \G$. In addition, we have $$\mathrm{pr}|_{\underline{\G}^{\underline{F}}} \circ R_{\underline{\Levi} \subset \underline{\Para}}^{\underline{\G}} \circ \mathrm{pr}^{-1}|_{\Levi^F}= R_{\Levi \subset \Para}^{\G}.$$

Note that $\underline{F}$ defines an $\mathbb{F}_p$-structure on $\underline{\G}$ so for an odd prime $\ell_0$ the unipotent $\ell_0$-blocks of $\underline{\G}^{\underline{F}}$ are parametrized by the unipotent $d_{\ell_0}(p)$-cuspidal pairs. One can now check that the Levi subgroup $\underline{\Levi}$ constructed above is $d_{\ell_0}(p)$-split for any prime $\ell_0$. This follows from the fact that since $\Levi=\C_{\G}(\mathbf{S})$ for $\mathbf{S}=\Z^\circ(\Levi)_{\Phi_d}$ then $\underline{\Levi}=\C_{\underline{\mathbf{G}}}(\underline{\mathbf{S}})$ 
and if $\mathbf{S}_0=\underline{\mathbf{S}}_{\Phi_{d_0}}$ where $d_0=d_{\ell_0}(p)$ then $\underline{\mathrm{pr}(\mathbf{S}_0)}=\underline{\mathbf{S}}$  as $\underline{F}$ acts transitively on the simple components of $\underline{\G}$ (or by comparing polynomial orders noting that if $q=p^f$ then $\Phi_d(q)=\prod_{e \mid f, (d,f/e)=1} \Phi_{de}(p)$ and $d_0=d (f,d_0) $). This gives $\underline{\Levi}=\C_{\underline{\G}}(\mathbf{S}_0)$.

Therefore $(\underline{\Levi},\underline{\la})$, where $\underline{\la}$ is the character corresponding to $\la$ under $\mathrm{pr}|_{{\underline{\Levi}^F}}$, is a unipotent $d_{\ell_0}(p)$-cuspidal pair defining the same $\ell$-block as $(\Levi,\la)$ under the isomorphism between $\underline{\G}^{\underline{F}}$ and $\G^F$.

We wish to apply Theorem \ref{unipotent characters} in our situation which will require the prime $p$ to be odd. We first discuss how we can circumvent this problem when $p=2$ and start with the following consequence of Lang's theorem.

\begin{lemma}\label{lem:Lang}
Let $F=F_q \gamma_0$ for some (possibly trivial) graph automorphism $\gamma_0$ and $q=p_0^{f_0}$. Suppose that $g \in \G^{\gamma_0} F_p$ satisfies $(g F_{p_0})^{f_0}=F_q$. Then $g F_{p_0}$ and $F_{p_0}$ are $(\G^{\gamma_0})^{F_q}$-conjugate.
\end{lemma}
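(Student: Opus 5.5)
The plan is to apply Lang's theorem to the connected group $\mathbf{H}:=(\G^{\gamma_0})^\circ$, viewed with the Steinberg endomorphism $F_{p_0}$. (When $\gamma_0$ is trivial this is just $\G$; $\G^{\gamma_0}$ is connected for $\G$ simply connected, by \cite[Thm.~1.8]{DM94}.) Write $g F_{p_0}$ for the given element. Two preliminary observations come first. Since $g \in \G^{\gamma_0}$ and $F_{p_0}$ commutes with $\gamma_0$, the map $x\mapsto g F_{p_0}(x) g^{-1}$ (that is, conjugation by $gF_{p_0}$ in $\G\langle F_p\rangle$) preserves $\G^{\gamma_0}$. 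Moreover, $F_{p_0}$ restricted to $\G^{\gamma_0}$ is a Frobenius endomorphism, since $\gamma_0$ commutes with field automorphisms. I will also first check that $g$ may be taken in $\G^{\gamma_0}$ itself, interpreting the hypothesis $g\in\G^{\gamma_0}F_p$ as $g$ lying in the appropriate coset whose product with $F_{p_0}$ is a rational endomorphism.

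Next I apply Lang--Steinberg to the Frobenius map $F_{p_0}$ on $\mathbf{H}$. There is $h\in \mathbf{H}$ with $h^{-1}F_{p_0}(h)=g^{-1}$, or with whatever normalization makes the conjugation work; I will choose $h$ so that $h\, F_{p_0}\, h^{-1} = h F_{p_0}(h^{-1}) F_{p_0} = gF_{p_0}$ inside $\G\langle F_p\rangle$. This shows that $gF_{p_0}$ and $F_{p_0}$ are conjugate by $h\in \G^{\gamma_0}$. It remains to see that $h$ can be taken $F_q$-fixed. Conjugation by $h$ carries $F_{p_0}^{f_0}=F_q$ to $(gF_{p_0})^{f_0}$, and the latter equals $F_q$ by hypothesis. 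Hence $hF_qh^{-1}=F_q$, i.e. $h F_q(h)^{-1}=1$, so $h\in \G^{F_q}\cap\G^{\gamma_0}=(\G^{\gamma_0})^{F_q}$.

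The main obstacle is the bookkeeping in the semidirect product $\G\rtimes\langle F_p\rangle$, especially getting the conventions of the Lang map consistent so that the conjugating element satisfies exactly $hF_{p_0}h^{-1}=gF_{p_0}$. I also have to make sure that the equality $(gF_{p_0})^{f_0}=F_q$ is an equality of elements of the semidirect product rather than merely of endomorphisms of $\G$; if it is only the latter, $h F_q(h)^{-1}$ is central, and one must argue it is trivial by adjusting $h$ by a central element. Connectedness of $\G^{\gamma_0}$ is needed for Lang's theorem, and this is where I would invoke simple connectedness.
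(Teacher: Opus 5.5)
Your proposal is correct and is essentially the paper's proof. Lang's theorem in the connected group $\G^{\gamma_0}$ for $F_{p_0}$ gives $t$ with $tF_{p_0}(t^{-1})=g$, so ${}^tF_{p_0}=gF_{p_0}$; taking $f_0$th powers gives ${}^tF_q=(gF_{p_0})^{f_0}=F_q$, whence $t\in(\G^{\gamma_0})^{F_q}$. The paper cites \cite[Rem.~1.30]{DM94} for connectedness and reads $g\in\G^{\gamma_0}$ and the identity $(gF_{p_0})^{f_0}=F_q$ in the semidirect product just as you do, so your hedges on those points are not needed.
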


\begin{proof}
    As $\G^{\gamma_0}$ is connected by \cite[Rem.~1.30]{DM94} we can apply Lang's theorem to find some $t \in \G^{\gamma_0}$ such that $t F_{p_0}(t^{-1})=g$. Hence, ${}^t F_{p_0}=g F_{p_0}$ and so ${}^t F_q=(gF_{p_0})^{f_0}=F_q$ which shows that $t \in (\G^{\gamma_0})^{F_q}$.
\end{proof}

\begin{remark}\label{rem:char2}
Suppose that we are in the situation of Lemma \ref{lem:square is twist} and that $p=2$.
    In this case, we have $V \cong W$. Let $\gamma_0$ again denote some (possibly trivial) graph automorphism such that $F=F_q\gamma_0$. Using the construction in Lemma \ref{lem:square is twist} shows that $\C_{G \langle F_0 \rangle}(\Z(\Levi)^F_\ell)=\Levi^F \langle y \rangle$ for some $\gamma_0$-stable element $y$ with $y^{f_0}=F_{q}$. Hence, $y$ is in particular $G$-conjugate to $F_{p_0}$ by Lemma \ref{lem:Lang}. Then the corresponding element in $\underline{\G}^{\underline{F}} \underline{E}$ is quasi-semisimple (as it is $\underline{\G}^{\underline{F}}$-conjugate to a quasi-semisimple element in $\underline{E}$) and the proof of Theorem \ref{unipotent characters} therefore goes through.
\end{remark}

Note that the following result does not mention the prime $\ell_0$ anymore and depends only on the $d$-Harish-Chandra series (that is only on the integer $d$).

\begin{corollary}\label{cor:disjoint HC}
Assume that $d$ is an odd integer. Then there exists a Levi subgroup $\hat{\Levi}$ contained in a parabolic subgroup $\hat{\Para}$ of $\hat{\G}$ with connected component $\underline{\Levi}$ such that the generalized $d$-Harish-Chandra series $R_{\hat{\Levi} \subset \hat{\Para}}^{\hat{\G}}(\hat{\la})$ are disjoint for distinct extensions of $\hat{\la} \in \Irr(\hat{\Levi}^{\underline{F}} \mid \lambda)$.
\end{corollary}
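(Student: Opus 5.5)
The plan is to reduce Corollary \ref{cor:disjoint HC} to Theorem \ref{unipotent characters} and Corollary \ref{cor:disj HC}, applied to $\underline{\G}\lhd\hat{\G}$ with an auxiliary prime $\ell_0$ chosen so that the hypotheses hold. The conclusion only concerns generalized Lusztig induction of unipotent characters, and neither the series $R_{\hat{\Levi}\subset\hat{\Para}}^{\hat{\G}}(\hat\la)$ nor $\hat{\Levi}$ depends on a prime. We may therefore pick any $\ell_0$ that makes the block-theoretic machinery available.

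\textbf{Step 1: choice of $\ell_0$.} Write $\hat{\G}/\underline{\G}\cong\langle F_{p_0}\rangle$, cyclic of $2$-power order. We first need to know that $\underline{\Levi}$ is $d_0$-split for $d_0=d_{\ell_0}(p)$ and any odd $\ell_0$; this was observed just before Lemma \ref{lem:Lang}. By Dirichlet (Zsigmondy suffices), pick an odd prime $\ell_0$ with the following properties:
\begin{itemize}
\item $d_{\ell_0}(q)=d$, and $d_{\ell_0}(p_0)$ is either $d$ or $2d$ as needed;
\item $\ell_0$ is large enough to satisfy Definition \ref{def:goodprime} for $\underline{\G}$;
\item $\ell_0\nmid |Z(G)|\cdot|\hat{\G}^F/\underline{\G}^F|\cdot p$.
\end{itemize}
Then $\hat{\G}/\underline{\G}$ is cyclic of $\{p,\ell_0\}'$-order, and $[\underline{\G},\underline{\G}]$ is simply connected.

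\textbf{Step 2: Dade group condition.} We need $\hat{\G}^F[b]=\hat{\G}^F$ for $b=b_G(\Levi,\la)$. If $d_{\ell_0}(p_0)=d$, this is Lemma \ref{lem:linear} (split case) or Lemma \ref{lem:linear twisted} (twisted case, using that $d$ is odd). Otherwise we use Lemma \ref{lem:square is twist} to identify the $2$-part of the Dade group. We then adjust the choice of $\ell_0$ via Chebotarev/Dirichlet so that $d_{\ell_0}(p_0)=d_{\ell_0}(q)$ holds; this is possible because $f_0$ is a power of $2$ and $d$ is odd. In this way the whole Sylow $2$-subgroup of $\langle F_p\rangle$ lies in the Dade group.

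\textbf{Step 3: apply Theorem \ref{unipotent characters}.} With these hypotheses, Theorem \ref{unipotent characters} provides $\hat{\Levi}\le\hat{\Para}$ with $\hat{\Levi}^\circ=\underline{\Levi}$. Corollary \ref{cor:disj HC}(2) then gives the disjointness of the sets $\mathcal{E}(\hat{G},(\hat{\Levi},\hat{\la}))$ for distinct $\hat\la\in\Irr(\hat{\Levi}^{\underline F}\mid\la)$, since they lie in distinct $\ell_0$-blocks.

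If $p=2$, quasi-semisimplicity of the generator can fail. We then invoke Remark \ref{rem:char2}, which uses Lemma \ref{lem:Lang}, to replace the generator by a $G$-conjugate of $F_{p_0}$ that is quasi-semisimple, so the proof of Theorem \ref{unipotent characters} still applies.

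\textbf{Main obstacle.} I expect the hard part to be Step 2: guaranteeing a prime $\ell_0$ for which the full $2$-part of $\langle F_p\rangle$ lies in the Dade group. If $d_{\ell_0}(p_0)=d$ cannot be arranged, the fallback is Lemma \ref{lem:square is twist}. In that case one uses the element $vF_{p_0}$ with $v^2\in\langle w\rangle$ directly as the generator of $\hat{\Levi}/\underline{\Levi}$, and applies Lemmas \ref{lem:DadeCompute} and \ref{lem:ext char} to it.
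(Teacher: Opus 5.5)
For odd $d>1$ your outline matches the paper's proof, but only once the prime is chosen by the right mechanism. Dirichlet and Chebotarev are irrelevant here, because every candidate $\ell_0$ must divide the fixed integer $q^d-1$.

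The paper writes $q=p_0^{f_0}$ with $f_0$ the $2$-part and takes $\ell_0$ to be a Zsigmondy prime of $p_0^d-1$. Such a prime exists and is odd precisely because $d>1$ is odd. Then $d_{\ell_0}(p_0)=d$, and since $\gcd(d,f_0)=1$ also $d_{\ell_0}(q)=d$. Lemmas \ref{lem:linear} and \ref{lem:linear twisted} then give the Dade condition, and Theorem \ref{unipotent characters}, Corollary \ref{cor:disj HC} and Remark \ref{rem:char2} finish the argument.

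The genuine gap is $d=1$. The statement covers this case, but no auxiliary prime need work there. Take $G=\SL_2(3^8)$ and $(\Levi,\la)=(\T_0,1)$, so $\hat G=G\langle F_3\rangle$. Since $3^8-1=2^5\cdot 5\cdot 41$, the only odd primes with $d_{\ell_0}(q)=1$ are $5$ and $41$, with $d_5(3)=4$ and $d_{41}(3)=8$.

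Suppose $xF_3^i$ with $x\in G$ centralizes the cyclic Sylow subgroup $P=(T_0)_{\ell_0}$. Then $x$ normalizes $P$, hence $x\in\N_G(\C_G(P))=\N_G(\T_0)$. So $x$ acts on $T_0$ by $t\mapsto t^{\pm1}$, which forces $3^i\equiv\pm1\pmod{\ell_0}$. Hence the Dade group of $B_0(G)$ is $G\langle F_9\rangle$ for $\ell_0=5$ and $G\langle F_{81}\rangle$ for $\ell_0=41$, never all of $\hat G$. So Theorem \ref{unipotent characters} is unavailable for every choice of $\ell_0$.

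Your fallback does not repair this. Lemma \ref{lem:square is twist} assumes that $F_0$ already lies in the Dade group and only describes the resulting structure, so it cannot be used to prove $\hat{\G}^F[b]=\hat{\G}^F$. Example \ref{ex:steinberg} shows how $F_{p_0}$ drops out of the Dade group once $d_{\ell_0}(p_0)\neq d_{\ell_0}(q)$.

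The paper handles $d=1$ separately and without any auxiliary prime. There $\Levi$ is a split Levi subgroup of an $F$-stable parabolic, and disjointness follows from ordinary generalized Harish-Chandra theory for disconnected groups, \cite[Sec.~2]{RSF22}, combined with Lemma \ref{lem:ext char}. If you add that case and choose $\ell_0$ via Zsigmondy, your argument becomes the paper's.
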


\begin{proof}
 We can assume that $1 \neq d$ since otherwise the statement follows from \cite[{Sec.~2}]{RSF22} together with Lemma \ref{lem:ext char}. 
We want to apply Corollary \ref{cor:disj HC}. For this it suffices to check the assumptions of Theorem \ref{unipotent characters} (see Remark \ref{rem:char2} in case $p=2$) for an odd prime $\ell_0$ with $d_{\ell_0}(q)=d$. In other words, it suffices to find a prime $\ell_0$ such that the Dade group of the unipotent $\ell_0$-block associated to $(\Levi,\la)$ contains a Sylow $2$-subgroup of $\hat{G}/G$, {where $\hat{G}=\hat\bG^{\underline{F}}$}. We will show that we can choose a prime $2 \neq \ell_0$ with $d_{\ell_0}(q)=d$.

Let $p_0$ be a power of $p$ and $f_0$ be such that $q=p_0^{f_0}$.  As $1 \neq d$ is odd, it follows that $\Phi_d(p_0)$ is odd so it has at least one odd prime divisor and we can choose $\ell_0$ to be a Zsigmondy prime divisor of $p_0^d-1$. Note that since $\bG$ is classical and $\ell_0$ is odd, $\ell_0$ is necessarily a good prime. 
If $f_0$ is coprime to $d$ (which happens in particular if $f_0$ is a power of $2$), then the order of $q$ mod $\ell_0$ is also $d$. Then choosing $p_0$ such that $f_0$ is a power of $2$, so that $F_{p_0}$ is in a Sylow $2$-subgroup of $\hat{G}/G$, we have $d=d_{\ell_0}(p_0)=d_{\ell_0}(q)$. 
As $d_{\ell_0}(q) \notin \{1,2\}$ it also follows that $\ell_0 \nmid (q \pm 1)$ so $\ell_0 \nmid |\Z(G)|$.

 By Lemma \ref{lem:linear} and Lemma \ref{lem:linear twisted} it follows that $\hat{G}[b_{\hat{G}}]=\hat{G}$. Then the result is a consequence of Theorem \ref{unipotent characters} for odd $p$. If $p=2$ we use Remark \ref{rem:char2} to conclude the result.
\end{proof}

\begin{proposition}\label{prop:extensionblock}

    Fix an extension of $\hat{\Lambda}(\lambda) \in \Irr(\N_{\hat{\G}^F}(\underline{\Levi})_\la)$ of $\lambda$. Denote by $\hat{\la}$ the restriction of $\hat{\Lambda}(\la)$ to $\hat{\Levi}^F$. Let $\ell$ be an odd prime such that $d_\ell(q)=d$ is odd and denote by $\hat{\la}'$ the restriction of $\hat{\Lambda}(\lambda)$ to $\hat L':=\C_{\hat{\G}^F}(\Z(\underline{\Levi})_\ell^{\underline{F}})$. Then for every $\chi \in \Irr(\G^F)$ there exists an extension $\hat{\chi} \in \Irr(\hat{\G}^F \mid \chi)$ such that $\hat \chi$ lies in the unique block with Brauer pair $(\Z(\underline{\bL})_\ell^{\underline{F}}, b_{\hat L'}(\hat\la'))$ and $\hat{\chi}$ and $\hat{\la}$ have the same field of values over the $\ell$-adic numbers.
\end{proposition}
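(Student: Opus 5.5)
The idea is to produce $\hat\chi$ through the generalized $d$-Harish-Chandra series attached to $\hat\la$. I would invoke Corollary \ref{cor:disjoint HC} and Theorem \ref{unipotent characters} (via Remark \ref{rem:char2} when $p=2$), with $\ell$ in place of the auxiliary prime $\ell_0$.

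The statement is presumably meant for $\chi\in\mathcal{E}(G,(\Levi,\la))$, i.e.\ $\chi$ a constituent of $R_\Levi^\G(\la)$. For such $\chi$, define $\hat\chi$ as the unique character of $\hat G$ covering $\chi$ with $\langle\hat\chi,R_{\hat\Levi\subset\hat\Para}^{\hat\G}(\hat\la)\rangle\neq0$. Existence and uniqueness follow from Corollary \ref{cor:disjoint HC} and the argument of Corollary \ref{cor:disj HC}(2): Frobenius reciprocity together with \cite[Cor.~2.4]{DM94} give
\[\langle \Ind_G^{\hat G}\chi, R_{\hat\Levi}^{\hat\G}(\hat\la)\rangle=\langle\chi,R_\Levi^\G(\la)\rangle\neq0,\]
and disjointness of the series for the different extensions of $\la$ forces a single constituent.

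\emph{Block.} Since $d_\ell(q)=d$ is odd, I would repeat the argument of the proof of Corollary \ref{cor:disjoint HC} with $\ell$ itself. Lemmas \ref{lem:linear}, \ref{lem:linear twisted} and \ref{lem:square is twist} should give that the Dade group of $b_G(\Levi,\la)$ contains the relevant Sylow $2$-part. If $d_\ell(p_0)\neq d_\ell(q)$, then Lemma \ref{lem:square is twist} supplies the correct generator, which is the twisted field automorphism $y$. Then Lemma \ref{lem:Levi existence} gives $\hat L'=\C_{\hat G}(\Z(\underline\Levi)_\ell^{\underline F})$ as the finite group of a Levi $\hat\Levi'$ of $\hat\G$. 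Theorem \ref{unipotent characters}(2), applied with $\hat\la'$, shows that the constituents of $R_{\hat\Levi'}^{\hat\G}(\hat\la')$ lie in the block with Brauer pair $(\Z(\underline\Levi)_\ell^{\underline F},b_{\hat L'}(\hat\la'))$.

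It then remains to check that $\hat\chi$ is a constituent of $R_{\hat\Levi'}^{\hat\G}(\hat\la')$. This is the point I expect to be the main obstacle: $\hat\Levi$ (chosen via $\ell_0$) and $\hat\Levi'$ (chosen via $\ell$) may differ. Both lie in $\N_{\hat G}(\underline\Levi)$ and both $\hat\la$ and $\hat\la'$ are restrictions of the single character $\hat\Lambda(\la)$. So I would compare the two generalized Lusztig inductions coset by coset, using formula \eqref{eq:genlusres} and \cite[Thm.~4.13]{DM94}, and show that both series single out the same extension of $\chi$. Alternatively, working in $\N_{\hat G}(\underline\Levi)_\la$, I would use Remark \ref{rem:weyl central product} to write $\hat\Levi'$ as $\Levi\langle x\sigma\rangle$ with $x\in\N_G(\Levi)$, and transport the parametrisation through the relative Weyl group with the common extension $\hat\Lambda(\la)$.

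\emph{Fields of values.} For $\tau\in\Gal(\overline{\QQ}_\ell/\QQ_\ell)$, note that $R_{\hat\Levi}^{\hat\G}$ commutes with Galois action, as it is defined by $\ell$-adic cohomology with rational traces. Also $\chi$ is $\tau$-stable, since unipotent characters of classical groups are rational. Hence $\hat\chi^\tau$ covers $\chi$ and occurs in $R_{\hat\Levi}^{\hat\G}(\hat\la^\tau)$, where $\hat\la^\tau$ is again an extension of $\la$. By the disjointness of Corollary \ref{cor:disjoint HC}, $\hat\chi^\tau=\hat\chi$ if and only if $\hat\la^\tau=\hat\la$. This gives equality of the stabilisers in the Galois group, and hence $\QQ_\ell(\hat\chi)=\QQ_\ell(\hat\la)$.
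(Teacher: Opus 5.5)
Your block argument has two genuine gaps; the fields-of-values argument is sound.

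First, you assume that for the given prime $\ell$ the Dade group of $b=b_G(\Levi,\la)$ in $\hat G$ is all of $\hat G$. That is what you need to apply Lemma \ref{lem:Levi existence} and Theorem \ref{unipotent characters} to $\hat\G$, but nothing you cite gives it. Lemmas \ref{lem:linear} and \ref{lem:linear twisted} need $d_\ell(p_0)=d_\ell(q)$. Lemma \ref{lem:square is twist} \emph{assumes} $F_{p_0}$ lies in the Dade group and only constrains $d_\ell(p_0)$. This is exactly why Corollary \ref{cor:disjoint HC} works with an auxiliary Zsigmondy prime $\ell_0$. For the actual $\ell$ the Dade group is in general proper; in Example \ref{ex:steinberg}, $G\langle F_{p_2}\rangle[b]=G\langle F_{p_1}\rangle$. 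So Theorem \ref{unipotent characters} can only be used on a subgroup $\hat G'$ of the Dade group, namely the one where the centralising field automorphism is a Sylow $d$-twist, of index $\delta\in\{1,2\}$ in the Dade group. There it produces an extension $\hat\chi'$, and you must then choose $\hat\chi$ above $\hat\chi'$ inside the prescribed block.

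Second, the step you flag as the main obstacle is not merely hard but false. The extension of $\chi$ picked out by $R_{\hat\Levi\subset\hat\Para}^{\hat\G}(\hat\la)$ can lie outside the prescribed block, even when the Dade group is everything. Take $G=\SL_2(q)$ with $q=p^2$ and $\ell\mid p+1$ odd, so $d=1$, $\Levi=\T_0$, $\la=1$, $\hat G=G\langle F_p\rangle$, and choose $\hat\Lambda(\la)=1$.

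\begin{itemize}
\item Here $\hat\Levi=\T_0\langle F_p\rangle$ in the standard Borel, so the $\hat\la$-series is $\{1_{\hat G},\St_{G\langle F_p\rangle}\}$.
\item On the other hand $\hat L'=T_0\langle nF_p\rangle$ with $n\in\N_G(\T_0)\setminus T_0$, and $\hat\la'=1$. By Brauer's third main theorem the prescribed block is $B_0(\hat G)$.
\item By Example \ref{ex:steinberg} (with $p_1=p$), $\St_{G\langle F_p\rangle}\notin B_0(\hat G)$.
\end{itemize}

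So your $\hat\chi$ lies in the wrong block. The correct character is $\St_{G\langle F_p\rangle}\varepsilon$, which lies in the series of $\hat\la\varepsilon$ with $\varepsilon$ of order $2$.

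In general the right $\hat\chi$ lies in the series of $\hat\la\varepsilon$, where $\varepsilon$ is a linear character of the cyclic $2$-group $\hat G/\hat G'$, and $\varepsilon=1$ when $\delta=1$. The missing idea is the following. When $\delta=2$, the Dade group is generated by some $F_{p_0'}$ with $d_\ell(p_0')$ even, and this forces $|\hat G:\hat G'|$ to divide $\ell-1$. Hence $\varepsilon$ takes values in the $\ell$-adic numbers. This is why the statement compares fields of values only over the $\ell$-adic numbers and not over $\QQ$.

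Your Galois/disjointness argument is what the paper uses for $\hat\chi'$ on $\hat G'$. Run on $\hat G$ with $\hat\la\varepsilon$ in place of $\hat\la$, and combined with the observation above, it finishes the proof; the paper cites \cite[Thm.~7.6]{RSST} for this last step.
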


\begin{proof}
 
 First, recall that $\hat\Lambda(\la)$ exists by Lemma \ref{lem:ext char}.
Let $f_0$ be the maximal power of $2$ such that $q$ can be written $q=p_0^{f_0}$ for some power $p_0$ of $p$. Let $p'$ be the minimal power of $p_0$ such that the centralizing element corresponds to a Sylow $d$-twist as in Lemma \ref{lem:square is twist}. The Dade ramification group of the block $b_G(\Levi,\la)$ is then generated by $F_{p_0'}$ where $p_0'^{\delta}=p'$ for $\delta \in \{1,2 \}$. If $\delta=2$ let $f'$ be such that $p_0^{f'}=p_0'$. It follows from this that the $\ell$-adic integers contain a primitive $2f'$th root of unity (as the order of $p_0'$ modulo $\ell$ is necessarily even). Let $\hat{\G}'$ be the full preimage of $\G \langle F_{p'} \rangle$ under the map $\mathrm{pr}: \underline{\G} \underline{E} \to \G E$. 
Now by Theorem \ref{thm:unipotent characters} combined with Corollary \ref{cor:disjoint HC} (and taking into account Remark \ref{rem:char2} in case $p=2$), we find some extension $\hat{\chi}' \in \Irr(G \langle F_{p'} \rangle)$ of $\chi$ in the block with Brauer pair $(\Z(\underline{\bL})_\ell^{\underline{F}}, b_{\hat L'}(\hat\la'))$  such that $\mathbb{Q}(\hat{\chi}')=\mathbb{Q}(\hat{\lambda}')$ and such that $\hat{\chi}'$ is a constituent of $R_{\hat{\Levi} \cap \hat{\G}' \subset \hat{\Para} \cap \hat{\G}'}^{\hat{\G}'}(\hat{\la}')$. (Note that we have $\mathbb{Q}(\hat{\chi}')=\mathbb{Q}(\hat{\lambda}')$ because of the disjointness properties guaranteed in Theorem \ref{thm:unipotent characters} and Corollary \ref{cor:disjoint HC}, together with the fact that $R_{\hat{\Levi} \cap \hat{\G}' \subset \hat{\Para} \cap \hat{\G}'}^{\hat{\G}'}$ is compatible with Galois automorphisms thanks to \cite[Cor.~8.1.6]{DM20}.)
Here, $\hat{\Levi} \cap \hat{\G}'=\C_{\hat{\G}'}(\Z(\Levi)_\ell^F)$. It follows that there is a character $\hat{\chi}$ covering $\hat\chi'$ in the block of $G \langle F_{p_0} \rangle$ with Brauer pair $(\Z(\underline{\bL})_\ell^{\underline{F}}, b_{\hat L'}(\hat\la'))$, and then $\hat{\chi}$ is a constituent of $R_{\hat{\Levi} \subset \hat{\Para}}^{\hat{\G}}(\hat{\la} \varepsilon)$ for some character ${\varepsilon}$. If $\delta=1$ we can choose $\varepsilon=1$ and the claim follows. 

Assume therefore now that $\delta=2$. Since $\hat{\chi}$ covers $\hat{\chi}'$ it follows that $\varepsilon$ restricts trivially to $\hat{\G}'^{\underline{F}}$ which forces $\varepsilon^{2f'}=1$. On the other hand, as argued above, the $\ell$-adic numbers contain a primitive $2f'$th root of unity. Thanks to \cite[{Thm.~7.6, Sec.~8.A}]{RSST}, the characters $\hat{\chi}$ and $\hat{\la}$ have the same fields of values over the $\ell$-adic numbers. Similarly, $\hat{\la}$ and $\hat{\la} \varepsilon$ differ by a linear character of order dividing $2f'$. In particular, both have the same field of values over the $\ell$-adic numbers.
\end{proof}

We now prove Theorem \ref{thm:AMN}, which we restate here and which solves part of the block distribution problem for the Alperin--McKay--Navarro conjecture. 

\begin{theorem}
Keep the assumptions of the first paragraph of this section. In particular,  $\G$ is simple of classical type, but not of type $\tD_4$, and $\ell$ is an odd prime not dividing $|\Z(G)|$. Further, $(\Levi, \la)$ is a unipotent $d$-cuspidal pair, where $d:=d_\ell(q)$.
Then there exists a block-preserving 
$\mathcal{H}$-equivariant bijection
$$\Irr_{0}(GE \mid \mathcal{E}(G,(\Levi,\la))) \to \Irr_{0}(\N_{GE}(\Levi) \mid \la).$$
\end{theorem}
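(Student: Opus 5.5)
We start from the $\mathcal{H}$-equivariant bijection $\Omega\colon \Irr_{0}(GE \mid \mathcal{E}(G,(\Levi,\la))) \to \Irr_{0}(\N_{GE}(\Levi) \mid \la)$ of \cite[Thm.~8.8]{RSST}. Height-zero preservation follows as explained at the start of this section. The remaining task is to show that, possibly after modifying $\Omega$ inside each fibre over a fixed $\chi\in\Irr_0(G)$, it respects blocks: for $\hat\chi\in\Irr_0(GE\mid\chi)$ we need $b_{\N_{GE}(\Levi)}(\Omega(\hat\chi))^{GE}=b_{GE}(\hat\chi)$.

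Recall how $\Omega$ is built. One first fixes a bijection $\chi\mapsto\psi$ between $\Irr_0(G\mid \mathcal{E}(G,(\Levi,\la)))$ and $\Irr_0(N\mid\la)$, equivariant for $\N_{GE}(\Levi)$ and $\mathcal{H}$. Then, for stable pairs, the extensions of $\chi$ to $(GE)_\chi$ are matched with the extensions of $\psi$, via extensions of $\la$ to $\N_{GE}(\Levi)_\la$ (Lemma \ref{lem:ext char}); Clifford correspondence then does the rest. Since induction of blocks is compatible with the Clifford/Fong--Reynolds step (\cite[Thm.~9.14]{Nav98}), it suffices to treat the extension step over the stabilizer. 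There we split $E_\chi$ into its Hall $\ell'$-part and the rest.

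\textbf{$\ell$-part.} Above $E_{\ell'}$ the quotient is an $\ell$-group, so each block covers a unique block and Clifford theory over the $\ell$-part introduces no block ambiguity. We handle this by gluing with Lemma \ref{lem:gluing} and Lemma \ref{lem:gluing blocks}.

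\textbf{$\ell'$-part.} The crucial input is Lemma \ref{lem:hz} together with Theorem \ref{unipotent characters} (and Corollary \ref{cor:disjoint HC}, Remark \ref{rem:char2} when $p=2$). These show that, for $\hat G$ of $\ell'$-index in the Dade group $GE[b]$, the extensions of $\chi$ lying in the block $b_{\hat G}(\hat\Levi,\hat\la)$ are exactly those occurring in $R_{\hat\Levi\subset\hat\Para}^{\hat\G}(\hat\zeta)$ with $\hat\zeta\in\Irr(b_{\hat L}(\hat\la))$. Moreover, $b_{\hat G}(\hat\Levi,\hat\la)$ has Brauer pair $(\Z(\Levi)_\ell^F,b_{\hat L}(\hat\la))$, and $\hat L=\C_{\hat G}(\Z(\Levi)^F_\ell)$.

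On the local side, $\N_{\hat G}(\Levi)$ contains $\hat L$ and, by Remark \ref{rem:weyl central product}, $W_{\hat G}(\Levi)=W_G(\Levi)\times\hat L/L$. Hence the block of an extension of $\psi$ is determined by which $\hat\la\in\Irr(\hat L\mid\la)$ it restricts to, by the uniqueness of blocks covering $b_{\hat L}(\hat\la)$ (\cite[Cor.~9.21]{Nav98}). Brauer's third main theorem and the Brauer pair statement (2) of Theorem \ref{unipotent characters} then identify the two blocks under induction. So the block-compatible choice is: match the extension of $\chi$ lying in $b_{\hat G}(\hat\Levi,\hat\la)$ with the extension of $\psi$ lying over $\hat\la$. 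Proposition \ref{prop:extensionblock} supplies the $\ell$-adic field-of-values statement, which ensures this choice is still $\mathcal{H}$-equivariant. Indeed, elements of $\mathcal{H}$ act on $\ell'$-roots of unity as $\ell^e$-powers and therefore preserve $\ell$-blocks. For the outside of the Dade group, blocks above $b_{\hat G}$ are again unique by Lemma \ref{lem:alpdade}.

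\textbf{Main obstacle.} The Dade group of $b_G(\Levi,\la)$ in $GE$ may contain graph automorphisms or a non-cyclic part, whereas Theorem \ref{unipotent characters} needs a cyclic $\{p,\ell\}'$-quotient. We would deal with this by restricting to the cyclic pieces: the Sylow $2$-subgroup of field automorphisms, analysed via Lemmas \ref{lem:linear}, \ref{lem:linear twisted} and \ref{lem:square is twist}; the odd-order part, where real blocks are unique by Lemma \ref{lem:real blocks} and unipotent characters are rational; and the graph automorphism of order $2$. We would then glue across these pieces with Lemma \ref{lem:gluing blocks}, checking its Dade-group product hypothesis from the explicit description in Lemma \ref{lem:square is twist}. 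Verifying that hypothesis, especially when $d$ is even (so that Corollary \ref{cor:disjoint HC} is not directly available), is where we expect most of the work.
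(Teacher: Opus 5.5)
Your architecture is essentially the paper's. You reduce from $GE$ to $\hat G=G\langle F_p\rangle$ by splitting off a graph involution with Lemma~\ref{lem:gluing blocks}, and split $\hat G=\hat G_2\hat G_{2'}$ coprimely, with $\hat G_2/G$ the Sylow $2$-subgroup of $\hat G/G$. The odd part is handled by the unique real extensions and Lemma~\ref{lem:real blocks}. On the $2$-part you match the extension of $\chi$ lying in $b_{\hat G}(\hat\Levi,\hat\la)$ with the extension of $\psi$ lying over $\hat\la$, using Remark~\ref{rem:weyl central product} and Proposition~\ref{prop:extensionblock}.

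The gap is the case $d=d_\ell(q)$ even on the $2$-part, which you leave open and in fact flag as the main obstacle. Your only source of $\mathcal{H}$-equivariance there is Proposition~\ref{prop:extensionblock}, and it (like Corollary~\ref{cor:disjoint HC}) is only available for $d$ odd. So as written your argument does not prove the theorem when $d$ is even.

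The missing idea is that even $d$ is the easy case. Write $q=p^f$ and let $f_2$ be the $2$-part of $f$.
\begin{itemize}
\item Since $d_\ell(q)=d_\ell(p)/\gcd(d_\ell(p),f)$, if $d$ is even then $f_2$ divides $d_\ell(p)$, hence divides $\ell-1$.
\item So $\QQ_{f_2}$ is contained in the field of $\ell$-adic numbers, and every element of $\mathcal{H}$ fixes it pointwise.
\item By \cite[Thm.~7.6, Sec.~8.A]{RSST}, every character in $\Irr(\hat G_2\mid\mathcal{E}(G,(\Levi,\la)))\cup\Irr(\N_{\hat G_2}(\Levi)\mid\la)$ has values in $\QQ_{f_2}$.
\item Hence all these characters are $\mathcal{H}$-invariant, and \emph{any} block-preserving bijection is automatically $\mathcal{H}$-equivariant. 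No Dade-group product hypothesis and no disjointness of generalized $d$-Harish-Chandra series are needed.
\end{itemize}
You then only need the existence of some block-preserving bijection. The paper gets this from the generalized Harris--Kn\"orr correspondence \cite[Thm.~B]{KS15}: block induction from $\N_{\hat G}(\Levi)$ surjects from the blocks over $b_L(\la)^N$ onto the blocks of $\hat G$ over $b$. Since $\hat G/G$ is cyclic, the blocks over $b$ are twists of one another by linear characters, which finishes the construction.

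Separately, your justification that elements of $\mathcal{H}$ ``preserve $\ell$-blocks'' is false in general. One has $(B\otimes\mu)^\sigma=B^\sigma\otimes\mu^\sigma$, where $\mu^\sigma=\mu^{\ell^e}$ need not equal $\mu$, and $B^\sigma$ need not equal $B$. What your $d$-odd argument actually uses is $\mathcal{H}_{\hat\chi}=\mathcal{H}_{\hat\la}$, which comes from the equal $\ell$-adic fields of values in Proposition~\ref{prop:extensionblock}. It is this equality that makes the block-compatible matching commute with the $\mathcal{H}$-action.
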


\begin{proof}
Note that $E$ is abelian by our assumptions on $G$. Further,  
by Lemma \ref{lem:ext char} we know that for any $G \leq H \leq GE$ we can fix an extension $\hat{\Lambda}(\la) \in \Irr(\N_{H}(\Levi)_\la)$ of $\la \in \Irr(L)$. 

We therefore obtain a bijection
$$\Irr_0(H) \to \Irr_0(\N_H(\Levi))$$
by sending the set of characters $\Irr( H \mid R_{\Levi}^{\G}(\la)_\eta)$ (where $R_{\Levi}^{\G}(\la)_\eta$ is the character in $\mathcal{E}(G, (\bL, \la))$ corresponding to $\eta$ under the bijection in \cite[Thm.~3.2]{BMM}) bijectively to the set of characters $\{\Ind_{\N_{H}(\Levi)_\la}^{\N_{H}(\Levi)}(\hat{\La}(\la) \hat{\eta}) \mid \hat{\eta} \in \Irr(W_{H}(\Levi,\la) \mid \eta) \}$. 

Set $\hat{G}:=G \langle F_p \rangle$ and let $b:=b_G(\bL, \la)$. Then either $\hat{G}[b]=(GE)[b]$ or there exists some involution $e\in E$ such that $GE[b]=\hat{G}[b] G \langle e \rangle[b]$.  Now for $\chi \in \Irr(b)$, the two extensions to $G \langle e \rangle$ (and similarly in the local case) have the same field of values. Hence, by applying Lemma \ref{lem:gluing blocks}, it suffices to show the statement for $\hat{G}$ instead of $GE$. Note that the unipotent characters are $\hat{G}$-invariant by \cite[Thm.~4.5.11]{GM20}. 

Set $\hat{N}:=\N_{\hat{G}}(\Levi)$ and $\hat{L}':=\C_{\hat{G}}(\Z(L)_\ell)$, and denote by $\hat{\la}$ the restriction of $\hat{\La}(\la)$ to $\hat{L}' \cap \hat{N}=\hat{L}'$. Note that the character $\psi:=\Ind_{\hat N_\la}^{\hat N}(\hat{\La}(\la) \hat{\eta})$ lies in the unique block of $\N_{\hat{G}}(\Levi)$ that covers the block $b_{\hat{L}'}(\Res_{\hat{L}'}^{\hat{N}_\la}(\hat{\La}(\la) \hat{\eta}))$. 
(Recall that the uniqueness follows by \cite[Cor.~9.21]{Nav98}, and that this unique block is given by block induction.) 
Now, recall that the unique block $b'$ of $N$ covering $b_L(\la)$ induces to the block $b=b_G(\Levi, \la)$ of $G$ by \cite[Thm.~21.7]{CE04}. Then using a generalized Harris--Kn{\"o}rr theorem as in \cite[Thm.~B]{KS15}, the block of $\psi$ induces to $\hat{G}$ and this construction gives a surjection from the blocks of $\hat{N}$ above $b'$ and those of $\hat{G}$ above $b$. Since $\hat{G}/G$ is cyclic, we may then use this and the fact that the other blocks above $b$ are tensors with $b$  to obtain that there is some block-preserving bijection between $\Irr_0(\hat{G} \mid \mathcal{E}(G,(\Levi,\la)) \to \Irr_0(\hat{N} \mid \la)$. It remains to show that such a bijection can be chosen to be $\galh$-equivariant.

Decompose $\hat{G}=\hat{G}_2 \hat{G}_{2'}$ such that $\hat{G}_{\pi}/G$ is a Hall $\pi$-subgroup of $\hat{G}/G$ for $\pi=2$ and $2'$, respectively. (Here by abuse of notation, in this context we let $2$ denote the set $\{2\}$ and $2'$ denote the set of odd primes dividing $|\hat G|$.) Since the decomposition here is coprime, note that Lemma \ref{lem:gluing blocks} applies, and hence it suffices to show the statement for $\hat{G}_2$ and $\hat{G}_{2'}$ individually. That is, it suffices to show for $\pi \in \{2,2'\}$ that there exists block-preserving $\mathcal{H}$-equivariant bijections
$$\Irr_0(\hat{G}_\pi \mid \mathcal{E}(G,(\Levi,\la)) \to \Irr_0(\hat{N}_\pi \mid \la),$$
where $\hat{N}_\pi:=\hat{G}_\pi \cap NE$.

We first consider the $2'$-part. For $\hat G_{2'}$, we observe that every unipotent character is real (even rational) so that there exists a unique real character of $\hat{G}_{2'}$ extending it and this character lies in the unique real block covering $b$, see Lemmas \ref{lem:real} and \ref{lem:real blocks}.  On the local side, note that block induction is compatible with complex conjugation (see Lemma \ref{lem:real blocks}). In particular, the unique block of $\hat{N}_{2'}$ that covers the block $b_{\hat{L}_{2'}}(\Res_{\hat{L}_{2'}}^{\hat{L}'}(\hat{\la}))$ of $\hat L_{2'}:=\hat L'\cap \hat N_{2'}$ (see \cite[Cor.~9.21]{Nav98}) is the unique real block covering $b_L(\la)^N$ (see Lemma \ref{lem:real blocks}). Hence, this bijection can always be chosen such that it preserves the distribution of $\ell$-blocks, {since the quotient is abelian, and hence the remaining blocks are tensors of these real blocks with linear characters.}

We now consider the extensions to $\hat{G}_2$. We have shown in \cite[{Thm.~7.6, Sec.~8.A}]{RSST} that $\mathbb{Q}(\hat{\chi}) \subset \mathbb{Q}_{f_2}$ for every $\hat{\chi} \in \Irr(\hat{G}_2 \mid \mathcal{E}(G,(\Levi,\la))) \cup \Irr(\hat{N}_2 \mid \la)$, where $f$ is the integer such that $q=p^f$ and $f_2$ is the largest power of $2$ dividing $f$. 

Assume first that $d=d_\ell(q)$ is even. In this case, the $\ell$-adic numbers contains all $f_2$th roots of unity {since then $f_2$ divides $d_\ell( p)$, and hence $\ell-1$}. In particular, all extensions of unipotent characters {to $\hat G_2$} are $\mathcal{H}$-invariant. Then here any block-preserving bijection (which exists as discussed for $\hat{G}$ above) works.

We can therefore assume that $d=d_\ell(q)$ is odd.
In this case,  
Proposition \ref{prop:extensionblock} shows that for every $\chi \in \mathcal{E}(G,(\Levi,\la))$ labeled by the character $\eta \in \Irr(W_G(\Levi,\la))$, there exists a $\mathcal{H}$-equivariant bijection $$R_\eta:\Irr(\hat{L}_2 \mid \la) \to \Irr(\hat{G}_2 \mid \chi)$$ such that $(1,b_{\hat{G}_2}(R_\eta(\hat{\la}))) \lhd (\Z(\Levi)_\ell^F,b_{\hat{L}_2})$ where $b_{\hat{L}_2}$ is the unique block of $\hat L_2:=\C_{\hat{G}_2}(\Z(L)_\ell)$ which contains the irreducible constituents of the restriction of $\hat{\la}$.  
By the proof from \cite[Lem.~7.5]{RSST}, we obtain that for $\hat{\la} \in \Irr(\hat{L}_2 \mid \la)$ there exists an extension $\hat{\La}(\hat{\la}) \in \Irr((\hat{N}_2)_\la \mid \hat{\la})$  with $\mathcal{H}_{\hat{\La}(\hat\la)}=\mathcal{H}_{\hat{\la}}$. 
Moreover, as $\hat{N}_2=\N_{\hat{G}_2}(\Z(L)_\ell)$, the characters $\Ind_{(\hat{N}_2)_\la}^{\hat{N}_2}(\hat{\La}(\hat \la) \hat{\eta})$ all lie in the induced block $(b_{\hat{L}_2})^{\hat{N}_2}$ by \cite[Cor.~9.21]{Nav98}. Recall from Remark \ref{rem:weyl central product} that $W_{\hat{G}_2}(\Levi,\la) =W_G(\Levi,\la) \times \hat{L}_2/L$ so that every character $\eta \in \Irr(W_G(\Levi,\la))$ has a unique extension $\eta^\circ \in \Irr(W_{\hat{G}_2}(\Levi,\la))$ with $\hat{L}_2/L$ in its kernel. We therefore obtain that if we extend the map $\Ind_{(\hat{N}_2)_\la}^{\hat{N}_2}(\hat{\La}(\hat\la) \eta^\circ) \mapsto R_{\eta}(\hat\la)$ in an $\Irr(\hat{N}/N)$-equivariant way, we get a bijection
$$\Irr_0(\hat{N}_2 \mid \la ) \to \Irr_0(\hat{G}_2 \mid \mathcal{E}(G,(\Levi,\la)))$$
which is $\mathcal{H}$-equivariant and preserves $\ell$-blocks, as desired.
\end{proof}

\subsection{An Example}

We end the paper by explicitly treating the case where $G=\SL_2(q)$ and $\ell \mid (q-1)$, which in particular illustrates a situation where the Steinberg character lies in the principal block of $G$ but the extended Steinberg character from Definition \ref{def:genstein} does \emph{not} lie in the principal block.

\begin{example}\label{ex:steinberg}
    Suppose that $G=\SL_2(q)$ and $\ell$ is an odd prime with $\ell \mid (q-1)$. Note that the Steinberg character $\St_G$ lies in the principal block $b:=B_0(G)$. Let $\bT_0\leq \B$ be the diagonal torus and corresponding $F$-stable Borel, and write $B:=\B^F$ and $T_0:=\bT_0^F$. Let $p_0$ be minimal such that there is some positive integer $f_0$ with $p_0^{f_0}=q$ an $d_\ell(p_0)=1$. In this case, $\Ind_{B\langle F_{p_0} \rangle}^{G \langle F_{p_0} \rangle}(1_{B \langle F_{p_0} \rangle})$ coincides with generalized Harish-Chandra induction and contains both the trivial character and the generalized Steinberg character $\St_{G\langle F_{p_0}\rangle}$. These are precisely the unipotent characters that lie in the principal block of $G \langle F_{p_0} \rangle$. 
    
    Assume now that there is $p_1$ such that $p_1^2=p_0$, so that $d_\ell(p_1)=2$. In particular, if $n\in \N_G(\T_0) \setminus T_0$ then $n F_{p_1}$ centralizes $(T_0)_\ell$, as $(p_0-1)_\ell=(p_1+1)_\ell (p_1-1)_\ell$. In this situation, the generalized Steinberg character of $G \langle F_{p_1} \rangle$ will not be in the principal block $B_1:=B_0(G \langle F_{p_1}\rangle)$ 
    but in the block $B_1\otimes  \varepsilon$, where $\varepsilon \in \Irr(G \langle F_{p_1} \rangle/G \langle F_{p_0} \rangle)$ is the unique character of order two. This can be computed as follows: Using Theorem \ref{unipotent characters} together with \cite[Theorem 3.11]{DM94} (whose assumptions are satisfied in our case as observed in the remarks following the theorem) shows that generalized Alvis--Curtis duality swaps the blocks $B_1$ and $B_1  \otimes \varepsilon$. ({Indeed, note that $|\T^{nF}|=p+1$ so Alvis--Curtis duality and Lusztig induction do not commute on the coset $G F_{p_0}$ but rather a $-1$ sign is introduced.} See also Section \ref{sec:ACblockswap}). Since generalized Alvis--Curtis duality maps the trivial character to the generalized Steinberg character, it follows that the Steinberg character is not in the principal block of $G \langle F_{p_1} \rangle$. In particular, if $p_2^2=p_1$ then $G \langle F_{p_2}\rangle[b]=G \langle F_{p_1} \rangle$ and therefore the principal block $B_2$ of $G \langle F_{p_2} \rangle$ gets swapped via Alvis--Curtis duality with $B_2 \otimes \varepsilon_2$ where $\varepsilon_2$ is some linear character (necessarily of order $4$) extending $\varepsilon$. Consequently, the characters covering the trivial character are the unique unipotent characters in $B_2$ with real character field. Note that on the other hand $d_{\ell}(p_2)=4$ so that all characters in $B_2$ are $\mathcal{H}$-invariant.

    Compare this to the situation where $\tilde{G}=\GL_2(q)$ in which case $\tilde{G}[B_0(\tilde{G})]=\tilde{G} \langle F_{p_0} \rangle$. In particular, the image of the principal block under generalized Alvis--Curtis duality is always again the principal block.
\end{example}


\begin{thebibliography}{SpSt70}

	\bibitem[Bon06]{B06}
	{\sc C.~Bonnaf\'{e}.}
	\newblock Sur les caract\`eres des groupes r\'{e}ductifs finis \`a centre non
	connexe: applications aux groupes sp\'{e}ciaux lin\'{e}aires et unitaires.
	\newblock {\em Ast\'{e}risque}, (306), 2006.

\bibitem[BMM93]{BMM}
{\sc M. Brou\'e, G. Malle and J.~C.~M. Michel}, Generic blocks of finite reductive groups, {\em Ast\'erisque \bf{212}} (1993), 7--92.



\bibitem[CE94]{CE94}
 {\sc M.~Cabanes and M.~Enguehard}, On unipotent blocks and their ordinary characters.
{\em Invent. Math. \bf{117}} (1994), 149--164.

 \bibitem[CE99]{CE99} {\sc M.~Cabanes and M.~Enguehard}, On Blocks of Finite Reductive Groups and Twisted Induction. {\em Adv. Math. \bf{145}} Issue 2 (1999), 189--229.

 \bibitem[CE04]{CE04}  {\sc M.~Cabanes and M.~Enguehard}, \emph{Representation Theory of Finite  Reductive Groups}.  Cambridge University Press, Cambridge, 2004.

 \bibitem[CS13]{CS13}
 		{\sc M.~Cabanes and B.~Sp\"ath}, Equivariance and extendibility in finite reductive groups with connected center. \emph{Math. Z. \bf275} (2013), 689--713.

	
\bibitem[CS25]{CS25}
		{\sc M.~Cabanes and B.~Sp\"ath},
		The McKay conjecture on character degrees, \emph{Ann. of Math.}, To Appear,
		\newblock Preprint  arXiv:2410.20392 (2024).

\bibitem[DM94]{DM94}
{\sc F.~Digne and J.~Michel},
\newblock Groupes r\'{e}ductifs non connexes.
\newblock {\em Ann. Sci. \'{E}cole Norm. Sup. (4)}, 27(3):345--406, 1994.

\bibitem[DM20]{DM20}
{\sc F.~Digne and J.~Michel},
 {\it Representations of finite groups of Lie type}, second edition, 
London Mathematical Society Student Texts, 95, Cambridge Univ. Press, Cambridge, 2020.

\bibitem[Da73]{dade73}
{\sc E.~C. Dade}, Block extensions, {\em Illinois J. Math. {\bf 17}} (1973), 198--272.

\bibitem[Da77]{dade77}
{\sc E.~C. Dade}, Remarks on isomorphic blocks, {\em J. Algebra {\bf 45}} (1977), no.~1, 254--258.

\bibitem[GM20]{GM20}
{\sc M. Geck and G. Malle},
\newblock {\em The Character Theory of Finite Groups of {L}ie Type}, volume 187
of {\em Cambridge Studies in Advanced Mathematics}.
\newblock Cambridge University Press, Cambridge, 2020.

\bibitem[GSV18]{GSV}
{\sc E. Giannelli, A. A. Schaeffer Fry, and C. Vallejo Rodr{\'i}guez}.
{ \em Proc. Amer. Math. Soc. \bf{147}} (2019), 4697--4712.

\bibitem[G]{gow}
{\sc R. Gow}, Real $2$-blocks of characters of finite groups, {\em Osaka J. Math. {\bf 25}} (1988), no.~1, 135--147.


\bibitem[Hi90]{hiss}
{\sc G.~Hiss},
\newblock Regular and semisimple blocks of finite reductive groups.
\newblock{\em J. Lond. Math. Soc. \bf{s2-41}}:1  (1990), 63--68.



\bibitem[Isa76]{isaacs}
		{\sc I.M.~Isaacs}, \emph{Character Theory of Finite Groups}. Academic Press, New York, 1976.
     

\bibitem[Joh22b]{Joh22b}
{\sc B. Johansson}, On the inductive McKay--Navarro condition for finite groups of Lie type in their
defining characteristic. \emph{J. Algebra \bf610}  (2022), 223--240.


	\bibitem[KM15]{KM15}
	{\sc R.~Kessar and G.~Malle}.
	\newblock Lusztig induction and {$\ell$}-blocks of finite reductive groups.
	\newblock {\em Pacific J. Math. \bf{279}} (2015), no.~1-2, 269--298.

\bibitem[KS15]{KS15}
{\sc S. Koshitani and B. Sp\"ath}, Clifford theory of characters in induced blocks, {\em Proc. Amer. Math. Soc. {\bf 143}} (2015), no.~9, 3687--3702.
    


\bibitem[LMNT]{lmnt}
{\sc R.  Lyons, J. M. Mart{\'i}nez, G. Navarro, and P. H. Tiep},
\newblock
Principal blocks, irreducible restriction, fields and degrees. (2025)
\newblock arXiv:2507.22503

\bibitem[Ma07]{Ma07}	
{\sc G.~Malle}, Height 0 characters of finite groups of Lie type. \emph{Represent. Theory \bf11} (2007), 192--220.



	\bibitem[Mar96]{Marcus}
	{\sc A. Marcus}.
	\newblock On equivalences between blocks of group algebras: reduction to the
	simple components.
	\newblock {\em J. Algebra}, 184(2):372--396, 1996.


    \bibitem[Mur13]{Murai}
	{\sc M. Murai}.
	\newblock On blocks of normal subgroups of finite groups.
	\newblock {\em Osaka J. Math.}, 50(4):1007--1020, 2013.
	
    \bibitem[Na98]{Nav98}
	{\sc G. Navarro}.
	\newblock {\em Characters and blocks of finite groups}, volume 250 of {\em
		London Mathematical Society Lecture Note Series}.
	\newblock Cambridge University Press, Cambridge, 1998.
        
        \bibitem[Na04]{Na04}
		{\sc 		G. Navarro}, The McKay conjecture and Galois automorphisms. \emph{Ann. of Math. (2) \bf{160}(3)} (2004), 1129--1140.

        \bibitem[Na18]{Nav18}{\sc G.~Navarro}, \emph{Character Theory and the McKay Conjecture}. Cambridge University Press, Cambridge 2018. 
        


\bibitem[NT10]{nt10}
{\sc G. Navarro and P.~H.~Tiep}, Degrees of rational characters of finite groups, {\em Adv. Math. {\bf 224}} (2010), no.~3, 1121--1142.

\bibitem[NT13]{NT13}
{\sc G. Navarro and P.~H.~Tiep},
Characters of relative $p'$-degree over normal subgroups, {\em Ann. of Math. (2) {\bf 178}} (2013), no.~3, 1135--1171.



\bibitem[NTT08]{NTT} 
{\sc G. Navarro, P.~H.~Tiep and A. Turull}, Brauer characters with cyclotomic field of values, {\em J. Pure Appl. Algebra {\bf 212}} (2008), no.~3, 628--635.

\bibitem[Ros24]{Ros24}
{\sc D. Rossi},
A local-global principle for unipotent characters.
{\em Forum Math. Sigma \bf{12}} (2024), e125, 1--29.


\bibitem[RSF22]{RSF22}		
{\sc L. Ruhstorfer and A.~A.~ Schaeffer Fry}, The inductive McKay--Navarro conditions for the prime $2$ and some groups of Lie type. \emph{ Proc. Amer. Math. Soc. Ser. B, \bf{9}} (2022), 204--220. (Minor corrections at arXiv:2106.14745) 



\bibitem[RSST25]{RSST}		
{\sc L. Ruhstorfer, A.~A.~Schaeffer Fry,  B. Sp{\"a}th, and J. Taylor},  Towards the inductive Galois-McKay Condition for groups of Lie type. arXiv:2506.17123.



\bibitem[SFT23]{SFT23}
{\sc A.~A.~Schaeffer Fry and J.~Taylor},
Galois automorphisms and classical groups.  {\em Transform. Groups \bf{28}} (2023), 439--486.



\bibitem[S92]{schmid}
{\sc P.~P. Schmid}, Extending the Steinberg representation, {\em J. Algebra {\bf 150}} (1992), no.~1, 254--256.



\bibitem[Sp10a]{Spa10a}
{\sc B.~Sp{\"a}th},
\newblock Sylow {$d$}-tori of classical groups and the {M}c{K}ay conjecture. {I}.
\newblock {\em J. Algebra} {\bf 323} (2010), 2469--2493.
        
\bibitem[Sp10b]{spath10}
{\sc B. Sp{\"a}th}, 
Sylow $d$-tori of classical groups and the McKay conjecture, II.
\emph{J. Algebra \bf{323}} (2010), 2494--2509.

\bibitem[Sp12]{Spa12}
{\sc B. Sp{\"a}th},
Inductive McKay condition in defining characteristic.
{\em Bull. London Math. Soc. \bf{44}} (2012), 426--438.



\bibitem[SV20]{SV20}		
{\sc B.~Srinivasan and C.~R.~Vinroot}, Galois group action and Jordan decomposition of characters of finite reductive groups with connected center. \emph{J. Algebra} \textbf{558}, 708--727 (2020). 




\end{thebibliography}
\end{document}